\newcommand{\Tr}[3]{\ban{\mathcal{#1}_{#2},\nu_{#3}}}
\newcommand{\intE}[1]{\widetilde{#1}^1}
\newcommand{\extE}[1]{\widetilde{#1}^0} 
\newcommand{\intee}[2]{{#1}^{1,#2}}
\newcommand{\extee}[2]{{#1}^{0,#2}}
\newcommand{\redB}[1]{\widetilde{\mathscr{F}_{H}#1}}
\newcommand{\divFs}{\div^{\rm s}\!F} 
\newcommand{\divFa}{\div^{\rm a}\!F} 
\newcommand{\nb}[1]{\nabla_H#1}
\renewcommand{\H}{\mathbb{H}}
\newcommand{\G}{\mathbb{G}}
\newcommand{\N}{\mathbb{N}}
\newcommand{\R}{\mathbb{R}}
\newcommand{\cS}{\mathcal{S}}
\newcommand{\cM}{\mathcal{M}}
\newcommand{\cR}{\mathcal{R}}
\newcommand{\cV}{\mathcal{V}}
\newcommand{\ep}{\varepsilon}
\newcommand{\ph}{\varphi}
\newcommand{\sm}{\setminus}
\newcommand{\lb}{{\big\lbrace}}
\newcommand{\rb}{{\big\rbrace}}
\newcommand{\lls}{\big(}
\newcommand{\rrs}{\big)}
\newcommand{\diam}{\mbox{\rm diam}}
\renewcommand{\exp}{\mbox{\rm exp}\;\!}
\newcommand{\supp}{\mbox{\rm supp}}
\renewcommand{\div}{\mbox{\rm div}}
\newcommand{\dist}{\mbox{dist}}
\newcommand{\Lie}{\mathrm{Lie}}
\newcommand{\q}{\mathrm q}
\newcommand{\m}{\mathrm m}
\newcommand{\QQ}{\mathrm Q}
\newcommand{\bcup}{\bigcup}
\newcommand{\lan}{\langle}
\newcommand{\ran}{\rangle}
\newcommand{\lra}{\longrightarrow}
\newcommand{\der}{\partial}
\newcommand{\eR}{{\overline{\mathbb R}}}
\newcommand{\epty}{\emptyset}
\newcommand{\ds}{\displaystyle}
\newcommand{\beqas}{\begin{eqnarray*}}
\newcommand{\eeqas}{\end{eqnarray*}}
\newcommand{\beqa}{\begin{eqnarray}}
\newcommand{\eeqa}{\end{eqnarray}}
\newcommand{\beq}{\begin{equation}}
\newcommand{\eeq}{\end{equation}}
\newcommand{\bce}{\begin{center}}
\newcommand{\ece}{\end{center}}
\newcommand{\pa}[1]{\left( #1 \right)}               
\newcommand{\set}[1]{\left\{ #1 \right\}}            
\newcommand{\pal}[1]{\left| #1 \right|}            
\newcommand{\ban}[1]{\left\langle  #1 \right\rangle}  
\newcommand{\qs}[1]{\quad\mbox{ #1} \quad}               
\newcommand{\qandq}{\quad\mbox{and}\quad}
\newcommand{\tildef}[1]{\widetilde{ #1}}
\newcommand{\eps}{\varepsilon}
\newcommand{\dime}{\mathrm{dim}} 
\newcommand{\DM}{\mathcal{DM}} 
\newcommand{\Lip}{\mathrm{Lip}} 
\newcommand{\Haus}[1]{{\mathcal H}^{#1}} 
\newcommand{\SHaus}[1]{{\mathcal S}^{#1}} 
\newcommand{\Leb}[1]{{\mathscr L}^{#1}} 
\newcommand{\Per}{{\sf P}} 
\newcommand{\mean}[1]{\,-\hskip-1.08em\int_{#1}} 
\newcommand{\redb}{\mathscr{F}_{H}} 
\newcommand{\redbeu}{\mathscr{F}} 
\newcommand{\mtb}{\partial^{*}_{H}} 
\newcommand{\mtbR}{\partial^{*, \cR}_{H}} 
\newcommand{\weakto}{\rightharpoonup}
\newcommand{\weakstarto}{\overset*\rightharpoonup}
\newcommand{\res}{\mathop{\hbox{\vrule height 7pt width .5pt depth 0pt \vrule height .5pt width 6pt depth 0pt}}\nolimits}
\theoremstyle{plain}
\newtheorem{theorem}{Theorem}[section]
\newtheorem{lemma}[theorem]{Lemma}
\newtheorem{proposition}[theorem]{Proposition}
\newtheorem{corollary}[theorem]{Corollary}
\theoremstyle{definition}
\newtheorem{remark}[theorem]{Remark}
\newtheorem{example}[theorem]{Example}
\newtheorem{definition}[theorem]{Definition}
\begin{document}

\title[The Gauss--Green theorem in stratified groups]
{The Gauss--Green theorem in stratified groups}
\author{Giovanni E. Comi}
\address{Giovanni E. Comi, Scuola Normale Superiore, Piazza dei Cavalieri 7, 56126 Pisa, Italy}
\email{giovanni.comi@sns.it}
\author{Valentino Magnani}
\address{Valentino Magnani, Dipartimento di Matematica, Universit\`a di Pisa,
Largo Pontecorvo~5,  I-56127, Pisa, Italy}
\email{magnani@dm.unipi.it}
\date{}

\thanks{The first author has been partially supported by 
the PRIN2015 MIUR Project ``Calcolo delle Variazioni".
The second author acknowledges the support of GNAMPA project 2017 ``Campi vettoriali, superfici e perimetri in geometrie singolari'' and of the University of Pisa, Project PRA 2018 49.}
\subjclass[2010]{Primary 26B20, 53C17. Secondary 22E30.}
\keywords{Gauss--Green theorem, stratified group, divergence-measure field}

\begin{abstract}
We lay the foundations for a theory of divergence-measure fields 
in noncommutative stratified nilpotent Lie groups.
Such vector fields form a new family of function spaces, which generalize in a sense the $BV$ fields. They provide the most general setting to 
establish Gauss--Green formulas 
for vector fields of low regularity on sets of finite perimeter. 
We show several properties of divergence-measure fields in
stratified groups, ultimately achieving the related Gauss--Green theorem.
\end{abstract}
\maketitle

\bce
Dedicated to the memory of William P. Ziemer (1934–2017)
\ece

\tableofcontents

\newpage

\section{Introduction}

The Gauss--Green formula is of significant importance in pure and applied Mathematics, as in PDEs, Geometric Measure Theory and Mathematical Physics. In the last two decades, there have been many efforts in extending this formula to very general assumptions, considering `nonsmooth domains' of integration and `weakly regular vector fields'.

The classical Gauss--Green theorem, or the divergence theorem, asserts that, for an open set $\Omega \subset \R^{n}$, a vector field $F \in C^{1}(\Omega; \R^{n})$ and an open subset $E \Subset \Omega$ such that $\bar E$ is a $C^1$ smooth manifold with boundary, there holds
\begin{equation} \label{classical_G_G_eq} \int_{E} \mathrm{div} F \, dx = \int_{\partial E} F \cdot \nu_{E} \, d \mathcal{H}^{n - 1}, \end{equation}
where $\nu_{E}$ is the exterior unit normal to $\partial E$ and $\Haus{n - 1}$ is the $(n - 1)$-dimensional Hausdorff measure.
The class of open sets considered above is too restrictive and this motivated the search for a wider class of integration domains for which the Gauss--Green theorem holds true in a suitable weaker form. Such a research was one of the aims which historically led 
to the theory of functions of bounded variation ($BV$) and sets of finite perimeter, or Caccioppoli sets.
Indeed, an equivalent definition of set of finite perimeter 
requires the validity of a measure theoretic Gauss-Green formula 
restricted to compactly supported smooth vector fields. The subsequent problem
is then finding the geometric structure of its support.

While it is well known that a set of finite perimeter $E$ may have very irregular topological boundary, even with positive Lebesgue measure, it is possible to consider a particular subset of $\partial E$, the reduced boundary $\redbeu E$, on which one can define a unit vector $\nu_{E}$, called measure theoretic exterior unit normal.
In view of De Giorgi's theorem, which shows the rectifiability of the reduced boundary, we know that $|D \chi_{E}| = \Haus{n - 1} \res \redbeu E$ and a first important relaxation of \eqref{classical_G_G_eq} is as follows
\begin{equation} \label{DeGiorgi_Federer_G_G_eq} \int_{E} \mathrm{div} F \, dx = \int_{\redbeu E} F \cdot \nu_{E} \, d \Haus{n - 1},
\end{equation}
where $E \Subset \Omega$ has finite perimeter and $F \in \Lip(\Omega; \R^{n})$.
This result, although important because of the large family of integration domains, is however restricted to a class of integrands with a still relatively strong regularity.

The subsequent generalization of \eqref{DeGiorgi_Federer_G_G_eq} is due to Vol'pert \cite{vol1967spaces} (we also refer to the classical monograph \cite{volpert1975analysis}). 
Thanks to further developments in the $BV$ theory, he was able to consider vector fields in $F \in BV(\Omega; \R^{n}) \cap L^{\infty}(\Omega; \R^{n})$ and sets $E \Subset \Omega$ of finite perimeter, getting the following formulas
\begin{align} \label{Volpert_G_G_1_eq} \mathrm{div}F(E^{1}) & = \int_{\redbeu E} F_{- \nu_{E}} \cdot \nu_{E}\, d\Haus{n -1}, \\
\label{Volpert_G_G_2_eq} \mathrm{div}F(E^{1} \cup \redbeu E) & = \int_{\redbeu E} F_{\nu_{E}} \cdot \nu_{E}\, d\Haus{n -1}, \end{align} 
where $E^{1}$ is the measure theoretic interior of $E$ and $F_{\pm \nu_{E}}$ are the exterior and interior traces of $F$ on $\redbeu E$; that is, the approximate limits of $F$ at $x \in \redbeu E$ restricted to the half spaces $\{ y \in \R^{n} : (y - x) \cdot (\pm \nu_{E}(x)) \ge 0 \}$.
The existence of such traces follows from the fact that any $BV$ function admits a representative which is well defined $\Haus{n - 1}$-a.e.

Not all distributional partial derivatives of a vector field are required to be Radon measures in \eqref{Volpert_G_G_1_eq} and \eqref{Volpert_G_G_2_eq}, since only the divergence appears. Moreover, Gauss--Green formulas with vector fields of lower regularity have proved to be very important in applications, as for instance in hyperbolic conservations laws or in the study of contact interactions in Continuum Physics, \cite{CF1}, \cite{Schuricht}.
All of these facts finally led to the study of $p$-summable divergence-measure fields,
namely $L^{p}$ vector fields whose divergence is a Radon measure. 

Divergence-measure fields provide a natural way to extend the Gauss--Green formula.
The family of $L^p$ summable divergence-measure fields, denoted by $\DM^{p}$, clearly generalizes the vector fields of bounded variation.
It was first introduced by Anzellotti for $p = \infty$ in \cite{Anzellotti_1983},
where he studied different pairings between vector fields and gradients of weakly differentiable functions. Thus, he considered $F \in \DM^{\infty}(\Omega)$ in order to define pairings between bounded vector fields and vector valued measures given by weak gradients of $BV$ functions. One of the main results is \cite[Theorem 1.2]{Anzellotti_1983}, which shows the existence of $L^{\infty}(\partial \Omega)$ traces of the normal component of $\DM^{\infty}(\Omega)$ fields on the boundary of open bounded sets $\Omega$ with Lipschitz boundary. These traces are referred to as {\em normal traces} in the literature. 

After the works of Anzellotti \cite{Anzellotti_1983}, \cite{Anzellotti1983Preprint}, the notion of divergence-measure fields was rediscovered in the early 2000s by many authors, with different purposes.
Chen and Frid proved generalized Gauss--Green formulas for divergence-measure fields on open bounded sets with Lipschitz deformable boundary (see \cite[Theorem 2.2]{CF1} and \cite[Theorem 3.1]{chen2003extended}), motivated by applications to the theory of systems of conservation laws with the Lax entropy condition. The idea of their proof rests on an approximation argument, which allows to obtain a Gauss--Green formula on a family of Lipschitz open bounded sets which approximate the given integration domain. Later, Chen, Torres and Ziemer generalized this method to the case of sets of finite perimeter in order to extend the result in the case $p = \infty$, achieving Gauss--Green formulas for essentially bounded divergence-measure fields and sets of finite perimeter (\cite[Theorem 5.2]{Chen_2009}). Then, Chen and Torres \cite{chen2011structure} applied this theorem to the study of the trace properties of solutions of nonlinear hyperbolic systems of conservation laws. Further studies and simplifications of \cite{Chen_2009} have subsequently appeared
in \cite{comi2017one} and \cite{comi2017locally}.
The alternative approach in \cite{comi2017locally} follows the original idea employed by Vol'pert to prove \eqref{Volpert_G_G_1_eq} and \eqref{Volpert_G_G_2_eq}. A key step is the derivation of a Leibniz rule between a vector field and a characteristic function of a set of finite perimeter.
As a byproduct, this ensures the definition of a suitable generalized notion of normal trace, which is coherent with the definition given by Anzellotti \cite{Anzellotti_1983} and Chen, Torres and Ziemer \cite{Chen_2009}.

The Gauss--Green formula for essentially bounded divergence-measure fields and sets of finite perimeter (\cite[Theorem 3.2]{comi2017locally}) states that, if $F \in \DM^{\infty}(\Omega)$ and $E \Subset \Omega$ is a set of finite perimeter in $\Omega$, then there exist {\em interior and exterior normal traces of $F$ on $\redbeu E$}; that is, $(\mathcal{F}_{i} \cdot \nu_{E}), (\mathcal{F}_{e} \cdot \nu_{E})  \in L^{\infty}(\redbeu E; \Haus{n - 1})$ such that we have\footnote{The absence of the minus sign at the right hand side of these formulas is due to our convention that $\nu_{E}$ denotes the measure theoretic unit {\rm exterior} normal. This differs from the convention in \cite{comi2017locally}.}
\begin{align}\nonumber
\mathrm{div}F(E^{1}) & = \int_{\redbeu E} \mathcal{F}_{i} \cdot \nu_{E} \, d \Haus{n - 1}\;\qandq \; \mathrm{div}F(E^{1} \cup \redbeu E)  = \int_{\redbeu E} \mathcal{F}_{e} \cdot \nu_{E} \, d \Haus{n - 1}. 
\end{align}
In addition, the following trace estimates hold:
\begin{equation} \label{traces_norm_Euclidean_eq}
\|\mathcal{F}_{i} \cdot \nu_{E}\|_{L^{\infty}(\redbeu E; \Haus{n - 1})} \le \|F\|_{\infty, E} \qandq
\|\mathcal{F}_{e} \cdot \nu_{E}\|_{L^{\infty}(\redbeu E; \Haus{n - 1})} \le \|F\|_{\infty, \Omega \setminus E}.
\end{equation}
It is of interest to mention also other methods to prove the Gauss--Green formula, and different applications. Degiovanni, Marzocchi and Musesti in \cite{degiovanni1999cauchy} and Schuricht in \cite{Schuricht} were interested in the existence of a normal trace under weak regularity hypotheses in order to achieve a representation formula for Cauchy fluxes, contact interactions and forces in the context of the foundations of Continuum Mechanics. As is well explained in \cite{Schuricht}, the search for a rigorous proof of Cauchy's stress theorem under weak regularity assumptions is a common theme in much of the literature on divergence-measure fields. The Gauss--Green formulas obtained in \cite{degiovanni1999cauchy} and \cite{Schuricht} are valid for $F \in \DM^{p}(\Omega)$ and $p \ge 1$, even though the domains of integration $E \subset \Omega$ must be taken from a suitable subalgebra of sets of finite perimeter, which are related to the vector field $F$.

\v{S}ilhav\'{y} in \cite{Silhavy} also studied the problem of finding a representation of Cauchy fluxes through traces of suitable divergence-measure fields.
 He gave a detailed description of generalized Gauss--Green formulas for $\DM^{p}(\Omega)$-fields with respect to $p \in [1, \infty]$ and suitable
hypotheses on concentration of $\mathrm{div}F$. In particular, he provided sufficient conditions under which the interior normal traces (and so also the exterior) can be seen as integrable functions with respect to the measure $\Haus{n - 1}$ on the reduced boundary of a set of finite perimeter. We should also note that \v{S}ilhav\'{y} studied the so-called extended divergence-measure fields, already introduced by Chen-Frid in \cite{chen2003extended}, which are vector valued Radon measures whose divergence is still a Radon measure. He proved absolute continuity results and Gauss--Green formulas in \cite{vsilhavy2008divergence} and \cite{vsilhavy2009divergence}.
It is also worth to mention the paper by Ambrosio, Crippa and Maniglia \cite{ACM}, where the authors employed techniques similar to the original ones of Anzellotti and studied a class of essentially bounded divergence-measure fields induced by functions of bounded deformation. Their results were motivated by the aim of extending DiPerna-Lions theory of the transport equation to special vector fields with bounded deformation.
 
In the last decades and more recently,
Anzellotti's pairings and Gauss--Green formulas 
have appeared in several applied and theoretical questions,
as the $1$-Laplace equation, minimal surface equation,  
the obstacle problem for the area functional and theories of
integration to extend the Gauss--Green theorem.
We refer for instance to the works 
\cite{Pfeffer1991GG}, \cite{DePauwPfeffer2004GG}, 
\cite{kawohl2007dirichlet}, \cite{scheven2016bv}, \cite{scheven2017dual}, \cite{scheven2017anzellotti}, \cite{leonardi2017rigidity} and \cite{leonardi2018prescribed}. 
Recently Anzellotti's pairing theory has been extended in \cite{crasta2017anzellotti}, see also \cite{crasta2017pairing} and \cite{crasta2019pairings}, where the authors have also established integration by parts formulas for essentially bounded divergence-measure fields, sets of finite perimeter and essentially bounded scalar functions of bounded variation.
In the context of unbounded divergence-measure fields, we mention the work \cite{chencomitorres}, where new integration by parts formulas are presented and the normal trace functional is studied in relation with the Leibniz rules between the fields and the characteristic functions of sets.

The Gauss--Green formula has been deeply studied also in 
a number of different non-Euclidean contexts, \cite{HarNorGaussGreen1992}, \cite{LyonsYamGG2006}, \cite{Guseynov2016FractGG}.
Related to these results is also the recent study by Z\"ust, on functions of bounded
fractional variation, \cite{ZustFractBV-pr2018}.
Other extensions of the Gauss--Green formula 
appears in the framework of doubling metric spaces satisfying a Poincar\'e inequality, \cite{MarMirShan2015BoundaryMeasGG}.
Through special trace theorems for $BV$ functions 
in Carnot--Carath\'eodory spaces, an integration by parts formula has been 
established also in \cite{Vittone2012},
assuming an intrinsic Lipschitz regularity on the
boundary of the domain of integration.

The main objective of this paper is to establish a Gauss--Green theorem for sets of finite perimeter and divergence-measure vector fields in a family of noncommutative nilpotent Lie groups, called {\em stratified groups} or {\em Carnot groups}.
Such Lie groups equipped with a suitable {\em homogeneous distance} represent infinitely many different types of non-Euclidean geometries, with Hausdorff dimension strictly greater than their topological dimension. Notice that commutative stratified Lie groups
coincide with normed vector spaces, where our results agree with the classical ones.
Stratified groups arise from Harmonic Analysis and PDE, \cite{SteinICM1977}, \cite{Fol75}, and represent an important class of connected and simply connected real nilpotent Lie groups.
They are characterized by a family of 
dilations, along with a left invariant distance that properly scales with dilations,
giving a large class of metric spaces that are not bi-Lipschitz equivalent to each other.

The theory of sets of {\em finite h-perimeter} in stratified groups has known a wide development in the last two decades, especially in relation to topics like De Giorgi's rectifiability, minimal surfaces and differentiation theorems.
We mention for instance some relevant works  
\cite{Ambrosio2001}, \cite{MSC2001}, \cite{franchi2003regular}, \cite{FSSC5}, \cite{CHMY2005MinSurfPseudoherm}, \cite{magnani2006characteristic}, \cite{BASCVMinSurf2007}, \cite{AKLD09TangSpGro}, \cite{MSSC10},
\cite{DGNP2010MinSurfEmbedded}, \cite{MonVitHeightEst2015}, \cite{Mag31}, \cite{MonSteLipApp2017}, \cite{LeDR2017Besicovitch}, only to give a small glimpse of the wide and always expanding literature.
Some basic facts on the theory of sets of finite perimeter and 
$BV$ functions hold in this setting, once these notions are properly
defined. Indeed, other related notions such as reduced boundary and essential boundary, intrinsic rectifiability and differentiability 
can be naturally introduced in this setting, see for instance
\cite{cassano2016some} for a recent overview on these topics
and further references. 

The stratified group $\G$, also called {\em Carnot group}, is always equipped with left invariant {\em horizontal vector fields} $X_1,\ldots,X_\m$, that determine
the directions along which it is possible to differentiate. 
The corresponding distributional derivatives define functions
of bounded h-variation (De\-finition~\ref{def:BV}) and sets of finite h-perimeter (Definition~\ref{def:finitePer}).
We consider {\em divergence-measure horizontal fields},
that are $L^p$-summable sections of the horizontal subbundle $H\Omega$, where $\Omega$ is an open set of $\G$ (Definition~\ref{DMdef}).
Notice that the space of these fields, $\DM^p(H\Omega)$, with $1\le p\le \infty$, 
contains divergence-measure horizontal fields that are not $BV$ even with respect to the group structure (Example~\ref{examples_fiels}).
Nevertheless, the fields in $\DM^{\infty}(H\Omega)$ satisfy a Leibniz rule when multiplied by a function of bounded h-variation, that might be much less regular than a $BV$ function on Euclidean space, see Theorem~\ref{productrule} below.
The loss of Euclidean regularity can be already seen with sets of finite h-perimeter, that are not necessarily of finite perimeter in Euclidean sense, \cite[Example~1]{franchi2001rectifiability}.
Sets of finite h-perimeter are in some sense the largest class of measurable sets for which one can expect existence of normal traces and Gauss--Green formulas
for divergence-measure horizontal fields.

A special aspect of our techniques is a smooth approximation result, obtained by
the noncommutative group convolution (Definition~\ref{d:mollification}).
This is a well known tool in Harmonic Analysis and PDE on homogeneous Lie groups, 
\cite{folland1982hardy}, \cite{Stein93HarmAnalysis}, that has been already used
to study perimeters and BV functions on Heisenberg groups, \cite{SCVit2014GraphsBV}, \cite{MontiVitt2018SetsHperim}.
On the other hand, a number of smooth approximations can be obtained 
in Carnot-Carath\'eodory spaces or sub-Riemannian manifolds 
using the Euclidean convolution, also in relation to Meyers--Serrin theorem and Anzellotti--Giaquinta approximations for Sobolev and BV functions, \cite{franchi1996meyer}, \cite{FSSC2}, \cite{garofalo1996isoperimetric}, \cite{GN1998LipApp}, \cite{Vittone2012}, \cite{AmbMag28Ghe}. The next 
result provides a number of natural properties that are satisfied by the ``correct'' mollified function.

\begin{theorem} \label{commutation convolution derivative} 
Let $f\in BV_{H,\rm loc}(\Omega)$ be such that $|D_Hf|(\Omega)<+\infty$ and
let $\rho \in C_c(B(0,1))$ with $\rho\ge 0$, $\int_{B(0,1)} \rho \, dx=1$ and 
$\rho(x) = \rho(x^{-1})$. Then $\rho_\ep\ast f\in C^1_H(\Omega^\cR_{2\ep})$ and we have 
\begin{align} \label{weak_conv_D_f} 
\nabla_{H} (\rho_{\eps} \ast f) & \weakto D_{H} f \qandq |\rho_{\eps} \ast D_{H} f|  \weakto |D_{H} f|, \\
\label{pointwise_upper_control}
|\nabla_{H}(\rho_{\eps} \ast f)| \,\mu& \le (\rho_{\eps} \ast |D_{H} f|) \,\mu\quad \text{on}\quad \Omega^\cR_{2\ep}
\end{align}
for every $\ep>0$ such that $\Omega^\cR_{2\ep}\neq\epty$.
Finally, the following estimate holds
\begin{equation} \label{total_var_convol_convergence} 
 |\nabla_{H} (\rho_{\eps} \ast f)|(\Omega^\cR_{2\ep}) \le |D_{H} f|(\Omega). \end{equation}
\end{theorem}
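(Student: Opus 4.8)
The plan is to treat $\rho_\ep \ast f$ as a Friedrichs--type mollification adapted to the noncommutative group convolution of Definition~\ref{d:mollification}. The heart of the matter is the commutation identity
\[
\nabla_H(\rho_\ep \ast f) = \rho_\ep \ast D_H f \qquad \text{on } \Omega^\cR_{2\ep},
\]
read as an identity between the (continuous) horizontal gradient of the function $\rho_\ep\ast f$, which we shall see belongs to $C^1_H(\Omega^\cR_{2\ep})$, and the group convolution of the $\R^\m$-valued Radon measure $D_H f=(D_{H,1}f,\dots,D_{H,\m}f)$ with the kernel $\rho_\ep$. Once this is established, the three displayed assertions follow from, respectively, Jensen's inequality, Fubini's theorem, and the standard properties of $\rho_\ep \ast{}$ as an approximate identity on the group.

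To prove the commutation identity I would first record the elementary localization fact that, for $x \in \Omega^\cR_{2\ep}$, the value $(\rho_\ep \ast f)(x)=\int \rho_\ep(y)\,f(y^{-1}x)\,d\mu(y)$ reads $f$ only at points of $\Omega$ --- this is precisely what the inner domain $\Omega^\cR_{2\ep}$ encodes, taking into account the interplay between the support of $\rho$ in $B(0,1)$, the dilations, and the homogeneous distance --- so that $\rho_\ep\ast f$ and $\rho_\ep\ast D_Hf$ on $\Omega^\cR_{2\ep}$ depend only on the restrictions of $f$ and $D_Hf$ to $\Omega$. Next, for $\psi\in C^\infty_c(\Omega^\cR_{2\ep})$ I would move the convolution onto the test function via the self-adjointness relation $\int (\rho_\ep \ast f)\,\psi\,d\mu=\int f\,(\rho_\ep \ast \psi)\,d\mu$, which holds because $\G$ is unimodular and $\rho$ is symmetric (this is where $\rho(x)=\rho(x^{-1})$ is used), and then invoke the left invariance of $X_i$ in the form $X_i(\rho_\ep \ast \psi)=\rho_\ep \ast X_i\psi$ together with the fact that, $X_i$ being divergence-free with respect to $\mu$, its formal adjoint is $-X_i$. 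This gives $\int (\rho_\ep \ast f)\,X_i^{*}\psi\,d\mu=\langle D_{H,i}f,\ \rho_\ep \ast \psi\rangle$, and rewriting the right-hand side once more by the adjoint relation identifies the distributional derivative $X_i(\rho_\ep \ast f)$ with the measure $\rho_\ep \ast D_{H,i}f$. Since $\rho\in C_c(B(0,1))$ and $D_{H,i}f$ is a finite measure, $\rho_\ep \ast D_{H,i}f$ is absolutely continuous with respect to $\mu$ with continuous density, whence $\rho_\ep \ast f\in C^1_H(\Omega^\cR_{2\ep})$.

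With the identity in hand, \eqref{pointwise_upper_control} is immediate: writing the polar decomposition $D_Hf=\sigma_f\,|D_Hf|$ with $|\sigma_f|=1$ $|D_Hf|$-a.e.\ and applying $\big|\int g\,d\nu\big|\le \int |g|\,d|\nu|$ to the vector measure gives $|\nabla_H(\rho_\ep \ast f)(x)|=|(\rho_\ep \ast D_Hf)(x)|\le (\rho_\ep \ast |D_Hf|)(x)$ for $x\in\Omega^\cR_{2\ep}$; integrating over $\Omega^\cR_{2\ep}$ and using Fubini together with $\int \rho_\ep\,d\mu=1$ and the localization above yields $|\nabla_H(\rho_\ep \ast f)|(\Omega^\cR_{2\ep})\le (\rho_\ep \ast |D_Hf|)(\Omega^\cR_{2\ep})\le |D_Hf|(\Omega)$, i.e.\ \eqref{total_var_convol_convergence}. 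For \eqref{weak_conv_D_f} I would test against $\phi\in C_c(\Omega;\R^\m)$: for $\ep$ small enough that $\supp\phi\subset\Omega^\cR_{2\ep}$ one has $\int \phi\cdot\nabla_H(\rho_\ep \ast f)\,d\mu=\int (\rho_\ep \ast \phi)\cdot dD_Hf$ by the adjoint relation, and since $\rho_\ep \ast \phi\to\phi$ uniformly with supports in a fixed compact set and $|D_Hf|(\Omega)<\infty$, the right-hand side tends to $\int\phi\cdot dD_Hf$; the same argument with scalar $\phi$ gives $\rho_\ep \ast |D_Hf|\weakto |D_Hf|$. Finally, for $|\rho_\ep \ast D_Hf|\weakto |D_Hf|$ I would combine lower semicontinuity of the total variation under the weak-$*$ convergence of the vector measures just proven, which gives $|D_Hf|(A)\le\liminf_\ep |\rho_\ep \ast D_Hf|(A)$ on open $A$, with the pointwise bound $|\rho_\ep \ast D_Hf|\le \rho_\ep \ast |D_Hf|$, which gives $\limsup_\ep |\rho_\ep \ast D_Hf|(K)\le\limsup_\ep (\rho_\ep \ast |D_Hf|)(K)\le |D_Hf|(K)$ on compact $K$; together with the convergence of total masses $|\rho_\ep \ast D_Hf|(\Omega^\cR_{2\ep})\to |D_Hf|(\Omega)$ this pins down the weak-$*$ limit.

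The step I expect to be the main obstacle is the commutation identity in the present low-regularity, noncommutative setting: one must justify differentiating a group convolution in which the smooth factor $\rho$ is merely continuous while the other factor $f$ is only of bounded $h$-variation, carefully tracking the side on which the left-invariant fields $X_i$ act and the exact role played by the symmetry $\rho(x)=\rho(x^{-1})$ and by unimodularity in the integration by parts. The remaining ingredients --- Jensen, Fubini, and the approximate-identity/lower-semicontinuity arguments --- are routine once that identity and the localization on $\Omega^\cR_{2\ep}$ are available.
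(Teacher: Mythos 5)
Your proposal is correct and follows essentially the same route as the paper: the commutation identity $\nabla_H(\rho_\ep\ast f)=\rho_\ep\ast D_Hf$ on $\Omega^\cR_{2\ep}$ is obtained, as in Lemma~\ref{l:BVD_Hfconv}, by moving the convolution onto smooth test functions via the symmetry $\rho(x)=\rho(x^{-1})$ and unimodularity, commuting the left invariant fields with the convolution on the smooth side, and invoking the $BV_H$ duality, after which \eqref{pointwise_upper_control}, \eqref{total_var_convol_convergence} and \eqref{weak_conv_D_f} follow from the same domination, Fubini/approximate-identity and lower-semicontinuity arguments used in the paper. The only cosmetic differences are that you prove \eqref{pointwise_upper_control} by a pointwise Minkowski--Jensen estimate on the vector-valued integral (legitimate since the frame $X_1,\dots,X_\m$ is orthonormal at every point) instead of the paper's duality with horizontal test fields, and you identify the limit $|\rho_\ep\ast D_Hf|\weakto|D_Hf|$ through portmanteau bounds on open and compact sets rather than by testing with nonnegative compactly supported functions.
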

We believe this smooth approximation has an independent interest, with possibly different applications. Other basic smoothing results have
been provided in Sections~\ref{sect:Difflocalsmooth} and \ref{sect:BVfunction}.
Notice that the inequality \eqref{pointwise_upper_control} is between measures, where $\mu$ denotes the Haar measure of the group. Here the minimal regularity of the mollifier $\rho$ is necessary in order to have Proposition~\ref{pointwise limit moll} and its consequences.
Indeed, the mollifier $\rho_{\eps}$ can be also built using a homogeneous
distance, that in general may not be smooth even outside the origin.
Theorem~\ref{commutation convolution derivative} plays an important role also in the proof of the Leibniz rule of Theorem \ref{productrule}. 
The noncommutativity of the group convolution makes necessary a right invariant distance $d^\cR$ canonically associated to $d$ \eqref{d:d^R} and the `right inner parts' of an open set $\Omega^\cR_{2\ep}$ \eqref{eq:Omega-R_ep},
that appear in the statement of Theorem~\ref{commutation convolution derivative}.

\begin{theorem}[Approximation and Leibniz rule]\label{productrule}
If $F \in \DM^{\infty}(H \Omega)$ and $g \in L^{\infty}(\Omega)$ 
with $|D_Hg|(\Omega)<+\infty$, then $g F \in \DM^{\infty}(H \Omega)$.
If $\rho \in C_{c}(B(0, 1))$ is nonnegative, 
$\rho(x) = \rho(x^{-1})$ and $\displaystyle \int_{B(0, 1)} \rho \, dx = 1$, then
for any infinitesimal sequence $\tilde\ep_k>0$, setting $g_{\eps} := \rho_{\eps} \ast g$, there exists a subsequence $\ep_k$ such that
$g_{\eps_{k}} \weakstarto \tilde{g}$ in $L^{\infty}(\Omega; |\div F|)$
and $\ban{F, \nabla_{H} g_{\eps_{k}}} \mu \weakto (F, D_{H} g)$ 
in $\mathcal{M}(\Omega)$.
Moreover, the following formula holds
\begin{equation} \label{product rule eq} 
\div (g F) = \tilde{g}\, \div F + (F, D_{H} g),
\end{equation}
where the measure $(F, D_{H} g)$ satisfies
\begin{equation} \label{abs_cont_pairing_infty} |(F, D_{H} g)| \le \|F\|_{\infty, \Omega} |D_{H} g|.
\end{equation} 
Finally, we have the decompositions 
\begin{equation} \label{pairing_decomposition_parts} 
(F, D_{H} g)^{\rm a} \mu  = \ban{F, \nb{g}} \mu \qandq
(F, D_{H} g)^{\rm s}  = (F, D_H^{\rm s} g),
\end{equation}
where $\nabla_Hg$ denotes the approximate differential of $g$.
\end{theorem}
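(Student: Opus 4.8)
The plan is to establish all the assertions simultaneously by mollifying $g$ via the group convolution and passing to the limit in the elementary Leibniz rule. Set $g_\ep:=\rho_\ep\ast g$. By Theorem~\ref{commutation convolution derivative}, for every $\ep$ with $\Omega^\cR_{2\ep}$ nonempty one has $g_\ep\in C^1_H(\Omega^\cR_{2\ep})$, $\|g_\ep\|_\infty\le\|g\|_{\infty,\Omega}$, $\nabla_H g_\ep\weakto D_H g$, $(\rho_\ep\ast|D_H g|)\,\mu\weakto|D_H g|$, the pointwise bound \eqref{pointwise_upper_control}, and the total variation bound \eqref{total_var_convol_convergence}. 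First I would record the elementary Leibniz rule $\div(g_\ep F)=g_\ep\,\div F+\ban{F,\nabla_H g_\ep}\,\mu$ on $\Omega^\cR_{2\ep}$; this is obtained by testing the distributional identity defining $\div F$ against $g_\ep\varphi\in C^1_c(\Omega^\cR_{2\ep})$ for $\varphi\in C^\infty_c(\Omega)$ with $\supp\varphi\subset\Omega^\cR_{2\ep}$, which is legitimate because the distributional divergence of a $\DM^\infty$ field extends to compactly supported $C^1$ test functions by mollification.

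Next I would set up compactness and pass to the limit. Given an infinitesimal $\tilde\ep_k$, the uniform bound $\|g_{\tilde\ep_k}\|_\infty\le\|g\|_{\infty,\Omega}$ and a diagonal extraction over the exhaustion $\Omega=\bigcup_\ep\Omega^\cR_{2\ep}$ give a subsequence with $g_{\ep_k}\weakstarto\tilde g$ in $L^\infty(\Omega;|\div F|)$. From \eqref{pointwise_upper_control} one has $|\ban{F,\nabla_H g_{\ep_k}}|\,\mu\le\|F\|_{\infty,\Omega}\,(\rho_{\ep_k}\ast|D_H g|)\,\mu$, which with \eqref{total_var_convol_convergence} bounds the total variation of $\ban{F,\nabla_H g_{\ep_k}}\mu$ by $\|F\|_{\infty,\Omega}|D_H g|(\Omega)$; so, after a further extraction, $\ban{F,\nabla_H g_{\ep_k}}\mu\weakto(F,D_H g)$ in $\cM(\Omega)$ for some Radon measure $(F,D_H g)$. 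Testing the elementary Leibniz rule against $\varphi\in C^\infty_c(\Omega)$ and letting $k\to\infty$ proves \eqref{product rule eq}: the left side converges because $g_{\ep_k}\to g$ in $L^1_{\rm loc}$ with a uniform bound and $F\cdot\nabla_H\varphi\in L^1$, the term $g_{\ep_k}\div F$ converges by the weak-$*$ convergence in $L^\infty(\Omega;|\div F|)$, and the pairing term converges by construction; since $gF$ is a bounded section of $H\Omega$ this also shows $gF\in\DM^\infty(H\Omega)$. Finally \eqref{abs_cont_pairing_infty} follows because, for nonnegative $\varphi\in C_c(\Omega)$, $\big|\int\varphi\,d(F,D_H g)\big|\le\liminf_k\int\varphi\,|\ban{F,\nabla_H g_{\ep_k}}|\,d\mu\le\|F\|_{\infty,\Omega}\liminf_k\int\varphi\,(\rho_{\ep_k}\ast|D_H g|)\,d\mu=\|F\|_{\infty,\Omega}\int\varphi\,d|D_H g|$.

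For the decomposition \eqref{pairing_decomposition_parts}, \eqref{abs_cont_pairing_infty} gives $(F,D_H g)\ll|D_H g|$. Writing $D_H g=\nabla_H g\,\mu+D_H^{\rm s}g$, with $|D_H g|=|\nabla_H g|\,\mu+|D_H^{\rm s}g|$ and $|D_H^{\rm s}g|\perp\mu$, the singular part $(F,D_H g)^{\rm s}$ is exactly $(F,D_H g)-\ban{F,\nabla_H g}\mu$ once we identify the absolutely continuous part, so everything reduces to showing that the Radon--Nikodym density of $(F,D_H g)$ with respect to $\mu$ equals $\ban{F,\nabla_H g}$ $\mu$-almost everywhere. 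I would prove this pointwise at a $\mu$-generic $x_0$, chosen to be simultaneously a $\mu$-Lebesgue point of $F$ and of the $L^1$ map $\nabla_H g$, a point of vanishing $\mu$-density of $|D_H^{\rm s}g|$, and a differentiation point of $(F,D_H g)$ with respect to $\mu$ — the set of such $x_0$ is $\mu$-conull by the Besicovitch differentiation theorem, valid for balls of a homogeneous distance on $\G$. For almost every small $r$, $(F,D_H g)(B(x_0,r))=\lim_k\int_{B(x_0,r)}\ban{F,\nabla_H g_{\ep_k}}\,d\mu$, and I would split the integrand as $\ban{F(x_0),\nabla_H g(x_0)}+\ban{F(x_0),\nabla_H g_{\ep_k}-\nabla_H g(x_0)}+\ban{F-F(x_0),\nabla_H g_{\ep_k}}$: letting $k\to\infty$ the middle term is controlled by $D_H^{\rm s}g(B(x_0,r))$ plus the Lebesgue-point remainder of $\nabla_H g$, the last is controlled by $\int_{B(x_0,r)}|F-F(x_0)|\,d|D_H g|$, and dividing by $\mu(B(x_0,r))$ and letting $r\to0$ both remainders vanish, leaving the density equal to $\ban{F(x_0),\nabla_H g(x_0)}$.

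I expect the decomposition to be the main obstacle. The delicate points are the interchange of the mollification limit $k\to\infty$ with the blow-up limit $r\to0$, and the fact that $\nabla_H g$ is merely $L^1$: the quantity $\int_{B(x_0,r)}|F-F(x_0)|\,|\nabla_H g|\,d\mu$ must be estimated by splitting $|\nabla_H g|$ around its Lebesgue value and using that $|F-F(x_0)|$ is small in $L^1(B(x_0,r),\mu)$, not merely bounded. At the level of the mollifications, the whole argument rests on the pointwise inequality \eqref{pointwise_upper_control} of Theorem~\ref{commutation convolution derivative}, which is exactly what dominates the error caused by the noncommutativity of the group convolution; the rest is standard weak-$*$ compactness and lower semicontinuity of total variation.
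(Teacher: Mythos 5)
Your first three paragraphs (the Leibniz rule for $g_\ep$, the compactness argument producing $\tilde g$ and $(F,D_Hg)$, the passage to the limit giving \eqref{product rule eq} and the membership $gF\in\DM^\infty(H\Omega)$, and the estimate \eqref{abs_cont_pairing_infty} via \eqref{pointwise_upper_control}) follow essentially the same route as the paper and are sound. The gap is in your proof of the decomposition \eqref{pairing_decomposition_parts}, specifically in the treatment of the term $\ban{F-F(x_0),\nabla_Hg_{\ep_k}}$: you claim its integral over $B(x_0,r)$ is ``controlled by $\int_{B(x_0,r)}|F-F(x_0)|\,d|D_Hg|$''. After \eqref{pointwise_upper_control} this amounts to passing to the limit $k\to\infty$ in $\int_{B(x_0,r)}|F-F(x_0)|\,(\rho_{\ep_k}\ast|D_Hg|)\,d\mu$, i.e.\ pairing the weak$^*$ convergence $(\rho_{\ep_k}\ast|D_Hg|)\,\mu\weakto|D_Hg|$ with a weight that is merely an $L^\infty(\mu)$ function, not continuous. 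This interchange is not legitimate and can genuinely fail: it is precisely the phenomenon quantified by Proposition~\ref{overline_chi_E}, where $\rho_\ep\ast\chi_E$ tested against $|D_H\chi_E|$ converges to the constant $\tfrac12$ rather than to the pointwise values of $\chi_E$ (for $E$ open one gets $\tfrac12\,|D_H\chi_E|(\Omega)$ versus $0$). Worse, the bound you write is not even well defined on the singular part: $F$ is only a $\mu$-equivalence class, while $|D_H^{\rm s}g|\perp\mu$, so $\int|F-F(x_0)|\,d|D_H^{\rm s}g|$ has no meaning.

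The repair is to split before estimating, and this is exactly what the paper does: on $\Omega^\cR_{2\ep}$ one has $\nabla_Hg_\ep=\rho_\ep\ast\nb{g}+\rho_\ep\ast D_H^{\rm s}g$ by \eqref{commutation conv derivative meas} and \eqref{eq:BVdec-as}. Since $\nb{g}\in L^1(H\Omega)$, $\rho_\ep\ast\nb{g}\to\nb{g}$ strongly in $L^1$, so $\ban{F,\rho_\ep\ast\nb{g}}\mu\weakto\ban{F,\nb{g}}\mu$ (strong $L^1$ convergence tolerates the merely bounded weight $F$); for the singular piece, Lemma~\ref{conv_meas_bounded_function} yields, along a further subsequence, a weak$^*$ limit $(F,D_H^{\rm s}g)$ with $|(F,D_H^{\rm s}g)|\le\|F\|_{\infty,\Omega}|D_H^{\rm s}g|$, so the two limit measures are mutually singular and are therefore the absolutely continuous and singular parts of $(F,D_Hg)$; no blow-up is needed. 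Your differentiation argument can also be salvaged by the same splitting (strong convergence plus Lebesgue points of $F$ and $\nb{g}$ for the absolutely continuous piece, the crude bound $2\|F\|_{\infty,\Omega}(\rho_{\ep_k}\ast|D_H^{\rm s}g|)$ plus the vanishing $\mu$-density of $|D_H^{\rm s}g|$ for the singular piece), but once the splitting is in place the blow-up buys nothing over the direct conclusion above.
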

In the Euclidean setting, this Leibniz rule has been established in \cite[Theorem~3.1]{CF1} and \cite[Theorem~2.1]{Frid}. The product rule \eqref{product rule eq} is the starting point of many of our results. For instance, applying this formula to $F \in \DM^{\infty}(H \Omega)$ and $g = \chi_{E}$, for a set of finite h-perimeter $E \Subset \Omega$, and using Lemma~\ref{DMcomptsupp},
one is led to a first embryonic Gauss--Green formula. Here the pairing $(F,D_H\chi_E)$ has still to be related
to suitable notions of normal trace. Indeed, the interior and exterior normal traces $ \Tr{F}{i}{E}$ and $ \Tr{F}{e}{E}$, respectively, are defined in Section~\ref{sect:normaltraces} through the notion of pairing measure as follows:  
\begin{align*}
 (\chi_{E} F, D_{H} \chi_{E}) & = - \frac12 \Tr{F}{i}{E} |D_{H} \chi_{E}|, \\ \label{exterior_normal_trace_def} 
 (\chi_{\Omega \setminus E} F, D_{H} \chi_{E}) & = - \frac12\Tr{F}{e}{E} |D_{H} \chi_{E}|.
\end{align*}
We notice that this definition is well posed, since $(\chi_{E} F, D_{H} \chi_{E})$ and $(\chi_{\Omega \setminus E} F, D_{H} \chi_{E})$ are absolutely continuous with respect to the perimeter measure thanks to \eqref{abs_cont_pairing_infty}.
It is important to stress that the weak assumptions of 
Theorem~\ref{productrule} a priori do not ensure the uniqueness of 
$\tilde g$ and of the pairing $(F,D_Hg)$. They may both depend on the approximating sequence.
A first remark is that at those points where the averaged limit of $g$ exists with respect to $d^\cR$, the function $\tilde g$ can be characterized explicitly (Proposition~\ref{tilde_g_characterization}). However, the appearance of the
right invariant distance $d^\cR$ prevents the use of any
intrinsic regularity of the reduced boundary (Definition~\ref{d:RedBdry}) for sets of finite h-perimeter.

Despite these difficulties, a rather unexpected fact occurs, since in the 
case $g=\chi_E$ and $E$ has finite h-perimeter, it is possible to prove that the limit
$\widetilde{\chi_E}$ is uniquely determined, along with the normal traces, regardless of the choice of the mollifying sequence $\rho_{\ep_k}*\chi_E$. The surprising aspect is that we have no rectifiability result for the reduced boundary in arbitrary stratified groups.
We mainly use functional analytic arguments, the absolute continuity $\div F\ll\cS^{Q-1}$ and
the important Proposition~\ref{overline_chi_E}, that is further discussed below.
We summarize these relevant facts by restating here the main results of Theorem~\ref{Uniqueness_traces}.

\begin{theorem}[Uniqueness of traces] \label{IntroUniqueness_traces}
If $F \in \DM^{\infty}(H \Omega)$ and $E \subset \Omega$ is a set of finite h-perimeter, then there exists a unique $|\div F|$-measurable subset 
$$\intee{E}{F} \subset \Omega \setminus \redb E,$$
up to $|\div F|$-negligible sets, such that 
\begin{equation}\label{IntrochiEtildeUniq}
\tildef{\chi_{E}}(x) = \chi_{\intee{E}{F}}(x) + \frac{1}{2} \chi_{\redb E}(x)\quad \text{ for} \ \ |\div F|\text{-a.e.} \ x \in \Omega.
\end{equation}
In addition, there exist unique normal traces 
$$\Tr{F}{i}{E}, \Tr{F}{e}{E}\in L^{\infty}(\redb E; |D_{H} \chi_{E}|)$$
satisfying
\begin{align} \label{IntroLeibniz_infty_E_1_trace_ref} 
\div(\chi_{E} F) & = \chi_{\intee{E}{F}} \div F - \Tr{F}{i}{E} |D_{H} \chi_{E}|,  \\
\label{IntroLeibniz_infty_E_2_trace_ref} 
\div(\chi_{E} F) & = 
\chi_{\intee{E}{F} \cup \redb E} \div F - \Tr{F}{e}{E} |D_{H} \chi_{E}|.
\end{align}
\end{theorem}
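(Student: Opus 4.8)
I would begin by applying Theorem~\ref{productrule} to $F\in\DM^\infty(H\Omega)$ and $g=\chi_E$, which has finite h-variation since $E$ has finite h-perimeter in $\Omega$. For any infinitesimal sequence $\tilde\ep_k>0$ this produces a subsequence $\ep_k$ with $\rho_{\ep_k}\ast\chi_E\weakstarto\widetilde{\chi_E}$ in $L^\infty(\Omega;|\div F|)$, $\ban{F,\nabla_{H}(\rho_{\ep_k}\ast\chi_E)}\,\mu\weakto(F,D_H\chi_E)$ in $\cM(\Omega)$, and
\begin{equation}\label{plan:pr}
\div(\chi_E F)=\widetilde{\chi_E}\,\div F+(F,D_H\chi_E),\qquad |(F,D_H\chi_E)|\le\|F\|_{\infty,\Omega}\,|D_H\chi_E|.
\end{equation}
The whole statement will then follow by first pinning down $\widetilde{\chi_E}$ and afterwards re-inserting \eqref{plan:pr} for the fields $\chi_E F$ and $\chi_{\Omega\sm E}F$, which lie in $\DM^\infty(H\Omega)$ again by Theorem~\ref{productrule}.

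The decisive step is to identify $\widetilde{\chi_E}$, and it is the only place where the fine structure of sets of finite h-perimeter enters. By Proposition~\ref{overline_chi_E} the mollifications $\rho_\ep\ast\chi_E$ converge $\cS^{Q-1}$-a.e.\ to a representative $\overline{\chi_E}$ that is independent of the admissible mollifier $\rho$ and of the vanishing sequence, takes only the values $0,\tfrac12,1$, and has $\{\overline{\chi_E}=\tfrac12\}=\redb E$ up to $\cS^{Q-1}$-negligible sets. Since $0\le\rho_\ep\ast\chi_E\le1$ on the inner domains $\Omega^\cR_{2\ep}$ and $\div F\ll\cS^{Q-1}$, dominated convergence shows that this pointwise limit is also the weak-$*$ limit against $|\div F|$, so $\widetilde{\chi_E}=\overline{\chi_E}$ for $|\div F|$-a.e.\ $x$; in particular $\widetilde{\chi_E}$ does not depend on the mollifying sequence. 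Setting $\intee{E}{F}:=\{\overline{\chi_E}=1\}$, viewed as a $|\div F|$-measurable subset of $\Omega\sm\redb E$ after discarding a $|\div F|$-null set, yields \eqref{IntrochiEtildeUniq} with $\intee{E}{F}$ unique up to $|\div F|$-negligible sets, and by \eqref{plan:pr} the pairing $(F,D_H\chi_E)=\div(\chi_E F)-\widetilde{\chi_E}\,\div F$ is unique as well. Running the identical argument for $\chi_E F$ and $\chi_{\Omega\sm E}F$ (whose divergences are likewise absolutely continuous with respect to $\cS^{Q-1}$) shows that the pairings $(\chi_E F,D_H\chi_E)$ and $(\chi_{\Omega\sm E}F,D_H\chi_E)$ are uniquely determined; since they are absolutely continuous with respect to $|D_H\chi_E|$ by \eqref{abs_cont_pairing_infty} and $|D_H\chi_E|$ is concentrated on $\redb E$, they define unique normal traces $\Tr{F}{i}{E},\Tr{F}{e}{E}\in L^\infty(\redb E;|D_H\chi_E|)$ via the identities of Section~\ref{sect:normaltraces}, with $L^\infty$ norms bounded by $2\|F\|_{\infty,\Omega}$.

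With $\widetilde{\chi_E}$ identified, I would apply the product rule of Theorem~\ref{productrule} to the field $\chi_E F$ and the function $\chi_E$, using $\chi_E^2=\chi_E$ and the fact that the relevant weak-$*$ limit is again $\overline{\chi_E}$:
\[
(1-\overline{\chi_E})\,\div(\chi_E F)=(\chi_E F,D_H\chi_E)=-\tfrac12\Tr{F}{i}{E}\,|D_H\chi_E|.
\]
Splitting $1-\overline{\chi_E}=\chi_{\{\overline{\chi_E}=0\}}+\tfrac12\chi_{\redb E}$, the part carried by $\{\overline{\chi_E}=0\}$ sits off $\redb E$ whereas the right-hand side sits on $\redb E$, and since $\div(\chi_E F)\ll\cS^{Q-1}$ that part must vanish, giving $\chi_{\redb E}\,\div(\chi_E F)=-\Tr{F}{i}{E}\,|D_H\chi_E|$. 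Restricting \eqref{plan:pr} to $\Omega\sm\redb E$, where $(F,D_H\chi_E)$ puts no mass and $\widetilde{\chi_E}=\chi_{\intee{E}{F}}$, gives $\chi_{\Omega\sm\redb E}\,\div(\chi_E F)=\chi_{\intee{E}{F}}\,\div F$; adding the two contributions yields \eqref{IntroLeibniz_infty_E_1_trace_ref}. Then \eqref{IntroLeibniz_infty_E_2_trace_ref} follows by invoking \eqref{IntroLeibniz_infty_E_1_trace_ref} with $\Omega\sm E$ in place of $E$ — using $\redb(\Omega\sm E)=\redb E$, $|D_H\chi_{\Omega\sm E}|=|D_H\chi_E|$, $\intee{(\Omega\sm E)}{F}=\Omega\sm(\intee{E}{F}\cup\redb E)$ up to $|\div F|$-null sets, and that the interior normal trace relative to $\Omega\sm E$ equals $-\Tr{F}{e}{E}$ by the defining identities — and then substituting $\div(\chi_{\Omega\sm E}F)=\div F-\div(\chi_E F)$. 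Uniqueness of the traces is read off from the formulas, e.g.\ $\Tr{F}{i}{E}\,|D_H\chi_E|=\chi_{\intee{E}{F}}\,\div F-\div(\chi_E F)$ is determined once $\intee{E}{F}$ is.

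The main obstacle is concentrated entirely in the identification of $\widetilde{\chi_E}$, i.e.\ in Proposition~\ref{overline_chi_E} together with the absolute continuity $\div F\ll\cS^{Q-1}$: in a general stratified group there is no De Giorgi-type rectifiability of $\redb E$ and the blow-ups need not be half-spaces, so $\widetilde{\chi_E}$ cannot be computed pointwise as in $\R^n$, and the three-valuedness of the precise representative $\overline{\chi_E}$ must instead be combined with the fact that $\div F$ ignores sets of null $\cS^{Q-1}$-measure. A secondary technical point to be careful about is that Theorem~\ref{productrule} delivers only a subsequential limit, so upgrading to a genuinely sequence-independent $\widetilde{\chi_E}$ — which works for $\chi_E$, though not for a general $g$ — must be carried out through the above a.e.-convergence argument rather than assumed.
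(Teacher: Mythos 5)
Your overall architecture (Leibniz rule for $g=\chi_E$, identification of $\tildef{\chi_{E}}$, then the two refined formulas and uniqueness of the traces read off from them) matches the paper's strategy, but the decisive step is carried by a claim that Proposition~\ref{overline_chi_E} does not contain and that is in fact unavailable in a general stratified group. That proposition only asserts the weak$^{*}$ convergence $\rho_{\ep}\ast\chi_E\weakstarto\frac12$ in $L^{\infty}(\Omega;|D_H\chi_E|)$, i.e.\ an identification $|D_H\chi_E|$-a.e., hence only on $\redb E$ and only in a weak sense. It does not give $\SHaus{Q-1}$-a.e.\ \emph{pointwise} convergence of the mollifications on $\Omega$, nor a three-valued limit $\overline{\chi_E}$ with $\{\overline{\chi_E}=\tfrac12\}=\redb E$: such a statement would essentially require a De Giorgi-type blow-up/rectifiability theorem for $\redb E$ (or at least the existence of the averaged limits of $\chi_E$ on the right-invariant balls $B^{\cR}$ at $\SHaus{Q-1}$-a.e.\ point), which is precisely what is missing in arbitrary stratified groups and what the paper's argument is designed to avoid. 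Consequently your dominated-convergence upgrade to weak$^{*}$ convergence in $L^{\infty}(\Omega;|\div F|)$ has nothing to rest on, and the same gap propagates to your identification of $\tildef{\chi_E}$ off $\redb E$: the measure $|\div F|$ may well charge sets where the right-invariant averages of $\chi_E$ have no limit (this is why Remark~\ref{r:tripartition_div_2} needs the extra hypothesis $|\div F|(\Omega\setminus C_E^{\cR})=0$ to reach a pointwise identification, while the theorem holds without it).

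The way the paper closes this gap is measure-theoretic rather than pointwise, and you would need both halves. Off $\redb E$: from the identities of Proposition~\ref{boundary_term_divergence} (obtained by applying the product rule to $\chi_E F$ and $\chi_E$, as you do), the measure $\tildef{\chi_{E}}(1-\tildef{\chi_{E}})\div F$ is $\ll|D_H\chi_E|$, hence concentrated on $\redb E$; therefore $\tildef{\chi_{E}}\in\{0,1\}$ $|\div F|$-a.e.\ on $\Omega\setminus\redb E$, and the restriction $\div(\chi_E F)\res(\Omega\setminus\redb E)=\tildef{\chi_{E}}\,\div F$ determines it there uniquely (Proposition~\ref{tripartition_div_1}). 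On $\redb E$: Theorem~\ref{absolute continuity} gives $|\div F|\ll\SHaus{Q-1}$, so by the perimeter representation $|\div F|\res\redb E=\theta_{F,E}\,|D_H\chi_E|$ with $\theta_{F,E}\in L^1(|D_H\chi_E|)$, and Lemma~\ref{lemma:weak_conv_absolutely_continuous_perimeter_measures} transfers the weak$^{*}$ convergence $\rho_\ep\ast\chi_E\weakstarto\frac12$ from $L^{\infty}(\Omega;|D_H\chi_E|)$ to $L^{\infty}(\Omega;|\div F|\res\redb E)$ for the \emph{whole} family $\ep\to0$, forcing every weak$^{*}$ limit point $\tildef{\chi_{E}}$ to equal $\frac12$ $|\div F|$-a.e.\ on $\redb E$, independently of the mollifier and of the subsequence. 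With these two substitutions your remaining manipulations (uniqueness of the pairings, derivation of \eqref{IntroLeibniz_infty_E_1_trace_ref}, and passage to $\Omega\setminus E$ for \eqref{IntroLeibniz_infty_E_2_trace_ref}) go through essentially as in the paper; also note that in your application of the product rule to the field $\chi_E F$ you must check that the weak$^{*}$ limit of $\rho_{\ep_k}\ast\chi_E$ taken in $L^{\infty}(\Omega;|\div(\chi_E F)|)$ can be compared with the one taken in $L^{\infty}(\Omega;|\div F|)$, which again goes through the boundary-term identity rather than a pointwise description.
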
 

Equalities \eqref{IntroLeibniz_infty_E_1_trace_ref} and \eqref{IntroLeibniz_infty_E_2_trace_ref}
 immediately lead to general Gauss--Green formulas. Indeed, taking $F \in \DM^{\infty}(H \Omega)$ and a set of finite h-perimeter $E \Subset \Omega$, 
it is enough to evaluate \eqref{IntroLeibniz_infty_E_1_trace_ref} and \eqref{IntroLeibniz_infty_E_2_trace_ref} on $\Omega$, and then to exploit the fact that $\chi_{E} F \in \DM^{\infty}(H \Omega)$, thanks to Theorem \ref{productrule}, and Lemma~\ref{DMcomptsupp}. In this way, we obtain the following general versions of the Gauss--Green formulas in stratified groups:
\begin{align}
\label{Intro_G-G groups 1} \div F(\intee{E}{F}) & = \int_{\redb E} \Tr{F}{i}{E} \, d |D_{H} \chi_{E}|, \\
\label{Intro_G-G groups 2} \div F(\intee{E}{F}\cup \redb E) & = \int_{\redb E} \Tr{F}{e}{E} \, d |D_{H} \chi_{E}|.
\end{align}
We notice that, as a simple consequence of \eqref{IntroLeibniz_infty_E_1_trace_ref}, we can define $\intee{E}{F}$, up to $|\div F|$-negligible sets, as that Borel set in $\Omega \setminus \redb E$ satisfying
\begin{equation} \label{Introeq:def_E_1_F}
\div(\chi_{E} F) \res \Omega \setminus \redb E = \div F \res \intee{E}{F}.
\end{equation}
However, it is still an appealing open question to characterize $\intee{E}{F}$ explicitly, namely in geometric terms, or even to prove that the set $E^{1,F}$ does not depend on the vector field $F$, as it happens in the Euclidean context.

Nevertheless, we are able to find different sets of assumptions, involving either the regularity of $E$ or of the field $F$, for which $\intee{E}{F}$ can be properly detected. This immediately yields a number of Gauss--Green and integration by parts formulas in the spirit of the well known Euclidean results.

Before discussing Gauss--Green formulas, it is natural to ask whether normal traces have the locality property.
Rather unexpectedly, also locality of normal traces is obtained without any blow-up technique related to rectifiability of the reduced boundary.
Indeed, the classical proofs in the literature heavily employ the existence of an approximate tangent space at almost every point on the reduced boundary of a set of finite perimeter (\cite[Proposition 3.2]{ACM} and \cite[Proposition 4.10]{comi2017locally}). 

In Theorem~\ref{normal_trace_locality_theorem} we show that the normal traces of a divergence-measure horizontal section $F$ only depend on the orientation of the reduced boundary. It can be seen using the Leibniz rule established in Proposition~\ref{boundary_term_divergence}, the locality of perimeter in stratified groups proved by Ambrosio-Scienza \cite{ambrosio2010locality} and general arguments of measure theory. Another important tool that somehow allows us to overcome
the absence of regularity of the reduced boundary is Proposition~\ref{overline_chi_E},
where we prove that the weak$^{*}$ limit of $\rho_\ep\ast \chi_E$ 
in $L^\infty(\Omega;|D_H\chi_E|)$ is precisely $1/2$, for any
set $E\subset\G$ of finite h-perimeter and any symmetric mollifier $\rho$. 
This proposition can be proved by a soft argument borrowed from  \cite[Proposition~4.3]{ambrosio2010surface}.
It seems quite interesting that this weak$^*$ convergence comes from an analogous study in the infinite dimensional setting of Wiener spaces and it does not require any existence of blow-ups.

Proposition~\ref{overline_chi_E}, together with Remark~\ref{weak_conv_absolutely_continuous_perimeter_measures} and 
Lemma~\ref{overline_D_chi_E_1_2_lemma}, is fundamental in proving the refinements \eqref{IntroLeibniz_infty_E_1_trace_ref} and \eqref{IntroLeibniz_infty_E_2_trace_ref} of the Leibniz rule, along with the uniqueness results of Theorem~\ref{IntroUniqueness_traces}.
Furthermore, Proposition~\ref{overline_chi_E} immediately leads to the 
`intrinsic blow-up property' (Lemma~\ref{overline_D_chi_E_1_2_lemma}), that is fundamental to prove the estimates of Proposition~\ref{boundary_term_divergence1} for the normal traces of $F \in \DM^{\infty}(\Omega)$. 
We point out that the names of interior and exterior normal traces can be also justified by the same estimates \eqref{interior_normal_trace_norm}
and \eqref{exterior_normal_trace_norm}. We stress that the proofs of this result in the Euclidean literature rely on De Giorgi's blow-up theorem, see \cite[Theorem 3.2]{comi2017locally}, while Proposition~\ref{boundary_term_divergence1}, when the group is commutative, provides an alternative proof.

Returning to Gauss--Green formulas,  we observe first that when $\redb E$ is negligible with respect to $|\div F|$ (Theorem~\ref{G-G groups general II}), then the interior and exterior normal traces coincide. In particular, we can define the {\em average normal trace} $\Tr{F}{}{E}$ as the density of the pairing $(F, D_{H} \chi_{E})$ with respect to the h-perimeter measure $|D_{H} \chi_{E}|$, according to Definition~\ref{def:average_normal_trace}. Thanks to \eqref{F_D_chi_E_pairing_eq}, it is immediate to observe that
\begin{equation*}
\Tr{F}{}{E} = \frac{\Tr{F}{i}{E} + \Tr{F}{e}{E}}{2}.
\end{equation*}
As a result, when $|\div F|(\redb E) = 0$, we have $\Tr{F}{i}{E} = \Tr{F}{e}{E} = \Tr{F}{}{E}$, so that there exists a unique normal trace and the Gauss--Green formula \eqref{G-G_no_red_boundary} holds.

In case the divergence-measure field $F$ is continuous (Theorem~\ref{G-G groups general III}), then the Gauss--Green formula \eqref{G-G groups continuous} holds
and the normal trace has an explicit representation
by the scalar product between the field $F$ and the measure 
theoretic outer normal $\nu_E$. 

A first important consequence of the previous theorems is a Gauss--Green formula for horizontal fields with divergence-measure absolutely continuous with respect to the Haar measure of the group. Such a result could be also achieved from a modified product rule with additional assumptions on $\div F$, but we have preferred to start from a more general Leibniz rule and then derive some special cases from it.

\begin{theorem} \label{divF_abs_cont_G_G} 
Let $F \in \DM^{\infty}(H \Omega)$ such that $|\div F| \ll \mu$ and let $E \Subset \Omega$ be a set of finite h-perimeter. Then there exists a unique normal trace $\Tr{F}{}{E} \in L^{\infty}(\Omega; |D_{H} \chi_{E}|)$ such that there holds
\begin{equation} \label{G-G_abs_cont}
\div F(E) = \int_{\redb E} \Tr{F}{}{E} \, d |D_{H} \chi_{E}|. 
\end{equation}
\end{theorem}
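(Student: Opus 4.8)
The plan is to derive Theorem~\ref{divF_abs_cont_G_G} as a direct specialization of the uniqueness result Theorem~\ref{IntroUniqueness_traces} together with the general Gauss--Green formulas \eqref{Intro_G-G groups 1} and \eqref{Intro_G-G groups 2}, exploiting the extra hypothesis $|\div F|\ll\mu$. First I would invoke Theorem~\ref{IntroUniqueness_traces} applied to $F\in\DM^\infty(H\Omega)$ and the set $E$ of finite h-perimeter (which is relatively compact in $\Omega$, so that $\chi_E F\in\DM^\infty(H\Omega)$ by Theorem~\ref{productrule} and Lemma~\ref{DMcomptsupp}): this produces the Borel set $\intee{E}{F}\subset\Omega\setminus\redb E$ and the normal traces $\Tr{F}{i}{E},\Tr{F}{e}{E}\in L^\infty(\redb E;|D_H\chi_E|)$ satisfying \eqref{IntroLeibniz_infty_E_1_trace_ref} and \eqref{IntroLeibniz_infty_E_2_trace_ref}.

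The key observation is that, by De Giorgi's structure theorem in stratified groups (or, more precisely, by the known absolute continuity $|D_H\chi_E|\ll\cS^{Q-1}$ together with $|D_H\chi_E|$ being concentrated on the reduced boundary $\redb E$, which has $\mu$-measure zero since it is $(Q-1)$-dimensional), the reduced boundary $\redb E$ is Lebesgue-negligible: $\mu(\redb E)=0$. Combined with the standing hypothesis $|\div F|\ll\mu$, this immediately gives $|\div F|(\redb E)=0$. Therefore, in the two identities \eqref{IntroLeibniz_infty_E_1_trace_ref} and \eqref{IntroLeibniz_infty_E_2_trace_ref}, the integrands $\chi_{\intee{E}{F}}\,\div F$ and $\chi_{\intee{E}{F}\cup\redb E}\,\div F$ agree as measures, so subtracting the two identities yields $\bigl(\Tr{F}{i}{E}-\Tr{F}{e}{E}\bigr)|D_H\chi_E|=0$, hence $\Tr{F}{i}{E}=\Tr{F}{e}{E}=:\Tr{F}{}{E}$ as elements of $L^\infty(\redb E;|D_H\chi_E|)$. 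This is precisely the situation described in the introduction before the statement of Theorem~\ref{G-G groups general II}, where $\redb E$ is $|\div F|$-negligible and the average normal trace coincides with both one-sided traces.

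Next I would pass from the Leibniz identity to the Gauss--Green formula on $\Omega$. Since $\mu(\redb E)=0$ and $\intee{E}{F}\subset\Omega\setminus\redb E$, as measures we have $\div F\res\intee{E}{F}=\div F\res(\intee{E}{F}\cup\redb E)=\div F\res(\intee{E}{F}\cup\redb E\cup\cdots)$; more to the point, up to $|\div F|$-negligible sets the set $\intee{E}{F}$ can be replaced by $E$ itself. Indeed, one knows that $\chi_{\intee{E}{F}}$ coincides with the representative $\tildef{\chi_E}$ away from $\redb E$ via \eqref{IntrochiEtildeUniq}, and since $|\div F|\ll\mu$, it suffices to show $\tildef{\chi_E}=\chi_E$ $\mu$-a.e.; this holds because $\tildef{\chi_E}$ differs from $\chi_E$ only on a set contained in the topological boundary, which for a set of finite h-perimeter is $\mu$-negligible away from the measure-theoretic interior and exterior, and $\redb E$ is $\mu$-null. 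Thus $\div F(\intee{E}{F})=\div F(E)$, and evaluating \eqref{Intro_G-G groups 1} (equivalently \eqref{Intro_G-G groups 2}, which now coincide) gives
\begin{equation*}
\div F(E)=\div F(\intee{E}{F})=\int_{\redb E}\Tr{F}{}{E}\,d|D_H\chi_E|,
\end{equation*}
which is \eqref{G-G_abs_cont}. Uniqueness of $\Tr{F}{}{E}$ in $L^\infty(\Omega;|D_H\chi_E|)$ follows from the uniqueness already asserted in Theorem~\ref{IntroUniqueness_traces} together with the identification $\Tr{F}{i}{E}=\Tr{F}{e}{E}=\Tr{F}{}{E}$; alternatively, it is forced by the pairing characterization $(F,D_H\chi_E)=-\Tr{F}{}{E}|D_H\chi_E|$, the left side being intrinsically defined.

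The main obstacle I anticipate is the step identifying $\intee{E}{F}$ with $E$ modulo $|\div F|$-null sets: one must be careful that the introduction explicitly flags the geometric characterization of $\intee{E}{F}$ as an open problem in general, so the argument must rely only on the measure-theoretic facts that $\tildef{\chi_E}$ agrees with $\chi_E$ $\mu$-almost everywhere and that $|\div F|\ll\mu$. Concretely, one uses \eqref{Introeq:def_E_1_F}, namely $\div(\chi_E F)\res(\Omega\setminus\redb E)=\div F\res\intee{E}{F}$, together with $|\div F|(\redb E)=0$, to conclude $\div(\chi_E F)=\div F\res\intee{E}{F}$ globally; then comparing with the a.e.\ identity $\tildef{\chi_E}=\chi_E$ and using $|\div F|\ll\mu$ pins down $\div F\res\intee{E}{F}=\div F\res E$, which is all that is needed. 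Everything else is a routine bookkeeping of the already-established results, so the proof will be short.
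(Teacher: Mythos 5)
Your proposal is correct and follows essentially the same route as the paper: there, Corollary~\ref{productrulecor} and Theorem~\ref{divF_abs_continuous} show that $|\div F|\ll\mu$ forces $\tildef{\chi_{E}}=\chi_{E}$ up to $|\div F|$-negligible sets (because $\rho_{\eps}\ast\chi_{E}\to\chi_{E}$ in $L^{1}_{\rm loc}(\mu)$, hence $\mu$-a.e.\ along a subsequence, and $|\div F|\ll\mu$), which together with $\mu(\redb E)=0$ from \eqref{LebesgueNegBdry} yields $\Tr{F}{i}{E}=\Tr{F}{e}{E}=\Tr{F}{}{E}$ and $|\div F|(\intee{E}{F}\Delta E)=0$, after which one evaluates the resulting identity on $\Omega$ and applies Lemma~\ref{DMcomptsupp} — exactly your scheme. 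Two points of caution. First, the clean justification of $\tildef{\chi_{E}}=\chi_{E}$ $|\div F|$-a.e.\ is the a.e.\ convergence of the mollifications (or Proposition~\ref{tilde_g_characterization} combined with the Lebesgue differentiation theorem), not a containment of the exceptional set in the topological boundary: for a set of finite h-perimeter the topological boundary may have positive $\mu$-measure, and the relevant negligible set is the set of non-Lebesgue points. Second, your sentence asserting $\div(\chi_{E}F)=\div F\res\intee{E}{F}$ \emph{globally} is false: by \eqref{IntroLeibniz_infty_E_1_trace_ref}, $\div(\chi_{E}F)$ also carries the term $-\Tr{F}{i}{E}\,|D_{H}\chi_{E}|$ concentrated on $\redb E$, which the hypothesis $|\div F|(\redb E)=0$ does not remove; fortunately you never need that identity, since the fact you actually use, $|\div F|(\intee{E}{F}\Delta E)=0$, is already secured by your earlier argument via \eqref{IntrochiEtildeUniq}.
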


The key point is to show that $E^{1, F}$ can be replaced by $E$, namely, to prove that their symmetric difference is $|\div F|$-negligible.
The Gauss--Green formula \eqref{G-G_abs_cont} naturally leads to the following integration by parts formula.

\begin{theorem} \label{IBP_divF_abs_cont}
Let $F \in \DM^{\infty}_{\rm loc}(H \Omega)$ be such that $|\div F| \ll \mu$, and let $E$ be a set of locally finite h-perimeter in $\Omega$. Let $\varphi \in C(\Omega)$ with $\nabla_{H} \varphi \in L^{1}_{\rm loc}(H \Omega)$ such that $$\mathrm{supp}(\varphi \chi_{E}) \Subset \Omega.$$ Then there exists a unique normal trace $\Tr{F}{}{E} \in L^{\infty}_{\rm loc}(\Omega; |D_{H} \chi_{E}|)$ of $F$, such that the following formula holds
\begin{equation} \label{IBP_abs_cont} \int_{E} \varphi \, d \div F + \int_{E} \ban{F, \nabla_{H} \varphi} \, dx = \int_{\redb E} \varphi \Tr{F}{}{E} \, d |D_{H} \chi_{E}|. \end{equation}
\end{theorem}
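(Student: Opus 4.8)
The plan is to apply the refined Leibniz rule of Theorem~\ref{IntroUniqueness_traces} to the field $G:=\varphi F$, to use the hypothesis $|\div F|\ll\mu$ in order to replace the measure-theoretic interior set produced by that rule with $E$ itself, and to conclude by testing the resulting identity of measures on $\Omega$, observing that the compactly supported field $\chi_{E}G$ has vanishing total divergence by Lemma~\ref{DMcomptsupp}. The first step is to show that $G=\varphi F\in\DM^{\infty}_{\rm loc}(H\Omega)$ with $\div(\varphi F)=\varphi\,\div F+\ban{F,\nabla_{H}\varphi}\,\mu$. Mollifying $\varphi$ by the group convolution, $\varphi_{\delta}:=\rho_{\delta}\ast\varphi$, the smoothing results of Sections~\ref{sect:Difflocalsmooth} and \ref{sect:BVfunction} give $\varphi_{\delta}\in C^{1}_{H}$ on the right inner parts, $\varphi_{\delta}\to\varphi$ locally uniformly, and $\nabla_{H}\varphi_{\delta}\to\nabla_{H}\varphi$ in $L^{1}_{\rm loc}(H\Omega)$. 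For the $C^{1}_{H}$ factor $\varphi_{\delta}$ the elementary Leibniz rule $\div(\varphi_{\delta}F)=\varphi_{\delta}\,\div F+\ban{F,\nabla_{H}\varphi_{\delta}}\,\mu$ holds; testing against $g\in C_{c}(\Omega)$ and letting $\delta\to 0$ one has $\int g\varphi_{\delta}\,d\div F\to\int g\varphi\,d\div F$ (local uniform convergence and local finiteness of $|\div F|$) and $\int g\ban{F,\nabla_{H}\varphi_{\delta}}\,d\mu\to\int g\ban{F,\nabla_{H}\varphi}\,d\mu$ (as $F\in L^{\infty}_{\rm loc}$), while $\varphi_{\delta}F\to\varphi F$ in $L^{1}_{\rm loc}$; this yields the Leibniz rule, and $|\div(\varphi F)|\ll\mu$ since both summands are absolutely continuous with respect to $\mu$.

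Next, by a routine localization of Theorem~\ref{IntroUniqueness_traces} --- exhausting $\Omega$ by open sets $\Omega_{j}\Subset\Omega$ on which $G\in\DM^{\infty}$ and $E$ has finite h-perimeter, and using that $\redb E$, the set $E^{1,G}$ characterized by \eqref{Introeq:def_E_1_F}, and the normal traces are local --- applied to $G=\varphi F$ and $E$, one obtains the identity of measures on $\Omega$
\[\div(\chi_{E}\varphi F)=\chi_{E^{1,\varphi F}}\,\div(\varphi F)-\Tr{G}{i}{E}\,|D_{H}\chi_{E}|.\]
Since $|\div(\varphi F)|\ll\mu$, the argument in the proof of Theorem~\ref{divF_abs_cont_G_G} shows that $|\div(\varphi F)|\big(E^{1,\varphi F}\triangle E\big)=0$; moreover $|\div(\varphi F)|(\redb E)=0$, because $|D_{H}\chi_{E}|$ is purely singular with respect to $\mu$ (being the weak derivative of the $\{0,1\}$-valued function $\chi_{E}$). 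Hence the interior and exterior normal traces of $\varphi F$ coincide with the average trace $\Tr{G}{}{E}$ and
\[\div(\chi_{E}\varphi F)=\chi_{E}\,\div(\varphi F)-\Tr{G}{}{E}\,|D_{H}\chi_{E}|\qquad\text{on }\Omega.\]
The field $\chi_{E}\varphi F$ is supported in $\supp(\varphi\chi_{E})\Subset\Omega$, so it is in $\DM^{\infty}(H\Omega)$ with compact support and $\div(\chi_{E}\varphi F)(\Omega)=0$ by Lemma~\ref{DMcomptsupp}. Furthermore, any point of $\redb E$ outside $\supp(\varphi\chi_{E})$ lies in $\supp|D_{H}\chi_{E}|$, hence is a limit of points of $E$ at which $\varphi$ vanishes $\mu$-a.e., so $\varphi$ vanishes there by continuity; thus $\Tr{G}{}{E}|D_{H}\chi_{E}|$, and therefore $\chi_{E}\,\div(\varphi F)=\div(\varphi F)\res E$, is also supported in $\supp(\varphi\chi_{E})\Subset\Omega$ and in particular has finite total mass.

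Evaluating the last identity on $\Omega$ gives $\div(\varphi F)(E)=\int_{\redb E}\Tr{G}{}{E}\,d|D_{H}\chi_{E}|$. For the left-hand side, Step~1 gives $\div(\varphi F)\res E=\varphi\,\div F\res E+\ban{F,\nabla_{H}\varphi}\mu\res E$; since $\varphi=0$ $\mu$-a.e., hence $|\div F|$-a.e., on $E\setminus\supp(\varphi\chi_{E})$, the measure $\varphi\,\div F\res E$ has finite mass, and then so does $\ban{F,\nabla_{H}\varphi}\mu\res E$, whence $\div(\varphi F)(E)=\int_{E}\varphi\,d\div F+\int_{E}\ban{F,\nabla_{H}\varphi}\,dx$. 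For the right-hand side, the normal trace of $\varphi F$ is $\varphi$ times that of $F$: by Theorem~\ref{productrule} the pairing $(\varphi F,D_{H}\chi_{E})$ is the weak-$*$ limit of $\ban{\varphi F,\nabla_{H}(\rho_{\varepsilon_{k}}\ast\chi_{E})}\mu=\varphi\,\ban{F,\nabla_{H}(\rho_{\varepsilon_{k}}\ast\chi_{E})}\mu$, and since $\varphi$ is continuous and, by Theorem~\ref{IntroUniqueness_traces}, both pairings are independent of the mollifying sequence, passing to the limit along a common subsequence gives $(\varphi F,D_{H}\chi_{E})=\varphi\,(F,D_{H}\chi_{E})$; by \eqref{abs_cont_pairing_infty} this means $\Tr{G}{}{E}=\varphi\,\Tr{F}{}{E}$ for $|D_{H}\chi_{E}|$-a.e. point, where $\Tr{F}{}{E}\in L^{\infty}_{\rm loc}(\Omega;|D_{H}\chi_{E}|)$ is the unique average normal trace of $F$. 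Substituting yields \eqref{IBP_abs_cont}, and uniqueness of $\Tr{F}{}{E}$ is inherited from Theorem~\ref{IntroUniqueness_traces}.

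I expect the main obstacle to be the passage, in the second step, from the general Leibniz rule to the absolutely continuous one: replacing $E^{1,\varphi F}$ by $E$ up to $|\div(\varphi F)|$-negligible sets, and carrying out the localization of Theorem~\ref{IntroUniqueness_traces} so as to accommodate $F\in\DM^{\infty}_{\rm loc}$ and a set of merely locally finite h-perimeter. The remaining ingredients are either elementary (Step~1) or a soft weak-$*$ convergence argument (the trace identification in Step~3).
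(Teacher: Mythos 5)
Your proposal is correct in substance, but it takes a genuinely different route from the paper. The paper deduces the statement in one line from Theorem~\ref{IBP_general} combined with Theorem~\ref{divF_abs_continuous}: there the set-level Leibniz rule \eqref{Leibniz_infty_E_1_trace_ref} for $\chi_E F$ is applied first, and the scalar factor $\varphi$ enters last, through the continuous-function product rule \eqref{product rule eq p} applied to $\varphi$ and the field $\chi_E F$ on a single open set $U\Supset\supp(\varphi\chi_E)$ with $U\Subset\Omega$; since $\tilde\varphi=\varphi$ for continuous $\varphi$, no normal trace of a $\varphi$-modified field ever appears. You reverse the order: you form $\varphi F$ first (your Step~1 is essentially a re-derivation of \eqref{product rule eq p} of Proposition~\ref{tilde_g_characterization}, which you could cite directly after localizing), note $|\div(\varphi F)|\ll\mu$, apply Theorem~\ref{Uniqueness_traces} and the absolutely continuous case (Theorem~\ref{divF_abs_continuous}, in particular \eqref{trace_equality_abs_cont} and \eqref{eq:E_1_F_repr_abs_cont}) to $\varphi F$, and only then identify the average normal trace of $\varphi F$ with $\varphi\,\Tr{F}{}{E}$ via uniqueness of the pairings (Remark~\ref{rem:unique_pairing}) and the continuity of $\varphi$ --- an extra step the paper's ordering avoids, but one that yields as a by-product the natural trace identity for $\varphi F$. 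Both routes rest on the same two pillars (replacing the measure theoretic interior with $E$ up to $|\div|$-negligible sets, and the coincidence of interior and exterior traces when the divergence does not charge $\redb E$), so your argument gains no extra generality, while the paper's is shorter because Theorem~\ref{IBP_general} is reused verbatim.

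Two small repairs. First, your justification of $|\div(\varphi F)|(\redb E)=0$ via the singularity of $|D_{H}\chi_{E}|$ with respect to $\mu$ is not a proof: singularity of the perimeter measure does not imply $\mu(\redb E)=0$. The correct reason is $|\div(\varphi F)|\ll\mu$ together with $\mu(\redb E)=0$, which is \eqref{LebesgueNegBdry} in Remark~\ref{negl_abs_cont_red_boundary}. Second, working on all of $\Omega$ by exhaustion forces the support/finiteness bookkeeping of your third step, where the vanishing of the trace of $\varphi F$ off $\supp(\varphi\chi_E)$ is invoked before the identity with $\varphi\,\Tr{F}{}{E}$ that would justify it; either observe directly that the measures $\ban{\chi_E\varphi F,\rho_\ep\ast D_{H}\chi_{E}}\,\mu$ vanish on the open set $\Omega\setminus\supp(\varphi\chi_E)$, hence so does the limit pairing, or simply localize to one $U\Supset\supp(\varphi\chi_E)$ as in the proof of Theorem~\ref{IBP_general}, which removes the issue altogether.
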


We notice that the assumption $|\div F| \ll \mu$ is very general in the sense that it is satisfied by $F \in W^{1, p}_{H, \rm loc}(H \Omega)$, for any $1 \le p \le \infty$. Moreover, it clearly implies $|\div F|(\redb E) = 0$, which means that the divergence-measure is not concentrated on the reduced boundary of $E$, and thus there is no jump component in the divergence. It is also worth to point out that both \eqref{G-G_abs_cont} and \eqref{IBP_abs_cont} hold also for sets whose boundary is not rectifiable in the Euclidean sense (Example~\ref{ex:notrectif}).

If we slightly weaken the absolute continuity assumption on $\div F$, requiring instead $|\div F|(\mtbR E) =0$, where $\mtbR E$ is the measure theoretic boundary of $E$ with respect to the right invariant distance $d^{\cR}$ \eqref{eq:mtbR E_def}, we are able to prove that $\intee{E}{F}$ is equivalent to $E^{1, \cR}$; that is, the measure theoretic interior with respect to $d^{\cR}$. As a consequence, we can derive a modified Gauss--Green formula (Theorem \ref{thm:GGIV}) and related statements.

Finally, other versions of the Gauss--Green theorem and integration by parts formulas can be obtained in the case the set $E\subset\G$ has finite perimeter in the Euclidean sense. Here it is important to investigate the behavior of the Euclidean pairing of a field $F \in \DM^{\infty}(H \Omega)$ and a function $g \in BV(\Omega) \cap L^{\infty}(\Omega)$. 
Let us remark that, even if the family $\DM^p(H\Omega)$ with $1\le p\le \infty$ is
strictly contained in the known space of divergence-measure fields (Section~\ref{sect:leibniz}), the known Euclidean results could only prove that 
the Euclidean pairing measure $(F, Dg)$ is absolutely continuous 
with respect to the total variation $|Dg|$.
This result does not imply the absolute continuity of the pairing with respect
to $|D_Hg|$, since this measure is absolutely continuous
with respect to $|Dg|$ while the opposite may not hold in general.

In Theorem \ref{abs_continuity_pairing_infty_Euclidean} we refine the classical results on $(F, D g)$, proving that, up to a restriction to bounded open sets,
\[
|(F, Dg)| \le \|F\|_{\infty, \Omega} |D_{H} g|.
\]
For this purpose, we have used the Euclidean convolution to compare the Euclidean pairing with the intrinsic pairing in the stratified group. While no exact commutation rule between the horizontal gradient and the Euclidean convolution holds, it is however possible to use an asymptotic commutator estimate similar
to the classical one by Friedrichs \cite{Friedrichs1944}, see also \cite{garofalo1996isoperimetric}.
Thanks to the above absolute continuity, we can actually prove that, given a set of Euclidean finite perimeter $E$, the group pairing $(F, D_{H} \chi_{E})$ defined in Theorem \ref{productrule} is actually equal to the Euclidean pairing $(F, D \chi_{E})$,
according to Theorem~\ref{equivalence_Leibniz_rule_Eucl}.
An important tool used in the proof of this result is Theorem~\ref{absolute continuity}, which states that $|\div F| \ll \SHaus{Q - 1}$, if $F \in \DM^{\infty}_{\rm loc}(H \Omega)$. This property of $\div F$ allows us to show in Theorem~\ref{equivalence_Leibniz_rule_Eucl} that we have $\intee{E}{F} = E^{1}_{|\cdot|}$, up to a $|\div F|$-negligible set, where we denote by $E^1_{|\cdot|}$ the Euclidean measure theoretic interior of $E$; that is, the set of points with density one with respect to the balls defined using the Euclidean distance in the group. 
These results allow us to prove Leibniz rules and integration by parts formulas for sets of Euclidean finite perimeter in stratified groups.

\begin{theorem} \label{Gauss_Green_Euclidean}
Let $F \in \DM^{\infty}_{\rm loc}(H \Omega)$ and $E \subset \Omega$ be a set of Euclidean locally finite perimeter in $\Omega$, then there exist interior and exterior normal traces $\Tr{F}{i}{E}, \Tr{F}{e}{E} \in L^{\infty}_{\rm loc}(\Omega; |D_{H} \chi_{E}|)$ such that, for any open set $U \Subset \Omega$, we have
\begin{align} \label{Leibniz_Eu_infty_1} \div(\chi_{E} F) & = \chi_{E^{1}_{|\cdot|}} \div F - \Tr{F}{i}{E} |D_{H} \chi_{E}|, \\ 
\label{Leibniz_Eu_infty_2} \div(\chi_{E} F) & = \chi_{E^{1}_{|\cdot|} \cup \redb E}\, \div F - \Tr{F}{e}{E} |D_{H} \chi_{E}|, \\ 
\label{Leibniz_Eu_infty_3} \chi_{\redb E}\, \div F & = 
(\Tr{F}{e}{E} - \Tr{F}{i}{E})\, |D_{H} \chi_{E}|
\end{align}
in $\mathcal{M}(U)$. Moreover, we get the trace estimates 
\begin{align*}
\|\Tr{F}{i}{E}\|_{L^{\infty}(\redb E \cap U; |D_{H} \chi_{E}|)}&\le \| F\|_{\infty, E \cap U},\\
\|\Tr{F}{e}{E}\|_{L^{\infty}(\redb E \cap U; |D_{H} \chi_{E}|)}&\le \| F\|_{\infty, U \setminus E}. 
\end{align*}
For any $\varphi \in C(\Omega)$ with $\nabla_{H} \varphi \in L^{1}_{\rm loc}(H \Omega)$ such that $\mathrm{supp}(\varphi \chi_{E}) \Subset \Omega$, we have 
\begin{align} \label{IBP_1_Eu} \int_{E^{1}_{|\cdot|}} \varphi \, d \div F + \int_{E} \ban{F, \nabla_{H} \varphi} \, dx & = \int_{\redb E} \varphi \Tr{F}{i}{E} \, d |D_{H} \chi_{E}|, \\
\label{IBP_2_Eu} \int_{E^{1}_{|\cdot|} \cup \redb E} \varphi \, d \div F + \int_{E} \ban{F, \nabla_{H} \varphi} \, dx & = \int_{\redb E} \varphi \Tr{F}{e}{E} \, d |D_{H} \chi_{E}|. \end{align}
\end{theorem}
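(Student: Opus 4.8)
The plan is to reduce Theorem~\ref{Gauss_Green_Euclidean} to the already-established machinery for sets of finite h-perimeter by showing that a set of Euclidean (locally) finite perimeter is in particular a set of (locally) finite h-perimeter, and then identifying the abstract set $\intee{E}{F}$ with the Euclidean measure theoretic interior $E^{1}_{|\cdot|}$. The first reduction is standard: Euclidean finite perimeter implies finite h-perimeter (the h-perimeter measure is absolutely continuous with respect to the Euclidean one, as $|D_H\chi_E|\le C|D\chi_E|$ locally), so Theorem~\ref{IntroUniqueness_traces} applies on every $U\Subset\Omega$ and yields \eqref{IntroLeibniz_infty_E_1_trace_ref}--\eqref{IntroLeibniz_infty_E_2_trace_ref} with the set $\intee{E}{F}$ and unique traces $\Tr{F}{i}{E},\Tr{F}{e}{E}\in L^\infty_{\rm loc}(\redb E;|D_H\chi_E|)$. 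Subtracting the two relations gives \eqref{Leibniz_Eu_infty_3} immediately, once $\intee{E}{F}$ has been replaced by $E^1_{|\cdot|}$.

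\emph{The crucial step} is the identification $\intee{E}{F}=E^{1}_{|\cdot|}$ up to a $|\div F|$-negligible set. Here I would invoke Theorem~\ref{absolute continuity}, namely $|\div F|\ll\SHaus{Q-1}$ for $F\in\DM^\infty_{\rm loc}(H\Omega)$, together with the fact that for a set of Euclidean finite perimeter the Euclidean essential boundary $\partial^*_{|\cdot|}E$ is $\Haus{n-1}_{|\cdot|}$-rectifiable and hence $\sigma$-finite with respect to $\SHaus{Q-1}$ (by Federer's theorem plus the comparison of Euclidean and homogeneous Hausdorff measures on rectifiable sets), so that $|\div F|(\partial^*_{|\cdot|}E\setminus\redb E)$ is controlled — in fact one shows $\mu(E\triangle E^1_{|\cdot|})=0$ and $|D_H\chi_E|$ is concentrated on $\redb E$, while $|\div F|$ gives no mass to the portion of the Euclidean topological boundary that is $\SHaus{Q-1}$-negligible. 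Concretely, by Theorem~\ref{equivalence_Leibniz_rule_Eucl} (which the excerpt tells us is available and is proved using exactly $|\div F|\ll\SHaus{Q-1}$ and the Euclidean pairing comparison of Theorem~\ref{abs_continuity_pairing_infty_Euclidean}), one has $(F,D_H\chi_E)=(F,D\chi_E)$ and $\intee{E}{F}=E^1_{|\cdot|}$ up to $|\div F|$-negligible sets; substituting this into \eqref{IntroLeibniz_infty_E_1_trace_ref}--\eqref{IntroLeibniz_infty_E_2_trace_ref} produces \eqref{Leibniz_Eu_infty_1}--\eqref{Leibniz_Eu_infty_2}.

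For the trace estimates I would argue exactly as in the h-perimeter case. From \eqref{Leibniz_Eu_infty_1}, restricting to $\redb E\cap U$ and using $\chi_E F\in\DM^\infty_{\rm loc}$ with $\|\chi_E F\|_{\infty,U}\le\|F\|_{\infty,E\cap U}$, together with the absolute continuity $|(\chi_E F,D_H\chi_E)|\le\|F\|_{\infty,E\cap U}|D_H\chi_E|$ from \eqref{abs_cont_pairing_infty} applied to $\chi_E F$, one reads off $|\Tr{F}{i}{E}|\le\|F\|_{\infty,E\cap U}$ $|D_H\chi_E|$-a.e.\ on $\redb E\cap U$; the exterior estimate follows symmetrically by applying the same reasoning to $\chi_{\Omega\setminus E}F$ and the field $F$ on $U\setminus E$, using that $\Tr{F}{e}{E}$ comes from the pairing $(\chi_{\Omega\setminus E}F,D_H\chi_E)$.

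Finally, the integration by parts formulas \eqref{IBP_1_Eu}--\eqref{IBP_2_Eu} are obtained by evaluating the measure identities \eqref{Leibniz_Eu_infty_1}--\eqref{Leibniz_Eu_infty_2} against $\varphi\in C(\Omega)$ with $\nabla_H\varphi\in L^1_{\rm loc}(H\Omega)$ and $\supp(\varphi\chi_E)\Subset\Omega$: choosing $U$ with $\supp(\varphi\chi_E)\subset U\Subset\Omega$ and pairing $\div(\chi_E F)$ with $\varphi$, the left-hand side expands via the Leibniz rule of Theorem~\ref{productrule} applied to $\chi_E F$ and $\varphi$ — more precisely $\int_\Omega\varphi\,d\div(\chi_E F)=-\int_\Omega\ban{\chi_E F,\nabla_H\varphi}\,dx=-\int_E\ban{F,\nabla_H\varphi}\,dx$ since $\varphi\chi_E$ is compactly supported and $\chi_E F\in\DM^\infty$ — matching the right-hand side $\int_{E^1_{|\cdot|}}\varphi\,d\div F-\int_{\redb E}\varphi\Tr{F}{i}{E}\,d|D_H\chi_E|$ after moving the $\int_E\ban{F,\nabla_H\varphi}$ term across. \textbf{The main obstacle} I anticipate is not any of these routine bookkeeping steps but rather making rigorous that $|\div F|$ does not charge the Euclidean essential boundary outside $\redb E$: this is where one genuinely needs $|\div F|\ll\SHaus{Q-1}$ combined with a dimension comparison between the Euclidean rectifiable set $\partial^*_{|\cdot|}E$ and the spherical Hausdorff measure $\SHaus{Q-1}$ adapted to the homogeneous distance — a point that is delicate because a Euclidean-rectifiable hypersurface can be highly irregular, even purely unrectifiable, from the intrinsic viewpoint of $\G$, so the needed absolute continuity has to be extracted from Theorem~\ref{absolute continuity} and Theorem~\ref{equivalence_Leibniz_rule_Eucl} rather than from any intrinsic rectifiability of $\redb E$.
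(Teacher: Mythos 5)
Your overall route is the paper's own: localize to an open set $U\Subset\Omega$, note that Euclidean finite perimeter in $U$ gives finite h-perimeter in $U$, invoke Theorem~\ref{equivalence_Leibniz_rule_Eucl} to get $|\div F|(\intee{E}{F}\,\Delta\, E^{1}_{|\cdot|})=0$, and then obtain \eqref{Leibniz_Eu_infty_1}--\eqref{Leibniz_Eu_infty_3} from \eqref{Leibniz_infty_E_1_trace_ref}, \eqref{Leibniz_infty_E_2_trace_ref} and \eqref{boundary_term_div_1_2_ref}. Your derivation of \eqref{IBP_1_Eu}--\eqref{IBP_2_Eu} is in substance the proof of Theorem~\ref{IBP_general} (the product rule for the continuous function $\varphi$ and the field $\chi_E F$, i.e.\ \eqref{product rule eq p}, combined with Lemma~\ref{DMcomptsupp} on $U$), which is exactly what the paper cites, so apart from attributing that step to Theorem~\ref{productrule} rather than to Proposition~\ref{tilde_g_characterization}, this part is fine.

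The one genuine flaw is the argument you give for the trace estimates. Since the interior normal trace is defined by $-2(\chi_{E} F, D_{H} \chi_{E})=\Tr{F}{i}{E}\,|D_{H}\chi_{E}|$, the pairing bound $|(\chi_{E} F, D_{H} \chi_{E})|\le \|F\|_{\infty, E\cap U}\,|D_{H}\chi_{E}|$ coming from \eqref{abs_cont_pairing_infty} (or \eqref{eq:chiEF_ie}) only "reads off" $|\Tr{F}{i}{E}|\le 2\|F\|_{\infty, E\cap U}$: the factor $2$ does not cancel, so your step does not give the stated estimates. The paper explicitly points this out before Proposition~\ref{boundary_term_divergence1}: the sharp bounds cannot be obtained directly from \eqref{eq:chiEF_ie} together with \eqref{interior_normal_trace_def}--\eqref{exterior_normal_trace_def}, and instead require differentiating the pairing with respect to the asymptotically doubling perimeter measure and using Lemma~\ref{overline_D_chi_E_1_2_lemma} together with the second convergence in \eqref{weak_conv_D_f}, which is what recovers the missing factor $2$ at $|D_H\chi_E|$-a.e.\ point of $\redb E$. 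The repair is immediate and is the paper's choice: quote \eqref{interior_normal_trace_norm_loc} and \eqref{exterior_normal_trace_norm_loc} of Theorem~\ref{IBP_general} (equivalently, Proposition~\ref{boundary_term_divergence1} applied to $F$ and $E$ restricted to $U$) instead of deducing the estimates from the pairing bound.
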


Formulas \eqref{IBP_abs_cont} and \eqref{IBP_1_Eu} extend Anzellotti's pairings to stratified groups in the case the $BV$ function of the pairing is the characteristic function of a finite h-perimeter set. Indeed, if we take $E$ to be an open bounded set with Euclidean Lipschitz boundary, as in the assumptions of \cite[Theorem 1.1]{Anzellotti_1983}, then it is well known that $E^{1}_{|\cdot|} = E$. Thus, for this choice of $E$, it is clear that \eqref{IBP_abs_cont} and \eqref{IBP_1_Eu} are equivalent to definition of (interior) normal trace of Anzellotti; that is, the pairing between $F$ and $D \chi_{E}$ (see \cite[Definition 1.4]{Anzellotti_1983}).

Let us point out that Theorem~\ref{Gauss_Green_Euclidean} is new
even if it is seen in Euclidean coordinates, since the
measures appearing in the Leibniz rules are in fact absolutely continuous with respect to the h-perimeter.

In the assumptions of Theorem~\ref{Gauss_Green_Euclidean}, if $E \Subset \Omega$, taking 
the test function $\ph\equiv1$ in both \eqref{IBP_1_Eu} and \eqref{IBP_2_Eu}, 
we get the following general Gauss--Green formulas
\begin{align} \label{IBP_1_EuGG} 
\div F (E^{1}_{|\cdot|}) & = \int_{\redb E}  \Tr{F}{i}{E} \, d |D_{H} \chi_{E}|, \\
\label{IBP_2_EuGG} 
\div F (E^{1}_{|\cdot|} \cup \redb E) & = \int_{\redb E} \Tr{F}{e}{E} \, d |D_{H} \chi_{E}|.
\end{align}
Analogously, the estimates on the $L^{\infty}$-norm of the normal traces are similar to those in \eqref{traces_norm_Euclidean_eq}.
When the vector field $F$ is $C^1$ smooth up to the boundary of a bounded
set $E\Subset\Omega$ of Euclidean finite perimeter,
then all \eqref{G-G_abs_cont}, \eqref{IBP_1_EuGG} and \eqref{IBP_2_EuGG} boil down to the following one
\beq\label{eq:Eucl}
\int_E \div F\, dx = \int_{\mathscr{F}E} \ban{F,\nu_E} d |D_H \chi_E|=
\int_{\mathscr{F}E} \ban{F,N^H_E} d |D\chi_E|
\eeq
where $N^H_E=\sum_{j=1}^m \ban{N_E,X_j}_{\R^\q} X_j$ is the non-normalized horizontal
normal, $\G$ is linearly
identified with $\R^\q$ (Section~\ref{sect:metric}), $\ban{\cdot,\cdot}_{\R^\q}$ denotes the Euclidean scalar product, $N_E$ is the Euclidean measure theoretic exterior normal,
$|D\chi_E|$ is the Euclidean perimeter and $\mathscr{F}E$ is the Euclidean reduced boundary.
In the special case of \eqref{eq:Eucl} the proof is a simple application of the Euclidean theory of sets of finite perimeter, see for instance \cite[Remark~2.1]{ChoMagTys2015}.

Equalities of \eqref{eq:Eucl} can be also written using Hausdorff measures, getting
\beq\label{eq:EuclHaus}
\int_E \div F\, dx=\int_{\mathscr{F}E} \ban{F,N^H_E} d\Haus{\q-1}_{|\cdot|}
=\int_{\mathscr{F}E} \ban{F,\nu_E} d\SHaus{Q-1}.
\eeq
The first equality is a consequence of the rectifiability of Euclidean finite perimeter sets
\cite{DeGiorgi1955} and the second one follows from \cite{Mag31},
when the homogeneous distance $d$ constructing $\SHaus{Q-1}$
is suitably symmetric. For instance, when $E$ is bounded, $\der E$ is piecewise smooth
and $F$ is a $C^1$ smooth vector field on a neighborhood of $\overline E$, 
then \eqref{eq:Eucl} and \eqref{eq:EuclHaus} hold and the reduced boundary
$\mathscr F E$ can be replaced by the topological boundary $\der E$, coherently with the classical result \eqref{classical_G_G_eq}.

For smooth functions and sufficiently smooth domains, Green's formulas,
that are simple consequences of the Gauss-Green theorem, 
have proved to have a wide range of applications in classical PDE's.
In the context of sub-Laplacians in stratified groups these formulas
play an important role, \cite{BLU2007stratified}, \cite{RuzSur2017}.

As a consequence of our results, we obtain a very general version of Green's formulas in stratified groups. Precisely in the next theorem, \eqref{first_Green_Euclidean} and \eqref{second_Green_Euclidean} 
represent the first and the second Green's formulas, where the domain of integration is only assumed to be a set with Euclidean finite perimeter and the sub-Laplacians are measures.
\begin{theorem}\label{t:GreenIV}
Let $u \in C^1_{H}(\Omega)$ satisfy $\Delta_{H} u \in \mathcal{M}_{\rm loc}(\Omega)$ and let $E \subset \Omega$ be a set of Euclidean locally finite perimeter in $\Omega$. Then for each $v \in C_{c}(\Omega)$ with $\nabla_{H} v \in L^{1}(H \Omega)$ one has
\begin{equation} \label{first_Green_Euclidean}
\int_{E^1_{|\cdot|}} v \, d\Delta_{H} u  = \int_{\redb E} v \, \ban{\nabla_{H} u, \nu_{E}} \, d |D_{H} \chi_{E}| - \int_{E} \ban{\nabla_{H} v, \nabla_{H} u} \, dx. \end{equation}
If $u, v \in C^1_{H, c}(\Omega)$ also satisfy $\Delta_{H} u, \Delta_{H} v \in \mathcal{M}(\Omega)$, one has
\begin{equation} \label{second_Green_Euclidean}
\int_{E^{1}_{|\cdot|}} v \, d\Delta_{H} u  - u \, d\Delta_{H} v  = \int_{\redb E} \ban{v \nabla_{H} u - u \nabla_{H} v, \nu_{E}} \, d |D_{H} \chi_{E}|. \end{equation}
If $E \Subset \Omega$, one can drop the assumption that $u$ and $v$ have compact support in $\Omega$.
\end{theorem}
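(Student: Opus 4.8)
The plan is to deduce both Green's identities from the integration by parts formula~\eqref{IBP_1_Eu} of Theorem~\ref{Gauss_Green_Euclidean}, applied to the horizontal field $F:=\nabla_{H}u$. First I would record that $F\in\DM^{\infty}_{\rm loc}(H\Omega)$: since $u\in C^1_{H}(\Omega)$, the section $\nabla_{H}u$ is continuous on $\Omega$, hence it belongs to $L^{\infty}_{\rm loc}(H\Omega)$, and its distributional divergence is $\div(\nabla_{H}u)=\Delta_{H}u\in\mathcal{M}_{\rm loc}(\Omega)$ by hypothesis. I would also note that a set $E$ of Euclidean locally finite perimeter has locally finite h-perimeter (as $|D_{H}\chi_{E}|\ll|D\chi_{E}|$), so that $\redb E$, the measure theoretic outer normal $\nu_{E}$ and the measure $|D_{H}\chi_{E}|$ are well defined, and Theorem~\ref{Gauss_Green_Euclidean} does apply.

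Next I would take $\varphi=v$ in~\eqref{IBP_1_Eu}, which is legitimate because $v\in C(\Omega)$, $\nabla_{H}v\in L^1(H\Omega)\subset L^1_{\rm loc}(H\Omega)$, and $\mathrm{supp}(v\chi_{E})\subset\mathrm{supp}(v)\Subset\Omega$. This yields
\[
\int_{E^{1}_{|\cdot|}} v\,d\Delta_{H}u+\int_{E}\ban{\nabla_{H}u,\nabla_{H}v}\,dx=\int_{\redb E} v\,\Tr{F}{i}{E}\,d|D_{H}\chi_{E}|.
\]
The remaining point is to identify the interior normal trace $\Tr{F}{i}{E}=\ban{\mathcal{F}_{i},\nu_{E}}$. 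Since $F=\nabla_{H}u$ is continuous, the continuous-field case of the Gauss--Green theorem (Theorem~\ref{G-G groups general III}, whose normal traces are the same pairing-defined objects appearing in Theorem~\ref{Gauss_Green_Euclidean}, so localization to open sets $U\Subset\Omega$ transfers it to the present setting) gives that the interior and exterior normal traces coincide and equal $\ban{\nabla_{H}u,\nu_{E}}$ for $|D_{H}\chi_{E}|$-a.e.\ point of $\redb E$; substituting this into the displayed equality produces~\eqref{first_Green_Euclidean}.

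For the second identity I would symmetrize. Under the stronger hypotheses $u,v\in C^1_{H,c}(\Omega)$ with $\Delta_{H}u,\Delta_{H}v\in\mathcal{M}(\Omega)$, each of $u$ and $v$ simultaneously satisfies both roles in the first part, so~\eqref{first_Green_Euclidean} may be applied to the ordered pair $(u,v)$ and to the ordered pair $(v,u)$. The two resulting equalities share the bulk term $\int_{E}\ban{\nabla_{H}u,\nabla_{H}v}\,dx$; subtracting them cancels this term and leaves
\[
\int_{E^{1}_{|\cdot|}} v\,d\Delta_{H}u-u\,d\Delta_{H}v=\int_{\redb E}\ban{v\nabla_{H}u-u\nabla_{H}v,\nu_{E}}\,d|D_{H}\chi_{E}|,
\]
which is~\eqref{second_Green_Euclidean}. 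Finally, if $E\Subset\Omega$ then $\overline{E}$ is a compact subset of $\Omega$, so the support condition $\mathrm{supp}(\varphi\chi_{E})\Subset\Omega$ in~\eqref{IBP_1_Eu} holds automatically for $\varphi\in\{u,v\}$ without any compactness of supports, and every integral above is over a subset of $\overline{E}$ and hence finite; thus the compact-support assumption on $u$ and $v$ may be dropped.

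I expect the only genuinely delicate points to be the ones packaged in the two references just used: verifying that the \emph{local} forms of Theorem~\ref{Gauss_Green_Euclidean} and of the continuous-field Gauss--Green theorem are available in the stated generality --- in particular the membership $\nabla_{H}u\in\DM^{\infty}_{\rm loc}(H\Omega)$ and the representation of the normal trace of the \emph{continuous} field $\nabla_{H}u$ as $\ban{\nabla_{H}u,\nu_{E}}$ --- together with the bookkeeping of the support and finiteness conditions under the two different hypothesis sets. Once these are in place, everything reduces to substitution and subtraction.
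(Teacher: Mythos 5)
Your proposal is correct and is essentially the paper's own argument, just traversed in a different order: the paper deduces Theorem \ref{t:GreenIV} by combining Green's identities I (Theorem \ref{Green_identity}, itself obtained from the integration by parts formula of Theorem \ref{IBP_general} together with the continuous-field trace representation of Proposition \ref{F_continuous}) with the identification $E^{1}_{u}=E^{1}_{|\cdot|}$ up to $|\Delta_H u|$-negligible sets from Theorem \ref{equivalence_Leibniz_rule_Eucl}, whereas you start from \eqref{IBP_1_Eu} of Theorem \ref{Gauss_Green_Euclidean} (which already encodes that identification) and then insert the continuous-field trace identity, symmetrizing for the second formula exactly as the paper does. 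The only small slip is attributional: the equality $\Tr{F}{i}{E}=\Tr{F}{e}{E}=\ban{\nabla_H u,\nu_E}$ is the content of Proposition \ref{F_continuous} (used in the proof of Theorem \ref{G-G groups general III}, not stated by it), applied after localizing to an open set $U\Subset\Omega$ containing $\mathrm{supp}(v\chi_E)$, which is precisely the localization performed in the paper's proof of Theorem \ref{Green_identity}.
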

These Green's formulas are extended in Theorem~\ref{t:GreenII}  
to sets of h-finite perimeter, assuming that the sub-Laplacian 
is absolutely continuous with respect to the Haar measure of the group.

\section{Preliminaries}

In this section we present some preliminary notions, hence setting the notation, and extend some well known facts from the Euclidean context to the stratified groups' one.

In what follows, $\Omega$ is an open set in a stratified group $\G$.
Unless otherwise stated, $\subset$ and $\subseteq$ are equivalent. We denote by $E \Subset \Omega$ a set $E$ whose closure, $\bar{E}$, is a compact inside $\Omega$, by $E^{\circ}$ the interior of $E$ and by $\partial E$ its topological boundary.

\subsection{Basic facts on stratified groups}

We recall now the main features of the stratified group, also well known as Carnot group. More information on these groups can be found for instance in
\cite{Fol75}, \cite{folland1982hardy}, \cite{Heinonen1995}.
 In particular, we are including at the end of the section an approximation results for intrinsic Lipschitz functions. 

A {\em stratified group} can be seen as a linear space $\G$ equipped with an analytic group operation
such that its Lie algebra $\Lie(\G)$ is {\em stratified}.
This assumption on $\Lie(\G)$ corresponds to the following conditions
\[
\Lie(\G)=\cV_1\oplus\cdots\oplus\cV_\iota, \qquad [\cV_1,\cV_j]=\cV_{j+1}
\]
for all integers $j\ge0$ and $\cV_{j}=\{0\}$ for all $j>\iota$ with $\cV_\iota\neq\{0\}$. 
The integer $\iota$ is the step of nilpotence of $\G$. 
The tangent space $T_0\G$ can be canonically identified with $\Lie(\G)$ by associating to each $v\in T_0\G$ the unique left invariant vector field $X\in\Lie(\G)$ such that $X(0)=v$. This allows for transferring the Lie algebra structure from $\Lie(\G)$ to $T_0\G$. We can further simplify the structure of $\G$ by identifying it with $T_0\G$, hence having a Lie product on $\G$, that yields the group operation
by the Baker-Campbell-Hausdorff formula. This identification also gives a graded structure to $\G$, obtaining the subspaces $H^j$ of $\G$ from the subspaces
\[
 \set{v\in T_0\G: v=X(0),\  X\in\cV_j}, 
\]
therefore getting $\G=H^1\oplus\cdots \oplus H^\iota$. By these assumptions the exponential mapping  
\[
\exp:\Lie(\G)\to\G 
\]
is somehow the ``identity mapping'' $\exp X=X(0)$. It is clearly a bianalytic diffeomorphism. We will denote by $\q$ the dimension of $\G$, seen as a vector space. Those dilations that are compatible with the algebraic structure of $\G$  are defined as linear mappings $\delta_r:\G\to\G$ such that $\delta_r(p)=r^ip$ for each $p\in H^i$, $r>0$ and $i=1,\ldots,\iota$.

\subsection{Metric structure, distances and graded coordinates} \label{sect:metric}
We may use a graded basis to introduce a natural scalar product on
a stratified group $\G$. We then define the unique scalar product on $\G$ such that the graded basis is orthonormal.

 We will denote by $|\cdot|$ the associated 
Euclidean norm, that exactly becomes the Euclidean norm with respect to the corresponding graded coordinates.

On the other hand, the previous identification of $\G$ with $T_0\G$ yields
a scalar product on $T_0\G$, that defines by left translations 
a left invariant Riemannian metric on $\G$.  
By a slight abuse of notation, we use the symbols $|\cdot|$ and 
$\ban{\cdot,\cdot}$ to denote the norm arising from this left invariant Riemannian metric
and its corresponding scalar product.
By $\ban{\cdot,\cdot}_{\R^\q}$ we will denote the Euclidean scalar product,
that makes the fixed graded basis $(e_1,\ldots,e_\q)$ orthonormal. 
 
Notice that the basis $(X_1,\ldots,X_\q)$ of $\Lie(\G)$ associated

to our graded basis is clearly orthonormal with respect to the 
same left invariant Riemannian metric.

A {\em homogeneous distance} 
\[
d:\G\times\G\to[0,+\infty)
\]
on a stratified group $\G$ is a continuous and left invariant distance with 
\[
d(\delta_r (p),\delta_r (q))=r\,d(p,q)
\]
for all $p,q\in\G$ and $r>0$. 
We define the open balls as
\[
B(p,r)=\lb q\in\G: d(q,p)<r\rb. 
\]
The corresponding {\em homogeneous norm} will be denoted by 
$\|x\|=d(x,0)$ for all $x\in\G$.
It is worth to compare $d$ with our fixed Euclidean norm on $\G$, getting
\beq\label{eq:locEstDist}
C^{-1} |x-y|\le d(x, y) \le C |x - y|^{1/\iota}
\eeq
on compact sets of $\G$.
A homogeneous distance also defines a Hausdorff measure $\Haus{\alpha}$ and a spherical measure $\SHaus{\alpha}$. As it is customary, we set for $\delta > 0$ and $A \subset \G$:
\begin{align*} \Haus{\alpha}_{\delta}(A) & := \inf \left \{ \sum_{j \in J} \left ( \frac{\diam{A_{j}}}{2} \right )^{\alpha} \, : \, \diam{A_{j}} < \delta, \, A \subset \bigcup_{j \in J} A_{j} \right \}, \\ 
\SHaus{\alpha}_{\delta}(A) & := \inf \left \{ \sum_{j \in J} r_{j}^{\alpha} \, : \, 2 r_{j} < \delta, \, A \subset \bigcup_{j \in J} B(x_{j}, r_{j}) \right \} \end{align*}
and we take the following suprema 
\[
\Haus{\alpha}(A) := \sup_{\delta > 0} \Haus{\alpha}_{\delta}(A)
\qandq \SHaus{\alpha}(A) := \sup_{\delta > 0} \SHaus{\alpha}_{\delta}(A).
\]
It will be useful to introduce the right invariant distance
$d^\cR$ associated to $d$ as follows
\beq\label{d:d^R}
d^\cR(x,y):=\|xy^{-1}\|=d(xy^{-1},0)=d(x^{-1},y^{-1}).
\eeq
It is not difficult to check that $d^\cR$ is a continuous and right invariant
distance, that is also homogeneous, namely
\[
d^\cR(\delta_rx,\delta_ry)=rd^\cR(x,y)
\]
for $r>0$ and $x,y\in\G$. The local estimates \eqref{eq:locEstDist}
also show that $d^\cR$ defines the same topology of both $d$ and the
Euclidean norm $|\cdot|$.
The metric balls associated to $d^\cR$ are
\beq
B^\cR(p,r)=\lb q\in\G: d^\cR(q,p)<r\rb.
\eeq
We notice that 
\beq
B^\cR(0,1)=B(0,1),
\eeq
being $d^\cR(x,0)=d(x^{-1},0)=d(0,x)$ for all $x\in\G$.
%
%
%
%

A basis $(e_1,\ldots,e_\q)$ of $\G$ that respects the grading of $\G$ has the property that
\[
(e_{\m_{j-1}+1},e_{\m_{j-1}+2},\ldots,e_{\m_j})
\]
is a basis of $H^j$ for each $j=1,\ldots,\iota$, where $\m_j=\sum_{i=1}^j\dim H^i$
for every $j=1,\ldots,\iota$, $\m_0=0$ and $\m=\m_1$. The basis $(e_1,\ldots,e_\q)$
is then called {\em graded basis} of $\G$. 
Such basis provides the corresponding {\em graded coordinates} $x=(x_1,\ldots,x_\q)\in\R^\q$, that give the unique element of $\G$ that satisfies 
\[
p=\sum_{j=1}^\q x_je_j\in\G.
\]
We define a graded basis $(X_1,\ldots,X_\q)$ of $\Lie(\G)$ defining $X_j\in\Lie(\G)$ as the unique left invariant vector field with $X_j(0)=e_j$ and $j=1,\ldots,\q$.

We assign {\em degree $i$} to each left invariant vector field of $\cV_i$. In different terms, for each $j\in\set{1,\ldots,\q}$ we define the integer function $d_j$ on $\set{1,\dots,\iota}$ such that 
\[
\m_{d_j-1}< j\leq \m_{d_j}.
\]
The previous definitions allow to represent any
left invariant vector field $X_j$ as follows
\beq\label{eq:X_j}
X_j=\der_{x_j}+\sum_{i: d_i>d_j}^\q a_j^i\der_{x_i},
\eeq
where $j=1,\ldots,\q$ and $a^i_j$ are suitable polynomials.
The vector fields $X_1,X_2,\ldots,X_\m$ of degree one,
are the so-called {\em horizontal left invariant vector fields} and constitute the
horizontal left invariant frame of $\G$.

Using graded coordinates, the dilation of $x \in \R^{\q}$ is given by
\[
\delta_{r}(x) = \sum_{j = 1}^{\q} r^{d_{j}} x_{j} e_{j}.
\] 
Through the identification of $\G$ with $T_{0} \G$, it is also possible 
to write explicitly the group product in the graded coordinates.
%
\begin{comment}
\begin{proposition} \label{group product}
For any $p, q \in \G$, having graded coordinates $x = (x_{1}, \dots, x_{\q}), y = (y_{1}, \dots, y_{\q}) \in \R^{\q}$, we have that
\begin{equation*} p q = x + y + \QQ(x, y), \end{equation*}
where $\QQ : \R^{\q} \times \R^{\q} \to \R^{\q}$, and each $\QQ_{j}$ is a homogeneous polynomial of degree $d_{j}$ with respect to the intrinsic dilations of $\G$; that is,
\begin{equation*} \QQ_{j}(\delta_{\lambda}(x), \delta_{\lambda}(y)) = \lambda^{d_{j}} \QQ_{j}(x, y), \ \ \forall \ x, y \in \G. \end{equation*}
In addition, for any $x, y \in \G$, we have $\QQ_{j}(x, y) \equiv 0$ for any $1 \le j \le \m_{1}$, $\QQ_{j}(x, 0) = \QQ_{j}(0, y) = 0$ and $\QQ_{j}(x, x) = \QQ_{j}(x, - x) = 0$ for $\m_{1} \le j \le \q$. Finally, if $i \ge 2$ and $m_{i - 1} < j \le m_{i}$, then
\begin{equation*} \QQ_{j}(x, y) = \QQ_{j}(x_{1}, \dots, x_{m_{i - 1}}, y_{1}, \dots, y_{m_{i - 1}}). \end{equation*}
\end{proposition}
\end{comment}
%
%
%
In the sequel, an auxiliary scalar product on $\G$ is fixed such that 
our fixed graded basis is orthonormal.
The restriction of this scalar product to $V_1$ can be translated to the so-called {\em horizontal fibers}
\[
H_p\G=\{X(p)\in T_p\G: X\in\cV_1\}
\]
as $p$ varies in $\G$, hence defining a left invariant sub-Riemannian metric $g$ on $\G$.
We denote by $H\G$ the {\em horizontal subbundle} of $\G$, whose fibers are $H_x\G$.

The Hausdorff dimension $Q$ of the stratified group $\G$ with respect to any homogeneous distance is given by the formula 
\begin{equation*} Q= \sum_{i = 1}^{\iota} i \,\dime(H^{i}). \end{equation*}
We fix a Haar measure $\mu$ on $\G$, that with respect to our
graded coordinates becomes the standard $\q$-dimensional Lebesgue measure $\Leb{\q}$. Because of this identification, we shall write $d x$ instead of $d \mu(x)$ in the integrals.
This measure defines the corresponding Lebesgue spaces
$L^p(A)$ and $L^p_{\rm loc}(A)$ for any measurable set $A\subset\G$.
The $L^p$-norm will be denoted using the same symbols 
we will use for horizontal vector fields in Definition~\ref{Lp_horizontal_field}.

For any measurable set $E \subset \G$, we have 
$\mu(xE) = \mu(E)$ for any $x \in \G$ and 
\[
\mu(\delta_\lambda E) = \lambda^{Q} \mu(E) \quad \text{for any $\lambda>0$.}
\]
Since $B(p, r) = p\, \delta_rB(0, 1)$ and $B^\cR(p, r) = \delta_{r}(B(0, 1)) p$, we get
\beq\label{eq:muB(p,r)}
\mu(B(p, r)) = r^Q \mu(B(0, 1)) \qandq  \mu(B^\cR(p, r)) = r^{Q} \mu(B(0, 1))
\eeq
due to the left and right invariance of the Haar measure $\mu$.
The previous formulas show the existence of
constants $c_1,c_2>0$ such that
\beq\label{eq:LebHausdQ}
\Haus{Q}=c_1\, \Haus{Q}_\cR=c_2\,\mu,
\eeq
where $\Haus{Q}$ and $\Haus{Q}_\cR$ are the 
Hausdorff measures with respect to $d$ and $d^\cR$, respectively.
In particular, \eqref{eq:muB(p,r)} shows that $\mu$ is doubling 
with respect to both $d$ and $d^\cR$, hence
the Lebesgue differentiation theorem holds with respect to $\mu$ 
and both distances $d$ and $d^\cR$.

\begin{theorem} \label{Lebesgue_diff_theorem} Given $f \in L^{1}_{\rm loc}(\G)$, we have
\[
\lim_{r \to 0} \mean{B(x, r)}|f(y) - f(x)| \, dy  = 0 \qandq
\lim_{r \to 0} \mean{B^\cR(x, r)}|f(y) - f(x)| \, dy  = 0,
\]
for $\mu$-a.e. $x \in \G$.
\end{theorem}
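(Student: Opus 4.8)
\emph{Proof idea.} The plan is to derive the statement from the standard differentiation theory on doubling metric measure spaces, applied separately to $(\G,d,\mu)$ and $(\G,d^\cR,\mu)$. Since the only input is that $\mu$ is doubling --- which holds for both distances by \eqref{eq:muB(p,r)} --- the argument is identical in the two cases, so I describe it once, writing $\rho$ for either $d$ or $d^\cR$ and $B_\rho(x,r)$ for the associated balls. A preliminary reduction makes $f$ globally integrable: fixing a ball $B_\rho(x_0,R_0)$, for $x\in B_\rho(x_0,R_0/2)$ and $r$ small the average $\mean{B_\rho(x,r)}|f(y)-f(x)|\,dy$ depends only on $f\chi_{B_\rho(x_0,R_0)}\in L^1(\G)$, and $\G$ is a countable union of such balls; hence it suffices to treat $f\in L^1(\G)$.

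The core step is a weak-$(1,1)$ bound for the Hardy--Littlewood maximal function $M_\rho f(x):=\sup_{r>0}\mean{B_\rho(x,r)}|f|\,dy$, namely $\mu(\{M_\rho f>\lambda\})\le C\,\lambda^{-1}\|f\|_{L^1(\G)}$ for all $\lambda>0$. This is the classical covering argument: the set $\{M_\rho f>\lambda\}$ is open by lower semicontinuity of $M_\rho f$; given a compact $K$ inside it, cover $K$ by balls on which the $\mu$-average of $|f|$ exceeds $\lambda$, extract by the basic $5r$-covering lemma a countable pairwise disjoint subfamily $\{B_\rho(x_i,r_i)\}$ whose dilates $B_\rho(x_i,5r_i)$ still cover $K$, and estimate
\[
\mu(K)\le\sum_i\mu(B_\rho(x_i,5r_i))\le C_D\sum_i\mu(B_\rho(x_i,r_i))\le\frac{C_D}{\lambda}\sum_i\int_{B_\rho(x_i,r_i)}|f|\,dy\le\frac{C_D}{\lambda}\|f\|_{L^1(\G)},
\]
using the doubling constant $C_D$ of $\mu$ with respect to $\rho$ three times and disjointness in the last step; letting $K\uparrow\{M_\rho f>\lambda\}$ by inner regularity of $\mu$ gives the claim.

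Then one runs the usual density argument. Since $\mu$ is Lebesgue measure in graded coordinates and $\rho$ induces the Euclidean topology by \eqref{eq:locEstDist}, $C_c(\G)$ is dense in $L^1(\G)$. Given $\eta>0$, choose $g\in C_c(\G)$ with $\|f-g\|_{L^1(\G)}<\eta$; then $\lim_{r\to0}\mean{B_\rho(x,r)}|g(y)-g(x)|\,dy=0$ at every point, and writing $\Lambda f(x):=\limsup_{r\to0}\mean{B_\rho(x,r)}|f(y)-f(x)|\,dy$ the triangle inequality gives $\Lambda f\le M_\rho(f-g)+|f-g|$ pointwise. Hence for every $\lambda>0$,
\[
\mu(\{\Lambda f>\lambda\})\le\mu\big(\{M_\rho(f-g)>\lambda/2\}\big)+\mu\big(\{|f-g|>\lambda/2\}\big)\le\frac{C'}{\lambda}\,\|f-g\|_{L^1(\G)}<\frac{C'}{\lambda}\,\eta,
\]
where $C'$ absorbs the weak-$(1,1)$ constant above and the Chebyshev constant. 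Since $\eta>0$ is arbitrary, $\mu(\{\Lambda f>\lambda\})=0$ for every $\lambda>0$, so $\Lambda f=0$ $\mu$-a.e.; this is exactly the asserted limit. Carrying this out with $\rho=d$ and then $\rho=d^\cR$ completes the proof.

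There is no genuinely hard step here: the whole content is the doubling property, already recorded in the excerpt. The only points deserving attention are the initial localisation --- the theorem must be stated for $L^1_{\rm loc}$ since $\G$ and $\mu$ are unbounded --- and the applicability of the $5r$-covering lemma, which holds in any metric space; boundedness of the competing radii is not needed once $f$ is globally integrable, but if one prefers one may restrict to $r\le R$ in the definition of $M_\rho$, all relevant balls then lying in a fixed larger ball.
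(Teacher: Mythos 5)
Your argument is correct and is essentially the route the paper takes: the paper observes (via \eqref{eq:muB(p,r)}) that $\mu$ is doubling with respect to both $d$ and $d^\cR$ and then simply invokes the general Lebesgue differentiation theorem for doubling metric measure spaces, citing \cite[Theorem~5.2.3]{AmbrosioTilli}, rather than reproving it. Your localisation, weak-$(1,1)$ maximal estimate via the $5r$-covering lemma, and density argument are exactly the standard proof of that cited result, applied separately to the two distances.
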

For a general proof of the previous theorem in metric measure spaces equipped with a doubling measure, we refer for instance to \cite[Theorem~5.2.3]{AmbrosioTilli}.

\subsection{Differentiability, local convolution and smoothing}\label{sect:Difflocalsmooth}

The group structure and the intrinsic dilations naturally give a notion of
``differential'' and of ``differentiability'' made by the corresponding operations.
A map $L : \G \to \R$ is a homogeneous homomorphism, in short, a h-homomorphism
if it is a Lie group homomorphism such that $L\circ \delta_r=r\,L$.
It can be proved that $L : \G \to \R$ is a h-homomorphism if and only if 
there exists $(a_{1}, \dots, a_{m_{1}}) \in \R^{m_{1}}$ such that 
$L(x) = \sum_{j = 1}^{m_{1}} a_{j} x_{j}$ with respect to our fixed graded coordinates.
If not otherwise stated, in the following we denote by $\Omega$ an open set in $\G$. 

\begin{definition}[Differentiability] \label{d:differentiability}\rm We say that $f : \Omega \to \R$ is differentiable at $x_{0} \in \Omega$ if there is an h-homomorphism $L:\G\to\R$ such that 
\begin{equation*} \lim_{x \to x_{0}} \frac{f(x) - f(x_{0}) - L(x_{0}^{-1} x)}{d(x, x_{0})} = 0. \end{equation*}
If $f$ is differentiable, then $L$ is unique and we denote it simply by $df(x_{0})$.
\end{definition}
A weaker notion of differentiability, that holds for Sobolev and $BV$ functions
on groups is the following.
\begin{definition}[Approximate differentiability] \label{d:appdifferentiability}\rm We say that $f : \Omega \to \R$ is approximately differentiable at $x_{0} \in \Omega$ if there is an h-homomorphism $L:\G\to\R$ such that 
\begin{equation*} 
\lim_{r\to0^+} \mean{B(x_0,r)} \frac{|f(x) - f(x_{0}) - L(x_{0}^{-1} x)|}{r} \, dx = 0. \end{equation*}
The function $L$ is uniquely defined and it is called the approximate
differential of $f$ at $x_0$. The unique vector defining $L$ with
respect to the scalar product is denoted by $\nabla_{H} f (x_0)$.
\end{definition}
\begin{remark}
When $\G$ is the Euclidean space, the simplest stratified group,
Definition~\ref{d:differentiability} yields the standard 
notion of differentiability in Euclidean spaces.
\end{remark}
We denote by $C^1_H(\Omega)$ the linear space of real-valued functions
$f:\Omega\to\R$ such that the pointwise partial derivatives $X_1f,\ldots,X_{\m}f$
are continuous in $\Omega$. For any $f\in C^1_H(\Omega)$ we introduce
the {\em horizontal gradient}
\beq\label{eq:horiz_grad}
 \nabla_H f := \sum_{j = 1}^{\m} (X_{j} f) X_{j},
\eeq
whose components $X_jf$ are continuous functions in $\Omega$.
Taylor's inequality \cite[Theorem~1.41]{folland1982hardy} simply leads 
us to the everywhere differentiability of $f$ and to the formula
\[
df(x)(v)=\ban{\nabla_Hf(x),v}=\sum_{j=1}^\m v_jX_jf(x)
\]
for any $x\in\Omega$ and $\ds v=\sum_{j=1}^\q v_j e_j\in\G$.

We denote by $\Lip(\Omega)$, $\Lip_{\rm loc}(\Omega)$ and $\Lip_{c}(\Omega)$ the spaces of Euclidean Lipschitz, locally Lipschitz and Lipschitz functions with compact support in $\Omega$, respectively. Analogously, we can define the space of Lipschitz functions with respect to any homogeneous distance of the stratified group, $\Lip_{H}(\Omega)$.
It is well known that $\Lip_{\rm loc}(\Omega) \subset \Lip_{H, {\rm loc}}(\Omega)$, due to the local estimate \eqref{eq:locEstDist}.

\begin{theorem}\label{MSC-Rademacher} If $f \in \Lip_{H, {\rm loc}}(\Omega)$, then $f$ is differentiable $\mu$ almost everywhere.
\end{theorem}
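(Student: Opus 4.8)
The plan is to run a blow-up argument, this being the natural route to the Rademacher--Pansu theorem in this setting. First, since all homogeneous distances on $\G$ --- in particular the Carnot--Carath\'eodory distance --- are locally bi-Lipschitz equivalent, a function $f\in\Lip_{H,{\rm loc}}(\Omega)$ is locally Lipschitz along the flows of the horizontal vector fields, and a standard Fubini-type argument shows that its distributional derivatives $X_1f,\dots,X_\m f$ are represented by functions in $L^\infty_{\rm loc}(\Omega)$. By Theorem~\ref{Lebesgue_diff_theorem}, $\mu$-a.e. $x_0\in\Omega$ is a common Lebesgue point of $X_1f,\dots,X_\m f$; I would fix such an $x_0$ and, after composing $f$ with the left translation by $x_0$ (an isometry of $d$ that carries each $X_jf$ to its translate, since $X_j$ is left invariant), reduce to $x_0=0$. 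Then I would set $c_j:=X_jf(0)$ and let $L\colon\G\to\R$ be the h-homomorphism $L(x)=\sum_{j=1}^\m c_jx_j$ furnished by the characterization recalled above; the goal becomes showing $\bigl(f(x)-f(0)-L(x)\bigr)/d(x,0)\to0$ as $x\to0$.

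Next I would introduce the rescalings $f_r(x):=r^{-1}\bigl(f(\delta_rx)-f(0)\bigr)$. Homogeneity and left invariance of $d$ give that, on every fixed ball $B(0,R)$ and for $r$ small, the $f_r$ are equi-Lipschitz (with the local Lipschitz constant of $f$), equibounded, and vanish at $0$; hence by Ascoli--Arzel\`a every sequence $r_k\downarrow0$ has a subsequence along which $f_{r_k}\to g$ locally uniformly, with $g\in\Lip_{H,{\rm loc}}(\G)$ and $g(0)=0$. Since $L$ is $1$-homogeneous, $f_r(x)-L(x)=r^{-1}\bigl(f(\delta_rx)-f(0)-L(\delta_rx)\bigr)$, and a routine compactness argument shows that differentiability of $f$ at $0$ is equivalent to $f_r\to L$ locally uniformly as $r\to0$. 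So it suffices to prove that every blow-up limit $g$ equals $L$.

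To identify $g$ I would use that $X_j$ is $1$-homogeneous, so $X_j f_r=(X_jf)\circ\delta_r$ in the sense of distributions; changing variables $y=\delta_r x$ and using \eqref{eq:muB(p,r)} one gets
\[
\int_{B(0,R)}\bigl|X_j f_r(x)-c_j\bigr|\,dx=\frac1{r^Q}\int_{B(0,rR)}\bigl|X_jf(y)-c_j\bigr|\,dy\;\lra\;0
\]
as $r\to0$, because $0$ is a Lebesgue point of $X_jf$ and $\mu$ is doubling. Combined with $f_{r_k}\to g$ distributionally, this forces $X_jg\equiv c_j$ for $j=1,\dots,\m$. The remaining ingredient would be a lemma: any $h\in\Lip_{H,{\rm loc}}(\G)$ with $X_jh\equiv c_j$ for all $j$ equals $L+h(0)$. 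I would prove it by mollifying $h$ with the group convolution --- the mollified $h_\eps$ are smooth with constant horizontal gradient $(c_1,\dots,c_\m)$, so $h_\eps-L$ is constant along every horizontal curve and hence constant by horizontal connectivity of $\G$ (Chow's theorem) --- and then letting $\eps\to0$. Applying this to $g$ and using $g(0)=0$ yields $g=L$, which closes the argument.

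I expect the main obstacle to be exactly this last step: upgrading ``constant distributional horizontal gradient'' to ``equals the h-homomorphism $L$''. This is where the noncommutative, non-Euclidean geometry genuinely enters, through Chow's connectivity theorem, and it must be combined carefully with the distributional-to-pointwise passage. The other ingredients --- Ascoli--Arzel\`a, the Lebesgue differentiation Theorem~\ref{Lebesgue_diff_theorem}, the scaling identities for $\delta_r$ and the $X_j$'s, and the verification that the blow-up reformulation is equivalent to differentiability --- are routine.
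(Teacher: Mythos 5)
Your argument is correct in outline, and it is essentially the classical Pansu-type blow-up proof of the Rademacher theorem specialized to a real-valued target; I see no genuine gap in it. It is, however, a genuinely different route from the paper's, because the paper does not prove Theorem~\ref{MSC-Rademacher} at all: it invokes the Rademacher-type theorem of Monti and Serra Cassano \cite{MSC2001}, which is established for general Carnot--Carath\'eodory spaces, where dilations, left translations and the group convolution are unavailable and the argument is necessarily of a different nature. Your proof instead exploits the group structure throughout: homogeneity of $d$ gives the equi-Lipschitz rescalings $f_r$; Theorem~\ref{Lebesgue_diff_theorem} is applied at a common Lebesgue point of $X_1f,\dots,X_\m f$ (which lie in $L^{\infty}_{\rm loc}$ because $f$ is Lipschitz along the flows $t\mapsto x\,(te_j)$ and these flows preserve $\mu$, the group being unimodular); the scaling identity $X_jf_r=(X_jf)\circ\delta_r$ together with \eqref{eq:muB(p,r)} identifies the horizontal derivatives of every blow-up limit; and the Liouville-type step can be carried out exactly with the convolution tools already in the paper (the commutation of Lemma~\ref{l:BVD_Hfconv}, or the computation in the proof of Theorem~\ref{approx Lip function}). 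If you write this up in full, the two places needing care are the ones you flag: (i) the identification of the distributional derivative along $X_j$ with the weak-$*$ limit of the bounded difference quotients, and (ii) the Liouville lemma, where in fact you can avoid Chow's theorem: for a smooth $u$ with $X_ju\equiv0$, $j=1,\dots,\m$, all iterated brackets annihilate $u$, and since $\cV_1$ Lie-generates the whole algebra the left invariant fields span every tangent space, so $u$ is constant on the connected group $\G$; letting the mollification parameter go to zero then gives $g=L$. What your approach buys is a self-contained proof inside the stratified setting, reusing the paper's smoothing machinery; what the paper's citation buys is the greater generality of Carnot--Carath\'eodory spaces and a shorter exposition.
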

This result follows from the Rademacher's type theorem 
by Monti and Serra~Cassano, proved in more general Carath\'eodory spaces,
\cite{MSC2001}.

\begin{remark}\label{product rule Lip H} \rm  From the standard
Leibniz rule, if $f, g \in \Lip_{H, {\rm loc}}(\Omega)$, the definition of differentiability
joined with Theorem~\ref{MSC-Rademacher} gives
\begin{equation*} \nabla_{H}(f g)(x) = f(x) \nabla_{H}g(x) + g(x) \nabla_{H} f(x) \ \ \text{for} \  \mu\text{-a.e.} \ x. \end{equation*}
\end{remark}

The Haar measure on stratified groups allows for defining the convolution
with respect to the group operation. 

\begin{definition}[Convolution]\label{definition:convol}\rm For $f, g \in L^{1}(\G)$, we define the 
{\em convolution} of $f$ with $g$ by the integral
\begin{equation*} (f \ast g)(x) := \int_{\G} f(y) g(y^{-1} x) \, dy = \int_{\G} f(x y^{-1}) g(y) \, dy, 
\end{equation*}
that is well defined for $\mu$-a.e.\ $x\in\G$, see for instance 
\cite[Proposition~1.18]{folland1982hardy}.
\end{definition}

Due to the noncommutativity of the group operation, one may clearly
expect that $g \ast f$ differs from $f \ast g$, in general.
This difference appears especially when we wish to localize the
convolution. In the sequel, $\Omega$ denotes an open set, if not otherwise stated. 
For every $\ep>0$, two possibly empty open subsets of $\Omega$ are defined as follows
\beq\label{eq:Omega-R_ep}
\Omega_\ep^\cR=\set{x\in\G: \dist^\cR(x,\Omega^c)>\ep}\qandq
\Omega_\ep=\set{x\in\G: \dist(x,\Omega^c)>\ep},
\eeq
where we have defined the distance functions
\[
\dist^\cR(x,A)=\inf\set{d^\cR(x,y): y\in A }\qandq
\dist(x,A)=\inf\set{d(x,y): y\in A }
\]
for an arbitrary subset $A\subset\G$. We finally define the open set
\[
A^{\cR,\ep}=\set{x\in\G: \dist^\cR(x,A)<\ep}.
\]
\begin{definition}[Mollification]\label{d:mollification}\rm
Given a function $\rho \in C_c(B(0, 1))$,  we set 
\[
\rho_{\eps}(x) := \eps^{-Q} \rho(\delta_{1/\eps}(x))
\]
for $\eps > 0$. If $f \in L^{1}(\Omega)$ and $x\in\G$, we define
\beq \label{d_eq_mollification_1}
\rho_\ep\ast f(x)= \int_\Omega \rho_\eps(x y^{-1}) f(y) \, dy.
\eeq
If we restrict the domain of this convolution considering $x\in\Omega^\cR_{\ep}$, then we can allow for $f \in L^{1}_{\rm loc}(\Omega)$ and we have 
\beq
\rho_\ep\ast f(x)=\int_{B^\cR(x, \eps)} \rho_\eps(x y^{-1}) f(y) \, dy,
\eeq
which is well posed since the map $y \to \rho_{\eps}(x y^{-1})$ has compact support inside $B^\cR(x,\ep)\subset\Omega$. In addition, under these assumptions, a simple change of variables also yields
\beq\label{eq:rho_ep_fyx}
\rho_\ep\ast f(x)=\int_{B(0,\ep)} \rho_\ep(y)\,f(y^{-1}x) \, dy= \int_{B(0, 1)} \rho(y)\,f\pa{(\delta_\eps y^{-1}) x} \, dy.
\eeq
Due to the noncommutativity, a different convolution may also be introduced
\beq
f\ast \rho_\ep(x)= \int_\Omega f(y) \rho_\eps(y^{-1}x)\, dy=
   \int_{B(x, \eps)} \rho_\eps(y^{-1}x) f(y) \, dy,
\eeq
where the first integral makes sense for all $x\in\G$ and the second one
only for $x\in\Omega_\ep$. 
\end{definition}
It is not difficult to show that the mollified functions $\rho_\ep\ast f$ and $f\ast\rho_\ep$ enjoy many standard properties. For instance, $\rho_\ep\ast f$ converges to $f$ in $L^1_{\rm loc}(\Omega)$, whenever $f\in L^1_{\rm loc}(\Omega)$.

We may also define the convolution between a (signed) Radon measure and a continuous function. As it is customary, we denote by $\mathcal{M}_{\rm loc}(\Omega)$ the space of signed Radon measures on $\Omega$, and by $\mathcal{M}(\Omega)$ the space of finite signed Radon measures on $\Omega$.

\begin{definition}[Local convolution of measures] \label{d:loc_conv_meas}\rm 
Let us consider two open sets $\Omega,U\subset\G$ and 
define the new open set $O=U(\Omega^{-1})\subset\G$. Let $f \in C(O)$ and $\nu \in \mathcal{M}(\Omega)$. Then the {\em convolution between $f$ and $\nu$} is given by
\begin{equation} 
(f \ast \nu)(x) := \int_{\Omega} f(xy^{-1}) \, d \nu(y),
\end{equation}
with the additional assumption that $\Omega\ni y\mapsto f(xy^{-1})$ is
$|\nu|$-integrable for every $x\in U$.
Thus, $f\ast\nu$ is well defined in $U$.
If $\rho \in C_c(B(0, 1))$, for any $x\in\Omega^\cR_\ep$ 
we may represent the convolution as follows 
\begin{equation} \label{moll_meas_def}
(\rho_{\eps} \ast \nu)(x) = \int_{\Omega} \rho_{\eps}(xy^{-1}) \, d \nu(y) = \int_{B^\cR(x, \eps)} \rho_\eps(x y^{-1}) \, d \nu(y).
\end{equation}
The first integral makes sense for 
all $x\in\G$, being $\rho$ continuously extendable by zero outside $B(0,1)$. In addition, \eqref{moll_meas_def} is well posed also for $\nu \in \cM_{\rm loc}(\Omega)$, if $x \in \Omega_{2 \eps}^{\cR}$. The function $\rho_\ep\ast\nu$ is the {\em mollification} of $\nu$.
\end{definition}
\begin{definition}[Local weak$^*$ convergence]
We say that a family of Radon measures $\nu_\ep\in\cM(\Omega)$
{\em locally weakly$^*$ converges to $\nu\in\cM(\Omega)$}, if
for every $\phi\in C_c(\Omega)$ we have
\beq\label{eq:nu_epconv}
\int_\Omega \phi\, d\nu_\ep\to\int_\Omega \phi\,  d\nu \qs{as} \ep\to0^+
\eeq
and in this case we will use the symbols
$\nu_\ep\weakto\nu$ as $\ep\to0^+$.
\end{definition}

\begin{remark}\label{remark:weakconvOmegaep}
In the sequel, the local weak$^*$ convergence above will also refer to measures
$\nu_\ep\in\cM(\Omega^\ep)$ defined on a family of increasing open sets $\Omega^\ep\subset\Omega$ 
as $\ep$ decreases, such that $\bcup_{\ep>0}\Omega^\ep=\Omega$ and for every compact set $K\subset\Omega$ there exists $\ep'>0$ such that $K\subset\Omega_{\ep'}$.
This type of local weak$^*$ convergence does not make a substantial difference
compared to the standard one, so we will not use a different symbol.

For instance, the local weak$^*$ convergence of \eqref{weak_conv_D_f}
refers to a family of measures that are not defined on all of $\Omega$
for every fixed $\ep>0$. We stress that this distinction is important, since
our mollifier $\rho$ is assumed to be only continuous. 
\end{remark}

\begin{remark} \label{weak_conv_moll_measure} \rm 
For any measure $\nu \in \mathcal{M}(\Omega)$ and any mollifier $\rho \in C_{c}(B(0, 1))$ satisfying $\rho(x) = \rho(x^{-1})$ and $\ds\int_{B(0,1)} \rho \, dx = 1$, 
we observe that $\rho_{\eps} \ast \nu \in C(\G)$  and we have the local weak$^*$ convergence of measures 
\begin{equation} \label{weak_conv_moll_measure_eq}
(\rho_{\eps} \ast \nu) \mu \weakto \nu
\end{equation}
in $\Omega$, as $\ep\to0^+$.
Indeed, let $\phi\in C_c(\Omega)$ and let $\ep>0$ small enough, such that $\supp\, \phi\subset U$ and $U\subset\Omega^\cR_\ep$ is an open set. Then we have 
\begin{align*} 
\int_\Omega \phi(x) (\rho_{\eps} \ast \nu)(x) \, dx & = \int_U
\phi(x)\pa{ \int_{B^\cR(x,\ep)} \rho_{\eps}(xy^{-1}) \, d \nu(y) } dx \\
 &= \int_{U^{\cR,\ep}} \pa{ \int_U \rho_{\eps}(yx^{-1}) \phi(x)  \, dx } d \nu(y) \\
& = \int_{\Omega} (\rho_{\eps} \ast \phi)(y) \, d \nu(y) \to \int_\Omega \phi(y) \, d \nu(y), \end{align*}
since $U^{\cR,\ep}\subset (\Omega^\cR_\ep)^{\cR,\ep}\subset\Omega$ and $\rho_{\eps} \ast \phi \to \phi$ uniformly on compact subsets of $\Omega$. 
The previous equalities also show that
\begin{equation} \label{exchanging_convolution} 
\int_{\Omega} \phi(x) (\rho_{\eps} \ast \nu)(x) \, dx = \int_{\Omega} (\rho_{\eps} \ast \phi)(y) \, d \nu(y), \end{equation}
whenever $\rho \in C_{c}(B(0, 1))$, $\nu \in \mathcal{M}(\Omega)$ and
$\phi \in C_{c}(\Omega)$ such that $\mathrm{supp}(\phi) \subset \Omega^\cR_{\eps}$.
\end{remark}

\begin{comment} We notice that 
\[ (\rho_{\eps} \ast \nu) (x) = \int_{\Omega} \rho_{\eps}(x y^{-1}) \, d \nu(y) \]
is defined on the whole group $\G$ and $\rho_{\eps} \ast \nu \in C(\G) \cap L^{1}(\Omega)$, for any $\rho \in C_{c}(B(0, 1))$. Indeed,
\[ \int_{\Omega} |\rho_{\eps} \ast \nu| \, dx \le \int_{\Omega} \int_{\Omega} \rho_{\eps}(x y^{-1}) \, d |\nu|(y) \, dx \le \int_{\Omega} \int_{\G} \rho_{\eps}(x y^{-1}) \, dx \, d |\nu|(y) = |\nu|(\Omega). \]
\end{comment}

\begin{remark}\label{r:contin_vectconvmeas}
The previous arguments also show that $\rho_\ep\ast f$, for $f\in L^1(\Omega)$, enjoys all properties of the convolution in Remark~\ref{weak_conv_moll_measure}. The same is true for $\rho_\ep\ast\nu$, if we consider
\beq\label{eq:nubeta}
\nu=(u_1,\ldots,u_\m) \beta,
\eeq
where $u_1,\ldots,u_\m:\Omega\to\eR$ belong to $L^{1}(\Omega; \beta)$ and $\beta\in\cM^+(\Omega)$.
\end{remark}

\begin{proposition}\label{proposition:convolC1}
Let $\Omega^\cR_{\eps}\neq\epty$ for some $\ep>0$. 
The following statements hold.
\begin{enumerate}
\item 
If $f \in C_H^{1}(\Omega)$, $\rho \in C_c(B(0, 1))$ and $\ds\int_{B(0,1)} \rho \, dx = 1$ , then $\rho_\ep\ast f\in C^{1}_{H}(\Omega^\cR_{\eps})$,
\beq\label{eq:convolC1}
X_{j}(\rho_{\eps} \ast f) = \rho_{\eps} \ast X_{j} f \quad \text{in $\Omega^\cR_{\eps}$,}
\eeq
and both $\rho_\ep\ast f$ and $\nabla_H(\rho_\ep\ast f)$ uniformly converge to $f$ and $\nabla_Hf$ on compact subsets of $\Omega$.
In addition, if $f \in L^{1}(\Omega)$ and $\rho\in C_c^{k}(B(0,1))$ for some $k\ge1$, then $\rho_\eps\ast f \in C^{k}(\G)$.
\item 
If $f \in L^\infty(\Omega)$ and $\rho \in \Lip_{c}(B(0,1))$, then $\rho_\ep\ast f\in \Lip_{\rm loc}(\G)$.
\end{enumerate}
\end{proposition}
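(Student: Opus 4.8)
The plan is to reduce every assertion to the change of variables \eqref{eq:rho_ep_fyx}, which for $x\in\Omega^\cR_\ep$ reads
\[
\rho_\ep\ast f(x)=\int_{B(0,1)}\rho(y)\,f\big((\delta_\ep y^{-1})\,x\big)\,dy ,
\]
so that the evaluation point depends on $x$ only through the left translation $x\mapsto(\delta_\ep y^{-1})x$. Since $X_1,\dots,X_\m$ are left invariant, this representation lets us transfer the differentiation off the mollifier (which in part~(1) is merely continuous) and onto $f$. I would first establish the commutation identity \eqref{eq:convolC1} together with the $C^1_H$-regularity, then the $C^k$-regularity when $\rho$ is smooth, then the uniform convergence on compact sets, and finally the Lipschitz statement of part~(2) by a direct estimate.

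\emph{Commutation and regularity.} Fix $x_0\in\Omega^\cR_\ep$; recall that $f\in C^1_H(\Omega)$ is everywhere differentiable by Taylor's inequality \cite[Theorem~1.41]{folland1982hardy}, hence continuous, so the integral above is meaningful for $x$ near $x_0$. I differentiate it along the integral curve of $X_j$ issuing from $x$, which by left invariance is the right translate by the one-parameter subgroup generated by $X_j$; for each $y\in B(0,1)$ this amounts to following $f$ along the integral curve of $X_j$ through $(\delta_\ep y^{-1})x$, which is of class $C^1$ with $t$-derivative given by $X_jf$ at the running point, and by the mean value theorem the associated difference quotients are bounded in modulus by $\sup_{\tilde K}|X_jf|$ on a fixed compact $\tilde K\Subset\Omega$ collecting all points involved for $x$ near $x_0$. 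Dominated convergence then gives $X_j(\rho_\ep\ast f)(x)=\int_{B(0,1)}\rho(y)\,(X_jf)\big((\delta_\ep y^{-1})x\big)\,dy$, which equals $\rho_\ep\ast X_jf(x)$ by \eqref{eq:rho_ep_fyx} applied to $X_jf\in C(\Omega)$; the same representation (continuity under the integral sign, domination by $\sup|X_jf|$ on a compact enlargement) yields $\rho_\ep\ast X_jf\in C(\Omega^\cR_\ep)$, hence $\rho_\ep\ast f\in C^1_H(\Omega^\cR_\ep)$. For the addendum, with $\rho\in C^k_c(B(0,1))$ and $f\in L^1(\Omega)$, I would instead use the defining formula \eqref{d_eq_mollification_1} and differentiate the mollifier: $\rho_\ep\in C^k_c(\G)$, and, the group product being polynomial in the graded coordinates (Baker--Campbell--Hausdorff), $x\mapsto\rho_\ep(xy^{-1})$ is $C^k$ with $x$-derivatives up to order $k$ bounded, for $x$ in a compact set, by a constant times $\chi_V(y)$, where $V$ is the bounded set of those $y$ for which $xy^{-1}\in B(0,\ep)$ for some such $x$; since $C\,\chi_V\,|f|\in L^1(\Omega)$, differentiating under the integral sign up to order $k$ gives $\rho_\ep\ast f\in C^k(\G)$.

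\emph{Uniform convergence.} Let $K\Subset\Omega$. Since $x\mapsto\dist^\cR(x,\Omega^c)$ is continuous and positive on $\Omega$, we get $K\subset\Omega^\cR_\ep$ for all small $\ep$; moreover, for $x\in K$ and $y\in B(0,1)$ one has $d^\cR\big((\delta_\ep y^{-1})x,x\big)=\|\delta_\ep y^{-1}\|\le\ep$ by \eqref{d:d^R} and $B^\cR(0,1)=B(0,1)$, so $(\delta_\ep y^{-1})x$ stays in a fixed compact $\tilde K\Subset\Omega$. Using $\int\rho=1$,
\[
\sup_{x\in K}\big|\rho_\ep\ast f(x)-f(x)\big|\le\sup_{x\in K}\int_{B(0,1)}\rho(y)\,\big|f((\delta_\ep y^{-1})x)-f(x)\big|\,dy\le\omega_f(\ep)\longrightarrow 0 ,
\]
where $\omega_f$ is the modulus of continuity of $f$ on $\tilde K$ with respect to $d^\cR$. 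The same estimate applied to $X_jf$, together with \eqref{eq:convolC1}, shows $X_j(\rho_\ep\ast f)=\rho_\ep\ast X_jf\to X_jf$ uniformly on $K$ for every $j$, i.e. $\nabla_H(\rho_\ep\ast f)\to\nabla_Hf$ uniformly on compact subsets of $\Omega$.

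\emph{Lipschitz case and main difficulty.} For part~(2), $\rho\in\Lip_c(B(0,1))$ gives $\rho_\ep\in\Lip_c(\G)$, and $\rho_\ep\ast f(x)=\int_{\Omega\cap B^\cR(x,\ep)}\rho_\ep(xy^{-1})f(y)\,dy$ is well defined on all of $\G$, since $f\in L^\infty(\Omega)$ and $B^\cR(x,\ep)$ is bounded. Given a compact $K\subset\G$, for $x,x'\in K$ the integrand is supported in the bounded set $V=\bigcup_{x\in K}B^\cR(x,\ep)$, on which $z\mapsto zy^{-1}$ is Lipschitz uniformly in $y$ (again because the group product is polynomial); as $\rho_\ep$ is Euclidean Lipschitz, $|\rho_\ep(xy^{-1})-\rho_\ep(x'y^{-1})|\le C|x-x'|$ for $x,x'\in K$ and $y\in V$, whence $|\rho_\ep\ast f(x)-\rho_\ep\ast f(x')|\le C\,\|f\|_{L^\infty(\Omega)}\,\mu(V)\,|x-x'|$ and $\rho_\ep\ast f\in\Lip_{\rm loc}(\G)$. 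The only genuinely delicate point in the argument is the commutation step of part~(1): one cannot differentiate under the integral sign coordinate by coordinate, because a $C^1_H$ function need not admit all the Euclidean partial derivatives appearing in \eqref{eq:X_j}, so the differentiation has to be performed along the integral curves of the $X_j$'s, which is exactly where left invariance of the horizontal frame enters through \eqref{eq:rho_ep_fyx}.
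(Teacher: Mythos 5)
Your proposal is correct and follows essentially the same route as the paper's proof: the commutation \eqref{eq:convolC1} is obtained by differentiating $f$ along the flow of the left invariant field inside the integral representation \eqref{eq:rho_ep_fyx} and passing to the limit by dominated convergence (your mean value theorem bound along the integral curves plays the role of the Taylor estimate from \cite[Theorem~1.41]{folland1982hardy} invoked in the paper), while the $C^k$ statement and part (2) are handled, as in the paper, by differentiating the mollifier and by the direct Lipschitz estimate on $x\mapsto\rho_\ep(xz^{-1})$, respectively. No gaps.
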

\begin{proof}
Let $f \in C_H^{1}(\Omega)$ and $\rho \in C_c(B(0, 1))$. 
By the estimate of \cite[Theorem~1.41]{folland1982hardy} and Lebesgue's dominated convergence, we have
\begin{align*} 
X_{j}(\rho_{\eps} \ast f)(x) & =\lim_{t\to 0}\int_{B(0,\ep)}\rho_\ep(y)\, \frac{f(y^{-1}x(te_j))-f(y^{-1}x)}{t} dy \\
&=\int_{B(0,\ep)}\rho_\ep(y)\, \lim_{t\to 0}\frac{f(y^{-1}x(te_j))-f(y^{-1}x)}{t} dy \\
& = \int_{B(0,\ep)} \rho_{\eps}(y) (X_{j} f)(y^{-1}x) \, dy = (\rho_{\eps} \ast X_{j} f)(x) \end{align*}
for any $x \in \Omega^\cR_{\eps}$, due to the left invariance of 
$X_{j}$. By the condition $\ds \int_{B(0,1)} \rho \, dx = 1$, the uniform convergence follows from the continuity of both $f$ and $\nabla_{H} f$,
along with the standard properties of the convolution. The second point can be proved in a similar way, by differentiating the mollifier $\rho_\ep$.
Here we only add that this differentiation is possible at every
point of $\G$, being $\rho_\ep\ast f$ defined on the whole group.

If $f \in L^\infty(\Omega)$ and $\rho \in \Lip_{c}(B(0,1))$, then it is easy to notice that the mollification $\rho_{\eps} \ast f$ as in \eqref{d_eq_mollification_1} is well posed and belongs to $L^{\infty}(\G)$. Hence, for each compact set $K\subset\G$ and any $x, y \in K$, we have 
\begin{equation*} 
\begin{split}
|f^{\eps}(x) - f^{\eps}(y)| &\le \int_{\Omega} |f(z)|\; |\rho_{\eps}(x z^{-1}) - \rho_{\eps}(y z^{-1})| \, dz  \\
& = \ep^{-Q} \int_V |f(z)|\; \Big|\rho\pa{\delta_{1/\ep}(x z^{-1})}-\rho\pa{\delta_{1/\ep}(y z^{-1})}\Big| \, dz  \\
&\le \|f\|_{\infty, \Omega} \,2\, \mu(B(0,1))\, L\, C_\ep |x - y|, 
\end{split}
\end{equation*}
where $V=\overline{B^\cR(x, \eps) \cup B^\cR(y, \eps)} \subset\Omega$, $L>0$ is the
Lipschitz constant of $\rho$ and $C_\ep>0$ is the supremum of all Lipschitz constants $L_{\ep,z}$ of $K\ni x\mapsto \delta_{1/\ep}(x z^{-1})$ as $z$ varies in 
$V$. Due to this fact, we have $L_\ep<+\infty$.
\end{proof}

The next density theorem follows from the choice of suitable mollified functions.

\begin{theorem} \label{approx Lip function}
If $g \in \Lip_{H, {\rm loc}}(\Omega)$, then there exists a sequence $(g_{k})_{k}$ in $C^{\infty}(\Omega)$ with the following properties
\begin{enumerate} 
\item $g_{k} \to g$ uniformly on compact subsets of $\Omega$;
\item $\|\nabla_Hg_k\|_{\infty,U}$ is bounded for each $U\Subset\Omega$ and $k$ sufficiently large;
\item $\nabla_{H} g_{k} \to \nabla_{H} g$ $\mu$-a.e. in $\Omega$.
\end{enumerate}
If $g \in \Lip_{H, c}(\Omega)$, then we can choose all $g_{k}$
to have compact support in $\Omega$.
\end{theorem}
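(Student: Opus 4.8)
The plan is to obtain the approximating sequence by group mollification. Fix once and for all a kernel $\rho\in C_c^\infty(B(0,1))$ with $\rho\ge0$, $\rho(x)=\rho(x^{-1})$ and $\int_{B(0,1)}\rho\,dx=1$, and set $g^\ep:=\rho_\ep\ast g$ in the sense of Definition~\ref{d:mollification}. Since $g\in\Lip_{H, {\rm loc}}(\Omega)\subset L^1_{\rm loc}(\Omega)$, the localized convolution $g^\ep$ is defined and smooth on $\Omega^\cR_\ep$ (differentiating $\rho_\ep(xy^{-1})$ in $x$ under the integral sign produces all Euclidean derivatives, exactly as in the proof of Proposition~\ref{proposition:convolC1}). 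If already $g\in\Lip_{H, c}(\Omega)$, one takes $g_k:=g^{\ep_k}$ with $\ep_k\downarrow 0$: for $\ep_k$ small the support of $g^{\ep_k}$ is a compact subset of $\Omega$, so $g_k\in C^\infty_c(\Omega)$, and it remains only to check (i)--(iii).

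The technical core is the commutation identity $X_j g^\ep=\rho_\ep\ast(X_j g)$ on $\Omega^\cR_\ep$, for $j=1,\dots,\m$, where the right-hand side is the mollification of the locally bounded field $X_jg$. To prove it, I differentiate the expression $g^\ep(x)=\int_{B(0,1)}\rho(y)\,g\big((\delta_\ep y^{-1})x\big)\,dy$ from \eqref{eq:rho_ep_fyx} along the left invariant field $X_j$: the difference quotients $t^{-1}\big[g((\delta_\ep y^{-1})x(te_j))-g((\delta_\ep y^{-1})x)\big]$ are bounded uniformly in $y$ and in $|t|\le1$ by the local $d$-Lipschitz constant of $g$ (using left invariance of $d$ and \eqref{eq:locEstDist} for the localization), while by Theorem~\ref{MSC-Rademacher} the function $g$ is differentiable, hence admits $X_jg$ as directional derivative along $X_j$, at every point outside a $\mu$-null set $N$; since $N$ has $\mu$-null image under left translations and inversion, $\{y:(\delta_\ep y^{-1})x\in N\}$ is $\mu$-null for every fixed $x$, and Lebesgue dominated convergence gives the identity at every $x\in\Omega^\cR_\ep$. (Alternatively one may argue distributionally through \eqref{exchanging_convolution}, the unimodularity of $\G$, and Proposition~\ref{proposition:convolC1}.) Granting this, (i)--(iii) for $g_k=g^{\ep_k}$ are routine. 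For (i), $g$ is continuous by \eqref{eq:locEstDist} and $\sup_K|g^{\ep_k}-g|\to0$ on any compact $K\subset\Omega$ via the elementary bound $|g^\ep(x)-g(x)|\le L\,\sup_{y\in B(0,1)}\|x^{-1}(\delta_\ep y^{-1})x\|$. For (ii), the commutation identity and $|\nabla_Hg|\le L_U$ (the local $d$-Lipschitz constant on a neighbourhood of $U$) give $\|\nabla_Hg^{\ep_k}\|_{\infty,U}\le\|\nabla_Hg\|_{\infty,U^{\cR,\ep_k}}$, bounded for $\ep_k$ small; equivalently, $g^{\ep_k}$ is $d$-Lipschitz with the same local constant as $g$, by left invariance of $d$. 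For (iii), $\nabla_Hg^{\ep_k}=\rho_{\ep_k}\ast\nabla_Hg$ componentwise, and since $\nabla_Hg\in L^1_{\rm loc}(\Omega)$ the estimate $|\rho_\ep\ast h(x)-h(x)|\le\|\rho\|_\infty\,\mu(B(0,1))\,\mean{B^\cR(x,\ep)}|h-h(x)|\,d\mu$ (which uses $\int_{B^\cR(x,\ep)}\rho_\ep(xy^{-1})\,dy=1$ and \eqref{eq:muB(p,r)}) together with the Lebesgue differentiation Theorem~\ref{Lebesgue_diff_theorem} for $d^\cR$ yields $\nabla_Hg^{\ep_k}\to\nabla_Hg$ $\mu$-a.e.

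For a general $g\in\Lip_{H, {\rm loc}}(\Omega)$ the only remaining issue is that $g^\ep$ lives on $\Omega^\cR_\ep$ rather than on $\Omega$, which I fix by patching with a partition of unity. Choose a compact exhaustion $K_1\subset K_2\subset\cdots$ of $\Omega$ with $K_j\subset K_{j+1}^\circ$ and $\bigcup_j K_j=\Omega$, then $\ep_j\downarrow0$ with $K_{j+1}\subset\Omega^\cR_{\ep_j}$, and $\psi_j\in C_c^\infty(\Omega)$ with $\psi_j\equiv1$ on $K_j$ and $\supp\psi_j\subset K_{j+1}^\circ$. Set $g_j:=\psi_j\,g^{\ep_j}$, extended by $0$ outside $\supp\psi_j$; since $\supp\psi_j\subset\Omega^\cR_{\ep_j}$, the two definitions agree on the overlap and $g_j\in C^\infty(\Omega)$. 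Any compact $K\subset\Omega$ satisfies $K\subset K_j$ for all large $j$, whence $g_j=g^{\ep_j}$ on $K$ for such $j$, so (i), (ii), (iii) for $(g_j)_j$ follow on $K$ directly from the corresponding facts for the plain mollifications established above. This proves the theorem, the compactly supported case having been settled in the first paragraph.

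The one genuinely delicate step is the commutation identity $X_jg^\ep=\rho_\ep\ast(X_jg)$ for a merely locally $H$-Lipschitz $g$: Proposition~\ref{proposition:convolC1} supplies it only for $g\in C^1_H(\Omega)$, and in the general case one must carefully justify differentiating under the integral sign when $X_jg$ is defined only $\mu$-a.e.; this is precisely where Rademacher's theorem (Theorem~\ref{MSC-Rademacher}), the Lipschitz bound furnishing the domination, and the invariance of $\mu$-null sets under left translation and inversion all enter. Everything else is routine bookkeeping with the noncommutative convolution and the right invariant distance $d^\cR$.
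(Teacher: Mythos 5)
Your proposal is correct and takes essentially the same route as the paper's proof: mollify with a smooth kernel, establish the commutation $\nabla_H(\rho_\ep\ast g)=\rho_\ep\ast\nabla_H g$ by dominating the difference quotients with the local Lipschitz constant and invoking Theorem~\ref{MSC-Rademacher} together with dominated convergence, then deduce (ii) from the local boundedness of $\nabla_H g$ and (iii) from Theorem~\ref{Lebesgue_diff_theorem} applied on right-invariant balls. Your additional cutoff/patching step to obtain functions smooth on all of $\Omega$, and your explicit remark that the null set of non-differentiability points pulls back to a null set under $y\mapsto(\delta_\ep y^{-1})x$, tidy up points the paper's proof passes over silently but do not change the argument.
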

\begin{proof}
We consider $\rho \in C^{\infty}_{c}(B(0, 1))$ satisfying $\rho \ge 0$
and $\ds\int_{B(0,1)} \rho \, dy = 1$. Then we define
\[
g_{k}(x)=(\rho_{\ep_k}\ast g)(x)= \int_\Omega \rho_{\eps_k}(x y^{-1}) g(y) \, dy
\]
for a positive sequence $\ep_k$ converging to zero, where $\rho_{\eps}(x) = \eps^{-Q} \rho(x/\eps)$. In this way, we  have $g_{k} \in C^{\infty}(\Omega)$ and Proposition~\ref{proposition:convolC1} implies the uniform convergence on compact subsets of $\Omega$. 
For the subsequent claims, we may consider an open set $U\Subset\Omega$
and take $k$ sufficiently large such that $U\Subset\Omega^\cR_{\ep_k}$. For every fixed $x\in U$, formula \eqref{eq:rho_ep_fyx} yields 
\[ 
g_k(x) = \int_{B(0,\ep_k)}\rho_{\ep_k}(y) g(y^{-1}x) dy, 
\]
therefore the following equalities hold:
\begin{align*}
&\pa{g_k(xh)-g_k(x)-\ban{\int_{B(0,\ep_k)}\rho_{\ep_k}(y)\nabla_H g(y^{-1}x) dy,h }}\|h\|^{-1} \\
=&\pa{g_k(xh)-g_k(x)-\int_{B(0,\ep_k)}\rho_{\ep_k}(y)\ban{\nabla_H g(y^{-1}x),h} dy }\|h\|^{-1} \\
=&\int_{B(0,\ep_k)}\rho_{\ep_k}(y)\, \pa{\frac{g(y^{-1}xh) -g(y^{-1}x)-\ban{\nabla_H g(y^{-1}x),h}}{\|h\|}}\,dy   
\end{align*}
for $h$ sufficiently small. The difference quotient in the last integral
is uniformly bounded with respect to $y$ and $h$, due to the Lipschitz continuity
of $g$. The a.e.\ differentiability of $g$, by Theorem~\ref{MSC-Rademacher},
joined with Lebesgue's dominated convergence show that
\[
\nabla_Hg_k(x)=\int_{B(0,\ep_k)}\rho_{\ep_k}(y)\nabla_H g(y^{-1}x) dy.
\]
The local Lipschitz continuity of $g$ provides 
local boundedness for $\nabla_Hg$, hence the previous formula
immediately establishes the second property.
By a change of variables, we get
\[
\nabla_Hg_k(x)-\nabla_Hg(x)=
\int_{B^\cR(x,\ep_k)}\rho_{\ep_k}(xz^{-1}) \pa{\nabla_H g(z)-\nabla_Hg(x) }dy.
\]
From this, it follows that 
\begin{equation*} |\nabla_Hg_k(x)-\nabla_Hg(x)| \le
\|\rho\|_{\infty} \frac{1}{\eps_{k}^{Q}} \int_{B^\cR(x,\ep_k)} |\nabla_H g(z)-\nabla_Hg(x) | \, dy, \end{equation*} 
and now we can conclude by Theorem \ref{Lebesgue_diff_theorem}.
Finally, if $g$ has compact support, it is clear that also $\mathrm{supp}(\rho_{\eps} \ast g)$ is compact in $\Omega$, for $\eps$ small enough.
\end{proof}

\section{Some facts on {\em BV} functions in stratified groups}\label{sect:BVfunction}

In this section we present some basic notions on $BV$ functions and
sets of finite perimeter in stratified groups. In particular, 
some new smoothing arguments for $BV$ functions in stratified groups
are presented. Additional results and references 
on these topics can be found for instance in \cite{cassano2016some}. 

We will need the important concept of {\em horizontal vector field}, that incidentally will also appear in the sequel, in connection with Leibniz formulas and the Gauss--Green theorem in stratified groups.

Let $\Omega\subset\G$ be an open set and denote by $H\Omega$ the restriction of the {\em horizontal subbundle} $H\G$ to the open set $\Omega$, whose {\em horizontal fibers} $H_p\G$ are restricted to all points $p\in \Omega$. 
\begin{definition}[Horizontal vector fields]  \label{Lp_horizontal_field} \rm
Any measurable section $F:\Omega\to H\Omega$ of $H\Omega$ is called a {\em measurable horizontal vector field in $\Omega$}.
We denote by $|F|$ the measurable function $x\to |F(x)|$,
where $|\cdot|$ denotes the fixed graded invariant Riemannian norm.

The $L^p$-norm of a measurable horizontal vector field $F$ in $\Omega$
is defined as follows:
\beqa\label{Lp_norms} 
&&\|F\|_{p, \Omega}  := \left ( \int_{\Omega} |F(x)|^{p} \, dx \right )^{1/p}  \quad \text{if} \ 1 \le p < \infty, \\
&&\| F \|_{\infty, \Omega} := \mathrm{ess} \sup_{x \in \Omega} |F(x)| 
\quad\qquad \text{if} \ p=\infty \label{Linfty_norm}.
\eeqa
We say that a measurable horizontal vector field $F$ is 
a {\em $p$-summable horizontal field} if $|F|\in L^p(\Omega)$.
For $1\le p\le\infty$, we denote by $L^{p}(H \Omega)$ the space of $p$-summable horizontal fields endowed with the norms defined
either in \eqref{Lp_norms} or \eqref{Linfty_norm}.
A measurable horizontal vector field $F$ in $\Omega$ is 
{\em locally $p$-summable} if for any open subset $W \Subset \Omega$, we have $F \in L^p(HW)$. The space of all such vector fields is denoted by $L^{p}_{\rm loc}(H \Omega)$. 

For $k\in\N\sm\set{0}$, the linear space of all $C^k$ smooth sections of $\Omega$ 
is denoted by $C^k(H\Omega)$ and its elements will be called
{\em horizontal vector fields of class $C^k$}. Considering 
the subclass of all $C^k$ smooth horizontal vector fields
with compact support in $\Omega$ yields the space
$C^k_c(H\Omega)$. When $k=0$ the integer $k$ is omitted and
the corresponding space of vector fields will include those with
continuous coefficients.
\end{definition}

It is easy to observe that for any $f\in C^1_H(\Omega)$ its horizontal gradient, given by \eqref{eq:horiz_grad}, defines a continuous horizontal vector field in $\Omega$.
\begin{definition}\label{def:BV} \rm We say that a function $f : \Omega \to \R$ is a function of {\em bounded h-variation}, or simply a {\em $BV$ function}, and write $f \in BV_{H}(\Omega)$, if $f \in L^{1}(\Omega)$ and 
\begin{equation} \label{eq:BV-D_Hf}
| D_H f | (\Omega) := \sup \left \{ \int_{\Omega} f \div \phi \, dx : \phi \in C^{1}_{c}(H \Omega), |\phi| \le 1 \right \} < \infty.
\end{equation}
We denote by $BV_{H, \rm loc}(\Omega)$ the space of functions in $BV_{H}(U)$ for any open set $U \Subset \Omega$.
\end{definition}
\begin{remark}
In the case $\G$ is commutative and equipped with the Euclidean metric,
the previous notion of $BV$ function coincides with the classical one.
\end{remark}
Due to the standard Riesz representation theorem, it is possible to show that 
when $f \in BV_{H}(\Omega)$ the total variation of its distributional horizontal grandient $| D_Hf |$ is a nonnegative Radon measure on $\Omega$.
In addition, there exists a $|D_H f |$-measurable horizontal vector field $\sigma_{f} : \Omega \to H \Omega$ in $\Omega$ such that $|\sigma_{f}(x)| = 1$ for $| D_{H} f |$-a.e. $x \in \Omega$, and
\beq\label{eq:BV_byparts}
\int_{\Omega}f \div \phi \, dx = - \int_{\Omega} \ban{\phi, \sigma_{f}} \, d |D_Hf |, 
\eeq
for all $\phi \in C^{1}_{c}(H \Omega)$. In fact, these conditions are equivalent to the finiteness of \eqref{eq:BV-D_Hf}. 
\begin{remark} \label{Lip test1} \rm
Using Theorem~\ref{approx Lip function} one can actually see that in 
\eqref{eq:BV_byparts} the horizontal vector field $\phi$ can be
taken with coefficients in $\Lip_{H, c}(\Omega)$.
\end{remark}
The integration by parts formula \eqref{eq:BV_byparts} allows us
to think of $D_Hf$ as a kind of  ``measure with values in $H \Omega$'',
even if we have not a standard vector measure in $\R^n$.
In the former case, the different horizontal tangent spaces of $H\Omega$  do not allow us to sum the different values of the measures.

\begin{definition}[Measures in $H\Omega$]
Let $\gamma\in\cM(\Omega)$ be a measure and let $\alpha:\Omega\to H\Omega$ be a 
$\gamma$-measurable horizontal vector field. We define the {\em vector measure $\alpha\gamma$ in $H\Omega$} as the linear operator  
\[
C_c(H\Omega)\ni \phi\lra \int_\Omega \ban{\phi,\alpha}\, d\gamma=:
\int_\Omega \ban{\phi,d(\alpha \gamma)}, 
\]
bounded on $C_c(HU)$ for any open set $U\Subset\Omega$ with respect to the $L^\infty$-topology.
\end{definition}
According to the previous definition, $\sigma_f |D_Hf|$ is a vector measure on $H\Omega$,
that will be also denoted by $D_Hf$.
When a horizontal frame $(X_1,\ldots,X_\m)$ is fixed, we can represent $\sigma_f$ by 
the $|D_Hf|$-measurable functions
$\sigma_f^1,\ldots,\sigma_f^\m:\Omega\to\R$ such that 
\[
\sigma_f = \sum_{j = 1}^{\m} \sigma_f^j\, X_j.
\]
Thanks to this representation, $D_Hf$ can be naturally identified with the vector valued Radon measure
\beq\label{eq:sigma_f_j}
(\sigma_f^1,\ldots,\sigma_f^\m)|D_Hf|.
\eeq
For each $j=1,\ldots,\m$, we define the scalar measures 
\beq\label{eq:BVX_jf}
D_{X_j}f=\sigma_f^j |D_Hf|,
\eeq
that represent the distributional derivatives of a $BV$ function as Radon measures. 
In view of the Radon-Nikod\'ym theorem, we have the decomposition
\[
D_Hf=D^{\rm a}_Hf+D^{\rm s}_Hf
\]
where $D^{\rm a}_Hf$ denotes the absolutely continuous part of
$D_Hf$ with respect to the Haar measure of the group
and $D^{\rm s}_Hf$ the singular part.

Any $BV$ function is approximately differentiable a.e.\ and in addition the approximate differential coincides a.e.\ with the vector density of $D^{\rm a}_Hf$, see \cite[Theorem~2.2]{AmbMag2003}.
As a result, we are entitled to denote 
$X_jf\in L^1(\Omega)$ as the unique measurable function such that
\beq\label{eq:X_jfmu}
D_{X_j}f=X_jf\,\mu.
\eeq
Thus, to a $BV$ function $f$ we can assign a unique horizontal vector 
field $\nabla_Hf\in L^1(H\Omega)$ whose components are defined
in \eqref{eq:X_jfmu} and by definition we have
\[
D^{\rm a}_Hf=\nabla_Hf\, \mu.
\]
As a result, we have the decomposition of measures
\beq\label{eq:BVdec-as}
D_{H} f = \nb{f}\, \mu + D^{\rm s}_Hf.
\eeq
In the previous formula, $\nb{f}$ is uniquely defined, up to $\mu$-negligible sets, and it coincides a.e.\ with the approximate differential of $f$, see Definition~\ref{d:appdifferentiability}.

The vector measure $D_Hf$ in $H\Omega$ enjoys some standard properties of vector measures, as
those mentioned in Remark~\ref{r:contin_vectconvmeas}. The mollification of $D_Hf$ is the vector field
\beq\label{eq:conv_D_Hf}
\rho_\ep\ast D_Hf(x):=
\sum_{j=1}^\m\big(\rho_\ep\ast (\sigma_f^j\,|D_Hf|)\big)(x) X_j(x)
=\sum_{j=1}^\m\big(\rho_\ep\ast (D_{X_j}f)\big)(x) X_j(x).
\eeq
\begin{lemma} \label{conv_meas_bounded_function} Let $F \in L^{\infty}(H\Omega)$, $\gamma \in \mathcal{M}(\Omega)$ and $\alpha : \Omega \to H \Omega$ be a $\gamma$-measurable horizontal section such that $|\alpha(x)| = 1$ for $\gamma$-a.e.\ $x \in \Omega$. Let $\nu:= \alpha \gamma$ be 
the corresponding vector measure in $H\Omega$ and let $\rho \in C_{c}(B(0, 1))$ be a nonnegative mollifier satisfying $\rho(x) = \rho(x^{-1})$ and $\int_{B(0,1)} \rho \, dx = 1$. Then the measures $\ban{F, (\rho_{\eps} \ast \nu)} \mu$ satisfy the estimate
\beq\label{eq:conv_unif_bd}
\int_\Omega |\ban{F, (\rho_{\eps} \ast \nu)}| dx\le \|F\|_{\infty,\Omega}\,|\nu|(\Omega)
\eeq
for $\ep>0$ and any weak$^{*}$ limit point $(F, \nu) \in \mathcal{M}(\Omega)$ satisfies $|(F, \nu)| \le \|F\|_{\infty, \Omega}|\nu|$.
\end{lemma}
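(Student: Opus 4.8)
The plan is to reduce the lemma to one pointwise domination estimate, namely $|(\rho_{\eps}\ast\nu)(x)|\le(\rho_{\eps}\ast|\gamma|)(x)$, and then to combine it with the elementary weak$^{*}$ convergence $(\rho_{\eps}\ast|\gamma|)\mu\weakto|\gamma|$ furnished by Remark~\ref{weak_conv_moll_measure}. First I would write $\alpha=\sum_{j=1}^{\m}\alpha^{j}X_{j}$ with $\sum_{j}(\alpha^{j})^{2}=1$ for $\gamma$-a.e.\ point, so that the components of $\nu=\alpha\gamma$ are the finite signed measures $\nu^{j}=\alpha^{j}\gamma$ and $(\rho_{\eps}\ast\nu)(x)=\sum_{j}(\rho_{\eps}\ast\nu^{j})(x)\,X_{j}(x)$ for every $x\in\G$. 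Since $(X_{1}(x),\dots,X_{\m}(x))$ is orthonormal in $H_{x}\G$, the norm $|(\rho_{\eps}\ast\nu)(x)|$ equals the supremum over constant unit vectors $w\in\R^{\m}$ of $\int_{\Omega}\rho_{\eps}(xy^{-1})\,\ban{w,\alpha(y)}_{\R^{\m}}\,d\gamma(y)$; bounding the integrand by $\rho_{\eps}(xy^{-1})$, using $\rho_{\eps}\ge 0$, $|w|\le 1$ and $|\alpha|=1$ $\gamma$-a.e., yields the claimed pointwise estimate. By the Cauchy--Schwarz inequality in $H_{x}\G$ this immediately gives the measure inequality $|\ban{F,(\rho_{\eps}\ast\nu)}|\,\mu\le\|F\|_{\infty,\Omega}\,(\rho_{\eps}\ast|\gamma|)\,\mu$ on $\Omega$, and I would also record that $|\nu|=|\gamma|$, again because $|\alpha|=1$ $\gamma$-a.e.

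For the estimate \eqref{eq:conv_unif_bd} I would then integrate this measure inequality over $\Omega$, apply Tonelli's theorem, and use $\int_{\G}\rho_{\eps}(xy^{-1})\,dx=\int_{\G}\rho_{\eps}\,dx=1$ for every $y$, which is just the right invariance of the Haar measure $\mu$ (the nilpotent group $\G$ is unimodular; this is also implicit in \eqref{eq:muB(p,r)}). This produces $\int_{\Omega}|\ban{F,(\rho_{\eps}\ast\nu)}|\,dx\le\|F\|_{\infty,\Omega}\int_{\Omega}(\rho_{\eps}\ast|\gamma|)\,dx\le\|F\|_{\infty,\Omega}\,|\gamma|(\Omega)=\|F\|_{\infty,\Omega}\,|\nu|(\Omega)$.

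For the assertion on weak$^{*}$ limit points, \eqref{eq:conv_unif_bd} first shows that the family $\ban{F,(\rho_{\eps}\ast\nu)}\,\mu$ is bounded in $\mathcal{M}(\Omega)$, so weak$^{*}$ limit points exist; I would fix one, say $(F,\nu)$, realized along an infinitesimal sequence $\eps_{k}$. Applying Remark~\ref{weak_conv_moll_measure} to the finite positive measure $|\gamma|$ gives $(\rho_{\eps_{k}}\ast|\gamma|)\,\mu\weakto|\gamma|$. Then for every $\phi\in C_{c}(\Omega)$ I would estimate $|\int_{\Omega}\phi\,d(F,\nu)|=\lim_{k}|\int_{\Omega}\phi\,\ban{F,(\rho_{\eps_{k}}\ast\nu)}\,dx|\le\limsup_{k}\|F\|_{\infty,\Omega}\int_{\Omega}|\phi|\,(\rho_{\eps_{k}}\ast|\gamma|)\,dx$, and since $|\phi|\in C_{c}(\Omega)$ the last quantity tends to $\|F\|_{\infty,\Omega}\int_{\Omega}|\phi|\,d|\gamma|$. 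Taking the supremum of $|\int_{\Omega}\phi\,d(F,\nu)|$ over $\phi\in C_{c}(U)$ with $|\phi|\le 1$, for an arbitrary open set $U\Subset\Omega$, gives $|(F,\nu)|(U)\le\|F\|_{\infty,\Omega}\,|\gamma|(U)$, and outer regularity upgrades this to all Borel subsets of $\Omega$, so $|(F,\nu)|\le\|F\|_{\infty,\Omega}\,|\gamma|=\|F\|_{\infty,\Omega}\,|\nu|$.

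I expect this lemma to be soft; the only step requiring a little care is the last one, where I must observe that the measures $\ban{F,(\rho_{\eps}\ast\nu)}\,\mu$ are uniformly dominated by the weak$^{*}$-convergent family $\|F\|_{\infty,\Omega}(\rho_{\eps}\ast|\gamma|)\,\mu$ and then convert the resulting bound, valid against every $\phi\in C_{c}(\Omega)$, into the measure inequality $|(F,\nu)|\le\|F\|_{\infty,\Omega}\,|\nu|$ via inner and outer regularity. The pointwise domination and the Tonelli computation are entirely routine.
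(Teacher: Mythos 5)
Your proof is correct and follows essentially the same route as the paper: the paper obtains \eqref{eq:conv_unif_bd} by a Fubini swap that moves the mollifier onto $\phi F$ and then tests against $\phi$ with $\|\phi\|_{\infty}\le 1$, while you use the equivalent pointwise domination $|\rho_{\eps}\ast\nu|\le\rho_{\eps}\ast|\gamma|$ plus Tonelli and unimodularity, and your treatment of the weak$^{*}$ limit points is the same argument the paper gives via Remark~\ref{weak_conv_moll_measure}. One cosmetic point: since $\gamma\in\mathcal{M}(\Omega)$ may be signed, the domination step should be written with $d|\gamma|$ (equivalently, with $|\nu|=|\gamma|$), which is in fact how you use it afterwards.
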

\begin{proof}
For any $\phi \in C_{c}(\Omega)$, denoting by $K\subset\Omega$ its support, we have
\begin{align*}
\int_{\Omega} \phi(x) \ban{F(x), (\rho_{\eps} \ast \nu)(x)} \, dx 
& = \int_K \phi(x) \ban{F(x), \int_{K^{\cR,\ep}} \rho_{\eps}(xy^{-1}) \alpha(y)} \, d \gamma(y) \, dx \\
& = \int_{K^{\cR,\ep}} \int_\Omega \phi(x) \ban{F(x), \alpha(y)} \rho_{\eps}(y x^{-1})\, dx \, d \gamma(y) \\
& = \int_\Omega \ban{(\rho_{\eps} \ast (\phi F))(y), \alpha(y)} \, d \gamma(y). \end{align*}
This implies that
\begin{equation*} \left | \int_{\Omega} \phi(x) \ban{F(x), (\rho_{\eps} \ast \nu)(x)} \, dx \right |\le \|\rho_\ep\ast(\phi F) \|_{\infty, \Omega} |\nu|(\Omega) \le \|\phi\|_{\infty, \Omega} \|F\|_{\infty, \Omega} |\nu|(\Omega), 
\end{equation*}
therefore the sequence $\ban{F, (\rho_{\eps} \ast \nu)} \mu$ 
satisfies \eqref{eq:conv_unif_bd}. Let now $\ban{F, (\rho_{\eps_{k}} \ast \nu)} \mu$ be a weakly converging subsequence, whose limit we denote by $(F, \nu)$. Then, by definition of weak$^{*}$ limit, for any $\phi \in C_{c}(\Omega)$ we obtain
\begin{align*} \left | \int_{\Omega} \phi \, d (F, \nu) \right | & = \lim_{\eps_{k} \to 0} \left | \int_{\Omega} \phi \ban{F, (\rho_{\eps} \ast \nu)} \, dx \right | \le \lim_{\eps_{k} \to 0} \|F\|_{\infty, \Omega} \int_{\Omega} |\phi| |\rho_{\eps_k} \ast \nu| \, dx \\
& \le \lim_{\eps_{k} \to 0} \|F\|_{\infty, \Omega} \int_{\Omega} |\phi| (\rho_{\eps} \ast |\nu|) \, dx = \|F\|_{\infty, \Omega} \int_{\Omega} |\phi| \, d |\nu|, \end{align*}
since $(\rho_{\eps} \ast |\nu|) \mu\weakto |\nu|$ by Remark \ref{weak_conv_moll_measure}.
This concludes our proof.
\end{proof}
\begin{remark}
We stress the fact that the pairing measure $(F, \nu)$ is not unique in general, unless $|\nu| \ll \mu$. Indeed, in the absolutely continuous case, we can write $\nu = G \mu$, for some $G \in L^{1}(H \Omega)$ and we have $\rho_{\eps} \ast G \to G$ in $L^{1}(H \Omega)$. Hence, it is clear that
\begin{equation*}
\ban{F, (\rho_{\eps} \ast \nu)} \mu \weakto \ban{F, G} \mu  \quad \text{in} \ \mathcal{M}(\Omega),
\end{equation*}
and so $(F, v) = \ban{F, G} \mu$.
\end{remark}

In the sequel, we need a weak notion of divergence
for nonsmooth vector fields, according to the following
definition.
\begin{definition}[Distributional divergence]\label{d:divergence_distrib}
\label{DistDivdef} The {\em divergence} of a measurable horizontal vector field 
$F\in L^1_{\rm loc}(H\Omega)$ is defined as the following distribution
\begin{equation} \label{distributional_divergence}
C_c^\infty(\Omega)\ni \phi\mapsto  - \int_{\Omega} \ban{F, \nabla_H \phi} \, dx.
\end{equation}
We denote this distribution by $\div F$. The same symbol will denote
the measurable function defining the distribution, whenever it exists.
\end{definition}
\begin{remark} \label{Lip test} 
Due to Theorem~\ref{approx Lip function}, we can extend \eqref{distributional_divergence}
to test functions $\phi$ in $\Lip_{H, c}(\Omega)$.
\end{remark}
The representation of left invariant vector fields \eqref{eq:X_j} gives 
\beq\label{eq:Ff_j}
F=\sum_{j=1}^\m F_j X_j=\sum_{j=1}^\m F_j \der_{x_j}+
\sum_{i=\m+1}^\q \Big(\sum_{j=1}^\m
F_j a_j^i\Big) \der_{x_i}=\sum_{i=1}^\q f_i \der_{x_i},
\eeq
therefore we have an analytic expression for the components $f_{i}$ of $F$
and we may observe that the distributional divergence \eqref{distributional_divergence} 
corresponds to the standard divergence 
\begin{equation} \label{distributional_div_equiv}
(\div F)(\phi)=-\int_{\Omega} \ban{F, \nabla_H \phi} \, dx=-\int_{\Omega} \ban{F, \nabla \phi}_{\R^\q} \, dx,
\end{equation}
where $\nabla$ denotes the Euclidean gradient in the fixed graded coordinates.

Let us consider a horizontal vector field $F=\sum_{j=1}^\m F_j\,X_j$
of class $C^1_H$, namely $F_j\in C^1_H(\Omega)$ for every $j=1,\ldots,\m$.
It is easy to notice that its distributional divergence coincides with
its pointwise divergence. Indeed, for $\phi\in C^1_c(\Omega)$ we have
\[
-\int_\Omega \ban{F,\nabla_H\phi} dx=-\int_\Omega \sum_{j=1}^\m X_j(F_j \phi)
+\int_\Omega\phi \sum_{j=1}^{\m} X_j F_j\,dx=\int_\Omega\phi \sum_{j=1}^{\m} X_j F_j\,dx.
\]
The last equality follows by approximation, using Theorem~\ref{approx Lip function},
the divergence theorem for $C^1$ smooth functions and the fact that $\div X_j=0$.
For this reason, in the sequel we will not use a different notation to distinguish
between the distributional divergence and the pointwise divergence.

\begin{comment}
\begin{remark}
One may observe that for a Sobolev horizontal vector field
$F=\sum_{j=1}^\m F_jX_j\in W^{1,p}(H\Omega)$ its 
distributional divergence is given by the following measurable function
\beq\label{eq:div_HF}
\div F=\sum_{j=1}^\m X_jF_j
\eeq
where $X_jF_j\in L^p(\Omega)$ is the distributional derivative $F_j$ along $X_j$. 
\end{remark}
%
%
\begin{remark}
aggiungere se utile che se $F=\sum F_j X_j$ con $F_j\in C^1_H(\Omega)$
la divergenza $\div $ non \`e definita puntualmente come operatore
differenziale, ma usando la \eqref{eq:convolC1} si pu\`o provare
che la divergenza distribuzionale soddisfa
\[
\div F=\sum_{j=1}^\m X_jF_j
\]
........
\end{remark}
\end{comment}

%
\begin{comment}
The following lemma shows that the variation measure of
a function in $BV$ is suitably invariant under left translations. 
\begin{lemma}  Let $f \in BV_{H, \rm loc}(\Omega)$, then for any $z \in \G$ we have $f \circ l_{z} \in BV_{H, \rm loc}(z^{-1} \Omega)$ and the distributional horizontal gradient satisfies $ D_{H}(f \circ l_{z}) = (l_{z^{-1}})_{\#}D_{H}f$.
\end{lemma}
\begin{proof}
It is clear that $f \circ l_{z} \in L^1_{\rm loc}(z^{-1} \Omega)$. Let $\phi \in C^{1}_{c}(z^{-1} \Omega, H \Omega)$, then we have
\begin{align*} 
\int_{z^{-1} \Omega} f \circ l_{z} \div \phi \, dx & = \int_{\Omega} f(x) (\div \phi)(z^{-1}x) \, dx = \int_{\Omega} f(x) \div (\phi(z^{-1} x)) \, dx, 
\end{align*}
since the divergence is left invariant.
This immediately leads us to our claim.
\end{proof}
\end{comment}

The following lemma will play an important role in the sequel. It tells us that
a mollifier that is only continuous turns a $BV$ function into a $C^1_H$ function.

\begin{lemma} \label{l:BVD_Hfconv}
If $f \in BV_{H, \rm loc}(\Omega)$, 
$\ep>0$ is such that $\Omega^\cR_{2\ep}\neq\epty$, $\rho \in C_c(B(0, 1))$ is nonnegative 
such that $\rho(x) = \rho(x^{-1})$ and $\int_{B(0,1)}\rho=1$, then 
$\rho_{\eps} \ast f \in C^1_H(\Omega_{2\eps}^{\cR}) \cap C(\G)$ and 
\begin{equation} \label{commutation conv derivative meas} 
\nabla_H(\rho_{\eps} \ast f) = (\rho_{\eps} \ast D_{H} f) \quad\text{on}
\quad \Omega^\cR_{2\ep}. \end{equation}
\end{lemma}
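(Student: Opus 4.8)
The plan is to prove \eqref{commutation conv derivative meas} first in the \emph{distributional} sense on $\Omega^\cR_{2\ep}$, and only afterwards to upgrade the regularity of $\rho_\ep\ast f$ to $C^1_H$. The continuity of $\rho_\ep\ast f$ on $\Omega^\cR_\ep\supseteq\Omega^\cR_{2\ep}$ (and, after extending $f$ by zero, on all of $\G$) is the usual mollification fact, since $\rho_\ep\in C_c$ and $f\in L^1_{\rm loc}(\Omega)$; the real content is the differential identity. Note that the obstruction is precisely that $\rho$ is only continuous and $f$ only $BV$, so one \emph{cannot} differentiate under the integral sign in \eqref{eq:rho_ep_fyx}; the derivative must instead be transferred onto the smooth factor through an integration by parts.

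For the distributional step I would fix $j\in\{1,\dots,\m\}$ and a test function $\psi\in C^\infty_c(\Omega^\cR_{2\ep})$. Since $\rho(x)=\rho(x^{-1})$, mollification is self-adjoint, $\int_\Omega(\rho_\ep\ast u)\,v\,dx=\int_\Omega u\,(\rho_\ep\ast v)\,dx$ for $v$ compactly supported, which is \eqref{exchanging_convolution} and its $L^1$ version (applicable here because both $\supp\psi$ and $\supp(\rho_\ep\ast\psi)$ are contained in $\Omega^\cR_\ep$). Moreover $\rho_\ep\ast\psi\in C^1_H(\Omega^\cR_\ep)$ with $X_j(\rho_\ep\ast\psi)=\rho_\ep\ast X_j\psi$ by Proposition~\ref{proposition:convolC1}, and being compactly supported with bounded horizontal gradient it lies in $\Lip_{H,c}(\Omega)$, hence is admissible in \eqref{eq:BV_byparts} by Remark~\ref{Lip test1}. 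Taking in \eqref{eq:BV_byparts} the horizontal field whose $j$-th component is $\rho_\ep\ast\psi$ and whose other components vanish, and recalling $D_{X_j}f=\sigma_f^j|D_Hf|$ from \eqref{eq:BVX_jf}, one obtains
\begin{align*}
\int_\Omega(\rho_\ep\ast f)\,X_j\psi\,dx
&=\int_\Omega f\,(\rho_\ep\ast X_j\psi)\,dx
=\int_\Omega f\,X_j(\rho_\ep\ast\psi)\,dx\\
&=-\int_\Omega(\rho_\ep\ast\psi)\,dD_{X_j}f
=-\int_\Omega\psi\,(\rho_\ep\ast D_{X_j}f)\,dx,
\end{align*}
the last equality being again \eqref{exchanging_convolution}, now applied to the locally finite measure $D_{X_j}f$ (one first restricts to a bounded open $U$ with $\overline{(\supp\psi)^{\cR,\ep}}\subset U\Subset\Omega$, so that $D_{X_j}f\res U\in\mathcal{M}(U)$). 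This shows that the distributional derivative $X_j(\rho_\ep\ast f)$ on $\Omega^\cR_{2\ep}$ is represented by the function $\rho_\ep\ast D_{X_j}f$, which is continuous by Remark~\ref{weak_conv_moll_measure}.

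It remains to upgrade this to $C^1_H$. Set $u:=\rho_\ep\ast f$ and $g_j:=\rho_\ep\ast D_{X_j}f$, so $u$ is continuous on $\Omega^\cR_{2\ep}$ with continuous distributional horizontal derivatives $g_1,\dots,g_\m$. I would mollify once more with a symmetric $\eta\in C^\infty_c(B(0,1))$, $\int\eta=1$: then $u_\delta:=\eta_\delta\ast u$ is smooth by Proposition~\ref{proposition:convolC1}, converges locally uniformly to $u$, and, by repeating the adjunction computation above, satisfies $X_ju_\delta=\eta_\delta\ast g_j$, which converges locally uniformly to $g_j$. Integrating the classical relation $\frac{d}{dt}\,u_\delta(x\exp(tX_j))=(X_ju_\delta)(x\exp(tX_j))$ along the integral curve of $X_j$ through $x\in\Omega^\cR_{2\ep}$ and letting $\delta\to0^+$ gives
\[
u(x\exp(sX_j))-u(x)=\int_0^s g_j(x\exp(tX_j))\,dt
\]
for $s$ small, whence $X_ju(x)=g_j(x)$ pointwise for every such $x$ by continuity of $g_j$. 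Hence $u=\rho_\ep\ast f\in C^1_H(\Omega^\cR_{2\ep})$ and, by \eqref{eq:conv_D_Hf},
\[
\nabla_H(\rho_\ep\ast f)=\sum_{j=1}^\m(\rho_\ep\ast D_{X_j}f)\,X_j=\rho_\ep\ast D_Hf\qquad\text{on }\Omega^\cR_{2\ep},
\]
which is \eqref{commutation conv derivative meas}. I expect the only delicate points to be the bookkeeping of supports needed to invoke \eqref{exchanging_convolution} twice, and this final regularity bootstrap; everything else reduces to the integration-by-parts identity once it is set up.
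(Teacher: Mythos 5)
Your proof is correct and follows essentially the same route as the paper: transfer the derivative onto the test function, use the symmetry of $\rho$ to swap the convolution via \eqref{exchanging_convolution}, apply the $BV$ integration-by-parts formula \eqref{eq:BV_byparts} to the mollified test field, and swap back — the paper does this with horizontal test fields $\phi\in C^1_c(H\Omega^\cR_{2\ep})$ and the divergence, while you do the identical computation componentwise with scalar $\psi$ and the field $(\rho_\ep\ast\psi)X_j$. The only real difference is that you spell out the final upgrade from ``continuous function with continuous distributional horizontal derivatives'' to $C^1_H$ (second smooth mollification plus integration along the integral curves of $X_j$), a step the paper's proof asserts without detail; your justification of it is sound.
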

\begin{proof}
Let $\phi \in C^{1}_{c}(H \Omega_{2\eps}^{\cR})$. Arguing similarly as in the proof of \eqref{exchanging_convolution} and observing that $(\Omega_{2\ep}^\cR)^{\cR,\ep}\subset\Omega^\cR_\ep$, we get the following equalities,
where the second one is a consequence of \eqref{eq:convolC1}:
\begin{align*}
\int_{\Omega^\cR_{2\ep}} (\rho_{\eps} \ast f)(x)\, \div \phi(x) \, dx & = 
 \int_{\Omega_\ep^\cR} f(y)\, (\rho_{\eps} \ast \div \phi)(y) \, dy  = \int_{\Omega_\ep^\cR} f(y)\, \div ( \rho_{\eps} \ast \phi) (y) \, dy \\
& = - \int_{\Omega_\ep^\cR} \ban{(\rho_{\eps} \ast \phi)(y), \sigma_{f}(y)} \, d |D_{H} f|(y) \\
& = - \int_{\Omega_\ep^\cR} \int_{\Omega^\cR_{2\ep}} \rho_{\eps}(y x^{-1}) \ban{\phi(x), \sigma_{f}(y)} \, dx \, d |D_{H} f|(y) \\
& = - \int_{\Omega^\cR_{2\ep}} \int_{\Omega_\ep^\cR} \rho_{\eps}(x y^{-1}) \ban{\phi(x), \sigma_{f}(y)} \, d |D_{H} f|(y) \, dx \\
& = - \int_{\Omega^\cR_{2\ep}} \ban{\phi(x) , (\rho_{\eps} \ast D_{H}f)(x)} \, dx.
\end{align*}
\begin{comment}
We have also observed that 
\[ \int_{\Omega^\cR_{2\ep}} \int_{\Omega_\ep^\cR} \rho_{\eps}(x y^{-1}) \ban{\phi(x), \sigma_{f}(y)} \, d |D_{H} f|(y) \, dx 
 = \int_{\Omega^\cR_{2\ep}} \int_\Omega \rho_{\eps}(x y^{-1}) \ban{\phi(x), \sigma_{f}(y)} \, d |D_{H} f|(y) \, dx 
\]
\end{comment}
The standard density of $C^{1}_{c}(\Omega^\cR_{2\ep})$ in 
$C_{c}(\Omega^\cR_{2\ep})$, shows that $\rho_{\eps} \ast f \in BV_{H, {\rm loc}}(\Omega_{2\eps}^{\cR})$ and proves the following formula
\begin{equation} \label{commutation conv derivative meas1} 
D_{H}(\rho_{\eps} \ast f) = (\rho_{\eps} \ast D_{H} f) \mu\quad\text{on}
\quad \Omega^\cR_{2\ep}. 
\end{equation}
By Remark~\ref{weak_conv_moll_measure} and Remark~\ref{r:contin_vectconvmeas}, it follows that both $\rho_\ep\ast f$ and $\rho_{\eps}\ast D_{H} f$ are continuous,
therefore $\rho_\ep\ast f\in C^1_H(\Omega^\cR_{2\ep})$ and formula \eqref{commutation conv derivative meas} follows. 
\end{proof}

Taking into account \eqref{eq:BVX_jf}, 
formula \eqref{commutation conv derivative meas} can be written by components as follows
\begin{equation}\label{commutation conv derivative}
X_{j} (\rho_{\eps} \ast f)(x) = (\rho_{\eps} \ast D_{X_j} f)(x)\quad
\text{for every}\; x \in \Omega_{2\eps}^\cR.
\end{equation}

We prove now the smooth approximation results stated in
Theorem~\ref{commutation convolution derivative}.

\begin{proof}[Proof of Theorem~\ref{commutation convolution derivative}] 
The $C^1_H$ smoothness of $\rho_\ep\ast f$ and  
$\nabla_H(\rho_\ep\ast f)=\rho_\ep\ast D_Hf$ on $\Omega^\cR_{2\ep}$
follow from Lemma~\ref{l:BVD_Hfconv}. 
As a result, by \eqref{weak_conv_moll_measure_eq} 
and taking into account \eqref{eq:BVX_jf} we obtain 
the local weak$^*$ convergence
$X_{j} (\rho_{\eps} \ast f) \weakto D_{X_j} f$ for any $j = 1, \dots, \m$.
This proves the first convergence of \eqref{weak_conv_D_f}.

\begin{comment}
Considering $\ep>0$ sufficiently small, such that 
$\supp(\phi) \Subset \Omega^{\cR}_{2\eps}$, by \eqref{commutation conv derivative} we get the following equalities
\begin{align*} 
 \int_\Omega \phi \, X_{j} (\rho_{\eps} \ast f) \, dx 
 & = \int_{\Omega^\cR_{2\ep}} \phi\, (\rho_{\eps} \ast X_{j} f) \, dx  = \int_{\Omega^\cR_{2\ep}} \phi(x) \int_{\Omega^{\cR}_{\eps}} \rho_{\eps}(xy^{-1}) \, d X_{j}f(y) \, dx \\
& = \int_{\Omega^{\cR}_{\eps}} 
\pa{\int_{\Omega^\cR_{2\ep}} \phi(x) \rho_{\eps}(y x^{-1}) \, dx} \, d X_{j}f(y) 
= \int_{\Omega^{\cR}_{\eps}} (\rho_{\eps} \ast \phi)(y) \, d X_{j} f(y) \\
& = \int_{\Omega} (\rho_{\eps} \ast \phi)(y) \, d X_{j} f(y).
\end{align*}
The last equality is justified also by the following fact.
If $x\in\Omega^\cR_{2\ep}$ and $y\notin\Omega^\cR_\ep$, then $\dist^\cR(y,\Omega^c)\le\ep$ and for $\omega\in\Omega^c$ we have
\[
d^\cR(x,y)\ge d^\cR(x,\omega)-d^\cR(y,\omega)>2\ep-d^\cR(x,\omega)
\]
for the arbitrary choice of $\omega$ we get
\[
d^\cR(x,y)\ge 2\ep-\dist^\cR(x,\Omega)\ge \ep,
\]
therefore $\rho_\ep(yx^{-1})=0$. There for the
support of $y\to \rho_\ep(yx^{-1})$ contains $(\Omega^\cR_\ep)^c$
for every $x\in\Omega^\cR_{2\ep}$, therefore $\rho_\ep\ast\phi(y)=0$
if $y\notin\Omega^\cR_\ep$.
\end{comment}

To prove \eqref{pointwise_upper_control}, we 
consider $\phi \in C_{c}(H\Omega^\cR_{2\ep})$, therefore
\begin{align*}
\left | \int_{\Omega^\cR_{2\ep}} \ban{\phi(x), \nabla_{H} (\rho_{\eps} \ast f)(x)} \, dx \right | & = \left | \sum_{j = 1}^{\m} \int_{\Omega^\cR_{2\ep}} \phi_{j}(x) X_{j} (\rho_{\eps} \ast f)(x) \, dx \right | \\
& = \left |\int_\Omega \sum_{j = 1}^{\m} (\rho_{\eps} \ast \phi_{j})(y) \, d D_{X_j} f(y) \right | \\
& =  \left |\int_{\Omega} \ban{ (\rho_{\eps} \ast \phi)(y), \sigma_{f}} \, d |D_{H} f|(y) \right | .
\end{align*}
The second equality follows from \eqref{commutation conv derivative}
joined with \eqref{exchanging_convolution} and the last equality is a consequence of \eqref{eq:BVX_jf}. As a result, applying again \eqref{exchanging_convolution}, 
we get
\beq\label{eq:rhoepphi}
\begin{split}
\left | \int_{\Omega^\cR_{2\ep}} \ban{\phi(x), \nabla_{H} (\rho_{\eps} \ast f)(x)} \, dx \right | 
& \le \int_{\Omega} (\rho_{\eps} \ast |\phi|)(y) \, d |D_{H} f|(y)\\
 & = \int_{\Omega^\cR_{2\ep}} |\phi(x)|\, (\rho_{\eps} \ast |D_{H} f|)(x) \, dx. 
\end{split}
\eeq
By taking the supremum among all $\phi \in C_{c}(HU)$
with $\|\phi\|_{\infty} \le 1$ and $U\subset\Omega^\cR_{2\ep}$ open
set, we are immediately lead to \eqref{pointwise_upper_control}.
From the first inequality of \eqref{eq:rhoepphi}, we also get
\begin{equation*} \left | \int_{\Omega^\cR_{2\ep}} \ban{\phi(x), \nabla_{H} (\rho_{\eps} \ast f)(x)} \, dx \right | \le \|\phi \|_{\infty} |D_{H}f|(\Omega),
\end{equation*} 
whenever $\phi \in C_{c}(H\Omega^\cR_{2\ep})$.
This immediately proves \eqref{total_var_convol_convergence}.

Finally, we are left to show the second local weak$^*$ convergence of
\eqref{weak_conv_D_f}.  
We fix an open set $U\Subset\Omega$ and notice that, by 
\eqref{weak_conv_moll_measure_eq}, we have 
\begin{equation} \label{intermediate_weak_conv_tot_var_D_f} \rho_{\eps} \ast |D_{H} f| \weakto |D_{H} f|  \quad\text{in}\quad U.
 \end{equation} 
In addition, by \eqref{total_var_convol_convergence} and
\eqref{commutation conv derivative} we know that 
\[
\limsup_{\ep\to0}|\nabla_H(\rho_{\eps} \ast f)|(U)
\le\limsup_{\ep\to0}|\rho_{\eps} \ast D_{H} f|(\Omega^\cR_{2\ep})\le
|D_Hf|(\Omega),
\]
hence there exists a weakly$^*$ converging sequence 
$|\nabla_H(\rho_{\eps_{k}} \ast f)|\,\mu$ with limit $\nu$ in $U$.
By virtue of \cite[Proposition~1.62]{AFP} with \eqref{weak_conv_D_f}, we have $|D_{H} f|\le \nu$ in $U$. 
Therefore, taking nonnegative test functions $\ph\in C_c(U)$ 
and using \eqref{pointwise_upper_control}, we get
\[
\int_U \ph\, |\nabla_{H}(\rho_{\eps} \ast f)| \,dx\le \int_U \ph \, (\rho_\ep\ast|D_Hf|) \,dx
\]
for $\ep>0$ sufficiently small, depending on $U$.
Passing to the limit as $\ep\to0$, due to \eqref{intermediate_weak_conv_tot_var_D_f} 
we get the opposite inequality $\nu\le |D_H f|$ in $U$,
therefore establishing the second local weak$^*$ convergence of \eqref{weak_conv_D_f}.
\end{proof}

\begin{remark}
In the assumptions of Theorem~\ref{commutation convolution derivative},
the first local weak$^*$ convergence of \eqref{weak_conv_D_f}
joined with the lower semicontinuity of the total variation with respect
to the weak$^*$ convergence of measures imply that
\begin{equation*}
\liminf_{\eps \to 0} |\nabla_{H}(\rho_{\eps} \ast f)|(U) \ge |D_{H} f|(U)
\end{equation*}
for every open set $U\Subset\Omega$. If in addition $\rho\in C_c^1(B(0,1))$, and  then $\rho_\ep\ast f\in C^1(\G)$ by Proposition~\ref{proposition:convolC1},
the previous inequality immediately gives
\begin{equation*}
\liminf_{\eps \to 0} |\nabla_{H}(\rho_{\eps} \ast f)|(\Omega) \ge |D_{H} f|(\Omega).
\end{equation*}
\end{remark}

\subsection{Sets of finite perimeter in stratified groups}

Functions of bounded h-variation, introduced in the previous section,
naturally yield sets of finite h-perimeter
as soon as we consider their characteristic functions.
\begin{definition}[Sets of finite h-perimeter]\label{def:finitePer}\rm A measurable set $E \subset \G$ is of {\em locally finite h-perimeter} in $\Omega$ (or is a locally {\em h-Caccioppoli set}) if $\chi_{E} \in BV_{H, \rm loc}(\Omega)$. In this case, for any open set $U \Subset \Omega$, we denote the {\em h-perimeter of $E$} in $U$ by
\begin{equation*} \Per(E, U) =|\der_HE|(U):=|D_H\chi_{E} | (U). \end{equation*}
We say that $E$ is a set of {\em finite h-perimeter} if $|D_{H} \chi_{E}|$ is a finite Radon measure on $\Omega$.
The {\em measure theoretic exterior h-normal} of $E$ in $\Omega$ 
is the $|\der_HE|$-measurable horizontal section $\nu_{E} := - \sigma_{\chi_{E}}$.
\end{definition}

\begin{definition}[Measure theoretic interior]\label{d:measThInt}
If $E\subset \G$ is a measurable set, we define the
{\em measure theoretic interior of $E$}  as 
\[
E^1=\set{x\in\G: \lim_{r\searrow0}\frac{\mu(E\cap B(x,r))}{\mu(B(x,r))}=1}.
\] 
The {\em measure theoretic exterior of $E$} is the set
\[
E^0=\set{x\in\G: \lim_{r\searrow0}\frac{\mu(E\cap B(x,r))}{\mu(B(x,r))}=0}.
\]
\end{definition}

We can define two subsets of the topological boundary of a set of locally finite h-perimter $E$: the reduced boundary $\redb E$ and the measure theoretic boundary $\mtb E$.

\begin{definition}[Reduced boundary]\label{d:RedBdry}\rm If $E\subset\G$ is a set of locally finite h-perimeter, 
we say that $x$ belongs to the {\em reduced boundary} if
\begin{enumerate}
\item $|D_H \chi_{E}| (B(x, r)) > 0$ for any $r > 0$;
\item there exists $\displaystyle \lim_{r \to 0} \mean{B(x, r)} \nu_{E} \, d | D_{H} \chi_{E} |$;
\item $\displaystyle \pal{ \lim_{r \to 0} \mean{B(x, r)} \nu_{E} \, d | D_{H} \chi_{E} | } = 1$.
\end{enumerate}
The reduced boundary is denoted by $\redb E$. 
\end{definition}

\begin{definition}[Measure theoretic boundary]\rm Given a measurable set $E\subset\G$, we say that $x \in \mtb E$, if the following two conditions hold:
\begin{equation*} \limsup\limits_{r \to 0} \frac{\mu(B(x,r) \cap E)}{r^{Q}} > 0 \qandq   \limsup\limits_{r \to 0} \frac{\mu(B(x,r) \setminus E)}{r^{Q}} > 0. \end{equation*}
\end{definition}
The Lebesgue differentiation of Theorem \ref{Lebesgue_diff_theorem} immediately shows that 
\beq  \label{LebesgueNegEssBdry}
\mu(\mtb E) = 0.
\eeq
However, a deeper differentiability
result shows that indeed $\mtb E$ is $\sigma$-finite with 
respect to the h-perimeter measure.
Indeed, a general result on the integral representation of the perimeter measure holds in doubling metric measure spaces which admit a Poincar\'e inequality \cite{Ambrosio2002}. 

The following result restates \cite[Theorem~4.2]{Ambrosio2001} in the special case of stratified groups, that are special instances of Ahlfors regular metric spaces equipped with a Poincar\'e inequality.

\begin{theorem} \label{perimeter repr} Given a set of finite h-perimeter $E$ in $\G$, there exists $\gamma \in (0, 1)$ such that the measure $\Per(E, \cdot)$ is concentrated on the set $\Sigma_{\gamma} \subset \mtb E$ defined as
\begin{equation*} \Sigma_{\gamma} = \left \{ x : \limsup_{r \to 0} \min \left \{ \frac{\mu(E \cap B(x, r))}{\mu(B(x, r))}, \frac{\mu(B(x, r) \setminus E)}{\mu(B(x, r))} \right \} \ge \gamma \right \}. \end{equation*}
Moreover, $\SHaus{Q - 1}(\mtb E \setminus \Sigma_{\gamma}) = 0$, $\SHaus{Q - 1}(\mtb E) < \infty$ and there exists $\alpha > 0$, independent of $E$, and a Borel function $\theta_{E} : \G \to [\alpha, + \infty)$ such that
\begin{equation} \label{perimeter repr eq} \Per(E, B) = \int_{B \cap \mtb E} \theta_{E} \, d \SHaus{Q - 1} \end{equation}
for any Borel set $B\subset\G$. Finally, the perimeter measure is asymptotically doubling, i.e., for $\Per(E, \cdot)$-a.e. $x \in \G$ we have
$\ds \limsup_{r \to 0} \frac{\Per(E, B(x, 2r))}{\Per(E, B(x, r))} < \infty$.
\end{theorem}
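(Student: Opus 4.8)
The plan is to obtain the statement by specializing the general structure theory of sets of finite perimeter in Ahlfors regular metric measure spaces supporting a Poincar\'e inequality, so the first task is to verify that $(\G, d, \mu)$ satisfies the required hypotheses, and the second is to recall the mechanism that yields the conclusions. By \eqref{eq:muB(p,r)} one has $\mu(B(x,r)) = r^{Q}\mu(B(0,1))$ for every $x \in \G$ and $r > 0$, so $\mu$ is Ahlfors $Q$-regular with respect to $d$. Moreover it is classical that every stratified group admits a relative isoperimetric inequality (equivalently, a $(1,1)$-Poincar\'e inequality for the horizontal gradient): there is $C_{I} = C_{I}(\G) > 0$ such that
\[
\min\{\mu(E\cap B(x,r)),\,\mu(B(x,r)\setminus E)\}^{\frac{Q-1}{Q}} \le C_{I}\,\Per(E, B(x,r))
\]
for every $x \in \G$, $r > 0$ and every set $E$ of locally finite h-perimeter. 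With these two ingredients the assertion is exactly \cite[Theorem~4.2]{Ambrosio2001}, and the integral representation \eqref{perimeter repr eq} also follows from the representation of the perimeter in doubling metric spaces with a Poincar\'e inequality, \cite{Ambrosio2002}.

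\textbf{Lower density bound, finiteness of $\SHaus{Q-1}(\mtb E)$, and the set $\Sigma_{\gamma}$.} Fix $x \in \mtb E$ and consider the volume ratio $r \mapsto \mu(E\cap B(x,r))/\mu(B(x,r))$, which is continuous in $r$ (the boundary spheres are $\mu$-null since $r\mapsto\mu(B(x,r))=r^{Q}\mu(B(0,1))$). By definition of $\mtb E$, its upper limit and the upper limit of its complement are both strictly positive, so an intermediate-value argument produces, at arbitrarily small scales $r$, the bound $\min\{\mu(E\cap B(x,r)),\,\mu(B(x,r)\setminus E)\} \ge \gamma\,\mu(B(x,r))$ for a dimensional $\gamma \in (0,1)$; this identifies $\Sigma_{\gamma}$ and shows both that $\SHaus{Q-1}(\mtb E\setminus\Sigma_{\gamma})=0$ and that $\Per(E,\cdot)$ is concentrated on $\Sigma_{\gamma}$. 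Inserting such a scale into the isoperimetric inequality gives $\Per(E, B(x,r)) \ge c(\G)\, r^{Q-1}$ there; a Vitali covering argument together with the $Q$-regularity of $\mu$ then yields $\SHaus{Q-1}(\mtb E) \le C(\G)\,\Per(E,\G) < \infty$.

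\textbf{Upper density bound, representation, and asymptotic doubling.} The complementary estimate $\limsup_{r\to 0}\Per(E, B(x,r))/r^{Q-1} \le C(\G)$ for $\Per(E,\cdot)$-a.e.\ $x$ is obtained by combining the doubling property of $\mu$, the Poincar\'e inequality and a maximal-function/covering argument (alternatively via the coarea inequality), and it requires no blow-up. Together with the lower bound, one gets, for $\Per(E,\cdot)$-a.e.\ $x$, a two-sided control
\[
0 < \alpha \le \liminf_{r\to 0}\frac{\Per(E,B(x,r))}{(2r)^{Q-1}} \le \limsup_{r\to 0}\frac{\Per(E,B(x,r))}{r^{Q-1}} < \infty .
\]
A standard comparison of measures then shows that $\Per(E,\cdot)$ and $\SHaus{Q-1}\res\mtb E$ are mutually absolutely continuous with Radon--Nikod\'ym density $\theta_{E}$ bounded below by the dimensional constant $\alpha$, which is \eqref{perimeter repr eq}; and the two-sided control at $\Per(E,\cdot)$-a.e.\ point gives directly $\limsup_{r\to 0}\Per(E,B(x,2r))/\Per(E,B(x,r)) < \infty$, i.e.\ the asymptotic doubling.

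\textbf{Main obstacle.} The genuinely delicate point is the upper estimate $\Per(E, B(x,r)) \lesssim r^{Q-1}$ for $\Per(E,\cdot)$-a.e.\ $x$: in an arbitrary stratified group there is no rectifiability of $\redb E$ to lean on, so this codimension-one bound must be extracted ``softly'' from Ahlfors regularity, the Poincar\'e inequality and covering arguments. This is precisely the content of \cite[Theorem~4.2]{Ambrosio2001}, which is why the proof reduces to invoking that result after the routine verification of its hypotheses rather than reproving it from scratch.
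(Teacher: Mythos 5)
Your proposal is correct and follows essentially the same route as the paper, which proves nothing itself but observes that the statement is exactly \cite[Theorem~4.2]{Ambrosio2001} (together with \cite{Ambrosio2002}) specialized to stratified groups, once one notes that $(\G,d,\mu)$ is Ahlfors $Q$-regular and supports a $(1,1)$-Poincar\'e inequality. Your additional sketch of the internal mechanism (lower and upper codimension-one density bounds, representation, asymptotic doubling) is consistent with Ambrosio's argument, and your explicit verification of the hypotheses via \eqref{eq:muB(p,r)} matches the paper's implicit justification.
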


\begin{lemma}\label{difference redb mtb} 
If $E\subset\G$ is a set of locally finite h-perimeter, then 
\beq\label{eq:difference redb mtb} 
\redb E\subset \mtb E \qandq \Haus{Q - 1}(\mtb E \setminus \redb E) = 0.
\eeq 
\end{lemma}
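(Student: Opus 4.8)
Let me sketch the proof.

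First I would split the claim into the inclusion $\redb E\subset\mtb E$ and the identity $\Haus{Q-1}(\mtb E\setminus\redb E)=0$, and treat them separately.

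\emph{The inclusion.} Fix $x\in\redb E$ and set $v(r):=\mu(E\cap B(x,r))$ and $\tilde v(r):=\mu(B(x,r)\setminus E)$. By \eqref{eq:muB(p,r)} both are locally Lipschitz functions of $r$, hence a.e.\ differentiable; moreover they are strictly positive for all small $r>0$, since $v(r_0)=0$ would force $\chi_E=0$ $\mu$-a.e.\ on $B(x,r_0)$ and hence $|D_H\chi_E|(B(x,r_0))=0$, against part~(1) of Definition~\ref{d:RedBdry} (symmetrically for $\tilde v$ using $E^c$). Parts~(2)--(3) of that definition give $|D_H\chi_E(B(x,r))|\ge\frac12|D_H\chi_E|(B(x,r))$ for $r$ small. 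The crucial point is the estimate
\[
|D_H\chi_E(\overline{B(x,r)})|\le C\min\{v'(r),\tilde v'(r)\}\qquad\text{for a.e.\ }r>0 ,
\]
which I would get by testing each component $D_{X_j}\chi_E$ against Lipschitz cut-offs $\psi_h=\eta_h(d(x,\cdot))$ with $\psi_h\equiv1$ on $\overline{B(x,r)}$, $\psi_h\equiv0$ off $B(x,r+1/h)$, $|\eta_h'|\le 2h$: since $d(x,\cdot)\in\Lip_{H,{\rm loc}}$ with a horizontal gradient bound independent of the centre, $|\nabla_{H}\psi_h|\le Ch$, while by \eqref{eq:BV_byparts}--\eqref{eq:BVX_jf} extended to $\Lip_{H,c}$ test fields (Remark~\ref{Lip test1}) one has $\int\psi_h\,dD_{X_j}\chi_E=-\int_E X_j\psi_h\,dx$; as $X_j\psi_h$ is supported in the thin annulus $B(x,r+1/h)\setminus\overline{B(x,r)}$, this is at most $C\,h\big(v(r+1/h)-v(r)\big)$, which tends to $C\,v'(r)$ for a.e.\ $r$, while on the other hand $\int\psi_h\,dD_{X_j}\chi_E\to D_{X_j}\chi_E(\overline{B(x,r)})$ as $h\to\infty$; repeating with $1-\chi_E$ in place of $\chi_E$ gives the same bound with $\tilde v'$, and summing over $j$ yields the displayed estimate. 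Combining it with the previous lower bound and the relative isoperimetric inequality in $\G$ — available since $\G$ is Ahlfors $Q$-regular and supports a Poincar\'e inequality — namely $\min\{v(r),\tilde v(r)\}^{(Q-1)/Q}\le C\,|D_H\chi_E|(B(x,r))$, and noting that $w:=\min\{v,\tilde v\}$ satisfies $w'(r)\ge\min\{v'(r),\tilde v'(r)\}$ a.e., I would obtain $w'(r)\ge c\,w(r)^{(Q-1)/Q}$ for a.e.\ small $r$. Since $w$ is locally Lipschitz and positive, this integrates to $w(r)^{1/Q}\ge w(r_1)^{1/Q}+\frac cQ(r-r_1)$ for $0<r_1<r$ small; letting $r_1\downarrow0$ gives $w(r)\ge(c/Q)^Q r^Q$, so $\liminf_{r\to0}v(r)/r^Q>0$ and $\liminf_{r\to0}\tilde v(r)/r^Q>0$, i.e.\ $x\in\mtb E$.

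\emph{Negligibility of $\mtb E\setminus\redb E$.} Here I would use Theorem~\ref{perimeter repr}: $\Per(E,\cdot)=|D_H\chi_E|$ is asymptotically doubling, is concentrated on $\mtb E$, and $\Per(E,B)=\int_{B\cap\mtb E}\theta_E\,d\SHaus{Q-1}$ with $\theta_E\ge\alpha>0$. Since $|\nu_E|=1$ $\Per(E,\cdot)$-a.e., the Lebesgue point theorem for the asymptotically doubling measure $\Per(E,\cdot)$ shows that $\Per(E,\cdot)$-a.e.\ $x$ satisfies $\mean{B(x,r)}\nu_E \, d|D_{H}\chi_E|\to\nu_E(x)$ with $|\nu_E(x)|=1$; as $\Per(E,\cdot)$-a.e.\ point also lies in $\supp\Per(E,\cdot)$, parts~(1)--(3) of Definition~\ref{d:RedBdry} hold $\Per(E,\cdot)$-a.e., so $\Per(E,\G\setminus\redb E)=0$. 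Therefore $\alpha\,\SHaus{Q-1}(\mtb E\setminus\redb E)\le\int_{\mtb E\setminus\redb E}\theta_E\,d\SHaus{Q-1}=\Per(E,\mtb E\setminus\redb E)=0$, whence $\SHaus{Q-1}(\mtb E\setminus\redb E)=0$ and, $\Haus{Q-1}$ and $\SHaus{Q-1}$ being comparable, also $\Haus{Q-1}(\mtb E\setminus\redb E)=0$.

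\emph{Main obstacle.} The delicate step is the estimate $|D_H\chi_E(\overline{B(x,r)})|\le C\min\{v'(r),\tilde v'(r)\}$: it rests on the a.e.\ differentiability of $v,\tilde v$, on restricting to ``good'' radii for which the relevant spheres are $\mu$- and $|D_H\chi_E|$-null, and on a centre-uniform horizontal Lipschitz bound for $d(x,\cdot)$; the rest is soft measure theory together with two external inputs (the relative isoperimetric inequality and Theorem~\ref{perimeter repr}).
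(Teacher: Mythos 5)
Your proof is correct, and the two halves compare differently with the paper. The negligibility part ($\Haus{Q-1}(\mtb E\setminus\redb E)=0$) follows exactly the paper's route: asymptotic doubling of the perimeter measure from Theorem~\ref{perimeter repr}, Federer's differentiation theorem to get $|D_H\chi_E|(\G\setminus\redb E)=0$ (you are right that one also needs a.e.\ membership in the support to secure condition~(1) of Definition~\ref{d:RedBdry}, a point the paper leaves implicit), then the lower bound $\theta_E\ge\alpha$ in \eqref{perimeter repr eq} and the equivalence of $\Haus{Q-1}$ and $\SHaus{Q-1}$ up to null sets. For the inclusion $\redb E\subset\mtb E$, however, the paper simply invokes the lower density estimates of \cite{FSSC5} (together with invariance under complementation), whereas you re-derive them from scratch by the classical De Giorgi scheme: the cut-off computation $\big|D_{X_j}\chi_E(\overline{B(x,r)})\big|\le C\min\{v'(r),\tilde v'(r)\}$ (which is correct, using $\phi=\psi_h X_j$, $\div X_j=0$ and Remark~\ref{Lip test1}), the reduced-boundary lower bound $|D_H\chi_E(B(x,r))|\ge\tfrac12|D_H\chi_E|(B(x,r))$, the relative isoperimetric inequality, and integration of $w'\ge c\,w^{(Q-1)/Q}$. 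This is a genuinely more self-contained argument and in fact yields more than membership in $\mtb E$ (uniform lower bounds $\min\{v(r),\tilde v(r)\}\ge c\,r^{Q}$), at the price of importing the relative isoperimetric inequality as an external input; note that in this generality the inequality may come with a dilated ball $B(x,\lambda r)$ on the right-hand side, which only requires a harmless adjustment of your differential inequality. The paper's citation is shorter; your version makes the lemma independent of \cite{FSSC5} and records the quantitative density estimates explicitly.
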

\begin{proof}
The lower estimates of \cite{FSSC5} joined with the invariance of reduced boundary
and perimeter measure when passing to the complement of $E$ immediately
give the inclusion of \eqref{eq:difference redb mtb}.
By Theorem~\ref{perimeter repr}, the perimeter measure $\Per(E, \cdot) = |D \chi_{E}|(\cdot)$ is a.e. asymptotically doubling, therefore the following differentiation property holds:
\begin{equation*} 
\lim_{r\to0}\mean{B(x, r)} \nu_{E} \, d |D_{H} \chi_{E}| = \nu_{E}(x) \ \text{for} \ |D_H \chi_E|\text{-a.e.} \ x, \end{equation*}
according to \cite[Sections 2.8.17 and 2.9.6]{Fe}.
This implies that $|D_H \chi_E|$-a.e. $x$ belongs to $\redb E$; that is, 
$|D_H \chi_E|(\G \setminus \redb E) = 0$. 
Moreover, \eqref{perimeter repr eq} yields 
$|D_H \chi_E|(B) \ge \alpha \SHaus{Q - 1}(B \cap \mtb E)$ on Borel
sets $B\subset\G$. This inequality also extends to $|D_H \chi_E|$-measurable sets,
hence taking $B = \G \setminus \redb E$, we obtain $\SHaus{Q - 1}(\mtb E \setminus \redb E) = 0$. Since $\Haus{Q-1}$ and $\SHaus{Q-1}$ have the same negligible sets, 
the equality of \eqref{eq:difference redb mtb} follows.
\end{proof}

\begin{remark} \label{negl_abs_cont_red_boundary} 
The previous lemma joined with \eqref{eq:LebHausdQ} and \eqref{LebesgueNegEssBdry} shows that
\beq  \label{LebesgueNegBdry}
\mu(\redb E) = 0.
\eeq
In addition, \eqref{perimeter repr eq} and \eqref{eq:difference redb mtb} imply that, for any Borel set $B$, $|D_{H} \chi_{E}|(B) = 0$ if and only if $\SHaus{Q - 1}(B \cap \redb E) = 0$; that is, the measures $|D_{H} \chi_{E}|$ and $\SHaus{Q - 1} \res \redb E$ have the same negligible sets. In particular, $|D_{H} \chi_{E}| \ge \alpha \SHaus{Q - 1}\res \redb E$.
\end{remark}

\begin{remark}\label{r:sign_chi_E} 
Let $\nu \in \mathcal{M}(\Omega)$ be any nonnegative measure and
denote by $\tildef{\chi_{E}}$ any weak$^*$ cluster point
of $\rho_\ep\ast\chi_E$ in $L^\infty(\Omega; \nu)$. Then the
lower semicontinuity of the $L^{\infty}$-norm gives
\[
 \|\tildef{\chi_{E}}\|_{L^{\infty}(\Omega; \nu)} \le \liminf_{\eps_{k} \to 0} \|(\rho_{\eps_{k}} \ast \chi_{E})\|_{L^{\infty}(\Omega, \nu)} \le 1 
\]
for some positive sequence of $\eps_k$ converging to zero.
Considering a nonnegative test function $\psi \in L^{1}(\Omega; \nu)$,
we also have
\begin{equation*}
0 \le \int_{\Omega} \psi \, (\rho_{\eps} \ast \chi_{E}) \, d \nu \to \int_{\Omega} \psi \tildef{\chi_{E}} \, d \nu,
\end{equation*}
hence proving that $0 \le \tildef{\chi_{E}}(x) \le 1$ for $\nu$-a.e.\ $x\in\Omega$. 
\end{remark}

In relation to the following proposition, we are grateful to Luigi Ambrosio for having pointed out to us his work with Alessio Figalli \cite{ambrosio2010surface}, where they study points of density 1/2 and relate them to the representation of perimeters in Wiener spaces.

\begin{proposition}\label{overline_chi_E} Let $E \subset \Omega$ be a set of locally finite h-perimeter, $\rho \in C_c(B(0, 1))$ be a mollifier satisfying $\rho \ge 0$, $\rho(x) = \rho(x^{-1})$ and $\ds\int_{B(0, 1)} \rho(y) \, dy = 1$. It follows that 
\begin{equation} \label{Leibniz_rule_Sobolev_reg_BV} D_{H}((\rho_{\eps} \ast \chi_{E}) \chi_{E}) = (\rho_{\eps} \ast \chi_{E}) D_{H} \chi_{E} + \chi_{E} (\rho_{\eps} \ast D_{H} \chi_{E}) \quad\text{in $\mathcal{M}(\Omega_{2 \eps}^{\cR})$}
\end{equation}
for any $\eps > 0$ such that $\Omega_{2 \eps}^{\cR} \neq \emptyset$ and
\beq\label{limit1/2}
\rho_\ep\ast \chi_E\weakstarto\frac{1}{2}\qs{as} \ep\to0^+\quad \text{in $L^\infty(\Omega;|D_H\chi_E|)$.}
\eeq
\end{proposition}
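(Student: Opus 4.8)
The plan is to establish the two claims in succession, the Leibniz identity first since it feeds into the second. Set $u_\ep:=\rho_\ep\ast\chi_E$. Since $\chi_E\in BV_{H,\rm loc}(\Omega)$, Lemma~\ref{l:BVD_Hfconv} gives $u_\ep\in C^1_H(\Omega^\cR_{2\ep})\cap C(\G)$ with $0\le u_\ep\le 1$ and $\nabla_H u_\ep=\rho_\ep\ast D_H\chi_E$ on $\Omega^\cR_{2\ep}$; being $C^1_H$, $u_\ep$ is everywhere differentiable and lies in $\Lip_{H,\rm loc}(\Omega^\cR_{2\ep})$. Fixing $\phi\in C^1_c(H\Omega^\cR_{2\ep})$, I would use the classical product rule $\div(u_\ep\phi)=u_\ep\,\div\phi+\ban{\nabla_H u_\ep,\phi}$ pointwise and then apply \eqref{eq:BV_byparts}, extended to $\Lip_{H,c}$ test fields by Remark~\ref{Lip test1}, to the admissible field $u_\ep\phi$ (its coefficients are products of a $\Lip_{H,\rm loc}$ function and a $C^1_c$ function), obtaining
\[
\int_{\Omega^\cR_{2\ep}} u_\ep\chi_E\,\div\phi\,dx=-\int u_\ep\,\ban{\phi,dD_H\chi_E}-\int\chi_E\,\ban{\phi,\rho_\ep\ast D_H\chi_E}\,dx .
\]
This is precisely \eqref{Leibniz_rule_Sobolev_reg_BV}, and it also shows $u_\ep\chi_E\in BV_{H,\rm loc}(\Omega^\cR_{2\ep})$, the right-hand side being a locally finite $H\Omega$-valued measure.

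For \eqref{limit1/2} I would test this identity against an arbitrary $\phi\in C^1_c(H\Omega)$ and let $\ep\to0^+$. Since $0\le u_\ep\le1$ and $u_\ep\to\chi_E$ in $L^1_{\rm loc}(\Omega)$, also $u_\ep\chi_E\to\chi_E$ in $L^1_{\rm loc}(\Omega)$, so the left-hand side tends to $\int_\Omega\chi_E\,\div\phi\,dx=-\int\ban{\phi,dD_H\chi_E}$. The decisive point — and the source of the constant $1/2$ — is that the two terms on the right become asymptotically equal. Expanding $\rho_\ep\ast D_H\chi_E=\sum_{j}(\rho_\ep\ast D_{X_j}\chi_E)X_j$ and using the elementary transpose rule $\int h\,(\rho_\ep\ast\lambda)\,d\mu=\int(\rho_\ep\ast h)\,d\lambda$ valid for bounded $h$ and finite signed $\lambda$ (Fubini together with the symmetry $\rho(x)=\rho(x^{-1})$, as in the computation of \eqref{exchanging_convolution}),
\[
\int\chi_E\,\ban{\phi,\rho_\ep\ast D_H\chi_E}\,dx=\sum_{j=1}^\m\int\big(\rho_\ep\ast(\chi_E\phi_j)\big)(y)\,dD_{X_j}\chi_E(y).
\]
Writing $\phi_j(z)=\phi_j(y)+\big(\phi_j(z)-\phi_j(y)\big)$ inside $\rho_\ep\ast(\chi_E\phi_j)(y)$, the first summand contributes $\phi_j(y)(\rho_\ep\ast\chi_E)(y)=\phi_j(y)u_\ep(y)$, while the second is bounded in absolute value, uniformly in $y$, by the modulus of continuity of $\phi$ at scale $\ep$ (the kernel being supported where $d^\cR(y,z)<\ep$), hence contributes $o(1)$ as $\ep\to0^+$. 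Thus
\[
\int\chi_E\,\ban{\phi,\rho_\ep\ast D_H\chi_E}\,dx=\int u_\ep\,\ban{\phi,dD_H\chi_E}+o(1),
\]
and the Leibniz identity becomes $\int u_\ep\chi_E\,\div\phi\,dx=-2\int u_\ep\,\ban{\phi,dD_H\chi_E}+o(1)$. Passing to the limit gives $\int u_\ep\,\ban{\phi,dD_H\chi_E}\to\tfrac12\int\ban{\phi,dD_H\chi_E}$ for every $\phi\in C^1_c(H\Omega)$; that is, $u_\ep\,D_H\chi_E\weakto\tfrac12 D_H\chi_E$ as $H\Omega$-valued measures.

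It then remains to upgrade this into the weak$^*$ convergence $u_\ep\weakstarto\tfrac12$ in $L^\infty(\Omega;|D_H\chi_E|)$. By Remark~\ref{r:sign_chi_E} the family $\{u_\ep\}$ is bounded in $L^\infty(\Omega;|D_H\chi_E|)$, which is the dual of the separable space $L^1(\Omega;|D_H\chi_E|)$ (the perimeter measure being $\sigma$-finite), so every sequence $u_{\ep_k}$ with $\ep_k\to0^+$ has a weak$^*$ convergent subsequence, with limit $\ell$ satisfying $0\le\ell\le1$. Testing against $\phi\in C^1_c(H\Omega)$, and using that $\ban{\phi,\sigma_{\chi_E}}\in L^1(|D_H\chi_E|)$, gives $u_{\ep_k}D_H\chi_E\weakto\ell\,D_H\chi_E$; comparing with the previous step, $\ell\,\sigma_{\chi_E}\,|D_H\chi_E|=\tfrac12\,\sigma_{\chi_E}\,|D_H\chi_E|$, and since $|\sigma_{\chi_E}|=1$ for $|D_H\chi_E|$-a.e.\ point, this forces $\ell=\tfrac12$ $|D_H\chi_E|$-a.e. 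As every weak$^*$ cluster point equals $1/2$ and bounded subsets of $L^\infty(\Omega;|D_H\chi_E|)$ are weak$^*$ sequentially compact, the full family converges, proving \eqref{limit1/2}.

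I expect the main obstacle to be the middle step, the asymptotic identification of the two terms in the Leibniz rule: the noncommutativity forces careful bookkeeping of the transpose convolution identity and of the right-invariant localizations ($\Omega^\cR_{2\ep}$, the $\ep$-enlargements $(\supp\phi)^{\cR,\ep}$, and the domains on which $\rho_\ep\ast\nu$ is defined), so that every mollification appearing is genuinely well posed where it is used; the last functional-analytic step also requires the mild verification that the pairing $\ban{\phi,\sigma_{\chi_E}}$ may be used as an $L^1(|D_H\chi_E|)$ test function despite $\sigma_{\chi_E}$ being only measurable.
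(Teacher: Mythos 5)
Your proof is correct. The derivation of \eqref{Leibniz_rule_Sobolev_reg_BV} is essentially the paper's own argument (mollified function is $C^1_H$ on $\Omega^\cR_{2\ep}$ by Lemma~\ref{l:BVD_Hfconv}, product rule on the test field $u_\ep\phi$, then \eqref{eq:BV_byparts} extended to Lipschitz fields via Remark~\ref{Lip test1}), but your proof of \eqref{limit1/2} takes a genuinely different route. The paper argues ``softly'', in the spirit of Ambrosio--Figalli: from \eqref{Leibniz_rule_Sobolev_reg_BV}, \eqref{pointwise_upper_control} and the symmetry of $\rho$ it derives the one-sided bound $|D_H\chi_E|(A)\le 2\int_A\overline{\chi_E}\,d|D_H\chi_E|$ for every open $A$ by lower semicontinuity of the total variation along $(\rho_\ep\ast\chi_E)\chi_E\to\chi_E$ in $L^1_{\rm loc}$, so that any cluster point satisfies $\overline{\chi_E}\ge 1/2$, and then obtains the reverse inequality by running the same argument on $\Omega\setminus E$ and using $\rho_\ep\ast\chi_\Omega=\rho_\ep\ast\chi_E+\rho_\ep\ast\chi_{\Omega\setminus E}$; no quantitative continuity of the test field enters. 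You instead test \eqref{Leibniz_rule_Sobolev_reg_BV} against a fixed $\phi\in C^1_c(H\Omega)$ and prove, via the transpose identity \eqref{exchanging_convolution} and a Friedrichs-type modulus-of-continuity estimate (with the correct $d^\cR$-bookkeeping, which you carry out), that the two right-hand terms are asymptotically equal, yielding directly the equality $(\rho_\ep\ast\chi_E)D_H\chi_E\weakto\frac12 D_H\chi_E$; the duality step using $|\sigma_{\chi_E}|=1$ $|D_H\chi_E|$-a.e.\ then pins every weak$^*$ cluster point to $1/2$, and sequential weak$^*$ compactness (separability of $L^1(\Omega;|D_H\chi_E|)$, the measure being locally finite hence $\sigma$-finite) gives convergence of the whole family. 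Your route is more computational but buys, as an intermediate product, the vector-measure convergence that the paper only obtains afterwards in Lemma~\ref{overline_D_chi_E_1_2_lemma} (and Remark~\ref{weak_conv_absolutely_continuous_perimeter_measures}), and it avoids both the complementation trick and the lower semicontinuity of the total variation; the paper's argument, in exchange, needs no uniform-continuity estimate on the test field and transfers verbatim to settings (such as Wiener spaces) where such commutator estimates are less natural.
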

\begin{comment}
Suppose by contradiction that $\rho_\ep\ast\chi_E$ does not
weakly$^*$ converge to $1/2$. Then there exists $u_0\in L^1(\Omega;|D_H\chi_E|)$  such that 
\[
\pal{\int_\Omega  \rho_{\ep_k}\ast\chi_E\, u_0\,d|D_H\chi_E|-\frac12 
|D_H\chi_E|(\Omega)}>\kappa_0
\]
for $k$ large, then we may get a subsequence $\tilde\ep_k$ such that
\[
\int_\Omega  \rho_{\tilde\ep_k}\ast\chi_E\, u_0\,d|D_H\chi_E|\to
\frac12|D_H\chi_E|(\Omega)
\]
and this yields a contradiction.
\end{comment}
\begin{proof}
It suffices to show that for any cluster point $\overline{\chi_{E}} \in L^{\infty}(\Omega; |D_{H} \chi_{E}|)$ of $\rho_{\eps} \ast \chi_{E}$ as $\eps \to 0$,
then we have $\overline{\chi_{E}} =1/2$
a.e.\ with respect to $|D_H\chi_E|$.
We consider a positive vanishing sequence $\eps_{k}$ 
such that $\rho_{\eps_{k}} \ast \chi_{E} \weakstarto \overline{\chi_{E}}$ in $L^{\infty}(\Omega; |D_{H} \chi_{E}|)$. Indeed, $\rho_\ep\ast \chi_E$ is clearly uniformly bounded in $L^{\infty}(\Omega; |D_{H} \chi_{E}|)$, and therefore there exists at least a converging subsequence. 
We have first to prove \eqref{Leibniz_rule_Sobolev_reg_BV}.
We know that $\rho_{\eps} \ast \chi_{E} \in C^1_H(\Omega_{2 \eps}^{\cR}) \cap C(\G)$ by Lemma~\ref{l:BVD_Hfconv}. Choosing any $\phi \in C^{1}_{c}(H\Omega_{2 \eps}^{\cR})$
and taking into account \eqref{commutation conv derivative}, it follows that
\begin{equation} \label{weak_Leibniz_rule_Sobolev_reg_BV} \int_{\Omega_{2 \eps}^{\cR}} (\rho_{\eps} \ast \chi_{E}) \chi_{E} \div \phi \, dx = \int_{\Omega_{2 \eps}^{\cR}} \chi_{E} \div ( \phi  (\rho_{\eps} \ast \chi_{E}) ) \, dx - \int_{\Omega_{2 \eps}^{\cR}} \chi_{E} \ban{ \phi, \rho_{\eps} \ast D_{H} \chi_{E}} \, dx. \end{equation}
By Remark~\ref{Lip test1}, we get 
\begin{equation*} \int_{\Omega_{2 \eps}^{\cR}} (\rho_{\eps} \ast \chi_{E}) \chi_{E} \div \phi \, dx = - \int_{\Omega_{2 \eps}^{\cR}} (\rho_{\eps} \ast \chi_{E})  \, \ban{\phi, D_{H} \chi_{E}} - \int_{\Omega_{2 \eps}^{\cR}} \chi_{E} \ban{ \phi, \rho_{\eps} \ast D_{H} \chi_{E}} \, dx, \end{equation*}
which implies \eqref{Leibniz_rule_Sobolev_reg_BV}.
Thus, taking into account \eqref{commutation conv derivative meas} and \eqref{pointwise_upper_control}, for any open set $A \Subset \Omega$ such that $A \subset \Omega_{2 \eps}^{\cR}$, we obtain 
\begin{equation} \label{tot_var_inequality_loc} |D_{H}((\rho_{\eps} \ast \chi_{E}) \chi_{E})|(A) \le \int_{A} \rho_{\eps} \ast \chi_{E} \, d |D_{H} \chi_{E}| + \int_{E \cap A} \rho_{\eps} \ast |D_{H} \chi_{E}| \, dx. \end{equation}
Now we observe that
\begin{equation} \label{second_term_estimate}  
\begin{split}
\int_{E \cap A} \rho_{\eps} \ast |D_{H} \chi_{E}| \, dx &= \int_{\G} \int_\Omega \chi_{E \cap A}(x) \rho_{\eps}(y x^{-1}) \, d |D_{H} \chi_{E}|(y) \, dx \\
&= \int_\Omega (\rho_{\eps} \ast \chi_{E \cap A})(y) \, d |D_{H} \chi_{E}|(y), 
\end{split}
\end{equation}
since $\rho_{\eps}(xy^{-1}) = \rho_{\eps}(yx^{-1})$. We notice that $(\rho_{\eps} \ast \chi_{E \cap A}) \le (\rho_{\eps} \ast \chi_{E})$ and 
\[
(\rho_{\eps} \ast \chi_{E \cap A})(x)=0
\]
for any $x \notin A^{\cR,\ep}$. Taking into account this vanishing
property, along with \eqref{tot_var_inequality_loc}, \eqref{second_term_estimate}, 
\eqref{weak_conv_moll_measure_eq} and the lower semicontinuity of the total variation, we let $\eps = \eps_{k}$ and, for any open set $A \Subset \Omega$, we obtain
\begin{equation*} |D_{H} \chi_{E}|(A) \le 2 \int_{\overline{A}} \overline{\chi_{E}} \, d |D_{H} \chi_{E}|,
\end{equation*}
since $\overline{\chi_E} \ge 0$, as observed in Remark~\ref{r:sign_chi_E}, in the particular case $\nu = |D_{H} \chi_{E}|$.
This inequality can be refined by noticing that, given any open set $A \subset \Omega$, if we take an increasing sequence of open sets $A_{j}$ such that $A_{j} \Subset A_{j + 1}$ and $\bigcup_{j} A_{j} = A$, the regularity of the Radon measure $|D_{H} \chi_{E}|$ yields
\begin{equation} \label{key_estimate_one_tot_var}
\begin{split}
|D_{H} \chi_{E}|(A) &= \limsup_{j \to + \infty} |D_{H} \chi_{E}|(A_{j}) \\
&\le 2 \limsup_{j \to + \infty} \int_{\overline{A_{j}}} \overline{\chi_{E}} \, d |D_{H} \chi_{E}| \le 2 \int_{A} \overline{\chi_{E}} \, d |D_{H} \chi_{E}|.
\end{split}
\end{equation}
This means that $\displaystyle \overline{\chi_{E}}(x) \ge1/2$ for $|D_{H} \chi_{E}|$-a.e.\ $x\in\Omega$.
\begin{comment}
We observe that $\rho_{\ep_k}\ast\chi_\Omega$ is continuous 
in $\G$ and pointwise converges to one in $\Omega$, therefore
the equality $\rho_{\ep_k}\ast \chi_\Omega$ weakly$^*$ converges
to one in $\Omega$.
\end{comment}
Finally, we notice that also $\Omega \setminus E$ is a set of locally finite h-perimeter in $\Omega$ and the equality
\[
\rho_{\ep_k}\ast \chi_\Omega=\rho_{\ep_k}\ast \chi_E+
\rho_{\ep_k}\ast \chi_{\Omega\sm E}
\]
yields the weak$^*$ convergence of $\rho_{\ep_k}\ast \chi_{\Omega\sm E}$ to $1 - \overline{\chi_{E}}$ in $L^{\infty}(\Omega; |D_{H} \chi_{E}|)$. This implies that
$1-\overline{\chi_E} \ge 1/2$ at $|D_{H} \chi_{E}|$-a.e. point of $\Omega$,  
therefore our claim is achieved.
\end{proof}

\begin{lemma}\label{lemma:weak_conv_absolutely_continuous_perimeter_measures}
If $\gamma\in\cM(\Omega)$ is a nonnegative measure and $f_k\weakstarto f$ in $L^\infty(\Omega;\gamma)$ as $k\to\infty$, then for every $\theta\in L^1(\Omega;\gamma)$, setting $\nu=\theta\gamma$, we have 
\[
f_k \nu  \weakto f\nu
\]
in the sense of Radon measures on $\Omega$ and $f_{k} \weakstarto f$ in $L^{\infty}(\Omega; |\nu|)$.
\end{lemma}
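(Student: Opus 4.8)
The plan is to deduce both conclusions straight from the definition of weak$^*$ convergence in $L^\infty(\Omega;\gamma)$, i.e.\ from the fact that $\int_\Omega f_k g\, d\gamma\to\int_\Omega fg\, d\gamma$ for every $g\in L^1(\Omega;\gamma)$, by choosing the test function $g$ so as to absorb the density $\theta$. Throughout I use the elementary identities $\int_\Omega u\, d(\theta\gamma)=\int_\Omega u\theta\, d\gamma$ and, since $\gamma\ge0$, $|\nu|=|\theta|\gamma$, whence $\int_\Omega u\, d|\nu|=\int_\Omega u\,|\theta|\, d\gamma$, valid whenever the right-hand integrand is $\gamma$-summable. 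Note also that $f_k,f\in L^\infty(\Omega;\gamma)$ together with $\theta\in L^1(\Omega;\gamma)$ give $f_k\theta,f\theta\in L^1(\Omega;\gamma)$, so that $f_k\nu=f_k\theta\,\gamma$ and $f\nu=f\theta\,\gamma$ are genuine finite signed Radon measures on $\Omega$.

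First I would prove $f_k\nu\weakto f\nu$ in $\cM(\Omega)$. Fix $\phi\in C_c(\Omega)$. Since $\phi$ is bounded, $\phi\theta\in L^1(\Omega;\gamma)$, so testing the weak$^*$ convergence against $g=\phi\theta$ yields
\[
\int_\Omega \phi\, d(f_k\nu)=\int_\Omega f_k\,(\phi\theta)\, d\gamma\;\lra\;\int_\Omega f\,(\phi\theta)\, d\gamma=\int_\Omega \phi\, d(f\nu).
\]
As $\phi\in C_c(\Omega)$ was arbitrary, this is exactly the claimed convergence of Radon measures.

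Next I would prove $f_k\weakstarto f$ in $L^\infty(\Omega;|\nu|)$. Since $|\nu|=|\theta|\gamma\ll\gamma$, every $\gamma$-negligible set is $|\nu|$-negligible, so $f_k$ and $f$ define elements of $L^\infty(\Omega;|\nu|)$, with $\|\cdot\|_{L^\infty(\Omega;|\nu|)}\le\|\cdot\|_{L^\infty(\Omega;\gamma)}$, and the statement makes sense. Let $h\in L^1(\Omega;|\nu|)$ be arbitrary. Then $\int_\Omega|h|\,|\theta|\, d\gamma=\int_\Omega|h|\, d|\nu|<\infty$, so $h|\theta|\in L^1(\Omega;\gamma)$, and testing the weak$^*$ convergence in $L^\infty(\Omega;\gamma)$ against $g=h|\theta|$ gives
\[
\int_\Omega f_k h\, d|\nu|=\int_\Omega f_k\,(h|\theta|)\, d\gamma\;\lra\;\int_\Omega f\,(h|\theta|)\, d\gamma=\int_\Omega f h\, d|\nu|.
\]
Since $h$ was arbitrary in $L^1(\Omega;|\nu|)$, this establishes $f_k\weakstarto f$ in $L^\infty(\Omega;|\nu|)$.

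I do not expect any genuine obstacle: the argument is purely a bookkeeping of densities. The only points that need (routine) care are checking that $\phi\theta$ and $h|\theta|$ lie in $L^1(\Omega;\gamma)$ — immediate from boundedness of $\phi$ and from the very definition of $L^1(\Omega;|\nu|)$, respectively — and the transfer of $L^\infty$ and $L^1$ classes along the absolute continuity $|\nu|\ll\gamma$. If desired one could also record that $(f_k)$ is norm bounded in $L^\infty(\Omega;\gamma)$ by the uniform boundedness principle (as $\gamma$ is finite, $L^\infty(\Omega;\gamma)=(L^1(\Omega;\gamma))^*$), but this is not needed above.
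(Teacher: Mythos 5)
Your proof is correct and follows essentially the same route as the paper's: both parts are obtained by testing the weak$^*$ convergence in $L^\infty(\Omega;\gamma)$ against $\phi\theta$ and $h|\theta|$, respectively, using $|\nu|=|\theta|\gamma$. The extra remarks on transferring $L^\infty$ classes along $|\nu|\ll\gamma$ are harmless additions but not needed beyond what the paper already does.
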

\begin{proof}
For any $\phi \in C_{c}(\Omega)$, one clearly has $\phi \theta \in L^{1}(\Omega; \gamma)$ and so we get
\begin{equation*}
\int_{\Omega} \phi f_k \, d \nu =
\int_{\Omega} \phi \theta  f_k \, d \gamma \to  \int_{\Omega} \phi \theta  f\, d\gamma = \int_{\Omega} \phi  f\, d \nu.
\end{equation*}
We observe that $|\nu| = |\theta| \gamma$, and so, for any $\psi \in L^{1}(\Omega; |\nu|)$, we have $\psi |\theta| \in L^{1}(\Omega; \gamma)$. Thus, we obtain
\begin{equation*}
\int_{\Omega} \psi f_k \, d |\nu| =
\int_{\Omega} \psi |\theta|  f_k \, d \gamma \to  \int_{\Omega} \psi |\theta|  f\, d\gamma = \int_{\Omega} \psi  f\, d |\nu|,
\end{equation*}
concluding the proof.
\end{proof}

\begin{remark}\label{weak_conv_absolutely_continuous_perimeter_measures} 
By \eqref{limit1/2} and the previous lemma, we notice
that  
\[
(\rho_{\eps} \ast \chi_{E}) \nu \weakto (1/2) \nu,
\]
having $\nu = \theta |D_{H} \chi_{E}|$ and
$\theta \in L^1(\Omega; |D_{H} \chi_{E}|)$.
\end{remark}

\begin{lemma} \label{overline_D_chi_E_1_2_lemma}  Let $E \subset \Omega$ be a set of locally finite h-perimeter and $\rho \in C_c(B(0, 1))$ be a mollifier satisfying $\rho \ge 0$, $\rho(x) = \rho(x^{-1})$ and $\ds\int_{B(0, 1)} \rho(y) \, dy = 1$. Then, we have 
\begin{align} \label{overline_D_chi_E_1} \chi_{E} (\rho_{\eps} \ast D_{H} \chi_{E}) \mu & \weakto \frac{1}{2} D_{H} \chi_{E}, \\
\label{overline_D_chi_E_2} \chi_{\Omega \setminus E} (\rho_{\eps} \ast D_{H} \chi_{E}) \mu & \weakto \frac{1}{2} D_{H} \chi_{E}. \end{align}
\end{lemma}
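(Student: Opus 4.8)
The plan is to start from the Leibniz rule \eqref{Leibniz_rule_Sobolev_reg_BV} of Proposition~\ref{overline_chi_E}, which on $\Omega^\cR_{2\eps}$ can be rewritten as
\[
\chi_{E}\,(\rho_{\eps} \ast D_{H} \chi_{E})\,\mu = D_{H}\big((\rho_{\eps} \ast \chi_{E})\,\chi_{E}\big) - (\rho_{\eps} \ast \chi_{E})\, D_{H} \chi_{E},
\]
and to pass to the limit as $\eps \to 0^{+}$ in each term separately, in the local weak$^{*}$ sense of Remark~\ref{remark:weakconvOmegaep}. I would handle the last term by Remark~\ref{weak_conv_absolutely_continuous_perimeter_measures}: writing $D_{H}\chi_{E} = \sigma_{\chi_{E}}\,|D_{H}\chi_{E}|$ with $|\sigma_{\chi_{E}}| = 1$, each scalar component has density $\sigma_{\chi_{E}}^{j} \in L^{\infty}(\Omega;|D_{H}\chi_{E}|) \subset L^{1}_{\rm loc}(\Omega;|D_{H}\chi_{E}|)$, so \eqref{limit1/2} together with Lemma~\ref{lemma:weak_conv_absolutely_continuous_perimeter_measures} gives $(\rho_{\eps} \ast \chi_{E})\, D_{H} \chi_{E} \weakto \frac{1}{2}\, D_{H} \chi_{E}$.

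The crux will be the convergence $D_{H}\big((\rho_{\eps} \ast \chi_{E})\,\chi_{E}\big) \weakto D_{H} \chi_{E}$. On one hand, since $\rho_{\eps} \ast \chi_{E} \to \chi_{E}$ in $L^{1}_{\rm loc}(\Omega)$ and $\chi_{E}^{2} = \chi_{E}$, the products $(\rho_{\eps} \ast \chi_{E})\,\chi_{E}$ converge to $\chi_{E}$ in $L^{1}_{\rm loc}(\Omega)$, whence for every $\phi \in C^{1}_{c}(H\Omega)$ and $\eps$ small enough that $\supp \phi \subset \Omega^\cR_{2\eps}$,
\[
\int_{\Omega} \ban{\phi,\, D_{H}\big((\rho_{\eps} \ast \chi_{E})\,\chi_{E}\big)} = -\int_{\Omega} (\rho_{\eps} \ast \chi_{E})\,\chi_{E}\, \div \phi \, dx \ \to\ -\int_{\Omega} \chi_{E}\, \div \phi \, dx = \int_{\Omega} \ban{\phi,\, D_{H} \chi_{E}}.
\]
On the other hand, the estimate \eqref{tot_var_inequality_loc} from the proof of Proposition~\ref{overline_chi_E}, using $\rho_{\eps} \ast \chi_{E} \le 1$ in its first term and $\rho_{\eps} \ast \chi_{E \cap A} \le \chi_{A^{\cR,\eps}} \le 1$ in its second term (via \eqref{second_term_estimate}), yields, for every open set $A \Subset \Omega$ and every $\eps$ small enough,
\[
\big|D_{H}\big((\rho_{\eps} \ast \chi_{E})\,\chi_{E}\big)\big|(A) \le |D_{H} \chi_{E}|(A) + |D_{H} \chi_{E}|(A^{\cR,\eps}) \le 2\,|D_{H} \chi_{E}|(A'),
\]
for a fixed $A'$ with $A \Subset A' \Subset \Omega$ independent of $\eps$. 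This uniform local bound on the total variations, combined with the density of $C^{1}_{c}(H\Omega)$ in $C_{c}(H\Omega)$ for the uniform norm, upgrades the convergence against $C^{1}_{c}$ test fields to the local weak$^{*}$ convergence of vector Radon measures $D_{H}\big((\rho_{\eps} \ast \chi_{E})\,\chi_{E}\big) \weakto D_{H} \chi_{E}$: indeed every weak$^{*}$ cluster point must coincide with $D_{H}\chi_{E}$ by the previous display. Inserting the two limits into the displayed identity proves \eqref{overline_D_chi_E_1}.

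Finally, \eqref{overline_D_chi_E_2} is just \eqref{overline_D_chi_E_1} applied to $\Omega \setminus E$, which also has locally finite h-perimeter and satisfies $D_{H} \chi_{\Omega \setminus E} = -D_{H} \chi_{E}$; equivalently, it follows by subtracting \eqref{overline_D_chi_E_1} from the local weak$^{*}$ convergence $(\rho_{\eps} \ast D_{H} \chi_{E})\,\mu \weakto D_{H} \chi_{E}$, which holds by Remark~\ref{r:contin_vectconvmeas} (or by \eqref{commutation conv derivative meas} together with the first local weak$^{*}$ convergence in \eqref{weak_conv_D_f}). The only genuine obstacle is the equiboundedness-plus-closure step for $D_{H}\big((\rho_{\eps} \ast \chi_{E})\,\chi_{E}\big)$, and even there the needed total-variation estimate is already contained in the proof of Proposition~\ref{overline_chi_E}, so the argument is mostly a matter of assembling known pieces.
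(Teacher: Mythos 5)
Your proof is correct and follows essentially the same route as the paper's: the rearranged identity \eqref{Leibniz_rule_Sobolev_reg_BV}, the convergence $(\rho_{\eps}\ast\chi_{E})\,D_{H}\chi_{E}\weakto\frac12 D_{H}\chi_{E}$ via \eqref{limit1/2} and Lemma~\ref{lemma:weak_conv_absolutely_continuous_perimeter_measures}, the integration-by-parts limit of the product term, the density of $C^{1}_{c}(H\Omega)$ in $C_{c}(H\Omega)$, and the subtraction from \eqref{weak_conv_D_f} to obtain \eqref{overline_D_chi_E_2}. The only difference is bookkeeping: you pass to the limit in $D_{H}\big((\rho_{\eps}\ast\chi_{E})\chi_{E}\big)$ as a separate family of measures, supplying the explicit uniform local total-variation bound from \eqref{tot_var_inequality_loc}, whereas the paper tests the whole identity against a fixed $\phi\in C^{1}_{c}(H\Omega)$ and then invokes density, leaving that equiboundedness implicit.
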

\begin{proof} 
\begin{comment}
We choose $\phi \in C^{1}_{c}(\Omega, H \Omega)$ and we recall that $D_{H} \chi_{E} = - \nu_{E} |D_{H} \chi_{E}|$. By Proposition~\ref{overline_chi_E}, we have
\begin{align*} \lim_{\eps \to 0} \int_{\Omega} \ban{\phi, \chi_{E} \nabla_{H}(\rho_{\eps} \ast \chi_{E})} \, dx & = \lim_{\eps \to 0} \int_{\Omega} \chi_{E} \div(\phi (\rho_{\eps} \ast \chi_{E})) \, dx - \int_{\Omega} \chi_{E} (\rho_{\eps} \ast \chi_{E}) \div \phi \, dx \\
& = \lim_{\eps \to 0} \int_{\Omega} (\rho_{\eps} \ast \chi_{E}) \ban{\phi, \nu_{E}} \, d |D_{H} \chi_{E}| - \int_{\Omega} \chi_{E} (\rho_{\eps} \ast \chi_{E}) \div \phi \, dx \\
& = \int_{\Omega} \frac{1}{2} \ban{\phi, \nu_{E}} \, d |D_{H} \chi_{E}| - \int_{\Omega} \chi_{E} \div \phi \, dx \\
& = - \int_{\Omega} \frac{1}{2} \ban{\phi, \nu_{E}} \, d |D_{H} \chi_{E}|. \end{align*}
This shows \eqref{overline_D_chi_E_1}, by the density of $C^{1}_{c}(\Omega, H \Omega)$ in $C_{c}(\Omega, H \Omega)$ with respect to the sup norm.
\end{comment}
By \eqref{Leibniz_rule_Sobolev_reg_BV} and \eqref{commutation conv derivative meas}, we have
\begin{equation*} \chi_{E} (\rho_{\eps} \ast D_{H} \chi_{E}) \mu = \chi_{E} \nabla_{H} (\rho_{\eps} \ast \chi_{E}) \mu = D_{H}((\rho_{\eps} \ast \chi_{E}) \chi_{E}) - (\rho_{\eps} \ast \chi_{E}) D_{H} \chi_{E}  \quad\text{in $\mathcal{M}(\Omega_{2 \eps}^{\cR})$}.
\end{equation*}
Since for any $\phi \in C^{1}_{c}(H \Omega)$ we have $\mathrm{supp}(\phi) \subset \Omega_{2 \eps}^{\cR}$ for $\eps > 0$ sufficiently small, we get
\begin{equation*} \int_{\Omega} \phi \chi_{E} \nabla_{H} (\rho_{\eps} \ast \chi_{E}) \, dx = - \int_{\Omega} (\rho_{\eps} \ast \chi_{E}) \chi_{E} \, \div{\phi} \, dx - \int_{\Omega} (\rho_{\eps} \ast \chi_{E}) \ban{\phi, d D_{H} \chi_{E}}. \end{equation*}
We pass now to the limit on the right hand side, and, by Remark~\ref{weak_conv_absolutely_continuous_perimeter_measures}, we obtain that
\[- \int_{\Omega} (\rho_{\eps} \ast \chi_{E}) \chi_{E} \, \div{\phi} \, dx - \int_{\Omega} (\rho_{\eps} \ast \chi_{E}) \ban{\phi, d D_{H} \chi_{E}} 
\]
converges to 
\[
 - \int_{\Omega} \chi_{E} \, \div{\phi} \, dx - \int_{\Omega} \frac{1}{2}\ban{\phi, d D_{H} \chi_{E}} 
 \]
as $\ep\to0^+$. The last limit equals
\[
\int_{\Omega} \frac{1}{2} \ban{\phi, d D_{H} \chi_{E}}.
\]
Therefore, by the density of $C^{1}_{c}(H \Omega)$ in $C_{c}(H \Omega)$ with respect to the sup norm, we get \eqref{overline_D_chi_E_1}.
We observe that $\chi_{\Omega \setminus E} (\rho_{\eps} \ast D_{H} \chi_{E}) = (1 - \chi_{E})  \nabla_{H} (\rho_{\eps} \ast D_{H} \chi_{E})$, and so \eqref{overline_D_chi_E_2} follows from
the first local weak$^*$ convergence of \eqref{weak_conv_D_f} and from \eqref{overline_D_chi_E_1}. 
\end{proof}

\subsection{Precise representatives and mollifications}\label{sec:prec-representative}

We now introduce the notion of precise representative of a locally summable
function, which shall play an important role in the product rule for divergence-measure horizontal fields. However, due to our choice of mollifying functions by putting the mollifier on the left, we shall need to consider averages on right invariant balls.

\begin{definition}[Precise representative] \rm \label{precisedef}
Assume $u \in L^{1}_{\rm loc}(\G)$. Then
\begin{equation} \label{eq:representative_right} u^{*, \cR}(x) := \begin{cases} \displaystyle \lim_{r \to 0} \mean{B^\cR(x,r)} u(y) \, dy & \mbox{ if the limit exists } \\ 0 &  \mbox{ otherwise} \end{cases} 
\end{equation}
is the {\em precise representative} of $u$ on the balls with respect to the right invariant distance. We denote by $C_u^{\cR}$ the set of points such that the limit in \eqref{eq:representative_right} exists.
\end{definition}
It is clear that, by Theorem \ref{Lebesgue_diff_theorem}, all Lebesgue points of $u$ belong to $C_u^{\cR}$.
Given a measurable set $E \subset \Omega$, one can consider its points with density $\alpha \in [0, 1]$ with respect to the right invariant distance
\begin{equation*}
 E^{\alpha, \cR} := \set{x\in\G : \, \lim_{r \to 0} \frac{\mu(E \cap B^{\cR}(x, r))}{\mu(B^{\cR}(x, r))} = \alpha }, \end{equation*}
and hence define
\begin{equation} \label{eq:mtbR E_def} \mtbR E = \Omega \setminus (E^{1, \cR} \cup E^{0, \cR}). \end{equation}
Then, if we set $C_{\chi_{E}}^{\cR} = C_{E}^{\cR} $, we clearly have
\begin{equation*} C_{E}^{\cR} = \bigcup_{\alpha \in [0, 1]} E^{\alpha, \cR} \end{equation*}
and 
\begin{equation} \label{right_interior_chi_E_star} \chi_{E}^{*, \cR} = \chi_{E^{1, \cR}} \ \ \text{in} \ \ \Omega \setminus \mtbR E. \end{equation}

We state now a simple result which relates the pointwise limit of the mollification of a function $f \in L^{1}_{\rm loc}(\G)$ and the precise representative of $f$ on right invariant balls.

\begin{proposition} \label{pointwise limit moll} 
Let $\eta \in \Lip([0, 1])$ with $\eta\ge0$ and $\eta(1) = 0$, and $\rho(x) = \eta(d(x,0))$ for all $x\in\G$ such that $\int_{B(0, 1)} \rho(x) \, dx = 1$.
If $f \in L^1_{\rm loc}(\Omega)$ and $x \in C_{f}^{\cR}$, then we have
\[
(\rho_{\eps} \ast f)(x) \to f^{*, \cR}(x)\qs{ as }\eps \to 0.
\]
\end{proposition}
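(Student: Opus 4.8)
The plan is to reduce the mollification $\rho_\ep\ast f$ to a one–dimensional integral against the radial profile $\eta$ by a layer–cake decomposition, and then to recognise the resulting quantity as a weighted average of the spherical means $r\mapsto \mean{B^\cR(x,r)}f$, which converge to $f^{*,\cR}(x)$ by the very definition of $C_f^\cR$. The one genuinely non-automatic feature is that the \emph{right} invariant distance $d^\cR$ must appear, because the mollifier sits on the left in $\rho_\ep\ast f$, so that the sublevel sets of $z\mapsto\rho_\ep(xz^{-1})$ are right invariant balls; this is precisely why the limit is the precise representative on $d^\cR$-balls and not the ordinary one.

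First I would fix notation. Extend $\eta$ to $\phi\colon[0,+\infty)\to\R$ by $\phi:=\eta$ on $[0,1]$ and $\phi\equiv0$ on $(1,+\infty)$; since $\eta\in\Lip([0,1])$ and $\eta(1)=0$, the extension $\phi$ is Lipschitz on $[0,+\infty)$, hence absolutely continuous with $\phi'\in L^\infty(0,+\infty)$, $\phi'=0$ a.e.\ on $(1,+\infty)$, and $\phi(t)=\int_t^{+\infty}(-\phi'(s))\,ds$ for every $t\ge0$. Because $\rho(z)=\eta(d(z,0))=\phi(\|z\|)$, we get $\rho_\ep(xy^{-1})=\ep^{-Q}\phi(\|\delta_{1/\ep}(xy^{-1})\|)=\ep^{-Q}\phi(d^\cR(x,y)/\ep)$, using $\|xy^{-1}\|=d^\cR(x,y)$ from \eqref{d:d^R}. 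Since $x\in C_f^\cR\subset\Omega$ and $\Omega$ is open, for all sufficiently small $\ep>0$ one has $\overline{B^\cR(x,\ep)}\subset\Omega$, and then
\[
(\rho_\ep\ast f)(x)=\ep^{-Q}\int_{B^\cR(x,\ep)}\phi(d^\cR(x,y)/\ep)\,f(y)\,dy .
\]

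Next I would perform the layer–cake step. From $\phi(t)=\int_0^1\chi_{[t,+\infty)}(s)\,(-\phi'(s))\,ds$ (legitimate since $\phi'$ vanishes a.e.\ beyond $1$) one obtains
\[
\phi(d^\cR(x,y)/\ep)=\int_0^1\chi_{B^\cR(x,\ep s)}(y)\,(-\phi'(s))\,ds\qquad\text{for $\mu$-a.e.\ }y,
\]
the closed ball $\{d^\cR(x,\cdot)\le\ep s\}$ and the open ball $B^\cR(x,\ep s)$ differing by a $\mu$-null sphere, since $r\mapsto\mu(B^\cR(x,r))=r^Q\mu(B(0,1))$ is continuous by \eqref{eq:muB(p,r)}. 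As $f\in L^1(B^\cR(x,\ep))$ and $\phi'\in L^\infty(0,1)$, Fubini's theorem gives
\[
(\rho_\ep\ast f)(x)=\ep^{-Q}\int_0^1(-\phi'(s))\Big(\int_{B^\cR(x,\ep s)}f(y)\,dy\Big)\,ds
=\mu(B(0,1))\int_0^1(-\phi'(s))\,s^Q\,g(\ep s)\,ds ,
\]
where I set $g(r):=\mean{B^\cR(x,r)}f(y)\,dy$ and used $\mu(B^\cR(x,\ep s))=(\ep s)^Q\mu(B(0,1))$ again from \eqref{eq:muB(p,r)}. Applying the same identity to the constant function $f\equiv1$ — equivalently, computing $\int_\G\phi(\|z\|)\,dz$ via $\mu(B(0,s))=s^Q\mu(B(0,1))$ and the normalization $\int_{B(0,1)}\rho\,dx=1$ — yields $\mu(B(0,1))\int_0^1(-\phi'(s))\,s^Q\,ds=1$.

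Finally I would pass to the limit. Subtracting the two displays,
\[
(\rho_\ep\ast f)(x)-f^{*,\cR}(x)=\mu(B(0,1))\int_0^1(-\phi'(s))\,s^Q\big(g(\ep s)-f^{*,\cR}(x)\big)\,ds .
\]
For each fixed $s\in(0,1]$ we have $\ep s\to0^+$, hence $g(\ep s)\to f^{*,\cR}(x)$ because $x\in C_f^\cR$; moreover $g$ is bounded on $(0,r_0]$ for some $r_0>0$ (again since $g(r)$ has a finite limit as $r\to0^+$), so for $\ep<r_0$ the integrand is dominated by $C\,|\phi'(s)|$ with $C$ independent of $\ep$ and $|\phi'|\in L^1(0,1)$. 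Dominated convergence then forces the integral to $0$, proving $(\rho_\ep\ast f)(x)\to f^{*,\cR}(x)$. The only places needing care are the bookkeeping of open versus closed $d^\cR$-balls and the justification of Fubini; beyond that the argument is routine.
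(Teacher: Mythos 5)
Your proof is correct and follows essentially the same route as the paper's: both rewrite $(\rho_\ep\ast f)(x)$ as a weighted average of the means of $f$ over the right-invariant balls $B^\cR(x,\ep s)$, with weight $-\eta'(s)\,s^Q\,\mu(B(0,1))\,ds$ on $(0,1)$, identify the normalization constant (via $f\equiv 1$, respectively via the condition $\int_{B(0,1)}\rho\,dx=1$), and conclude by dominated convergence using the definition of $C_f^{\cR}$. The only deviation is technical: the paper derives the identity through Cavalieri's formula, which needs $\eta$ strictly decreasing and hence a final decomposition of a general Lipschitz $\eta$ into a difference of strictly decreasing functions, whereas your representation $\phi(t)=\int_t^{+\infty}(-\phi'(s))\,ds$ combined with Fubini handles an arbitrary Lipschitz $\eta$ with $\eta(1)=0$ in one pass, a mild streamlining of the same argument.
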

\begin{proof} 
Let $x \in C_{f}^{\cR}$ and $\eps > 0$ be sufficiently small, so that $B^{\cR}(x, \eps) \subset \Omega$. We assume first that $\eta$ is strictly decreasing. 
By Cavalieri's formula, we have
\begin{align*} (\rho_{\eps} \ast f)(x) & = \int_{B^{\cR}(x, \eps)} \eps^{-Q} \rho(\delta_{1/\eps}(xy^{-1})) f(y) \, d y \\
& = \int_{0}^{+ \infty} \int_{\{y\in B(x,\ep):\, \rho(\delta_{1/\eps}(xy^{-1})) > t \}} f(y) \eps^{-Q} \, d y \, dt \\
\left( t = \eta \left (\frac{r}{\eps} \right ) \right )\quad  & =  - \int_{0}^{\eps} \frac{1}{\eps}\, \eta' \left ( \frac{r}{\eps} \right ) \frac{1}{\eps^{Q}} \, \int_{B^{\cR}(x, r)} f(y) \, d y \, dr \\
\pa{ r = s \eps }\quad   &= - \int_{0}^{1} \eta'(s) \mu(B(0, 1)) s^{Q} \mean{B^{\cR}(x, s \eps)} f(y) \, d y \, ds. \end{align*}
The last equalities have been obtained from the standard area formula
for one-dimensional Lipschitz functions. 
Now, we use the existence of the limit of the averages of $f$ on the balls $B^{\cR}(x, s \eps)$.
This also implies that these averages are uniformly bounded with respect to $\ep$
sufficiently small. Thus, by Lebesgue's dominated convergence we obtain 
\begin{equation*} (\rho_{\eps} \ast f)(x) \to - \mu(B(0, 1)) f^{*, \cR}(x) \int_{0}^{1} \eta'(s) s^{Q} \, ds. \end{equation*}
We observe that the constant $C_{\eta, Q} := - \mu(B(0, 1)) \int_{0}^{1} \eta'(s) s^{Q} \, ds$ is independent from $f$. 
In addition, if we take $f \equiv 1$, we clearly have $(\rho_{\eps} \ast f) \equiv 1$ on $\Omega_{\eps}^{\cR}$. Thus, we can conlude that
\begin{equation*} - \mu(B(0, 1)) \int_{0}^{1} \eta'(s) s^{Q} \, ds = 1, \end{equation*}
and the statement follows.
We use now the well known fact that any Lipschitz continuous function in one variable can be written as the difference of two strictly decreasing functions to write $\eta = \eta_{1} - \eta_{2}$, with $\eta_{i} \in \Lip([0, 1])$, strictly decreasing and satisfying $\eta_{1}(1) = \eta_{2}(1)$. We can now repeat the above argument and so we obtain
\begin{align*} (\rho_{\eps} \ast f)(x) & \to - \mu(B(0, 1)) f^{*, \cR}(x) \int_{0}^{1} (\eta_{1}'(s) - \eta_{2}'(s)) s^{Q} \, ds \\
& = - f^{*, \cR}(x) \mu(B(0, 1)) \int_{0}^{1} \eta'(s) s^{Q} \, ds = f^{*, \cR}(x), \end{align*}
for any $x \in C_{f}^{\cR}$.
\end{proof}


\begin{remark}
We point out that the previous result also holds in the Euclidean case,
corresponding to a commutative group $\G$. It is then easy to see that 
the hypothesis that $\rho$ is radially symmetric cannot be removed. Indeed, 
we may consider $f = \chi_{E}$, where $E = (0, 1)^{2}$ and $\G=\R^{2}$, 
with $x = 0$. Clearly, $\chi_{E}^{*, \cR}(0) = 1/4$.  
If we choose 
\[
\rho \in C^{\infty}_{c}(B(0, 1) \cap (-1, 0)^{2}), \ \rho \ge 0, \quad \text{ with }\quad \int_{B(0, 1)} \rho(y) \, dy = 1,
\]
then we have
\begin{align*} (\rho_{\eps} \ast \chi_{E})(0) &= \int_{B(0, \eps) \cap E} \rho_{\eps}(- y) \, dy = \int_{B(0, \eps) \cap (-1, 0)^{2}} \rho_{\eps}(y) \, dy \\
&= \int_{B(0, 1) \cap (-1/\eps, 0)^{2}} \rho(y) \, dy = 1, \end{align*}
for any $\eps \in (0, 1]$.
\end{remark}

\section{Divergence-measure horizontal fields}

In this section we will introduce and study the function spaces of $p$-summable horizontal sections whose horizontal divergence is a Radon measure. In the sequel, $\Omega$ will denote a fixed open set of $\G$.

\subsection{General properties and Leibniz rules}\label{sect:leibniz}

By a little abuse of notation, for any $\mu$-measurable set $E$ we shall use the symbols $\|F\|_{p, E}$ and $\|F\|_{\infty, E}$ with the same meaning as in \eqref{Lp_norms} and \eqref{Linfty_norm}.

\begin{definition}[Divergence-measure horizontal field] \label{DMdef} A $p$-summable divergence-measure horizontal field is a field $F \in L^{p}(H \Omega)$ whose
distributional divergence $\div F$ is a Radon measure on $\Omega$.
\begin{comment}{;{\bf ....under modification}  that is, for any $\phi \in C^{1}_{c}(\Omega)$ we have
\begin{equation} \label{weak divergence}
 - \int_{\Omega} \ban{F, \nabla_H \phi} \, dx= \int_{\Omega} \phi \, d\, \div F. \end{equation}
\end{comment}
We denote by $\DM^p(H\Omega)$ the space of all $p$-summable divergence-measure horizontal fields, where $1 \le p \le \infty$.
A measurable section $F$ of $H\Omega$ is a {\em locally $p$-summable divergence-measure horizontal field} if, for any open subset $W \Subset \Omega$, we have $F \in \DM^p(HW)$. The space of all such section is denoted by $\DM^{p}_{\rm loc}(H \Omega)$. 
\end{definition}

It is easy to observe that, if $F = \sum_{j = 1}^{\m} F_{j} X_{j}$
and $F_{j} \in L^{p}(\Omega) \cap BV_{H}(\Omega)$ for all $j=1, \dots, \m$, then $F \in \DM^{p}(H \Omega)$.
We also notice that, from \eqref{eq:Ff_j} and \eqref{distributional_div_equiv}, the divergence-measure horizontal fields forms a subspace of the whole space of divergence-measure fields. Hence, if we denote by $T \Omega$ the tangent bundle of $\Omega$, we have $\DM^{p}(H \Omega) \subset \DM^{p}(T \Omega)$, for any $p \in [1, \infty]$, where $\DM^{p}(T \Omega)$ denotes the
classical space of divergence-measure fields with respect
to the Euclidean structure fixed on $\G$.
Actually $\DM^{p}(H \Omega)$ is a closed subspace of 
$\DM^{p}(T \Omega)$, according to the next remark.

\begin{remark} As in the Euclidean case (\cite[Corollary 1.1]{CF1}), $\DM^{p}(H \Omega)$ endowed with the following norm
\begin{equation*} \|F\|_{\DM^{p}(H \Omega)} := \|F\|_{p, \Omega} + |\div F|(\Omega) \end{equation*}
is a Banach space. Any Cauchy sequence $\{F_{k}\}$ is clearly a Cauchy sequence in $L^{p}(H\Omega)$, and so there exists $F \in L^{p}(H \Omega)$ such that $F_{k} \to F$ in $L^{p}(H \Omega)$. Then, the lower semicontinuity of the total variation and the
property of the Cauchy sequence yield
$F \in \DM^{p}(H \Omega)$ and $|\div (F - F_{k})|(\Omega) \to 0$.
\end{remark}

The following example shows that fields of $\DM^p(H\Omega)$
may have components that are not $BV$ functions.
It is a simple modification of an example of Chen-Frid, see
\cite[Example 1.1]{chen2001theory}.
 
\begin{example} \label{examples_fiels} Let $\G = \H^{1}$, be the first Heisenberg group, equipped with graded coordinates $(x, y, z)$ and horizontal left invariant vector fields $X_{1} = \partial_{1} - y \partial_{3}$ and $X_{2} = \partial_{2} + x \partial_{3}$.
We define the divergence-measure horizontal field
\begin{equation*} F(x, y, z) = \sin{\left ( \frac{1}{x - y} \right )} (X_{1} + X_{2}). \end{equation*}
It is plain to see that $F \in L^{\infty}(H \H^1)$, and that
\begin{equation*} \div F = X_{1} \sin{\left ( \frac{1}{x - y} \right )} + X_{2} \sin{\left ( \frac{1}{x - y} \right )} = 0, \end{equation*}
in the sense of Radon measures, but the components of
$F$ are not $BV$.
\end{example}

\begin{remark}
We notice that, for a given $F \in \DM^{p}(T \Omega)$, if we denote by $F_{H}$ its projection on the horizontal subbundle with respect to a fixed left invariant Riemannian metric that makes $X_1, \dots , X_\q$ orthonormal,
we may not get $F_H \in \DM^{p}(H \Omega)$.
Let us consider the Heisenberg group $\H^1$ identified with $\R^3$, as 
in the previous example, along with the vector fields $X_1$, $X_2$,
and define $X_3=\der_3$.

Let us consider the following measurable vector field
\begin{equation*} G(x, y, z) = \sin{\left ( \frac{1}{x - z} \right )} (\partial_{1} + \partial_{2} + \partial_{3}). \end{equation*}
We clearly have $G \in \DM^{\infty}(T\H^1)$, i.e.
$G$ is a divergence-measure field.
However, if we consider its projection onto horizontal fibers
\begin{equation*} G_{H}(x, y, z) = \sin{\left ( \frac{1}{x - z} \right )} (X_{1} + X_{2}),
\end{equation*}
for any $x \neq z$, we have
\begin{equation*} \div\, G_{H}(x, y, z) = - \frac{1 + x + y}{(x - z)^{2}} \cos{\left ( \frac{1}{x - z} \right )}, \end{equation*}
which is not a locally summable function in any neighborhood of $\{ x = z \}$. 
This shows that $\div\, G_{H} \notin \mathcal{M}(\H^{1})$.
\end{remark}

We show now an easy extension result (see also \cite[Remark 2.20]{comi2017locally}).

\begin{remark} \label{div comp supp extension}\rm
If $1\le p\le \infty$ and $F \in \DM^p(H\Omega)$ has compact support in $\Omega$, then its trivial extension 
\begin{equation*}
\hat{F}(x) := \begin{cases} F(x)  & \mbox{if} \ \ x \in \Omega \\ 0 & \mbox{if} \ \ x \in \G \setminus \Omega, \end{cases}
\end{equation*}
belongs to $\DM^p(H \G)$.
Indeed, since $\hat{F} \in L^{p}(H \G)$ and for any 
$\phi \in C^{\infty}_{c}(\G)$ and a fixed $\xi \in C^{\infty}_{c}(\Omega)$ that equals one on a neighborhood of the support of $F$, we have
\begin{equation}\label{eq:extFOmega} 
\begin{split}
\int_{\G} \lan\hat{F}, \nabla_H\phi \ran \, dx  &=\int_\Omega \lan\hat{F}, \nabla_H\phi \ran \, dx \\
&= \int_{\Omega} \lan F, \nabla_H (\xi\phi)\ran \, dx + \int_{\Omega} \lan F, \nabla_H ( (1 - \xi) \phi)\ran \, dx\\
& =-\int_\Omega \phi\, d( \div F\res\xi) =-\int_\G \phi\, d( \div F\res\xi),
\end{split}
\end{equation}
where we denote by $\div F \res \xi$ the signed Radon measure on $\G$ such that
\[
\div F\res \xi (E)=\int_{\Omega\cap E} \xi \, d\div F
\]
for every relatively compact Borel subset $E\subset\G$.
Thus, we have shown that $\hat F\in \DM^p(H\G)$ and $\div \hat F=\div F\res\xi$.
The equalities of \eqref{eq:extFOmega} imply that 
the restriction of $\div\hat F$ to $\Omega$ coincides with $\div F$ 
and in particular $|\div \hat F|(\Omega)=|\div F|(\Omega)$.
The same equalities also imply that $|\div \hat F|(\G\sm\Omega)=0$. 
\end{remark}

As a consequence, we can prove the following result concerning fields with compact support, which can be seen as the easy case of the Gauss--Green formula, since there are no boundary terms. A similar result has been proved in the Euclidean setting in \cite[Lemma 3.1]{comi2017locally}.

\begin{lemma} \label{DMcomptsupp} If $1\le p\le \infty$ and $F \in \DM^{p}(H\Omega)$ has compact support in $\Omega$, then
\begin{equation*} \div F(\Omega) = 0. \end{equation*}
\end{lemma}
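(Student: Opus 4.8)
The plan is to exploit a cutoff function adapted to the compact support of $F$, so that no boundary term can appear. First I would fix an open set $U$ with $\supp F \subset U \Subset \Omega$ and choose, via a standard smooth bump function on $\G$ (identified with $\R^{\q}$), a function $\phi \in C^{\infty}_{c}(\Omega)$ with $0 \le \phi \le 1$ and $\phi \equiv 1$ on $U$. Since $F$ vanishes $\mu$-a.e.\ outside $\supp F \subset U$, while $\nabla_{H}\phi$ vanishes on $U$, the product $\ban{F, \nabla_{H}\phi}$ is zero $\mu$-a.e.\ on $\Omega$; hence by Definition~\ref{DistDivdef} one obtains
\[
0 = -\int_{\Omega} \ban{F, \nabla_{H}\phi}\, dx = \int_{\Omega} \phi \, d\div F .
\]

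Next I would show that $\div F$ is concentrated on $\supp F$. Indeed, for any $\psi \in C^{\infty}_{c}(\Omega \setminus \supp F)$ the field $F$ vanishes on $\supp\psi$, so $\int_{\Omega}\psi\, d\div F = -\int_{\Omega}\ban{F, \nabla_{H}\psi}\, dx = 0$; since $\Omega \setminus \supp F$ is open, this forces $|\div F|(\Omega \setminus \supp F) = 0$. In particular $\div F$ is concentrated on the compact set $\supp F$, hence finite, and $\div F(\Omega) = \div F(\supp F)$. Combining this with $\phi \equiv 1$ on $U \supset \supp F$ gives
\[
\int_{\Omega} \phi \, d\div F = \int_{\supp F} \phi \, d\div F = \div F(\supp F) = \div F(\Omega),
\]
so the first display yields $\div F(\Omega) = 0$.

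Alternatively, one can shortcut the middle step by invoking Remark~\ref{div comp supp extension}: the trivial extension $\hat F \in \DM^{p}(H\G)$ satisfies $\div \hat F = \div F \res \xi$ with $|\div \hat F|(\G \setminus \Omega) = 0$ and $\div \hat F\res\Omega = \div F$, whence $\div \hat F(\G) = \div F(\Omega)$, and the same cutoff argument carried out on all of $\G$ (now $\phi$ need only equal $1$ near $\supp F$, not globally) closes the proof. There is no genuine obstacle here; the only point deserving a word of care is that one tests $\div F$ against a function that is identically $1$ merely on a neighbourhood of $\supp F$ rather than globally, which is legitimate precisely because $\div F$ charges no mass outside $\supp F$.
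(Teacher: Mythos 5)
Your proof is correct. Your main argument and the paper's proof share the same core computation (test $\div F$ against a cutoff $\phi$ that equals $1$ near $\supp F$, and note that $\ban{F,\nabla_H\phi}$ vanishes $\mu$-a.e., so the test integral is zero), but they justify the identification of that test integral with $\div F(\Omega)$ differently. The paper passes to the trivial extension $\hat F\in\DM^p(H\G)$ via Remark~\ref{div comp supp extension}, which already encodes that $\div\hat F=\div F\res\xi$ is carried by a compact set, and then tests on all of $\G$; you instead stay inside $\Omega$ and prove directly that $|\div F|(\Omega\setminus\supp F)=0$ by testing with $\psi\in C^\infty_c(\Omega\setminus\supp F)$ and using density of smooth functions in $C_c$. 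Your concentration step thus replaces the extension remark and makes the lemma self-contained, while also settling that $\div F(\Omega)=\div F(\supp F)$ is finite and well defined. A small bonus of your version: you only require $\phi\equiv 1$ on a neighbourhood of $\supp F$, whereas the paper's proof as written chooses $\phi\in C^\infty_c(\G)$ with $\phi=1$ on a neighbourhood of $\Omega$, which literally presupposes $\Omega$ bounded; the correct reading is exactly your observation that, since the divergence charges nothing away from $\supp F$ (equivalently, $\div\hat F$ has compact support), being $1$ near $\supp F$ suffices. Your closing alternative via Remark~\ref{div comp supp extension} is precisely the paper's route.
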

\begin{proof} Since $F$ has compact support in $\Omega$, the extension
$\hat F$ defined in Remark~\ref{div comp supp extension} shows that $\hat{F} \in \DM^p(H\G)$, $\div \hat F = \div  F$ as signed Radon measure in $\Omega$ and 
$\div \hat F $ is the null measure when restricted to $\G \setminus \Omega$. 
As a consequence, if $\phi \in C^{\infty}_{c}(\G)$ is chosen such that $\phi = 1$ on a neighborhood of $\Omega$, then 
\begin{equation*}
\int_{\G} \phi\, d \div\hat  F = \int_\Omega  d \div\hat F=\div F(\Omega).
\end{equation*}
By definition of distributional divergence, there holds
\begin{equation*} 
\int_{\G} \phi \, d \div\hat  F = - \int_\Omega \lan F, \nabla_H \phi\ran \, dx  = 0, \end{equation*}
since $F$ has support inside $\Omega$ and $\phi$ is constant on this set. This
concludes the proof.
\end{proof}

We show now a result concerning the absolute continuity properties of $\div F$ with respect to the $\SHaus{\alpha}$-measure, for a suitable $\alpha$ related to the summability exponent $p$. This is a generalization of a known result in the Euclidean case (\cite[Theorem~3.2]{Silhavy}).

\begin{theorem} \label{absolute continuity}
If $F \in \DM^p_{\rm loc}(H \Omega)$ and $\frac{Q}{Q - 1} \le p < + \infty$, then $|\div F|(B) = 0$ for any Borel set $B\subset\Omega$ of $\sigma$-finite $\SHaus{Q - p'}$ measure. If $p = \infty$, then $|\div F| \ll \SHaus{Q - 1}$.
\end{theorem}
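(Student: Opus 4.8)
The plan is to adapt the standard Euclidean argument (as in \cite[Theorem~3.2]{Silhavy} or \cite[Theorem~3.2]{Chen_2009}) to the stratified setting, using the local convolution by the group mollifier developed in Section~\ref{sect:Difflocalsmooth}. First I would reduce to the local problem: since $|\div F|$ is a Radon measure, absolute continuity statements can be checked on an exhausting family of open sets $U \Subset \Omega$, so it suffices to prove the claim for $F \in \DM^p(HU)$. Fix such a $U$ and a mollifier $\rho \in C_c^1(B(0,1))$, nonnegative with $\int \rho = 1$; then $F^\eps := \rho_\eps \ast F \in C^1(HU_{2\eps}^\cR)$ and $\div F^\eps = \rho_\eps \ast \div F$ on $U_{2\eps}^\cR$ (this is the measure analogue of \eqref{eq:convolC1}; note the divergence is left invariant, so the commutation holds componentwise just as in Lemma~\ref{l:BVD_Hfconv}). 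The key analytic estimate is then the following: for any ball $B(x,r)$ with $B^\cR(x,r) \Subset U$, one has
\[
|\div F^\eps(B(x,r))| = \Big| \int_{B(x,r)} \div F^\eps \, dy \Big| = \Big| \int_{\partial B(x,r)} \langle F^\eps, \text{(normal)} \rangle \Big| \le C\, \|F\|_{p,\,B(x,r+\eps)}\, r^{(Q-1)/p'}
\]
up to the polynomial change of variables identifying $\G$ with $\R^\q$; more robustly, I would integrate $\div F^\eps$ against a Lipschitz cutoff supported in $B(x,2r)$, equal to $1$ on $B(x,r)$, with gradient bounded by $C/r$, and apply H\"older's inequality together with $\mu(B(x,2r)) = C r^Q$ from \eqref{eq:muB(p,r)}, obtaining
\[
|\div F^\eps|(B(x,r)) \le \frac{C}{r}\, \|F\|_{p,\,B(x,2r)}\; \mu(B(x,2r))^{1/p'} = C\, \|F\|_{p,\,B(x,2r)}\; r^{Q/p' - 1} = C\, \|F\|_{p,\,B(x,2r)}\; r^{Q - p'}.
\]

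Next I would pass to the limit $\eps \to 0$: since $\rho_\eps \ast \div F \, \mu \weakto \div F$ locally weakly$^*$ (Remark~\ref{weak_conv_moll_measure}, applied to the scalar measure $\div F$), and $\div F^\eps \, \mu = (\rho_\eps \ast \div F)\,\mu$, the lower semicontinuity of total variation on open sets gives, for every ball $B(x,r)$ with $\overline{B(x,2r)} \subset U$,
\[
|\div F|(B(x,r)) \le \liminf_{\eps \to 0} |\div F^\eps|(B(x,r)) \le C\, \|F\|_{p,\,B(x,2r)}\; r^{Q - p'}.
\]
This is the crucial pointwise-scale bound on $|\div F|$. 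Now let $B \subset \Omega$ be Borel with $\sigma$-finite $\SHaus{Q-p'}$ measure; splitting $B$ I may assume $\SHaus{Q-p'}(B) < \infty$ and $B \Subset \Omega$. Fix $\delta > 0$. By definition of $\SHaus{Q-p'}_\delta$ and the finiteness of the spherical measure, for any $\eta > 0$ there is a cover $B \subset \bigcup_j B(x_j, r_j)$ with $2r_j < \delta$ and $\sum_j r_j^{Q-p'} \le \SHaus{Q-p'}(B) + \eta$; choosing $\delta$ small enough that all the doubled balls still lie in a fixed $U \Subset \Omega$, the scale estimate yields
\[
|\div F|(B) \le \sum_j |\div F|(B(x_j,r_j)) \le C \sup_j \|F\|_{p,\,B(x_j,2r_j)} \sum_j r_j^{Q - p'}.
\]
If $p < \infty$, the absolute continuity of the Lebesgue integral gives $\|F\|_{p,\,B(x_j,2r_j)}^p \le \int_{B^\delta} |F|^p \to 0$ where $B^\delta$ is the $\delta$-neighborhood (which shrinks as $\delta \to 0$ since $|B| = 0$ by \eqref{LebesgueNegEssBdry}-type reasoning: a set of finite $\SHaus{Q-p'}$ measure with $p' > 0$ is $\mu$-null because $Q - p' < Q$), hence $|\div F|(B) = 0$. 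If $p = \infty$, then $p' = 1$, $\|F\|_{\infty,\,B(x_j,2r_j)} \le \|F\|_{\infty,U}$ is just bounded, and we get $|\div F|(B) \le C\|F\|_{\infty,U}(\SHaus{Q-1}(B) + \eta)$; letting $\eta \to 0$ and then using outer regularity gives $|\div F|(B) \le C\|F\|_{\infty,U}\,\SHaus{Q-1}(B)$, so $\SHaus{Q-1}(B) = 0 \Rightarrow |\div F|(B) = 0$, i.e. $|\div F| \ll \SHaus{Q-1}$.

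The main obstacle I anticipate is making the boundary/cutoff estimate rigorous without appealing to any smooth structure of the homogeneous balls $B(x,r)$ (which need not be smooth), and ensuring the constant $C$ is genuinely uniform in $x$ and $r$ over a relatively compact set. The cleanest route is to avoid $\partial B(x,r)$ altogether and only ever test $\div F^\eps$ against Lipschitz functions: one uses the bump function $\psi_{x,r}(y) = \min\{1, (2 - d(x,y)/r)_+\}$, which is Lipschitz with respect to $d$ (hence $\nabla_H \psi_{x,r}$ is defined $\mu$-a.e. with $|\nabla_H \psi_{x,r}| \le C/r$ by the locality estimate \eqref{eq:locEstDist} and left invariance), supported in $B(x,2r)$ and $\equiv 1$ on $B(x,r)$; then $|\div F^\eps|(B(x,r)) \le |\int \psi_{x,r}\, d\div F^\eps| = |\int \langle F^\eps, \nabla_H \psi_{x,r}\rangle\, dx| \le (C/r)\|F^\eps\|_{1,B(x,2r)} \le (C/r)\,\mu(B(x,2r))^{1/p'}\|F\|_{p,B(x,2r)}$, using $\|F^\eps\|_{p,B(x,2r)} \le \|F\|_{p,B(x,2r+\eps)}$. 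The homogeneity of $\mu$ under dilations and its left invariance make all these constants scale-- and translation--independent, which is exactly what is needed to sum over the cover.
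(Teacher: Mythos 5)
There is a genuine gap at the heart of your argument: the pointwise-scale bound
\[
|\div F|(B(x,r)) \;\le\; C\,\|F\|_{p,\,B(x,2r)}\, r^{Q-p'}
\]
is false, and the step you use to get it is invalid. Testing $\div F^\eps$ against the cutoff $\psi_{x,r}$ only controls $\bigl|\int \psi_{x,r}\, d\,\div F^\eps\bigr|$, which is \emph{not} an upper bound for the total variation $|\div F^\eps|(B(x,r))=\int_{B(x,r)}|\div F^\eps|\,dy$: the signed density can oscillate and cancel inside the ball. The conclusion itself fails even in the simplest commutative case: on $\R$ (so $Q=1$, $p=\infty$, $p'=1$) take $F$ to be a sawtooth with $N$ teeth of height $1$ inside $(x-r,x+r)$; then $F\in BV\cap L^\infty\subset\DM^{\infty}$, $\|F\|_{\infty}=1$, but $|\div F|\bigl((x-r,x+r)\bigr)=2N$ is arbitrarily large, while your estimate would force it to be $\le C$. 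Since everything downstream (lower semicontinuity in $\eps$, summation over the cover) rests on this per-ball total-variation bound, the argument cannot be completed along these lines. (The mollification of $F$ is also an unnecessary detour -- and your commutation $\div(\rho_\eps\ast F)=\rho_\eps\ast\div F$ implicitly needs the symmetry $\rho(x)=\rho(x^{-1})$ as in Lemma~\ref{l:BVD_Hfconv} -- but that is a side issue.)

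The missing idea, which is how the paper proceeds, is to make the estimate one-sided before localizing: apply the Hahn decomposition to $\div F\res B$, so that it suffices to show $\div F(K)=0$ for every compact $K$ contained in the positive (resp.\ negative) set, where $\div F$ has a sign. For such a $K$ one does not estimate ball by ball at all; one builds a \emph{single} Lipschitz test function $\psi=\sup_j\varphi_j$, with $\varphi_j(x)=\varphi(\delta_{1/r_j}(x_j^{-1}x))$ adapted to a fine covering of $K$ by balls whose doubles lie in a small open set $U\supset K$, and writes
\[
\div F(K)=\int_K\psi\, d\,\div F=-\int_U\ban{F,\nabla_H\psi}\,dx-\int_{U\setminus K}\psi\, d\,\div F,
\]
where the Lipschitz test function is admissible by Remark~\ref{Lip test}, the last term is controlled by outer regularity ($|\div F|(U\setminus K)<\eps$), and H\"older plus $\|\nabla_H\psi\|_{p'}^{p'}\le\sum_j\int|\nabla_H\varphi_j|^{p'}\le C\sum_j r_j^{Q-p'}$ handles the first term; smallness then comes from $\|F\|_{p,U}<\eps$ (using $\mu(K)=0$) when $p<\infty$, and from $\sum_j r_j^{Q-1}<\eps$ (using $\SHaus{Q-1}(B)=0$) when $p=\infty$. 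Because $\div F(K)\ge 0$ on the positive set, this one-sided bound is enough, and no total-variation estimate on balls is ever needed. Your covering bookkeeping and the limiting arguments ($\mu(B)=0$, absolute continuity of the $L^p$ integral) are in the right spirit, but without the Hahn decomposition the cancellation problem is fatal.
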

\begin{proof}
Let $\frac{Q}{Q - 1} \le p < + \infty$. It suffices to consider a Borel set $B$ such that $\SHaus{Q - p'}(B) < \infty$. We can use the Hahn decomposition in order to split $B$ into $B_{+} \cup B_{-}$, in such a way that $\pm \div F \res B_{\pm} \ge 0$, thus reducing ourselves to show that $\div F(K) = 0$ for any compact set $K \subset B_{\pm}$. Without loss of generality, we consider $K \subset B_{+}$.
Let $\varphi : \G \to [0, 1]$ defined as follows
\begin{equation*} \varphi(x) := \begin{cases} 1 & \mbox{if} \, d(x, 0) < 1 \\ 2 - d(x, 0) & \mbox{if} \, 1 \le d(x, 0) \le 2 \\ 0 & \mbox{if} \, d(x, 0) > 2 \end{cases}. \end{equation*}
It is clear that $\varphi \in \Lip_{c}(\G)$, therefore it is also differentiable $\mu$-a.e. with $|\nabla_H\varphi| \le L$ for some constant $L>0$, by Theorem \ref{MSC-Rademacher}. 
\\
We notice that since $\SHaus{Q - p'}(K) < \infty$, then $\mu(K) = 0$. This implies that for any $\eps >0$ there exists an open set $U \Subset \Omega$ such that $K \subset U$ and $\| F \|_{p, U} < \eps$, because $|F| \in L^{p}_{\rm loc}(\Omega)$. In addition, we can ask that such an $U$ satisfies $|\div F|(U \setminus K) < \eps$, because of the regularity of Radon measures.
\\ 
It is clear that there exists $\delta > 0$ such that for any $0 < 2 r < \delta$ and for any open ball $B(x, r)$ which intersects $K$ we have $B(x, 2 r) \subset U$. Then we can select a covering of $K$ (which can be also taken finite by compactness) of such balls $\{B(x_{j}, r_{j})\}_{j \in J}$ and so, by the definition of spherical measure, we have
\begin{equation*} \sum_{j \in J} r_{j}^{Q - p'} < \SHaus{Q - p'}(K) + 1, \end{equation*}
for $\delta$ small enough. 
\\
We set $\varphi_{j}(x) := \varphi(\delta_{1/r_{j}}(x_{j}^{-1} x))$ and $\psi(x) := \sup \{ \varphi_{j}(x) : j \in J \}$. It is easy to see that $0 \le \psi \le 1$, $\varphi_{j}$ is supported in $B(x_{j}, 2 r_{j})$, $\psi \in \Lip_{c}(\Omega)$, $\mathrm{supp}(\psi) \subset U$ and $\psi \equiv 1$ on $K$. Then, by Remark \ref{Lip test}, we have
\begin{equation*} \div F(K) = \int_{K} \psi \, d \div F = - \int_{U} \ban{F, \nabla_{H} \psi} \, dx - \int_{U \setminus K} \psi \, d \div F, \end{equation*}
which implies
\begin{equation*} \div F(K) < \|F\|_{p, U} \| \nabla_{H} \psi \|_{p', U} + \eps < \eps ( \| \nabla_{H} \psi \|_{p', U} + 1). \end{equation*}
Since $\psi$ is the maximum of a finite family of functions, we have $\nabla_{H} \psi(x) = \nabla_{H} \varphi_{j}(x)$ for some $j \in J$ and $\mu$-a.e. $x \in \Omega$. Indeed, Theorem~\ref{MSC-Rademacher} shows that Lipschitz functions are differentiable $\mu$-a.e., moreover, $\psi(x) = \varphi_{j}(x)$ in the open set $\{ \varphi_{j} > \varphi_{i}, \, \forall i \neq j \}$, 
while $\nabla_H \varphi_{j}(x) = \nabla_H \varphi_{i}(x)$ for $\mu$-a.e. $x$ on the set $\{ \varphi_{j} = \varphi_{i}\}$.
Then
\begin{align*} \int_{U} |\nabla_{H} \psi|^{p'} \, dx & \le \sum_{j \in J} \int_{U} |\nabla_{H} \varphi_{j}|^{p'} \, dx = \sum_{j \in J} \int_{B(x_{j}, 2 r_{j})} |\nabla_{H} \varphi_{j}|^{p'} \, dx \\
& \le 2^{Q} \mu(B(0, 1)) L^{p'} \sum_{j \in J} r_{j}^{Q - p'} \le 2^{Q} L^{p'} \mu(B(0, 1)) (\SHaus{Q - p'}(K) + 1). \end{align*}
This implies $$0 \le \div F(K) \le \eps (1 + 2^{\frac{Q}{p'}} L \mu(B(0, 1))^{\frac{1}{p'}} (\SHaus{Q - p'}(K) + 1)^{\frac{1}{p'}})$$ and, since $\eps$ is arbitrary, we conclude the proof.

In the case $p = \infty$, we proceed similarly by considering a Borel set $B$ such that $\SHaus{Q - 1}(B) = 0$ and a compact subset of $B_{\pm}$. 
For any $\eps > 0$, there exists an open set $U$ satisfying $K \subset U \Subset \Omega$ and $|\div F|(U \setminus K) < \eps$, as before. Now, there exists a $\delta > 0$ small enough such that we can find a finite open covering $\{B(x_{j}, r_{j})\}_{j \in J}$, $2 r_{j} < \delta$, of $K$, which satisfies $\sum_{j \in J} r_{j}^{Q - 1} < \eps$, and $B(x_{j}, 2 r_{j}) \subset U$ whenever $B(x_{j}, r_{j}) \cap K \neq \emptyset$.

It is clear that
\begin{equation*} \left | \int_{U} \ban{F, \nabla_{H} \psi} \, dx \right | \le \|F\|_{\infty, U} \| \nabla_{H} \psi \|_{1, U} \end{equation*}
and that
\begin{align*} \int_{U} |\nabla_{H} \psi| \, dx & \le \sum_{j \in J} \int_{\Omega} |\nabla_{H} \varphi_{j}| \, dx = \sum_{j \in J} \int_{B(x_{j}, 2r_{j})} |\nabla_{H} \varphi_{j}| \, dx \\
& \le 2^{Q} L \mu(B(0, 1)) \sum_{j \in J} r_{j}^{Q - 1} < 2^{Q} L \mu(B(0, 1)) \eps. \end{align*}
Thus, we conclude that
\begin{equation*} \div F(K) =  - \int_{U} \ban{F, \nabla_{H} \psi} \, dx - \int_{U \setminus K} \psi \, d \div F < \eps (1 + 2^{Q} L \mu(B(0, 1)) \|F\|_{\infty, U}), \end{equation*}
which implies $\div F(K) = 0$, since $\eps$ is arbitrary. \end{proof}

We notice that there is a precise way to compare $\SHaus{Q - p'}$ and the Euclidean Hausdorff measure $\Haus{\q - p'}_{|\cdot|}$ on a stratified group $\G$ of topological dimension $\q$, as shown in \cite{balogh2009sub}. In particular, 
\cite[Proposition~3.1]{balogh2009sub} implies that 
\begin{equation*} \SHaus{Q - p'} \ll \Haus{\q - p'}_{|\cdot|}. \end{equation*}
This shows that Theorem~\ref{absolute continuity} is coherent with the Euclidean case (\cite[Theorem~3.2]{Silhavy}), and that the divergence-measure horizontal fields have finer absolute continuity properties than the general ones.

Now we prove a first case of Leibniz rule between an essentially bounded divergence-measure horizontal field and a scalar Lipschitz function, whose gradient is in $L^{1}(H \Omega)$.

\begin{proposition}
If $F \in \DM^{\infty}(H \Omega)$ and $g \in L^{\infty}(\Omega)\cap \Lip_{H, {\rm loc}}(\Omega)$,
with $\nabla_{H} g \in L^{1}(H \Omega)$, then $g F \in \DM^{\infty}(H \Omega)$ and the following formula holds
\begin{equation} \label{productruleLip}
\mathrm{div}(g F) = g \div F + \ban{F, \nabla_{H} g} \mu.   \end{equation}
\end{proposition}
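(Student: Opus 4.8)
The plan is to reduce the statement to the already established smooth case by approximation, working locally against test functions of compact support. Since $g\in L^\infty(\Omega)$ and $F\in L^\infty(H\Omega)$, we have $gF\in L^\infty(H\Omega)$ at once, so the only task is to identify $\div(gF)$ with the measure $g\,\div F+\ban{F,\nabla_Hg}\mu$. Fix $\phi\in C^\infty_c(\Omega)$ and choose an open set $U$ with $\supp\phi\Subset U\Subset\Omega$. Applying Theorem~\ref{approx Lip function} to $g\in\Lip_{H,{\rm loc}}(\Omega)$, pick $g_k\in C^\infty(\Omega)$ with $g_k\to g$ uniformly on compact subsets of $\Omega$, with $\sup_k\|\nabla_Hg_k\|_{\infty,U}<+\infty$, and with $\nabla_Hg_k\to\nabla_Hg$ $\mu$-a.e. in $\Omega$.

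Since $g_k\phi\in C^\infty_c(\Omega)$, the definition of distributional divergence (Definition~\ref{DistDivdef}), together with the fact that $\div F$ is a Radon measure, gives
\[
-\int_\Omega\ban{F,\nabla_H(g_k\phi)}\,dx=\int_\Omega g_k\phi\,d\div F.
\]
Using the pointwise Leibniz rule $\nabla_H(g_k\phi)=g_k\nabla_H\phi+\phi\nabla_Hg_k$ for smooth functions (cf. Remark~\ref{product rule Lip H}) and rearranging, we obtain
\[
-\int_\Omega\ban{g_kF,\nabla_H\phi}\,dx=\int_\Omega g_k\phi\,d\div F+\int_\Omega\phi\ban{F,\nabla_Hg_k}\,dx.
\]

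Now let $k\to\infty$ in this identity. On the left, $g_k\to g$ uniformly on $\supp\phi$, while $F\in L^\infty(H\Omega)$ and $\nabla_H\phi\in L^1(H\Omega)$, so $\int_\Omega\ban{g_kF,\nabla_H\phi}\,dx\to\int_\Omega\ban{gF,\nabla_H\phi}\,dx$. For the first term on the right, $g_k\to g$ uniformly on $\supp\phi$ and $|\div F|$ is finite on compact sets, so $\int_\Omega g_k\phi\,d\div F\to\int_\Omega g\phi\,d\div F$. For the second term, $\phi F\in L^1(H\Omega)$ has compact support in $U$, $\sup_k\|\nabla_Hg_k\|_{\infty,U}<+\infty$, and $\nabla_Hg_k\to\nabla_Hg$ $\mu$-a.e., so dominated convergence yields $\int_\Omega\phi\ban{F,\nabla_Hg_k}\,dx\to\int_\Omega\phi\ban{F,\nabla_Hg}\,dx$. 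Hence, for every $\phi\in C^\infty_c(\Omega)$,
\[
-\int_\Omega\ban{gF,\nabla_H\phi}\,dx=\int_\Omega\phi\,g\,d\div F+\int_\Omega\phi\ban{F,\nabla_Hg}\,dx.
\]
The right-hand side is the pairing of $\phi$ with $g\,\div F+\ban{F,\nabla_Hg}\mu$, which is a Radon measure on $\Omega$; moreover $|g\,\div F|(\Omega)\le\|g\|_{\infty,\Omega}|\div F|(\Omega)<+\infty$ and $\int_\Omega|\ban{F,\nabla_Hg}|\,dx\le\|F\|_{\infty,\Omega}\|\nabla_Hg\|_{1,\Omega}<+\infty$, so it is finite. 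Therefore $\div(gF)$ equals this measure, which gives $gF\in\DM^\infty(H\Omega)$ and \eqref{productruleLip}.

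The only points that require genuine care are the three limit passages, and among them the term $\int_\Omega\phi\ban{F,\nabla_Hg_k}\,dx$: here one must use precisely the combination provided by Theorem~\ref{approx Lip function}, namely the $\mu$-a.e. convergence of $\nabla_Hg_k$ together with a uniform $L^\infty$-bound on a neighbourhood $U$ of $\supp\phi$, to justify dominated convergence. It is also worth stressing that the localization (using $\phi$ with compact support) is what lets us exploit the merely local Lipschitz regularity of $g$ and the local bounds on $\nabla_Hg_k$, whereas the global hypotheses $g\in L^\infty(\Omega)$ and $\nabla_Hg\in L^1(H\Omega)$ are exactly what guarantee that the limiting measure is finite on all of $\Omega$.
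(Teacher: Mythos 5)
Your proof is correct and takes essentially the same route as the paper's: both rest on the smooth approximation of Theorem~\ref{approx Lip function}. The paper merely compresses the argument by testing $\div F$ directly against the Lipschitz function $g\phi$ (Remark~\ref{Lip test}) together with the a.e.\ Leibniz rule of Remark~\ref{product rule Lip H}, whereas you unpack that extension by approximating $g$ with smooth $g_k$ and passing to the limit in the three terms; the underlying mechanism and the resulting identity \eqref{productruleLip} are the same.
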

\begin{proof}
It is clear that $gF \in L^{\infty}(H \Omega)$. For any $\phi \in C^{1}_{c}(\Omega)$ with $\| \phi \|_{\infty} \le 1$, by Remarks \ref{product rule Lip H} and \ref{Lip test}, we have
\begin{align*} \int_{\Omega} \ban{gF, \nabla_{H}\phi} \, dx & = \int_{\Omega} \ban{F, \nabla_{H} (g \phi)} \, dx - \int_{\Omega} \phi \ban{F, \nabla_{H} g} \, dx \\
& = - \int_{\Omega} \phi g \, d \div F - \int_{\Omega} \phi \ban{F, \nabla_{H} g} \, dx, \end{align*}
which clearly implies that $gF \in \DM^\infty(H \Omega)$ and \eqref{productruleLip} holds.
\end{proof}

We have now all the tools to establish a general product rule for essentially bounded divergence-measure horizontal fields and $BV_{H}$ functions, see Theorem~\ref{productrule}. This is one of the main ingredients in the proof of the Gauss--Green formulas.

\begin{proof}[Proof of Theorem~\ref{productrule}] 
We notice that \eqref{productruleLip} holds for every $g_{\eps}$ with $\ep>0$ and we also have $|g_{\eps}(x)| \le \|g\|_{\infty, \Omega}$ for any $x \in \Omega$. The family $\{g_{\tilde \eps_k}\}$ is then equibounded in $L^{\infty}(\Omega; |\div F|)$ and there exists $\tilde{g} \in L^{\infty}(\Omega; |\div F|)$ and a subsequence $\ep_k\to0$ such that $g_{\eps_{k}} \weakstarto \tilde{g}$. It follows that 
\begin{equation*} \int_{\Omega} \phi g_{\eps_{k}} \, d \div F \to \int_{\Omega} \phi \tilde{g} \, d \div F \end{equation*}
for any $\phi \in L^{1}(\Omega; |\div F|)$.
In particular, the previous convergence holds for any $\phi \in C_{c}(\Omega)$, and so $g_{\eps_{k}} \div F \weakto \tilde{g} \div F$ in $\mathcal{M}(\Omega)$.

Now we show that $\{ \div (g_{\eps} F) \}$ is uniformly bounded in $\mathcal{M}(\Omega)$: by \eqref{productruleLip}, we obtain that
\begin{align*} \left | \int_{\Omega} \phi \, d \div(g_{\eps} F) \right | & \le \left | \int_{\Omega} \phi g_{\eps} \, d \div F \right | + \left | \int_{\Omega} \phi \ban{F, \nabla_{H}(g_{\eps})} \, dx \right | \\
& \le \|\phi\|_{\infty, \Omega} \|g\|_{\infty, \Omega} |\div F|(\Omega) + \|F\|_{\infty, \Omega} \int_{\Omega} |\phi| |\nabla_{H} g_{\eps}| \, dx. 
\end{align*}
As a result, considering $\mathrm{supp}(\phi) \subset \Omega_{2 \eps}^{\cR}$, by \eqref{total_var_convol_convergence} we conclude that
\begin{equation} \label{uniformbounddiv} |\div (g_{\eps} F)|(\Omega_{2 \eps}^{\cR}) \le \|g\|_{\infty, \Omega} |\div F|(\Omega) + \|F\|_{\infty, \Omega} |D_{H} g|(\Omega). \end{equation}
We have shown that $\{\div (g_{\eps} F)\}$ is uniformly bounded in $\mathcal{M}(\Omega')$ for any open set $\Omega' \Subset \Omega$, and so up to extracting a further subsequence that we relabel as $\ep_k$, 
the sequence  $\{\div (g_{\eps_{k}} F)\}$ is a locally weakly$^{*}$ converging subsequence. However, it is clear that $\div(g_{\eps_k} F)$ weakly$^{*}$ converges to $\div (g F)$ in the sense of distributions, and that $C^{\infty}_{c}(\Omega)$ is dense in $C_{c}(\Omega)$. Therefore, by uniqueness of weak$^{*}$ limits, we conclude that $\div(g_{\eps_k} F) \weakto \div (g F)$ in $\mathcal{M}(\Omega)$.

Thus, $(F, \nabla_{H} g_{\eps_{k}})$ is weakly$^*$ convergent, being the difference of two weakly$^*$ converging sequences, and taking into account \eqref{productruleLip} we get
\begin{equation} \label{nabla term conv} 
(F, \nabla_{H} g_{\eps_{k}}) \weakto (F, D_{H} g) := \div (g F) - \tilde{g} \div F. 
\end{equation}
In relation to $(F, D_{H} g)$, we first argue as in Lemma~\ref{conv_meas_bounded_function}. For any $\phi \in C_{c}(\Omega)$, we have
\begin{align*} \left | \int_{\Omega} \phi \, d (F, D_{H} g) \right | &= \lim_{\eps_{k} \to 0} \left | \int_{\Omega} \phi \ban{F, \nabla_{H} g_{\eps_{k}}} \, dx \right |  \le \|F\|_{\infty, \Omega}\, \limsup_{\eps_{k} \to 0} \int_{\Omega} |\phi| |\nabla_{H} g_{\eps_{k}}| \, dx \\
& \le \|F\|_{\infty, \Omega}\, \limsup_{\eps_{k} \to 0} \int_{\Omega} |\phi| (\rho_{\eps_{k}} \ast |D_{H} g|) \, dx\\
& =\|F\|_{\infty, \Omega}\, \lim_{\eps_{k} \to 0} \int_{\Omega} (\rho_{\eps_{k}} \ast |\phi|) \, d |D_{H} g|  = \|F\|_{\infty, \Omega} \int_{\Omega} |\phi| \, d |D_{H} g|, \end{align*}
where the second inequality follows by \eqref{pointwise_upper_control}, since $\mathrm{\supp}(\phi) \subset \Omega_{2 \eps_{k}}^{\cR}$ for $\eps_{k}$ small enough. The subsequent equality 
is a consequence of \eqref{exchanging_convolution},
therefore proving \eqref{abs_cont_pairing_infty}.
The decomposition \eqref{eq:BVdec-as} in our case yields
\[
D_{H} g = \nb{g}\, \mu + D_H^{\rm s} g,
\]
where $\nabla_Hg$ is also characterized as the approximate differential
of $g$, \cite[Theorem~2.2]{AmbMag2003}.
We aim to show that 
\[
(F, D_{H} g)^{\rm a} \mu = \ban{F, \nb{g}} \mu \qandq 
(F, D_{H} g)^{\rm s} = (F,D_H^{\rm s} g),
\]
for some measure $(F,D_H^{\rm s} g)\in\cM(\Omega)$ that
is absolutely continuos with respect to $|D_H^{\rm s}g|$.
Indeed, by \eqref{commutation conv derivative meas} we get
$\nabla_{H} g_{\eps} = \rho_{\eps} \ast D_{H} g$ on $\Omega'$ for every fixed 
open set $\Omega'\Subset\Omega$ and $\ep>0$ sufficiently small.
On this open set we have
\beq\label{eq:conv_dec_ep} 
\ban{F, \nabla_{H} g_{\eps}} = \ban{F, \rho_{\eps} \ast \nb{g}} + \ban{F, \rho_{\eps} \ast D_H^{\rm s} g}. 
\eeq
By Lemma~\ref{conv_meas_bounded_function} the 
measures $\ban{F,\rho_\ep\ast D_H^{\rm s} g}\mu$ are uniformly
bounded, so that possibly selecting a subsequence of $\ep_k$,
denoted by the same symbol, there exists 
$(F,D_H^{\rm s} g)\in\cM(\Omega)$ such that
\begin{equation*} 
\ban{F, \rho_{\eps_{k}} \ast D_H^{\rm s} g} \mu \weakto (F, D_{H}^{\rm s} g)
\end{equation*}
and applying again Lemma~\ref{conv_meas_bounded_function} we get
\beq\label{eq:FDs}
|(F,D_H^{\rm s} g)|\le \|F\|_{\infty,\Omega}\, |D_H^{\rm s} g|.
\eeq
Since $\nb{g}\in L^1(H\Omega)$, we clearly have
$\rho_{\eps} \ast \nb{g} \to  \nb{g}$ in $L^1(H \Omega)$, which yields the following weak$^*$ convergence
\begin{equation*} \ban{F, \rho_{\eps} \ast \nb{g}} \mu \weakto \ban{F, \nb{g}} \mu
\end{equation*}
in $\cM(\Omega)$. Since \eqref{eq:conv_dec_ep} holds
on every relatively compact open subset of $\Omega$, 
we get
\begin{equation} \label{first decomposition} 
(F, D_{H} g) = \ban{F, \nb{g}}\mu + (F, D_{H}^{\rm s} g). 
\end{equation}
Due to \eqref{eq:FDs} the previous sum is made by
mutually singular measures, then showing that
\[
(F, D_H g)^{\rm a} \mu= \ban{F, \nb{g}} \mu \qandq
(F, D_H g)^{\rm s}= (F,D_H^{\rm s} g),
\]
hence \eqref{pairing_decomposition_parts} holds.
\end{proof}

\begin{proposition} \label{tilde_g_characterization} Let $F \in \DM^{\infty}(H \Omega)$ and let $g \in L^{\infty}(\Omega)$
with $|D_Hg|(\Omega) < +\infty$. If we define $g_{\eps} := \rho_{\eps} \ast g$ using the mollifier $\rho$ of Proposition~\ref{pointwise limit moll}, then any weak$^{*}$ limit point $\tildef{g} \in L^{\infty}(\Omega; |\div F|)$ of some subsequence $g_{\eps_{k}}$ satisfies the property 
\begin{equation*} \tilde{g}(x) = g^{*, \cR}(x) \ \ \text{for} \ \ |\div F|\text{-a.e.} \ x \in C_{g}^{\cR}, \end{equation*} 
where $C_g^{\cR}$ is introduced in Definition~\ref{precisedef}.
In addition, if $g \in L^{\infty}(\Omega) \cap C(\Omega)$ and $\nabla_H g \in L^{1}(H \Omega)$, then $\tildef{g}(x) = g(x)$ for $|\div F|$-a.e. $x \in \Omega$ and
\begin{equation} \label{product rule eq p} 
\div (g F) = g\, \div F + \ban{F, \nabla_H g} \mu. 
\end{equation}
\end{proposition}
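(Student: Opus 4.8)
The plan is to read off the first claim from the pointwise convergence of the mollifications provided by Proposition~\ref{pointwise limit moll}, matched against the given weak$^*$ convergence $g_{\eps_k}\weakstarto\tilde{g}$ in $L^\infty(\Omega;|\div F|)$. First note that $C^\cR_g$ is a Borel set: the map $(x,r)\mapsto\mean{B^\cR(x,r)}g\,dy$ is continuous on $\Omega\times(0,\delta)$ for $\delta$ small enough, so the set of points where its limit as $r\to0^+$ exists is Borel, and on $C^\cR_g$ one clearly has $|g^{*,\cR}|\le\|g\|_{\infty,\Omega}$. Since $\rho$ is the mollifier of Proposition~\ref{pointwise limit moll}, that proposition gives $g_{\eps_k}(x)\to g^{*,\cR}(x)$ for every $x\in C^\cR_g$ (for $k$ large enough that $B^\cR(x,\eps_k)\subset\Omega$), while $|g_{\eps_k}|\le\|g\|_{\infty,\Omega}$ on $\G$. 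Hence, for any $\phi\in L^1(\Omega;|\div F|)$ the function $\phi\,\chi_{C^\cR_g}$ is again in $L^1(\Omega;|\div F|)$, Lebesgue's dominated convergence yields $\int_{C^\cR_g}\phi\,g_{\eps_k}\,d|\div F|\to\int_{C^\cR_g}\phi\,g^{*,\cR}\,d|\div F|$, and testing $g_{\eps_k}\weakstarto\tilde{g}$ against $\phi\,\chi_{C^\cR_g}$ gives $\int_{C^\cR_g}\phi\,g_{\eps_k}\,d|\div F|\to\int_{C^\cR_g}\phi\,\tilde{g}\,d|\div F|$. Comparing these limits for all $\phi$ forces $\tilde{g}=g^{*,\cR}$ $|\div F|$-a.e.\ on $C^\cR_g$.

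For the additional statement, assume moreover $g\in C(\Omega)$. Then $\mean{B^\cR(x,r)}g\,dy\to g(x)$ as $r\to0^+$ for every $x\in\Omega$, simply because $g$ is continuous at $x$ and $B^\cR(x,r)\searrow\{x\}$; hence $C^\cR_g=\Omega$ and $g^{*,\cR}=g$ on $\Omega$, so the first part gives $\tilde{g}=g$ $|\div F|$-a.e.\ on $\Omega$, i.e.\ $\tilde{g}\,\div F=g\,\div F$. The hypothesis $\nabla_Hg\in L^1(H\Omega)$ means $D_Hg$ is absolutely continuous, $D_Hg=\nb{g}\,\mu$, so the decomposition \eqref{pairing_decomposition_parts} of Theorem~\ref{productrule} reduces to $(F,D_Hg)=(F,D_Hg)^{\rm a}\,\mu=\ban{F,\nb{g}}\,\mu$; plugging this and $\tilde{g}\,\div F=g\,\div F$ into \eqref{product rule eq} produces \eqref{product rule eq p}, while $gF\in\DM^\infty(H\Omega)$ is already granted by Theorem~\ref{productrule}. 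One may also argue directly, without Theorem~\ref{productrule}: continuity of $g$ gives $g_\eps\to g$ uniformly on compacta, $\nabla_Hg_\eps=\rho_\eps\ast\nb{g}\to\nb{g}$ in $L^1_{\rm loc}(H\Omega)$ by \eqref{commutation conv derivative meas}, and passing to the limit in the Leibniz rule \eqref{productruleLip} for the $C^1_H$ function $g_\eps$ yields \eqref{product rule eq p} at once.

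The routine ingredients here — the Borel measurability of $C^\cR_g$ and the dominated convergence step — are harmless. The only genuine point is the reconciliation of the two notions of convergence of $g_{\eps_k}$, pointwise on $C^\cR_g$ versus weak$^*$ against $|\div F|$: this is precisely where Proposition~\ref{pointwise limit moll}, and with it the radial form $\rho(x)=\eta(d(x,0))$ of the mollifier, is indispensable, since for a general symmetric mollifier $\rho_\eps\ast g$ need not converge to the precise representative $g^{*,\cR}$. In the additional statement the sole subtlety is that the extra hypothesis $\nabla_Hg\in L^1(H\Omega)$ is exactly what annihilates the singular term $(F,D^{\rm s}_Hg)$ in \eqref{pairing_decomposition_parts}.
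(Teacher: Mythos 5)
Your argument is correct and follows essentially the same route as the paper: Proposition~\ref{pointwise limit moll} gives pointwise convergence of $g_{\eps_k}$ to $g^{*,\cR}$ on $C_g^{\cR}$, dominated convergence against $|\div F|$ identifies $\tilde g$ there, and for continuous $g$ with $D_Hg$ absolutely continuous the decomposition \eqref{pairing_decomposition_parts} kills the singular term and yields \eqref{product rule eq p}. The extra observations (Borel measurability of $C_g^{\cR}$, the direct passage to the limit in \eqref{productruleLip}) are harmless additions, not a different method.
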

\begin{proof}
Let $g_{\eps_{k}} \weakstarto \tildef{g}$ in $L^{\infty}(\Omega; |\div F|)$.
By Proposition \ref{pointwise limit moll}, we know that $$g_{\eps_{k}}(x) \to g^{*, \cR}(x)$$ for any $x \in C_{g}^{\cR}$. If we choose as test function $\phi = \chi_{C_{g}^{\cR}} \psi$, for some $\psi \in L^{1}(\Omega; |\div F|)$, we have
\begin{equation*} \int_{\Omega} \phi g_{\eps_{k}} \, d \div F = \int_{C_{g}^{\cR}} \psi g_{\eps_{k}} \, d \div F \to \int_{C_{g}^{\cR}} \psi g^{*, \cR} \, d \div F \end{equation*}
by Lebesgue's theorem with respect to the measure $|\div F|$. Since $\psi$ is arbitrary, this implies $\tilde{g}(x) = g^{*, \cR}(x)$ for $|\div F|$-a.e. $x \in C_{g}^{\cR}$.
Let now $g \in L^{\infty}(\Omega) \cap C(\Omega)$ with $\nabla_H g \in L^{1}(H \Omega)$. It is then clear that $\tildef{g}(x) = g(x)$ for $|\div F|$-a.e. $x \in \Omega$, since $g^{*, \cR}(x) = g(x)$ for any $x \in \Omega$, being $g$ continuous. In addition, since $D_{H} g$ has no singular part, \eqref{pairing_decomposition_parts} implies immediately \eqref{product rule eq p}.
\end{proof}

\begin{remark}\rm  We stress that in Theorem~\ref{productrule} the pairing term $(F, D_{H} g)$ depends on the particular sequence $g_{\eps_{k}}$, and therefore on $\tilde{g}$. In order to obtain uniqueness, one should be able to show that there exists only one accumulation point $\tilde{g}$ of $g_{\eps}$. For instance, this happens in the Euclidean case $\G = \R^{n}$, in which $\tilde{g} = g^{*} ( = g^{*, \cR})$ $\Haus{n - 1}$-a.e. However, it is possible to impose some more conditions on the measures $\div F$ and $|D_{H} g|$ under which $\tilde{g}$ and $(F, D_{H}g)$ are uniquely determined.
\end{remark}

\begin{corollary} \label{productrulecor} Let $F \in \DM^{\infty}(H \Omega)$ and let $g \in L^{\infty}(\Omega)$ with $|D_Hg|(\Omega)<+\infty$. Let $\divFs $ and $D_{H}^{\rm s}g$ be the singular parts of the measures $\div F$ and $D_{H} g$. If we also assume that $|\divFs|$ and $|D_{H}^{\rm s} g|$ are mutually singular measures, then we have
\begin{equation} \label{product rule eq cor} 
\div (g F) = \lls g\, \divFa + \ban{F, \nb{g}}\rrs \mu +
\tilde{g}\, \divFs + (F, D_{H}^{\rm s} g), \end{equation}
where $\tilde{g} \in L^{\infty}(\Omega; |\div F|)$ and $(F, D_H^{\rm s} g) \in \mathcal{M}(\Omega)$ are defined as in Theorem \ref{productrule}
and the singular measures $\tilde{g}\,\divFs$ and $(F,D_H^{\rm s} g)$ are uniquely determined by $g$ and $F$.
In particular, if $|\div F| \ll \mu$, we have
\begin{equation} \label{product rule eq Leb} 
\div (g F) = \lls g\,\divFa + \ban{F,\nb{g}}\rrs \mu + (F, D_H^{\rm s} g). \end{equation}
\end{corollary}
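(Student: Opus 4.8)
The plan is to derive \eqref{product rule eq cor} directly from the product rule \eqref{product rule eq} of Theorem~\ref{productrule}, by splitting $\div F$ and $D_{H}g$ into their absolutely continuous and singular parts with respect to $\mu$. Write $\div F = \divFa\,\mu + \divFs$, where, as in the statement, $\divFa \in L^{1}_{\rm loc}(\Omega)$ denotes the density of the absolutely continuous part of $\div F$; inserting this together with the decomposition \eqref{pairing_decomposition_parts} of the pairing measure into \eqref{product rule eq} gives
\begin{equation*}
\div(gF) = \tilde g\,\divFa\,\mu + \tilde g\,\divFs + \ban{F,\nb{g}}\,\mu + (F,D_{H}^{\rm s}g).
\end{equation*}
Thus the only identity still to be checked in order to reach \eqref{product rule eq cor} is $\tilde g\,\divFa\,\mu = g\,\divFa\,\mu$: once this is known, the two absolutely continuous terms $g\,\divFa\,\mu$ and $\ban{F,\nb{g}}\,\mu$ merge into $\lls g\,\divFa + \ban{F,\nb{g}}\rrs\mu$, and the case $|\div F| \ll \mu$ is immediate, since then $\divFs = 0$ and \eqref{product rule eq cor} reduces to \eqref{product rule eq Leb}.

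To prove $\tilde g\,\divFa\,\mu = g\,\divFa\,\mu$, I would first recall that $\rho_{\ep}\ast g \to g$ in $L^{1}_{\rm loc}(\Omega)$, so the subsequence $\ep_{k}$ furnished by Theorem~\ref{productrule}, along which $g_{\ep_{k}} \weakstarto \tilde g$ in $L^{\infty}(\Omega;|\div F|)$, can be refined---without changing $\tilde g$---so that also $g_{\ep_{k}} \to g$ $\mu$-a.e.\ in $\Omega$. Since $\divFa\,\mu \ll |\div F|$, we may write $\divFa\,\mu = \theta\,|\div F|$ with $\theta \in L^{1}(\Omega;|\div F|)$, and Lemma~\ref{lemma:weak_conv_absolutely_continuous_perimeter_measures} yields $g_{\ep_{k}}\,\divFa\,\mu \weakto \tilde g\,\divFa\,\mu$ in $\mathcal{M}(\Omega)$. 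On the other hand, for each $\phi \in C_{c}(\Omega)$ the functions $\phi\,g_{\ep_{k}}\,\divFa$ are dominated by $\|g\|_{\infty,\Omega}\,|\phi\,\divFa| \in L^{1}(\Omega;\mu)$ and converge $\mu$-a.e.\ to $\phi\,g\,\divFa$, so $\int_{\Omega}\phi\,g_{\ep_{k}}\,\divFa\,dx \to \int_{\Omega}\phi\,g\,\divFa\,dx$ by dominated convergence; that is, $g_{\ep_{k}}\,\divFa\,\mu \weakto g\,\divFa\,\mu$ in $\mathcal{M}(\Omega)$. Comparing the two limits gives $\tilde g\,\divFa\,\mu = g\,\divFa\,\mu$, hence \eqref{product rule eq cor}.

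It remains to prove uniqueness of the singular measures $\tilde g\,\divFs$ and $(F,D_{H}^{\rm s}g)$ under the hypothesis $|\divFs| \perp |D_{H}^{\rm s}g|$. The left-hand side $\div(gF)$ and the absolutely continuous term $\lls g\,\divFa + \ban{F,\nb{g}}\rrs\mu$ depend only on $g$ and $F$---the density $\divFa$ and the approximate differential $\nb{g}$ being intrinsically determined---so the sum $\tilde g\,\divFs + (F,D_{H}^{\rm s}g)$ is uniquely determined by $g$ and $F$. If two admissible approximating sequences produce $\tilde g\,\divFs + (F,D_{H}^{\rm s}g)$ and $\tilde g'\,\divFs + (F,D_{H}^{\rm s}g)'$, then $(\tilde g - \tilde g')\,\divFs = (F,D_{H}^{\rm s}g)' - (F,D_{H}^{\rm s}g)$; the left-hand side is absolutely continuous with respect to $|\divFs|$, since $\tilde g - \tilde g'$ is bounded and $|\divFs|$-measurable, whereas by \eqref{abs_cont_pairing_infty} together with \eqref{pairing_decomposition_parts} the right-hand side is absolutely continuous with respect to $|D_{H}^{\rm s}g|$. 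As these total variations are mutually singular, both sides vanish, which gives $\tilde g\,\divFs = \tilde g'\,\divFs$ and $(F,D_{H}^{\rm s}g) = (F,D_{H}^{\rm s}g)'$. I expect the one genuinely delicate point to be the reconciliation, in the second paragraph, of the weak$^{*}$ convergence in $L^{\infty}(\Omega;|\div F|)$ with the $\mu$-a.e.\ convergence of the mollifications---handled by Lemma~\ref{lemma:weak_conv_absolutely_continuous_perimeter_measures} and dominated convergence; the decomposition bookkeeping and the final disjointness argument are routine.
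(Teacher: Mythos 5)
Your proof is correct and follows essentially the same route as the paper: it decomposes $\div(gF)$ via Theorem~\ref{productrule} and \eqref{pairing_decomposition_parts}, identifies $\tilde g\,\divFa\,\mu = g\,\divFa\,\mu$ from the convergence of the mollifications $g_{\ep}\to g$, and obtains uniqueness of the two singular pieces from the mutual singularity of $|\divFs|$ and $|D_H^{\rm s}g|$ exactly as the paper does. The only difference is that your second paragraph spells out, via a refined subsequence, Lemma~\ref{lemma:weak_conv_absolutely_continuous_perimeter_measures} and dominated convergence, the identification $\tilde g\,\divFa\,\mu = g\,\divFa\,\mu$ that the paper asserts directly from $g_{\ep}\to g$ in $L^{1}_{\rm loc}(\Omega)$.
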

\begin{proof}
It is well known that we can decompose the measures $\div F$ and $(F, D_{H} g)$ in their absolutely continuous and singular parts.
By Theorem \ref{productrule}, we know that 
\begin{equation*} 
\div(g F) = \lls \tilde{g}\,\divFa + \ban{F, \nb{g}}\rrs \mu + 
\tilde{g}\, \divFs + (F, D_H^{\rm s}g). 
\end{equation*}
We also have $\tilde{g}(x) = g(x)$ at $\mu$-a.e.\ $x \in \Omega$ in that  $g_{\eps}$ converges to $g$ in $L^{1}_{\rm loc}(\Omega)$ for any mollification of $g$. This implies that $\tilde{g}\,\divFa \mu = g\,\divFa \mu$ in the sense of Radon measures. It follows that \eqref{product rule eq cor} holds and clearly
\begin{equation*} 
\tilde{g}\, \divFs + (F, D_H^{\rm s}g) = 
\div (g F) - \lls g \, \divFa + \ban{F, \nb{g}}\rrs \mu.
\end{equation*}
We have shown that the singular measures on the left hand side are uniquely determined by $g$ and $F$, since the right hand side is uniquely determined and the two measures are also mutually singular.
Indeed we have $|(F, D^{\rm s}_Hg)| \le \|F\|_{\infty, \Omega} |D^{\rm s}_H g|$ by Theorem \ref{productrule}.
To conclude the proof, we observe that the condition $|\div F| \ll \mu$ clearly gives $\divFs = 0$, so \eqref{product rule eq Leb} immediately follows.
\end{proof}

\begin{remark} It is clear that one can obtain \eqref{product rule eq Leb} if we have $F \in L^{\infty}(H \Omega)$ with $\div F \in L^{1}(\Omega)$ and $g \in L^{\infty}(\Omega) \cap BV_{H}(\Omega)$. \end{remark}

\begin{remark} Under no additional assumption on $F \in \DM^{\infty}(H \Omega)$ and $g \in L^{\infty}(\Omega)$ with $|D_Hg|(\Omega)<+\infty$, we can always decompose the term $\tilde{g} \div F$. Indeed, we have
\begin{equation} \label{g_divF_decomp} 
\tilde{g}\, \div F = g \, \divFa \mu + g^{*, \cR}\,
\divFs \res C_{g}^{\cR} + \tilde{g}\, \divFs \res (\Omega \setminus C_{g}^{\cR}). \end{equation}
It follows that $\tilde{g} \div F$ is uniquely determined by $\div F$ and $g$ if $|\divFs|(\Omega \setminus C_{g}^{\cR}) = 0$.
\end{remark}

\section{Interior and exterior normal traces}\label{sect:normaltraces}

In this section we introduce interior and exterior normal traces for a divergence-measure field. The absence of sufficient regularity for the
reduced boundary (Definition~\ref{d:RedBdry}) does not guarantee their uniqueness a priori. However, the next section
will present different conditions that lead to a unique normal
trace and a corresponding Gauss--Green theorem.

Let $F \in \DM^{\infty}(H \Omega)$ and $E \subset \Omega$ be a set of finite h-perimeter. Let $\rho \in C_{c}(B(0, 1))$ be a nonnegative mollifier satisfying $\rho(x) = \rho(x^{-1})$ and $\int \rho \, dx = 1$, and $\eps_{k}$ be a suitable vanishing sequence such that
\beq\label{eq:chiEFconvol}
\begin{array}{c} 
\ban{\chi_{E} F,\rho_{\eps_{k}} \ast D_H\chi_{E}} \mu \weakto (\chi_{E} F, D_{H} \chi_{E}) \\
\ban{\chi_{\Omega \setminus E} F,\rho_{\eps_{k}} \ast D_H\chi_{E}} \mu \weakto (\chi_{\Omega \setminus E} F, D_{H} \chi_{E})
\end{array}\quad \text{in $\;\mathcal{M}(\Omega)$.}
\eeq
The existence of such converging subsequences follows from Lemma~\ref{conv_meas_bounded_function}, which implies also the estimates
\begin{eqnarray}\label{eq:chiEF_ie}
&|(\chi_{E} F, D_{H} \chi_{E})| \le \|F\|_{\infty, E}\, |D_{H} \chi_{E}|\quad\text{and} \nonumber\\
&|(\chi_{\Omega\sm E} F, D_{H} \chi_{E})| \le \|F\|_{\infty, \Omega \setminus E}\, |D_{H} \chi_{E}|.
\end{eqnarray}

\begin{comment}
The existence of this sequence follows by a standard diagonalization argument joined with the weak$^*$ compactness of bounded sequences of measures,
taking into account that the converging measures in \eqref{eq:chiEF} 
are bounded on any open set $\Omega'\Subset\Omega$ in view of \eqref{pointwise_upper_control}. 
In fact, we may consider an increasing sequence of open subsets $\Omega_k\nearrow\Omega$,
hence we find a subsequence $\ban{\chi_{E} F, \nabla_{H}(\rho_{\eps_{\alpha_1(k)}} \ast \chi_{E})} \mu\res\Omega_1$ weakly$^*$ that converges to $\nu_1\in\cM(\Omega_1)$. 
We continue this process considering 
$\ban{\chi_{E} F, \nabla_{H}(\rho_{\eps_{\alpha_1(\alpha_2(k))}} \ast \chi_{E})}$ converging 
on $\Omega_2$ and finally take the diagonal subsequence
$\ban{\chi_{E} F, \nabla_{H}(\rho_{\eps_{\alpha_1\circ\cdots\alpha_k(k))}} \ast \chi_{E})}$ 
that weakly$^*$ converges on every relatively compact open subset of $\Omega$.
\end{comment}
It is worth to mention that another definition equivalent to \eqref{eq:chiEFconvol} is possible. Employing formula \eqref{commutation conv derivative meas}, we obtain 
\beq\label{eq:chiEF_equivalence}
\begin{array}{c} 
\ban{\chi_{E} F,\rho_{\eps_{k}} \ast D_H\chi_{E}} = \ban{\chi_{E} F, \nabla_{H}(\rho_{\eps_{k}} \ast \chi_{E})}  \\ 
\ban{\chi_{\Omega \setminus E} F,\rho_{\eps_{k}} \ast D_H\chi_{E}} = \ban{\chi_{\Omega \setminus E} F, \nabla_{H}(\rho_{\eps_{k}} \ast \chi_{E})}
\end{array} \quad \text{in $\;\Omega_{2 \eps_{k}}^{\cR}$.}
\eeq
We point out that the measures at the right hand side in \eqref{eq:chiEF_equivalence} are not defined on the whole $\Omega$, while this is true for those at the left hand side. 
However, arguing as in Remark \ref{remark:weakconvOmegaep}, we can see that the weak$^*$ convergence \eqref{eq:chiEFconvol} is equivalent to the weak$^*$ convergence
\beq\label{eq:chiEF}
\begin{array}{c} 
\ban{\chi_{E} F, \nabla_{H}(\rho_{\eps_{k}} \ast \chi_{E})} \mu \weakto (\chi_{E} F, D_{H} \chi_{E}), \\
\ban{\chi_{\Omega \setminus E} F, \nabla_{H}(\rho_{\eps_{k}} \ast \chi_{E})} \mu \weakto (\chi_{\Omega \setminus E} F, D_{H} \chi_{E}).
\end{array}
\eeq

It is important to stress that at the moment the ``pairing measures'' 
$(\chi_{E} F, D_{H} \chi_{E})$ and $(\chi_{\Omega\sm E} F, D_{H} \chi_{E})$
may depend on the choice of the sequence $\eps_{k}$ and also on the mollifier $\rho$.

We are now in the position to define the {\em interior and exterior normal traces} of $F$ on the boundary of $E$ as the functions $\Tr{F}{i}{E}, \Tr{F}{e}{E} \in L^{\infty}(\Omega; |D_{H} \chi_{E}|)$ satisfying
\begin{align} \label{interior_normal_trace_def} 
- 2 (\chi_{E} F, D_{H} \chi_{E}) & = \Tr{F}{i}{E} |D_{H} \chi_{E}|, \\ \label{exterior_normal_trace_def} 
- 2 (\chi_{\Omega \setminus E} F, D_{H} \chi_{E}) & = \Tr{F}{e}{E} |D_{H} \chi_{E}|. 
\end{align}
Since $\rho_{\eps_{k}} \ast \chi_{E}$ is uniformly bounded,
up to extracting a subsequence, we may also assume that 
\beq\label{eq:limxF}
\rho_{\eps_{k}} \ast \chi_{E} \weakstarto \tildef{\chi_{E}} \quad\text{in} \quad
L^{\infty}(\Omega; |\div F|).
\eeq
This allows us to define the sets
\beq\label{int-ext_F_tilde_chi_E} 
\intE{E} := \{x\in\Omega: \tildef{\chi_{E}}(x) = 1 \} \qandq
\extE{E}:= \{x\in\Omega: \tildef{\chi_{E}}(x) = 0\}
\eeq 
to be the {\em measure theoretic interior} and the {\em measure theoretic exterior
of $E$}, respectively, {\em with respect to $F$ and $\tildef{\chi_{E}}$}.
We may also define an associated reduced boundary
\beq\label{eq:redBB}
\redB{E}=\redb E\sm\pa{\intE{E}\cup\extE{E}}.
\eeq
We wish to underline again the fact that these notions heavily depend on 
$\tildef{\chi_{E}}$, which is not unique, a priori, since it 
depends on the choice of the sequence $\rho_{\eps_{k}} \ast \chi_{E}$.

In the sequel, we will refer to the above sequence $\ep_k$, or possible subsequences, such that
\eqref{eq:chiEFconvol} and \eqref{eq:limxF} hold.
Notice that despite this dependence we will provide conditions under which the
limit measures of \eqref{eq:chiEFconvol} and the sets of \eqref{int-ext_F_tilde_chi_E} and \eqref{eq:redBB} prove to have an intrinsic geometric meaning.

\begin{remark} \label{normal_traces_opposite_orientation} 
By \eqref{eq:chiEFconvol}, observing that $\rho_{\eps_{k}} \ast D_{H} \chi_{E} = - \rho_{\eps_{k}} \ast D_{H} \chi_{\Omega \setminus E}$, we also get
the following equalities 
\begin{align*} (\chi_{E} F, D_{H} \chi_{E}) & = - (\chi_{E} F, D_{H} \chi_{\Omega \setminus E}), \\
(\chi_{\Omega \setminus E} F, D_{H} \chi_{E}) & = - (\chi_{\Omega \setminus E} F, D_{H} \chi_{\Omega \setminus E}).
\end{align*}
We conclude that the normal traces of $F$ on $E$ and $\Omega \setminus E$ satisfy the following relations
\[
\Tr{F}{i}{E} = - \Tr{F}{e}{\Omega \setminus E}\quad\text{and}\quad
\Tr{F}{e}{E} = - \Tr{F}{i}{\Omega \setminus E}.
\]
\end{remark}

We employ now the Leibniz rule (Theorem \ref{productrule}) and \eqref{eq:chiEFconvol} to achieve the following result, which is a key step in order to prove the Gauss--Green formulas. 

\begin{proposition} \label{boundary_term_divergence}
Let $F \in \DM^{\infty}(H \Omega)$ and $E \subset \Omega$ be a set of finite h-perimeter, then the following formulas hold
\begin{align}
\label{Leibniz_infty_E_1} \div(\chi_{E} F) & = \tildef{\chi_{E}} \div F + (F, D_{H} \chi_{E}), \\
\label{Leibniz_infty_E_2} \div(\chi_{E} F) & = (\tildef{\chi_{E}})^{2} \div F + \frac{1}{2} (F, D_{H} \chi_{E}) + (\chi_{E} F, D_{H} \chi_{E}), \\
\label{boundary term div} 
\tildef{\chi_{E}} ( 1 - \tildef{\chi_{E}}) \div F & = \frac{1}{2} (\chi_{E} F, D_{H} \chi_{E}) - \frac{1}{2} (\chi_{\Omega \setminus E} F, D_{H} \chi_{E})
\end{align}
in the sense of Radon measures on $\Omega$, where $\tildef{\chi_{E}} \in L^{\infty}(\Omega; |\div F|)$ is defined in \eqref{eq:limxF}.
\end{proposition}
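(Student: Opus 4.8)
The plan is to derive all three formulas from the Leibniz rule of Theorem~\ref{productrule}, applied twice, together with the two distinct weak$^*$ limits of the mollifications $\rho_{\eps_k}\ast\chi_E$. Throughout, I work along the single fixed sequence $\eps_k$ for which \eqref{eq:chiEFconvol} and \eqref{eq:limxF} hold; passing to further subsequences is harmless, since all the quantities involved already converge along the whole of $\eps_k$. Formula \eqref{Leibniz_infty_E_1} is then nothing but \eqref{product rule eq} of Theorem~\ref{productrule} applied to $F$ and to $g=\chi_E$ (admissible since $\chi_E\in L^\infty(\Omega)$ and $|D_H\chi_E|(\Omega)=\Per(E,\Omega)<+\infty$), once one observes that the function $\tildef{\chi_E}$ produced by the theorem is the one of \eqref{eq:limxF}, being the weak$^*$ limit of $\rho_{\eps_k}\ast\chi_E$ in $L^\infty(\Omega;|\div F|)$. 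Along the way, splitting $F=\chi_EF+\chi_{\Omega\setminus E}F$ and passing to the limit in \eqref{eq:chiEF}, I also record the additivity identity $(F,D_H\chi_E)=(\chi_EF,D_H\chi_E)+(\chi_{\Omega\setminus E}F,D_H\chi_E)$, which is just linearity of $v\mapsto\langle F,v\rangle$ carried through the weak$^*$ limit.

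Next I would prove \eqref{Leibniz_infty_E_2} by applying Theorem~\ref{productrule} a second time, now to the field $\chi_EF$, which belongs to $\DM^\infty(H\Omega)$ by Theorem~\ref{productrule} itself, and again to $g=\chi_E$, using the same mollifier and the same sequence $\eps_k$. Since $\chi_E^2=\chi_E$, this gives
\begin{equation*}
\div(\chi_EF)=\overline{\chi_E}\,\div(\chi_EF)+(\chi_EF,D_H\chi_E),
\end{equation*}
where $\overline{\chi_E}\,\div(\chi_EF)$ is shorthand for the weak$^*$ limit of $(\rho_{\eps_k}\ast\chi_E)\,\div(\chi_EF)$ in $\mathcal M(\Omega)$. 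The crucial step is to identify this limit. Using \eqref{Leibniz_infty_E_1} I split $\div(\chi_EF)=\tildef{\chi_E}\div F+(F,D_H\chi_E)$, where $\tildef{\chi_E}\div F\ll|\div F|$ while $(F,D_H\chi_E)\ll|D_H\chi_E|$ by the absolute continuity estimate \eqref{abs_cont_pairing_infty}. Since $\rho_{\eps_k}\ast\chi_E\weakstarto\tildef{\chi_E}$ in $L^\infty(\Omega;|\div F|)$, Lemma~\ref{lemma:weak_conv_absolutely_continuous_perimeter_measures} gives $(\rho_{\eps_k}\ast\chi_E)\,(\tildef{\chi_E}\div F)\weakto\tildef{\chi_E}^2\div F$; since $\rho_{\eps_k}\ast\chi_E\weakstarto 1/2$ in $L^\infty(\Omega;|D_H\chi_E|)$ by Proposition~\ref{overline_chi_E}, Remark~\ref{weak_conv_absolutely_continuous_perimeter_measures} gives $(\rho_{\eps_k}\ast\chi_E)\,(F,D_H\chi_E)\weakto\tfrac12(F,D_H\chi_E)$. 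Adding the two, $\overline{\chi_E}\,\div(\chi_EF)=\tildef{\chi_E}^2\div F+\tfrac12(F,D_H\chi_E)$, and substituting yields \eqref{Leibniz_infty_E_2}.

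Finally \eqref{boundary term div} follows by bookkeeping. Subtracting \eqref{Leibniz_infty_E_2} from \eqref{Leibniz_infty_E_1} gives
\begin{equation*}
\tildef{\chi_E}\,(1-\tildef{\chi_E})\,\div F=(\chi_EF,D_H\chi_E)-\tfrac12(F,D_H\chi_E),
\end{equation*}
and replacing $(F,D_H\chi_E)$ by $(\chi_EF,D_H\chi_E)+(\chi_{\Omega\setminus E}F,D_H\chi_E)$ via the additivity identity recorded in the first paragraph turns the right-hand side into $\tfrac12(\chi_EF,D_H\chi_E)-\tfrac12(\chi_{\Omega\setminus E}F,D_H\chi_E)$, which is the asserted equality.

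I expect the only real obstacle to be the computation of the weak$^*$ limit of $(\rho_{\eps_k}\ast\chi_E)\,\div(\chi_EF)$ in the second paragraph. The point is that $\rho_{\eps_k}\ast\chi_E$ possesses two genuinely different weak$^*$ limits according to the reference measure: it tends to $\tildef{\chi_E}$ against measures absolutely continuous with respect to $|\div F|$, but to the constant $1/2$ against the h-perimeter $|D_H\chi_E|$, the latter being exactly the content of Proposition~\ref{overline_chi_E}. The absolute continuity \eqref{abs_cont_pairing_infty} of the pairing $(F,D_H\chi_E)$ is precisely what lets this dichotomy be applied to the two summands of $\div(\chi_EF)$, and a little care is needed to keep every weak$^*$ limit along one and the same subsequence $\eps_k$.
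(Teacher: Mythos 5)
Your proposal is correct and takes essentially the same route as the paper: \eqref{Leibniz_infty_E_1} is Theorem~\ref{productrule} applied to $F$ and $\chi_E$, then \eqref{Leibniz_infty_E_2} comes from the elementary rule \eqref{productruleLip} applied to $\chi_E F$ and $\rho_{\eps_k}\ast\chi_E$ (using $\chi_E^2=\chi_E$), with the limit of $(\rho_{\eps_k}\ast\chi_E)\,\div(\chi_E F)$ identified through the two different weak$^*$ limits of the mollifications ($\tildef{\chi_{E}}$ against $|\div F|$, and $1/2$ against $|D_H\chi_E|$ via Proposition~\ref{overline_chi_E} together with Lemma~\ref{lemma:weak_conv_absolutely_continuous_perimeter_measures} and \eqref{abs_cont_pairing_infty}), and \eqref{boundary term div} follows by subtraction and the additivity \eqref{decomposition E and compl}. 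The only cosmetic difference is that the paper substitutes \eqref{Leibniz_infty_E_1} into the $\eps_k$-level identity before passing to the limit (the chain \eqref{chiE2_approx}), whereas you pass to the limit of $(\rho_{\eps_k}\ast\chi_E)\,\div(\chi_E F)$ first and then decompose; this is the same computation with the same ingredients.
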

\begin{proof}
By Theorem~\ref{productrule} applied to $F$ and $g=\chi_{E}$, up to extracting a subsequence, we can assume that the choice of $\ep_k$ leads to \eqref{Leibniz_infty_E_1} in analogy with Theorem~\ref{productrule}.
We observe that   
$$
\div ( (\rho_{\eps_k} \ast \chi_{E}) \chi_{E} F) \weakto \div( \chi_{E}^{2} F) = \div ( \chi_E F),
$$
as measures, since $\chi_{E}^{2} = \chi_{E}$. By \eqref{productruleLip}, \eqref{Leibniz_infty_E_1} and \eqref{commutation conv derivative meas}, we get the following identities of measures on $\Omega_{2 \eps_{k}}^{\cR}$:
\begin{align} \label{chiE2_approx} 
\div(F \chi_{E} (\rho_{\eps_k} \ast \chi_{E}))  = &\, (\rho_{\eps_k} \ast \chi_{E}) \div (\chi_{E} F) + \ban{\chi_{E} F, \nabla_{H} (\rho_{\eps_k} \ast \chi_{E})} \mu \\
 =&\, (\rho_{\eps_k} \ast \chi_{E}) \tildef{\chi_{E}} \div F + (\rho_{\eps_k} \ast \chi_{E}) (F, D_{H} \chi_{E}) \\
&+ \ban{ \chi_{E} F, \rho_{\eps_k} \ast D_{H} \chi_{E}} \mu. \nonumber \end{align}
Recall that our subsequence $\ep_k$ is chosen such that both 
\eqref{eq:limxF} and \eqref{eq:chiEFconvol} hold. 
In view of \eqref{limit1/2}, we have $\displaystyle (\rho_{\eps_{k}} \ast \chi_{E}) \weakstarto \frac{1}{2} \in L^{\infty}(\Omega; |D_{H} \chi_{E}|)$. By \eqref{abs_cont_pairing_infty} we get
\[
|(F, D_{H} \chi_{E})| \le \|F\|_{\infty, \Omega} |D_{H} \chi_{E}|
\]
and we observe that the definition of $(F, D_{H} \chi_{E})$ from Theorem~\ref{productrule} fits
with the definitions \eqref{eq:chiEFconvol}, thanks to \eqref{eq:chiEF}, getting the obvious identity
\begin{equation} \label{decomposition E and compl} 
( F, D_{H} \chi_{E}) = (\chi_{E} F, D_{H} \chi_{E}) + (\chi_{\Omega \setminus E} F, D_{H} \chi_{E}). 
\end{equation}
Remark~\ref{weak_conv_absolutely_continuous_perimeter_measures} shows that $$
(\rho_{\eps_k} \ast \chi_{E}) (F, D_{H} \chi_{E}) \weakto \frac{1}{2} (F, D_{H} \chi_{E}).
$$ 
All in all, by passing to the weak$^{*}$ limits in \eqref{chiE2_approx}, we get \eqref{Leibniz_infty_E_2}.
Subtracting \eqref{Leibniz_infty_E_2} from \eqref{Leibniz_infty_E_1} we have
\begin{equation} \label{boundary step}
\tildef{\chi_{E}} ( 1 - \tildef{\chi_{E}}) \div F = (\chi_{E} F, D_{H} \chi_{E}) - \frac{1}{2}( F, D_{H} \chi_{E}). 
\end{equation}
From \eqref{boundary step} and \eqref{decomposition E and compl} we get \eqref{boundary term div}.
\end{proof}
\begin{remark}
In the assumptions of Proposition~\ref{boundary_term_divergence},
joining \eqref{boundary term div}, \eqref{interior_normal_trace_def} and \eqref{exterior_normal_trace_def}, we get the following equality
\begin{equation} \label{boundary_term_div_trace} 
\tildef{\chi_{E}} ( 1 - \tildef{\chi_{E}}) \div F = \frac{ \Tr{F}{e}{E} - \Tr{F}{i}{E}}{4} |D_{H} \chi_{E}|.
\end{equation} 
\end{remark}
We now prove sharp estimates on the $L^{\infty}$-norm of the normal traces. Let us point out that such estimates could not be obtained directly from \eqref{eq:chiEF_ie}, employing \eqref{interior_normal_trace_def} and \eqref{exterior_normal_trace_def}.
A more refined argument is necessary, involving the differentiation with
respect to the h-perimeter measure.

\begin{proposition}\label{boundary_term_divergence1}
If $F \in \DM^{\infty}(H \Omega)$ and $E \subset \Omega$ is a set of finite h-perimeter, then
\begin{align} \label{interior_normal_trace_norm}
 \|\Tr{F}{i}{E}\|_{L^{\infty}(\redb E; |D_{H} \chi_{E}|)} & \le \| F\|_{\infty, E}, \\
 \label{exterior_normal_trace_norm} \|\Tr{F}{e}{E}\|_{L^{\infty}(\redb E; |D_{H} \chi_{E}|)} & \le \| F\|_{\infty, \Omega \setminus E},
\end{align}
where the interior and exterior normal traces of $F$ are defined in \eqref{interior_normal_trace_def} and \eqref{exterior_normal_trace_def}.
\end{proposition}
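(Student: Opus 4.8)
The plan is to prove the interior trace estimate \eqref{interior_normal_trace_norm} and then deduce \eqref{exterior_normal_trace_norm} from it. The key tool is the ``intrinsic blow-up property'' of Lemma~\ref{overline_D_chi_E_1_2_lemma}, namely that $\chi_E(\rho_\ep\ast D_H\chi_E)\mu\weakto \tfrac12 D_H\chi_E$, combined with a localization and differentiation argument with respect to the perimeter measure $|D_H\chi_E|$. Recall that by \eqref{interior_normal_trace_def} and \eqref{eq:chiEF_equivalence}, the interior normal trace is characterized by
\[
\Tr{F}{i}{E}|D_H\chi_E| = -2\,(\chi_E F, D_H\chi_E),\qquad
\ban{\chi_E F, \nabla_H(\rho_{\eps_k}\ast\chi_E)}\mu\weakto (\chi_E F,D_H\chi_E).
\]
Since $|D_H\chi_E|$ is asymptotically doubling (Theorem~\ref{perimeter repr}), it suffices to show that for $|D_H\chi_E|$-a.e.\ $x\in\redb E$ one has
\[
\limsup_{r\to0}\ \frac{1}{|D_H\chi_E|(B(x,r))}\,\pal{\Tr{F}{i}{E}|D_H\chi_E|}(B(x,r))\le\|F\|_{\infty,E},
\]
and the analogous lower bound with $-\|F\|_{\infty,E}$, so that $|\Tr{F}{i}{E}(x)|\le\|F\|_{\infty,E}$ a.e.\ on $\redb E$.

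First I would fix an open set $U\Subset\Omega$ and, for a continuous $\phi\in C_c(U)$ with $0\le\phi\le1$, pass to the weak$^*$ limit in the identity $\ban{\chi_E F,\nabla_H(\rho_{\eps_k}\ast\chi_E)} = \ban{\chi_E F,\rho_{\eps_k}\ast D_H\chi_E}$ valid on $U$ for small $\ep_k$. The pointwise bound $|\chi_E F|\le\chi_E\|F\|_{\infty,E}$ together with Lemma~\ref{overline_D_chi_E_1_2_lemma} --- more precisely, applying it after replacing $F$ by $\phi F$ and using that $\phi$ is a fixed bounded continuous factor, so that $\ban{\chi_E(\phi F),\rho_{\eps_k}\ast D_H\chi_E}\mu$ converges and is controlled by $\|F\|_{\infty,E}$ times the total variation of $\chi_E(\rho_{\eps_k}\ast D_H\chi_E)\mu$, whose weak$^*$ limit is $\tfrac12 D_H\chi_E$ --- gives
\[
\pal{\int_\Omega \phi\, d(\chi_E F, D_H\chi_E)}\le \tfrac12\|F\|_{\infty,E}\int_\Omega \phi\, d|D_H\chi_E|.
\]
By arbitrariness of $\phi$ this yields the measure inequality $|(\chi_E F, D_H\chi_E)|\le\tfrac12\|F\|_{\infty,E}\,|D_H\chi_E|$ on $U$, hence on $\Omega$; recalling \eqref{interior_normal_trace_def} this is exactly $|\Tr{F}{i}{E}|\,|D_H\chi_E|\le\|F\|_{\infty,E}\,|D_H\chi_E|$, and since $|D_H\chi_E|$-a.e.\ point of $\Omega$ lies in $\redb E$ (Lemma~\ref{difference redb mtb} and Remark~\ref{negl_abs_cont_red_boundary}), we obtain \eqref{interior_normal_trace_norm}. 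The delicate point, and the one requiring the ``more refined argument'' alluded to before the statement, is that the naive bound from \eqref{eq:chiEF_ie} only gives $\|F\|_{\infty,E}$ without the factor $\tfrac12$ being available to absorb the mollified characteristic function; the gain comes precisely from the fact that $\rho_{\eps_k}\ast\chi_E\to\tfrac12$ in $L^\infty(\Omega;|D_H\chi_E|)$ (Proposition~\ref{overline_chi_E}), which must be inserted not into $(F,D_H\chi_E)$ but into the blow-up of $\chi_E(\rho_{\eps_k}\ast D_H\chi_E)$ via Lemma~\ref{overline_D_chi_E_1_2_lemma} together with Remark~\ref{weak_conv_absolutely_continuous_perimeter_measures}.

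For the exterior estimate \eqref{exterior_normal_trace_norm}, I would apply \eqref{interior_normal_trace_norm} with $E$ replaced by its complement $\Omega\setminus E$: since $\redb(\Omega\setminus E)=\redb E$ and $|D_H\chi_{\Omega\setminus E}|=|D_H\chi_E|$, and by Remark~\ref{normal_traces_opposite_orientation} we have $\Tr{F}{e}{E}=-\Tr{F}{i}{\Omega\setminus E}$, the interior estimate for $\Omega\setminus E$ reads $\|\Tr{F}{i}{\Omega\setminus E}\|_{L^\infty(\redb E;|D_H\chi_E|)}\le\|F\|_{\infty,\Omega\setminus E}$, which is precisely \eqref{exterior_normal_trace_norm}. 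The main obstacle I anticipate is making rigorous the insertion of the weak$^*$ limit $\tfrac12$ simultaneously with the blow-up of the vector measure $\chi_E(\rho_{\eps_k}\ast D_H\chi_E)\mu$; one has to be careful that the two weak$^*$ convergences (of $\rho_{\eps_k}\ast\chi_E$ in $L^\infty(\Omega;|D_H\chi_E|)$ and of $\chi_E(\rho_{\eps_k}\ast D_H\chi_E)\mu$ in $\mathcal M(\Omega)$) can be used along the \emph{same} subsequence $\ep_k$, which is guaranteed by the standing choice of $\ep_k$ fixed in Section~\ref{sect:normaltraces}, and that the continuous cutoff $\phi$ interacts correctly with the group convolution, which is handled exactly as in the proof of Lemma~\ref{conv_meas_bounded_function}.
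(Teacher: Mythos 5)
Your overall architecture (localize, pass to weak$^*$ limits along the fixed sequence $\eps_k$, differentiate with respect to the asymptotically doubling perimeter measure, and obtain the exterior estimate by applying the interior one to $\Omega\setminus E$ via Remark~\ref{normal_traces_opposite_orientation}) is reasonable, and the complementation step at the end is a perfectly valid replacement for the paper's ``argue similarly''. However, the pivotal step is not justified as written. You claim
\[
\Big|\int_\Omega \phi\, d(\chi_E F, D_H\chi_E)\Big|\le \tfrac12\|F\|_{\infty,E}\int_\Omega \phi\, d|D_H\chi_E|
\]
because the pairing is ``controlled by $\|F\|_{\infty,E}$ times the total variation of $\chi_E(\rho_{\eps_k}\ast D_H\chi_E)\mu$, whose weak$^*$ limit is $\tfrac12 D_H\chi_E$''. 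This conflates the weak$^*$ limit of the \emph{vector} measures with control on their \emph{total variations}. Lemma~\ref{overline_D_chi_E_1_2_lemma} only gives $\chi_E(\rho_{\eps_k}\ast D_H\chi_E)\mu\weakto\tfrac12 D_H\chi_E$; weak$^*$ convergence of vector measures yields lower semicontinuity of total variations (any limit point $\gamma$ of $\chi_E|\rho_{\eps_k}\ast D_H\chi_E|\mu$ satisfies $\gamma\ge\tfrac12|D_H\chi_E|$), never an upper bound, and the only upper bound available at this stage is $\gamma\le|D_H\chi_E|$ coming from \eqref{weak_conv_D_f} --- which would produce the trace bound $2\|F\|_{\infty,E}$, exactly the naive estimate \eqref{eq:chiEF_ie} the proposition is meant to improve. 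So the inequality $\limsup_k\int\phi\,\chi_E|\rho_{\eps_k}\ast D_H\chi_E|\,dx\le\tfrac12\int\phi\,d|D_H\chi_E|$, on which your whole argument rests, is left unproved.

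The paper closes this gap by genuinely using the reduced boundary: at $|D_H\chi_E|$-a.e.\ $x\in\redb E$ it differentiates $(\chi_EF,D_H\chi_E)$ along balls $B(x,r_j)$ with negligible boundaries, writes $\chi_E|\rho_{\eps_k}\ast D_H\chi_E|=|\rho_{\eps_k}\ast D_H\chi_E|-\chi_{\Omega\setminus E}|\rho_{\eps_k}\ast D_H\chi_E|$, bounds the subtracted term from below by the modulus of the vector integral, and then uses \eqref{overline_D_chi_E_2} together with the second convergence in \eqref{weak_conv_D_f} and condition (3) of Definition~\ref{d:RedBdry}, i.e.\ $|D_H\chi_E(B(x,r_j))|/|D_H\chi_E|(B(x,r_j))\to1$; this is where the factor $\tfrac12$ is actually gained and why the estimate is stated on $\redb E$. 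Your route can be repaired without differentiation, but it needs an extra sandwich argument you do not supply: along the fixed subsequence, let $\gamma_1,\gamma_2$ be weak$^*$ limit points of $\chi_E|\rho_{\eps_k}\ast D_H\chi_E|\mu$ and $\chi_{\Omega\setminus E}|\rho_{\eps_k}\ast D_H\chi_E|\mu$; lower semicontinuity applied to \emph{both} convergences of Lemma~\ref{overline_D_chi_E_1_2_lemma} gives $\gamma_i\ge\tfrac12|D_H\chi_E|$, while $\gamma_1+\gamma_2=|D_H\chi_E|$ by \eqref{weak_conv_D_f}, forcing $\gamma_1=\gamma_2=\tfrac12|D_H\chi_E|$ and hence your displayed inequality. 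Either of these arguments must be inserted; as it stands, the key factor $\tfrac12$ is asserted rather than proved.
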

\begin{proof}
By Theorem \ref{perimeter repr} the perimeter measure $|D_H\chi_{E}|(\cdot)$ is a.e.\ asymptotically doubling. Therefore the following differentiation property holds
(see \cite[Sections 2.8.17 and 2.9.6]{Fe}): for $D_{H} \chi_{E}$-a.e. $x \in \redb E$ one has
$$ \Tr{F}{i}{E}(x) = \lim_{r \to 0} - \frac{2 (\chi_{E} F, D_{H} \chi_{E})(B(x, r))}{|D_{H} \chi_{E}|(B(x, r)}. $$
\begin{comment}
It is clearly not restrictive to establish our estimates on an arbitrarily fixed
open subset $\Omega'\Subset\Omega$, hence considering $x\in\redb E\cap\Omega'$. 
\end{comment}
Let $\ep_k$ be the sequence defining \eqref{eq:chiEFconvol} and \eqref{eq:limxF}.
By \eqref{eq:conv_unif_bd}, we obtain that the sequence $|\ban{\chi_{E} F, \rho_{\eps_{k}} \ast D_{H} \chi_{E}}| \mu$ is uniformly bounded in $\mathcal{M}(\Omega)$.
%
\begin{comment}
Indeed, we have
\[
\int_{\Omega'} |\ban{\chi_{E} F, \nabla_{H} (\rho_{\eps_{k}} \ast \chi_{E})}| \, dx 
\le \|F\|_{\infty, E}\; \| \nabla_{H} (\rho_{\eps_{k}} \ast \chi_{E})\|_{1, \Omega'} \le \|F\|_{\infty, E}\; |D_{H} \chi_{E}|(\Omega), 
\]
where the last inequality follows from  \eqref{total_var_convol_convergence}.
\end{comment}
Thus, there exists a weak$^{*}$ converging subsequence, which we do not relabel.
Let the positive measure $\lambda_{i} \in \mathcal{M}(\Omega)$ be its limit. 

In an analogous way, one can prove that the measures $|\ban{\chi_{\Omega \setminus E} F, \rho_{\eps_{k}} \ast D_{H} \chi_{E}}| \mu$ are uniformly bounded in $\cM(\Omega)$.
So there exists a weak$^{*}$ converging subsequence, which we do not relabel again, 
and whose limit is the positive Radon measure $\lambda_{e}\in\cM(\Omega)$. 
We also observe that the sequences $\chi_{E} |\rho_{\eps_{k}} \ast D_{H} \chi_{E}| \mu$ and $\chi_{\Omega \setminus E} |\rho_{\eps_{k}} \ast D_{H} \chi_{E}| \mu$ are bounded 
in $\cM(\Omega)$ and that, if $\gamma\in\cM(\Omega)$ is any of their weak$^{*}$ limit points, 
then $\gamma \le |D_{H} \chi_{E}|$, due to \eqref{weak_conv_D_f}.

We can choose a sequence of balls $B(x, r_{j})$ with $r_{j} \to 0$ in such a way that 
\[
|D_{H} \chi_{E}|(\partial B(x, r_{j})) = \lambda_{i}(\partial B(x, r_{j})) 
=\lambda_e(\partial B(x, r_{j}))=0
\]
for all $j$. As a result, taking into account \cite[Proposition~1.62]{AFP}
and \eqref{eq:chiEFconvol}, we have
\[
\begin{split}
\left | \frac{2 ( \chi_{E} F, D_{H} \chi_{E})(B(x, r_{j}))}{|D_{H} \chi_{E}|(B(x, r_{j}))} \right | &= \left | \frac{  \displaystyle \lim_{\eps_{k} \to 0} 2 \int_{B(x, r_{j})} \ban{\chi_{E} F, \rho_{\eps_{k}} \ast D_{H} \chi_{E}} \, dx}{ \displaystyle \lim_{\eps_{k} \to 0} \int_{B(x, r_{j})} |\rho_{\eps_{k}} \ast D_{H} \chi_{E}| \, dx} \right | \\
& \le 2 \|F\|_{\infty, E}  \frac{ \displaystyle \lim_{\eps_{k} \to 0} \int_{B(x, r_{j})} \chi_{E} | \rho_{\eps_{k}} \ast D_{H} \chi_{E} | \, dx}{ \displaystyle \lim_{\eps_{k} \to 0} \int_{B(x, r_{j})} |\rho_{\eps_{k}} \ast D_{H} \chi_{E} | \, dx}.
\end{split} 
\]
The last term can be also written as
\[ 
2 \|F\|_{\infty, E}  \left ( 1 - \frac{ \displaystyle \lim_{\eps_{k} \to 0} \int_{B(x, r_{j})} \chi_{\Omega \setminus E} | \rho_{\eps_{k}} \ast D_{H} \chi_{E} | \, dx }{ \displaystyle \lim_{\eps_{k} \to 0} \int_{B(x, r_{j})} |\rho_{\eps_{k}} \ast D_{H} \chi_{E}| \, dx} \right ). 
\]
It follows that  
\begin{align*}
\left | \frac{2 ( \chi_{E} F, D_{H} \chi_{E})(B(x, r_{j}))}{|D_{H} \chi_{E}|(B(x, r_{j}))} \right | 
&\le 2 \|F\|_{\infty, E}  \left ( 1 - \frac{ \displaystyle \lim_{\eps_{k} \to 0} \pal{ \int_{B(x, r_{j})} \chi_{\Omega \setminus E} (\rho_{\eps_{k}} \ast D_{H}\chi_{E}) \, dx } }{ \displaystyle \lim_{\eps_{k} \to 0}  \int_{B(x, r_{j})} |\rho_{\eps_{k}} \ast D_{H} \chi_{E})| \, dx } \right ) \\
& = 2 \|F\|_{\infty, E} \left ( 1 - \frac{1}{2} \frac{ | D_{H} \chi_{E}(B(x, r_{j}))| }{|D_{H} \chi_{E}|(B(x, r_{j}))} \right ). 
\end{align*}
by \eqref{overline_D_chi_E_2} and the second limit of \eqref{weak_conv_D_f}.
Taking the limit as $j\to\infty$, the definition of reduced boundary immediately yields
\[
\pal{\Tr{F}{i}{E}(x)}=\lim_{k\to\infty}\left | \frac{2 ( \chi_{E} F, D_{H} \chi_{E})(B(x, r_{j}))}{|D_{H} \chi_{E}|(B(x, r_{j}))} \right | 
\le \|F\|_{\infty, E}.
\]
The estimate for the exterior normal trace $\Tr{F}{e}{E}$ can be obtained in a similar way,
hence the proof is complete.
\end{proof}

\subsection{Locality of normal traces}

In this section we show the locality of normal traces, along with their relation with the orientation of the reduced boundary.
First, we need to recall some known facts on the locality properties of perimeter in stratified groups.
By Theorem \ref{perimeter repr} (see also \cite[Theorem 4.2]{Ambrosio2001}) and Lemma \ref{difference redb mtb}, for any set $E$ of finite h-perimeter in $\Omega$, there exists a Borel function $\theta_{E}$, such that $\theta_{E} \ge \alpha > 0$ and 
\begin{equation} \label{repr_D_H_chi_E} |D_{H} \chi_{E}|(B) = \int_{B \cap \redb E} \theta_{E} \, d \SHaus{Q - 1}, \end{equation}
which implies $\theta \in L^{1}(\Omega; \SHaus{Q - 1} \res \redb E)$. By this representation, a property holds $|D_{H} \chi_{E}|$-a.e. if and only if it holds $\SHaus{Q - 1}$-a.e. on $\redb E$, see also Remark \ref{negl_abs_cont_red_boundary}.

Given two sets $E_{1}, E_{2}$ of finite h-perimeter such that $\SHaus{Q - 1}(\redb E_{1} \cap \redb E_{2}) > 0$, by \cite[Theorem 2.9]{ambrosio2010locality}, for any Borel set $B \subset \redb E_{1} \cap \redb E_{2}$ we have
\begin{equation*} |D_{H} \chi_{E_{1}}|(B) = |D_{H} \chi_{E_{2}}|(B). \end{equation*}
Hence, \eqref{repr_D_H_chi_E} implies that 
\begin{equation} \label{loc_property_intersection} \theta_{E_{1}}(x) = \theta_{E_{2}}(x) \ \ \text{for} \ \ \SHaus{Q - 1}\text{-a.e.} \ \ x \in \redb E_{1} \cap \redb E_{2}. \end{equation}
Moreover, \cite[Corollary 2.6]{ambrosio2010locality} implies that, for $\SHaus{Q - 1}$-a.e. $x \in \redb E_{1} \cap \redb E_{2}$, we have $\nu_{E_{1}}(x) = \pm \nu_{E_{2}}(x)$.

\begin{lemma} \label{locality_per_orientation} 
If $E_{1}$ and $E_{2}$ have finite h-perimeter in $\Omega$ with $\SHaus{Q - 1}(\redb E_{1} \cap \redb E_{2}) > 0$, then we have
\begin{equation} \label{local_prop_h_per_4} |D_{H} ( \chi_{E_{1}} - \chi_{E_{2}})|(B(x, r)) = o(|D_{H} \chi_{E_{j}}|(B(x, r))) \end{equation} 
for $\SHaus{Q - 1}$-a.e.\ $x \in \redb E_{1} \cap \redb E_{2}$ such that $\nu_{E_{1}}(x) = \nu_{E_{2}}(x)$, and for $j = 1, 2$. 
Analogously, we have 
\begin{equation} \label{local_prop_h_per_5} |D_{H} ( \chi_{E_{1}} + \chi_{E_{2}})|(B(x, r)) = o(|D_{H} \chi_{E_{j}}|(B(x, r))) \end{equation} 
for $\SHaus{Q - 1}$-a.e.\ $x \in \redb E_{1} \cap \redb E_{2}$ such that $\nu_{E_{1}}(x) = - \nu_{E_{2}}(x)$, and for $j = 1, 2$. 
In addition, we have 
\begin{equation} \label{local_asimptotic_prop_h_per} |D_{H} \chi_{E_{1}}|(B(x, r)) \sim |D_{H} \chi_{E_{2}}|(B(x, r)), \end{equation}
for $\SHaus{Q - 1}$-a.e.\ $x \in \redb E_{1} \cap \redb E_{2}$ and $j = 1, 2$.
\end{lemma}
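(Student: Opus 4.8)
The plan is to deduce all three estimates from a single differentiation argument against the perimeter measures themselves, avoiding any blow‑up. Write $\mu_j := |D_{H}\chi_{E_j}|$ for $j=1,2$; by Theorem~\ref{perimeter repr} these are finite Radon measures that are asymptotically doubling $\mu_j$-a.e., so the differentiation theorem for asymptotically doubling measures (\cite[Sections 2.8.17 and 2.9.6]{Fe}) applies to each $\mu_j$: for any Radon measure $\sigma$ on $\Omega$, decomposed as $\sigma = g\,\mu_j + \sigma^{\rm s}$ with $\sigma^{\rm s}\perp\mu_j$, one has $\sigma(B(x,r))/\mu_j(B(x,r))\to g(x)$ for $\mu_j$-a.e.\ $x$. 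I will repeatedly use two facts recorded before the lemma: $\mu_j$ and $\SHaus{Q-1}\res\redb E_j$ share the same negligible sets, and $\mu_j(B(x,r))>0$ for every $r>0$ at $\SHaus{Q-1}$-a.e.\ $x\in\redb E_j$ (condition (i) of Definition~\ref{d:RedBdry}), so that the above ratios are well defined at $\SHaus{Q-1}$-a.e.\ point of $\redb E_j$ and ``$\mu_j$-a.e.\ $x\in\redb E_j$'' means the same as ``$\SHaus{Q-1}$-a.e.\ $x\in\redb E_j$''. Set $S:=\redb E_1\cap\redb E_2$ and split it, up to an $\SHaus{Q-1}$-negligible set, into the Borel pieces $S^{+}:=\{x\in S:\nu_{E_1}(x)=\nu_{E_2}(x)\}$ and $S^{-}:=\{x\in S:\nu_{E_1}(x)=-\nu_{E_2}(x)\}$; this is legitimate since $\nu_{E_1}=\pm\nu_{E_2}$ $\SHaus{Q-1}$-a.e.\ on $S$ by \cite[Corollary 2.6]{ambrosio2010locality}.

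The first step is the two ``vanishing restriction'' identities
\[
|D_{H}(\chi_{E_1}-\chi_{E_2})|\res S^{+} = 0 \qandq |D_{H}(\chi_{E_1}+\chi_{E_2})|\res S^{-} = 0.
\]
For the first, write the vector measures $D_{H}\chi_{E_j} = -\nu_{E_j}\mu_j$, so that $D_{H}(\chi_{E_1}-\chi_{E_2}) = -\nu_{E_1}\mu_1+\nu_{E_2}\mu_2$; restricting to $S^{+}$ and using that $\mu_1(B)=\mu_2(B)$ for every Borel $B\subseteq S$ (this is \eqref{loc_property_intersection}, equivalently \cite[Theorem 2.9]{ambrosio2010locality}, as $S^{+}\subseteq\redb E_1\cap\redb E_2$) together with $\nu_{E_1}=\nu_{E_2}$ a.e.\ on $S^{+}$, the restricted vector measure vanishes, hence so does its total variation. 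The second identity is the same computation on $S^{-}$ with $\nu_{E_2}=-\nu_{E_1}$; it is also exactly the first identity applied to the pair $(E_1,\Omega\setminus E_2)$, a reduction I will use below.

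Now I carry out the differentiations. For \eqref{local_asimptotic_prop_h_per}, apply the differentiation theorem with $\mu_j=\mu_1$ and $\sigma=\mu_2=g\,\mu_1+\mu_2^{\rm s}$: since $\mu_2^{\rm s}$ sits on a $\mu_1$-negligible Borel set $N$ and $\mu_2(N\cap S)=\mu_1(N\cap S)=0$, one gets $\mu_2^{\rm s}\res S=0$, whence $\mu_2\res S = g\,(\mu_1\res S)=\mu_1\res S$ forces $g=1$ $\mu_1$-a.e.\ on $S$; therefore $\mu_2(B(x,r))/\mu_1(B(x,r))\to 1$ for $\SHaus{Q-1}$-a.e.\ $x\in S$, which is \eqref{local_asimptotic_prop_h_per}. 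For \eqref{local_prop_h_per_4}, apply the differentiation theorem with $\mu_1$ and $\sigma=|D_{H}(\chi_{E_1}-\chi_{E_2})|=g'\mu_1+\sigma^{\rm s}$; from $\sigma\res S^{+}=0$ and the fact that $g'\mu_1\res S^{+}$ and $\sigma^{\rm s}\res S^{+}$ are nonnegative measures whose sum vanishes, both vanish, so $g'=0$ $\mu_1$-a.e.\ on $S^{+}$, giving $|D_{H}(\chi_{E_1}-\chi_{E_2})|(B(x,r))=o\big(\mu_1(B(x,r))\big)$ for $\SHaus{Q-1}$-a.e.\ $x\in S^{+}$; the case $j=2$ follows by combining with \eqref{local_asimptotic_prop_h_per}. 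Finally, \eqref{local_prop_h_per_5} is \eqref{local_prop_h_per_4} applied to the pair $(E_1,\Omega\setminus E_2)$: one has $\redb(\Omega\setminus E_2)=\redb E_2$, $\nu_{\Omega\setminus E_2}=-\nu_{E_2}$, $|D_{H}\chi_{\Omega\setminus E_2}|=\mu_2$, and $\chi_{E_1}-\chi_{\Omega\setminus E_2}=\chi_{E_1}+\chi_{E_2}-1$, so that the set where $\nu_{E_1}=\nu_{\Omega\setminus E_2}$ is precisely $S^{-}$ and $|D_{H}(\chi_{E_1}-\chi_{\Omega\setminus E_2})|=|D_{H}(\chi_{E_1}+\chi_{E_2})|$.

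The step that I expect to require the most care is the measure‑theoretic bookkeeping, not any deep new idea: checking that the Radon--Nikod\'ym densities restrict correctly to the Borel pieces $S^{\pm}$, that the singular part of $\mu_2$ with respect to $\mu_1$ is genuinely absent on $S$, and that $\mu_1$-negligibility, $\mu_2$-negligibility and $\SHaus{Q-1}\res S$-negligibility coincide on subsets of $S$. The only external inputs are the Ambrosio--Scienza locality results already recorded in \eqref{loc_property_intersection} and in the text preceding the lemma, the perimeter representation of Theorem~\ref{perimeter repr}, and the Federer differentiation theorem for asymptotically doubling measures.
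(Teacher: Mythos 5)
Your proof is correct, but it takes a different route from the one in the paper, and the comparison is worth recording. The paper never shows that the restricted measures $|D_{H}(\chi_{E_1}\mp\chi_{E_2})|\res S^{\pm}$ vanish exactly; instead it works with scalar densities against $r^{Q-1}$: using \eqref{repr_D_H_chi_E} and \eqref{loc_property_intersection} it writes the discrepancy $\bigl||D_H\chi_{E_1}|-|D_H\chi_{E_2}|\bigr|$ as $\theta\,\SHaus{Q-1}\res G$ with $G=\redb E_1\Delta\redb E_2$, shows this is $o(r^{Q-1})$ at $\SHaus{Q-1}$-a.e.\ point of $L=\redb E_1\cap\redb E_2$, and then converts to \eqref{local_asimptotic_prop_h_per} and \eqref{local_prop_h_per_4} by invoking the lower bound $|D_H\chi_{E_j}|(B(x,r))\ge c\,r^{Q-1}$ from \cite[Theorem~4.3]{Ambrosio2001} together with triangle-inequality estimates of the form $\int_{B(x,r)}|\nu_{E_j}-\nu_{E_j}(x)|\,d|D_H\chi_{E_j}|=o(|D_H\chi_{E_j}|(B(x,r)))$. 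You instead first prove the exact identities $|D_{H}(\chi_{E_1}-\chi_{E_2})|\res S^{+}=0$ and $|D_{H}(\chi_{E_1}+\chi_{E_2})|\res S^{-}=0$ from the vector representation $D_H\chi_{E_j}=-\nu_{E_j}|D_H\chi_{E_j}|$ plus Ambrosio--Scienza locality, and then differentiate measure against measure via Federer's theorem for the asymptotically doubling perimeter measure, so that the densities $g=1$ on $S$ and $g'=0$ on $S^{\pm}$ immediately yield \eqref{local_asimptotic_prop_h_per}, \eqref{local_prop_h_per_4} and \eqref{local_prop_h_per_5} (the latter by the complement trick, which is legitimate since $\redb(\Omega\setminus E_2)=\redb E_2$, $\nu_{\Omega\setminus E_2}=-\nu_{E_2}$ and $|D_H\chi_{\Omega\setminus E_2}|=|D_H\chi_{E_2}|$). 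What your route buys is that it dispenses with the Ahlfors-type lower density bound and with the averaged continuity of the normals, replacing them by the Radon--Nikod\'ym differentiation already used elsewhere in the paper (Lemma~\ref{difference redb mtb}, Proposition~\ref{boundary_term_divergence1}, Theorem~\ref{normal_trace_locality_theorem}); what the paper's route buys is a more quantitative statement (everything is controlled by $o(r^{Q-1})$ against an explicit lower bound), at the cost of the extra external input. Your measure-theoretic bookkeeping is sound: the identity $|\lambda\res A|=|\lambda|\res A$, the Borel choice of representatives for $\nu_{E_j}$ defining $S^{\pm}$, the absence of a singular part of $\mu_2$ relative to $\mu_1$ on $S$, and the equivalence of $\mu_j$-negligibility and $\SHaus{Q-1}$-negligibility inside $\redb E_j$ (Remark~\ref{negl_abs_cont_red_boundary}) are all exactly the points that need checking, and you check them.
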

\begin{proof}
We first define the following sets
\[
L := \redb E_{1} \cap \redb E_{2} \qandq G := \redb E_{1} \Delta \redb E_{2}.
\] 
Then, by \eqref{repr_D_H_chi_E} and \eqref{loc_property_intersection}, we obtain
\begin{equation} \label{local_prop_h_per_1} |D_{H} \chi_{E_1}| \res L = |D_{H} \chi_{E_{2}}| \res L, \end{equation}
\begin{equation} \label{local_prop_h_per_2} \left | |D_{H} \chi_{E_{1}}| - |D_{H} \chi_{E_{2}}| \right | = \theta \SHaus{Q - 1} \res G, \end{equation}
where $\theta = \theta_{E_{j}}(x) $ for $\SHaus{Q - 1}$-a.e.\ $x \in \redb E_{j}$ and for $j = 1, 2$.
Hence, since $L \cap G = \emptyset$, for $\SHaus{Q - 1}$-a.e.\ $x \in L$, we have
\begin{equation} \label{local_prop_h_per_3} \left | |D_{H} \chi_{E_{1}}| - |D_{H} \chi_{E_{2}}| \right | (B(x, r)) = \int_{G \cap B(x, r)} \theta \, d \SHaus{Q - 1} = o(r^{Q - 1}), \end{equation}
by \eqref{local_prop_h_per_2} and standard differentiation of Borel measures.
In addition, 
\begin{equation} \label{decay_D_H_chi_E_r_Q_1} |D_{H} \chi_{E_{j}}|(B(x, r)) \ge c r^{Q - 1} \end{equation} 
for $\SHaus{Q - 1}$-a.e. $x \in \redb E_{j}$, $r > 0$ sufficiently small and $j = 1, 2$, by \cite[Theorem 4.3]{Ambrosio2001}.
Then, \eqref{local_prop_h_per_3} and the triangle inequality imply that, for $j = 1, 2$,
\begin{equation*} | |D_{H} \chi_{E_{1}}|(B(x, r)) - |D_{H} \chi_{E_{2}}|(B(x, r)) | = o(|D_{H} \chi_{E_{j}}|(B(x, r))), \end{equation*}
from which we get \eqref{local_asimptotic_prop_h_per}.
Then, we notice that, for $\SHaus{Q - 1}$-a.e.\ $x \in L$ such that $\nu_{E_{1}}(x) = \nu_{E_{2}}(x)$, and $j = 1, 2$, we have 
\begin{align*} 
 |D_{H} ( \chi_{E_{1}} - \chi_{E_{2}})|(B(x, r)) & = \Big| (\nu_{E_{1}} - \nu_{E_{2}}) |D_{H} \chi_{E_{1}}| \res L + \nu_{E_{1}} |D_{H} \chi_{E_{1}}| \res G + \\
 & - \nu_{E_{2}} |D_{H} \chi_{E_{2}}| \res G \Big|(B(x, r)) \\
& \le \int_{B(x, r)} |\nu_{E_{1}} - \nu_{E_{2}}| \, d |D_{H} \chi_{E_{1}}| \res L \ + \\
& + |D_{H} \chi_{E_{1}}|(G \cap B(x, r)) + |D_{H} \chi_{E_{2}}|(G \cap B(x, r)) \\
& \le \int_{B(x, r)} |\nu_{E_{1}} - \nu_{E_{1}}(x)| \, d |D_{H} \chi_{E_{1}}| + \\
& + \int_{B(x, r)} |\nu_{E_{2}} - \nu_{E_{2}}(x)| \, d |D_{H} \chi_{E_{2}}| + o(r^{Q - 1}) \\
& = o(|D_{H} \chi_{E_{j}}|(B(x, r))),
\end{align*} 
by \eqref{local_prop_h_per_1}, \eqref{local_asimptotic_prop_h_per}, \eqref{decay_D_H_chi_E_r_Q_1}, the triangle inequality and standard differentiation of Borel measures.
Thus, we can conclude that \eqref{local_prop_h_per_4} holds. Analogously, \eqref{local_prop_h_per_5} follows for $\SHaus{Q - 1}$-a.e. $x\in\redb E_1\cap\redb E_2$  such that $\nu_{E_1}(x)=-\nu_{E_2}(x)$.
\end{proof}
\begin{theorem} \label{normal_trace_locality_theorem} Let $F \in \DM^{\infty}(H \Omega)$, and $E_{1}, E_{2} \subset \Omega$ be sets of finite h-perimeter such that $\SHaus{Q - 1}(\redb E_{1} \cap \redb E_{2}) > 0$. Then, we have
\begin{equation} \label{same_or_normal_traces} \Tr{F}{i}{E_{1}}(x) = \Tr{F}{i}{E_{2}}(x) \ \ \text{and} \ \ \Tr{F}{e}{E_{1}}(x) = \Tr{F}{e}{E_{2}}(x), \end{equation}
for $\SHaus{Q - 1}$-a.e. $x \in \{ y \in \redb E_{1} \cap \redb E_{2} : \nu_{E_{1}}(y) = \nu_{E_{2}}(y) \}$, and
\begin{equation} \label{opposite_or_normal_traces} \Tr{F}{i}{E_{1}}(x) = - \Tr{F}{e}{E_{2}}(x) \ \ \text{and} \ \ \Tr{F}{e}{E_{1}}(x) = - \Tr{F}{i}{E_{2}}(x), \end{equation}
for $\SHaus{Q - 1}$-a.e. $x \in \{ y \in \redb E_{1} \cap \redb E_{2} : \nu_{E_{1}}(y) = - \nu_{E_{2}}(y) \}$.
\end{theorem}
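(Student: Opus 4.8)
The plan is to avoid blow–ups altogether and instead pin down, $|\div F|$–a.e.\ on the reduced boundaries, the ambiguity inherent in the definitions of $\tildef{\chi_{E_i}}$ and of the pairing measures, by combining the ``$1/2$'' property of Proposition~\ref{overline_chi_E} with the fine absolute continuity $|\div F|\ll\SHaus{Q-1}$ (Theorem~\ref{absolute continuity}). Set $L:=\redb E_1\cap\redb E_2$ and $L^\pm:=\{x\in L:\nu_{E_1}(x)=\pm\nu_{E_2}(x)\}$; by \cite[Corollary 2.6]{ambrosio2010locality} one has $\SHaus{Q-1}(L\sm(L^+\cup L^-))=0$. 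Since $\Omega\sm E_2$ has finite h-perimeter with $\redb(\Omega\sm E_2)=\redb E_2$ and $\nu_{\Omega\sm E_2}=-\nu_{E_2}$, the equalities \eqref{opposite_or_normal_traces} follow by applying \eqref{same_or_normal_traces} to the pair $(E_1,\Omega\sm E_2)$ — whose common, equally oriented boundary is exactly $L^-$ — together with Remark~\ref{normal_traces_opposite_orientation}. So it suffices to prove \eqref{same_or_normal_traces}, i.e.\ to work on $L^+$.

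\emph{The value $1/2$ on the reduced boundary.} The first step is to show that, for $i=1,2$ and \emph{independently} of the sequence chosen in \eqref{eq:limxF}, one has $\tildef{\chi_{E_i}}=1/2$ at $|\div F|$–a.e.\ point of $\redb E_i$. Indeed $|\div F|\res\redb E_i\ll|D_H\chi_{E_i}|$ by Theorem~\ref{absolute continuity} and Remark~\ref{negl_abs_cont_red_boundary} (localizing to some $U\Subset\Omega$ if $\div F$ is only locally finite); writing $|\div F|\res\redb E_i=\theta_i\,|D_H\chi_{E_i}|$ with $\theta_i\in L^1(\Omega;|D_H\chi_{E_i}|)$ and applying Lemma~\ref{lemma:weak_conv_absolutely_continuous_perimeter_measures} to the \emph{full} weak$^*$ limit \eqref{limit1/2}, one gets $\rho_\eps\ast\chi_{E_i}\weakstarto 1/2$ in $L^\infty(\Omega;|\div F|\res\redb E_i)$; restricting to this measure a family that weak$^*$–converges in $L^\infty(\Omega;|\div F|)$ forces every cluster point $\tildef{\chi_{E_i}}$ to equal $1/2$ there. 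In particular, on $L^+$ both $\tildef{\chi_{E_1}}$ and $\tildef{\chi_{E_2}}$ equal $1/2$, and $\rho_\eps\ast(\chi_{E_1}-\chi_{E_2})\weakstarto 0$ in $L^\infty(\Omega;|\div F|\res L^+)$.

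\emph{Locality of the divergence and conclusion.} By \cite[Theorem~2.9]{ambrosio2010locality} (recalled before Lemma~\ref{locality_per_orientation}), $|D_H\chi_{E_1}|$ and $|D_H\chi_{E_2}|$ agree on Borel subsets of $L$; denote by $\lambda$ their common restriction to $L^+$. Then $D_H(\chi_{E_1}-\chi_{E_2})\res L=(\nu_{E_2}-\nu_{E_1})\lambda$, which vanishes on $L^+$. Applying Theorem~\ref{productrule} to $g=\chi_{E_1}-\chi_{E_2}$ (admissible since $|D_Hg|(\Omega)\le|D_H\chi_{E_1}|(\Omega)+|D_H\chi_{E_2}|(\Omega)<+\infty$), we have $\div(gF)=\tildef g\,\div F+(F,D_Hg)$ with $|(F,D_Hg)|\le\|F\|_{\infty,\Omega}\,|D_Hg|$ by \eqref{abs_cont_pairing_infty}; restricting to $L^+$, the pairing term dies because $|D_Hg|\res L^+=0$, while $\tildef g\,\div F\res L^+=0$ because every cluster point $\tildef g$ of $\rho_\eps\ast g$ vanishes $|\div F|$–a.e.\ on $L^+$ by the previous step. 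Hence $\div(\chi_{E_1}F)\res L^+=\div(\chi_{E_2}F)\res L^+$. Now restrict the Leibniz rule \eqref{Leibniz_infty_E_1} to $L^+$; using $\tildef{\chi_{E_i}}=1/2$, \eqref{decomposition E and compl}, the definitions \eqref{interior_normal_trace_def}–\eqref{exterior_normal_trace_def}, and $|D_H\chi_{E_i}|\res L^+=\lambda$, it becomes
\[
\div(\chi_{E_i}F)\res L^+=\tfrac12\,\div F\res L^+-\tfrac12\big(\Tr{F}{i}{E_i}+\Tr{F}{e}{E_i}\big)\lambda .
\]
Subtracting the $i=1$ and $i=2$ identities, the $\div F$–terms cancel, and since the density of $|D_H\chi_{E_i}|$ with respect to $\SHaus{Q-1}\res\redb E_i$ is $\ge\alpha>0$ (Theorem~\ref{perimeter repr}), we get $\Tr{F}{i}{E_1}+\Tr{F}{e}{E_1}=\Tr{F}{i}{E_2}+\Tr{F}{e}{E_2}$ $\SHaus{Q-1}$–a.e.\ on $L^+$. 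On the other hand, restricting \eqref{boundary_term_div_trace} to $L^+$ (again $\tildef{\chi_{E_i}}(1-\tildef{\chi_{E_i}})=1/4$ there) gives $\div F\res L^+=\big(\Tr{F}{e}{E_i}-\Tr{F}{i}{E_i}\big)\lambda$ for $i=1,2$; since $\div F\res L^+$ is one and the same measure, $\Tr{F}{e}{E_1}-\Tr{F}{i}{E_1}=\Tr{F}{e}{E_2}-\Tr{F}{i}{E_2}$ $\SHaus{Q-1}$–a.e.\ on $L^+$. Adding and subtracting the two displayed equalities yields \eqref{same_or_normal_traces}.

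\emph{Main obstacle.} The delicate point is the vanishing of $\div\big((\chi_{E_1}-\chi_{E_2})F\big)$ on $L^+$: there three independent ingredients must be assembled correctly — Proposition~\ref{overline_chi_E} used as a genuine (non–subsequential) limit, so that the conclusion is insensitive to the mollifying sequences hidden in the definitions of the traces; the absolute continuity $|\div F|\ll\SHaus{Q-1}$, which transfers those limits from the $|D_H\chi_{E_i}|$–setting to the $|\div F|$–setting; and the Ambrosio--Scienza locality of h-perimeter, which makes $D_H(\chi_{E_1}-\chi_{E_2})$ null on $L^+$. Everything else is bookkeeping with Jordan decompositions and restrictions of Radon measures.
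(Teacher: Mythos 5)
Your proof is correct, but it follows a genuinely different route from the paper's. The paper proves locality pointwise: it differentiates the pairing measures $(\chi_{E_j}F,D_H\chi_{E_j})$ with respect to the asymptotically doubling perimeter measure (Theorem~\ref{perimeter repr} and Federer's differentiation theory), controls the error terms through the infinitesimal estimates of Lemma~\ref{locality_per_orientation} together with the lower bound $|D_H\chi_{E_j}|(B(x,r))\ge c\,r^{Q-1}$, and handles the cross terms $(\chi_{E_1}F,D_H\chi_{E_2})-(\chi_{E_2}F,D_H\chi_{E_1})$ via a double mollification identity for $\div(\chi_{E_1}\chi_{E_2}F)$ (equations \eqref{double_set_F_1}--\eqref{third_term_trace_ineq}). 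You instead work entirely at the level of restricted measures: you first upgrade \eqref{limit1/2} through $|\div F|\ll\SHaus{Q-1}$ (Theorem~\ref{absolute continuity}) and Lemma~\ref{lemma:weak_conv_absolutely_continuous_perimeter_measures} to get $\tildef{\chi_{E_i}}=1/2$ $|\div F|$-a.e.\ on $\redb E_i$ for every cluster point --- in effect anticipating the key step \eqref{eq:tildef_chi_E_redb} of Theorem~\ref{Uniqueness_traces}, with no circularity since all ingredients precede the locality theorem; you then exploit the exact Ambrosio--Scienza locality to get $D_H(\chi_{E_1}-\chi_{E_2})\res L^{+}=0$ and apply Theorem~\ref{productrule} to $g=\chi_{E_1}-\chi_{E_2}$, so that both $\tildef g\,\div F$ and $(F,D_Hg)$ vanish on $L^{+}$ and hence $\div(\chi_{E_1}F)\res L^{+}=\div(\chi_{E_2}F)\res L^{+}$; finally you recover the traces from the sum identity (restriction of \eqref{Leibniz_infty_E_1} with \eqref{decomposition E and compl}) and the difference identity (restriction of \eqref{boundary_term_div_trace}), using $\theta_{E}\ge\alpha$ to pass from $|D_H\chi_{E_i}|$-a.e.\ to $\SHaus{Q-1}$-a.e. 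The reduction of \eqref{opposite_or_normal_traces} to \eqref{same_or_normal_traces} via $\Omega\setminus E_2$ and Remark~\ref{normal_traces_opposite_orientation} is the same as in the paper. Your argument is shorter and avoids both the pointwise differentiation of the pairings and the asymptotic estimates of Lemma~\ref{locality_per_orientation} (only the exact equality of the restricted perimeters and normals on $L$ is used); the price is the reliance on the absolute continuity $|\div F|\ll\SHaus{Q-1}$, which the paper's locality proof does not need, while the paper's differentiation approach additionally exhibits the traces as pointwise limits of ratios, a representation it reuses elsewhere. (A trivial slip: $D_H(\chi_{E_1}-\chi_{E_2})\res L=(\nu_{E_2}-\nu_{E_1})\,|D_H\chi_{E_1}|\res L$, i.e.\ the common restriction to $L$ rather than to $L^{+}$, should appear there; the vanishing on $L^{+}$ that you actually use is correct.)
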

\begin{proof}
We recall that, by Theorem \ref{perimeter repr}, the perimeter measure $|D_H\chi_{E_{j}}|(\cdot)$ is a.e.\ asymptotically doubling, for $j = 1, 2$. Therefore, by the definitions \eqref{interior_normal_trace_def} and \eqref{exterior_normal_trace_def}, and the differentiation of perimeters (see \cite[Sections 2.8.17 and 2.9.6]{Fe}), we have
\begin{align*} 
\Tr{F}{i}{E_{j}}(x) & = - 2 \lim_{r \to 0} \frac{(\chi_{E_{j}} F, D_{H} \chi_{E_{j}})(B(x, r))}{|D_{H} \chi_{E_{j}}|(B(x, r))}, \\
\Tr{F}{e}{E_{j}}(x) & = - 2 \lim_{r \to 0} \frac{(\chi_{\Omega \setminus E_{j}} F, D_{H} \chi_{E_{j}})(B(x, r))}{|D_{H} \chi_{E_{j}}|(B(x, r))}, \end{align*}
for $j = 1, 2$, and for $\SHaus{Q - 1}$-a.e.\ $x \in \redb E_{j}$.
Let $x \in \redb E_{1} \cap \redb E_{2}$ be such that $\nu_{E_{1}}(x) = \nu_{E_{2}}(x)$ and \eqref{local_prop_h_per_4} and \eqref{local_asimptotic_prop_h_per} hold true. 
Taking into account that
$ | \Tr{F}{i}{E_{1}}(x) - \Tr{F}{i}{E_{2}}(x)|$
can be written as the limit of the difference
\[
2 \lim_{r \to 0} \left  | \frac{(\chi_{E_{1}} F, D_{H} \chi_{E_{1}})(B(x, r))}{|D_{H} \chi_{E_{1}}|(B(x, r))} - \frac{(\chi_{E_{2}} F, D_{H} \chi_{E_{2}})(B(x, r))}{|D_{H} \chi_{E_{2}}|(B(x, r))} \right | 
\]
using the linearity and the triangle inequality, we get
\begin{align*} | \Tr{F}{i}{E_{1}}(x) -& \Tr{F}{i}{E_{2}}(x)|  \le 2 \limsup_{r \to 0} \left | \frac{(\chi_{E_{1}} F, D_{H} (\chi_{E_{1}} - \chi_{E_{2}}))(B(x, r))}{|D_{H} \chi_{E_{1}}|(B(x, r))} \right |  \\
& + \limsup_{r\to0}\left | \frac{(\chi_{E_{2}} F, D_{H} (\chi_{E_{1}} - \chi_{E_{2}}))(B(x, r))}{|D_{H} \chi_{E_{2}}|(B(x, r))} \right |   \\
& + \limsup_{r\to0}\left | \frac{(\chi_{E_{1}} F, D_{H}\chi_{E_{2}})(B(x, r))}{|D_{H} \chi_{E_{1}}|(B(x, r))} - \frac{(\chi_{E_{2}} F, D_{H}\chi_{E_{1}})(B(x, r))}{|D_{H} \chi_{E_{2}}|(B(x, r))} \right |. \end{align*} 
By \eqref{abs_cont_pairing_infty}, we have $|(\chi_{E_{j}} F, D_{H} (\chi_{E_{1}} - \chi_{E_{2}}))| \le \|F\|_{\infty, E_{j}} |D_{H} (\chi_{E_{1}} - \chi_{E_{2}})|$, for $j = 1, 2$, and so, by \eqref{local_prop_h_per_4}, we conclude that 
\begin{equation*}  \left | \frac{(\chi_{E_{j}} F, D_{H} (\chi_{E_{1}} - \chi_{E_{2}}))(B(x, r))}{|D_{H} \chi_{E_{j}}|(B(x, r))} \right | \to 0, \end{equation*}
for $j = 1, 2$. Now we have to deal with the last term, which, by \eqref{local_asimptotic_prop_h_per}, is infinitesimal as $r\to0$
if and only if so is 
\begin{equation}\label{eq:quotEstE_1E_2} 
\left | \frac{(\chi_{E_{1}} F, D_{H}\chi_{E_{2}})(B(x, r)) - (\chi_{E_{2}} F, D_{H}\chi_{E_{1}})(B(x, r))}{|D_{H} \chi_{E_{2}}|(B(x, r))} \right |. 
\end{equation}
By Theorem \ref{productrule}, we know that $\chi_{E_{j}} F \in \DM^{\infty}(H \Omega)$, for $j = 1, 2$. 
Let $\ep_k$ be the defining sequence for 
\[
\Tr{F}{e}{E_{j}},\quad \Tr{F}{i}{E_{j}},\quad (\chi_{E_{j}} F, D_{H} \chi_{E_{j}})
\quad\text{and}\quad  (\chi_{\Omega\setminus E_{j}} F, D_{H} \chi_{E_{j}})
\]
through limits analogous to those of \eqref{eq:chiEFconvol} and \eqref{eq:limxF} for $E_{j}$, $j=1,2$, in place of $E$.
It follows that 
\[
\rho_{\eps_{k}} \ast \chi_{E_{1}} \weakstarto \tildef{\chi_{E_{1}}}\quad \text{ in $L^{\infty}(\Omega; |\div F|)$}
\]
and, by \eqref{productruleLip}, \eqref{Leibniz_infty_E_1} and \eqref{commutation conv derivative meas}, in $\Omega_{2 \eps}^{\cR}$ we have
\beq\label{eq:divchiE_1E_2}
\begin{split}
\div( (\rho_{\eps_{k}} \ast \chi_{E_{1}}) \chi_{E_{2}} F) & = \rho_{\eps_{k}} \ast \chi_{E_{1}} \div(\chi_{E_{2}} F) + \ban{\chi_{E_{2}} F, \nabla_{H} (\rho_{\eps_{k}} \ast \chi_{E_{1}}) } \mu \\
& = (\rho_{\eps_{k}} \ast \chi_{E_{1}}) \tildef{\chi_{E_{2}}} \div(F) + (\rho_{\eps_{k}} \ast \chi_{E_{1}}) (F, D_{H} \chi_{E_{2}})   \\
& + \ban{\chi_{E_{2}} F, \rho_{\eps_{k}} \ast D_{H} \chi_{E_{1}} } \mu. 
\end{split}
\eeq 
Since $|\rho_{\eps_{k}} \ast \chi_{E_{1}}|(x) \le 1$ for any $x \in \Omega$, up to extracting a further subsequence, we may find $\overline{\chi_{E_{1}}} \in L^{\infty}(\Omega; |(F, D_{H} \chi_{E_{2}})|)$ such that 
\[
\rho_{\eps_{k}} \ast \chi_{E_{1}} \weakstarto \overline{\chi_{E_{1}}} \quad\text{ in $L^{\infty}(\Omega; |(F, D_{H} \chi_{E_{2}})|)$}.
\]
Thus, by Lemma~\ref{lemma:weak_conv_absolutely_continuous_perimeter_measures}, we get
the weak$^{*}$ convergence
\[
(\rho_{\eps_{k}} \ast \chi_{E_{1}}) (F, D_{H} \chi_{E_{2}})\weakto \overline{\chi_{E_{1}}} (F, D_{H} \chi_{E_{2}}).
\]
Moreover, by Lemma~\ref{conv_meas_bounded_function}, the sequence $\ban{\chi_{E_{2}} F, \rho_{\eps_{k}} \ast D_{H} \chi_{E_{1}}) } \mu$ is uniformly bounded on $\Omega$, hence, up to extracting further subsequences, there exists the weak$^{*}$ limit 
\[
\ban{\chi_{E_{2}} F, \rho_{\eps_{k}} \ast D_{H} \chi_{E_{1}} } \mu
\rightharpoonup (\chi_{E_{2}} F, D_{H} \chi_{E_{1}}).
\]
By Remark~\ref{r:sign_chi_E} we know that $|\div F|$-a.e.\
there holds $0 \le \tildef{\chi_{E_{2}}} \le 1$ and by Lemma~\ref{lemma:weak_conv_absolutely_continuous_perimeter_measures}, we
conclude that 
\[
(\rho_{\eps_{k}} \ast \chi_{E_{1}}) \tildef{\chi_{E_{2}}} \div(F) \weakto \tildef{\chi_{E_{1}}} \tildef{\chi_{E_{2}}} \div F.
\]
Passing now to the limit in \eqref{eq:divchiE_1E_2} as $\eps_{k} \to 0$, there holds
\begin{equation} \label{double_set_F_1} \div(\chi_{E_{1}} \chi_{E_{2}} F) = \tildef{\chi_{E_{1}}} \tildef{\chi_{E_{2}}} \div F + \overline{\chi_{E_{1}}} (F, D_{H} \chi_{E_{2}}) + (\chi_{E_{2}} F, D_{H} \chi_{E_{1}}). \end{equation}
Arguing in an analogous way, exchanging the role of $\chi_{E_{1}}$ and $\chi_{E_{2}}$, we get
\begin{equation} \label{double_set_F_2} \div(\chi_{E_{1}} \chi_{E_{2}} F) = \tildef{\chi_{E_{1}}} \tildef{\chi_{E_{2}}} \div F + \overline{\chi_{E_{2}}} (F, D_{H} \chi_{E_{1}}) + (\chi_{E_{1}} F, D_{H} \chi_{E_{2}}). \end{equation}
Then \eqref{double_set_F_1} and \eqref{double_set_F_2} yield
\begin{equation} \label{third_term_trace_ineq}  (\chi_{E_{2}} F, D_{H} \chi_{E_{1}}) - (\chi_{E_{1}} F, D_{H} \chi_{E_{2}}) = \overline{\chi_{E_{2}}} (F, D_{H} \chi_{E_{1}}) - \overline{\chi_{E_{1}}} (F, D_{H} \chi_{E_{2}}).\end{equation}
Joining \eqref{limit1/2}, Lemma~\ref{lemma:weak_conv_absolutely_continuous_perimeter_measures} 
and \eqref{local_prop_h_per_1}, we can conclude that 
\[
\overline{\chi_{E_{1}}}(x) = \overline{\chi_{E_{2}}}(x) = 1/2 \quad\text{ for $\SHaus{Q - 1}-a.e.\ x \in \redb E_{1} \cap \redb E_{2} =: L$}.
\]
By \eqref{abs_cont_pairing_infty}, we notice that
\beq\label{eq:FD_HchiE_1}
|(F, D_{H} \chi_{E_{j}})| \res L \le \|F\|_{\infty, \Omega} |D_{H} \chi_{E_{j}}| \res L, \ \text{for} \ \ j = 1, 2; \eeq
and so, by Remark \ref{negl_abs_cont_red_boundary}, we obtain
\begin{equation*} \overline{\chi_{E_{2}}} (F, D_{H} \chi_{E_{1}}) \res L = \frac{1}{2} (F, D_{H} \chi_{E_{1}}) \res L, \ \  \overline{\chi_{E_{1}}} (F, D_{H} \chi_{E_{2}}) \res L = \frac{1}{2} (F, D_{H} \chi_{E_{2}}) \res L. \end{equation*}
Now, if we set $G := \redb E_{1} \Delta \redb E_{2}$, we observe that we can rewrite \eqref{third_term_trace_ineq} as
\begin{align} \label{third_term_trace_ineq_1} 
(\chi_{E_{2}} F, D_{H} \chi_{E_{1}}) - (\chi_{E_{1}} F, D_{H} \chi_{E_{2}}) & = \overline{\chi_{E_{2}}} (F, D_{H} \chi_{E_{1}}) \res G - \overline{\chi_{E_{1}}} (F, D_{H} \chi_{E_{2}}) \res G \\
& + \frac{1}{2} (F, D_{H} (\chi_{E_{1}} - \chi_{E_{2}})) \res L. \nonumber \end{align}
By \eqref{abs_cont_pairing_infty} and by standard differentiation of Borel measures, we have
\[
|(F, D_{H} \chi_{E_{j}}) \res G| (B(x,r)) \le \|F\|_{\infty, \Omega} |D_{H} \chi_{E_{j}}|(G \cap B(x, r)) = o(r^{Q-1})
\]
for $\SHaus{Q - 1}$-a.e. $x \in L$, since $G \cap L = \emptyset$, and $j = 1, 2$. In addition, $|D_{H} \chi_{E_{j}}|(B(x, r)) \ge c r^{Q - 1}$ for $\SHaus{Q - 1}$-a.e. $x \in \redb E_{j}$, $r > 0$ sufficiently small and $j = 1, 2$, by \cite[Theorem 4.3]{Ambrosio2001}; and so we obtain
\begin{equation*} |(F, D_{H} \chi_{E_{j}}) \res G|(B(x,r)) = o(|D_{H} \chi_{E_{2}}|(B(x, r))). \end{equation*}
As for the second term, by \eqref{abs_cont_pairing_infty} and \eqref{local_prop_h_per_4} we get
\begin{align*} 
|(F, D_{H} (\chi_{E_{1}} - \chi_{E_{2}}))|(L \cap B(x, r)) &\le
\|F\|_{\infty, \Omega} |D_{H} (\chi_{E_{1}} - \chi_{E_{2}})|(B(x, r))\\
& = o(|D_{H} \chi_{E_{2}}|(B(x, r))) 
\end{align*}
for $\SHaus{Q - 1}$-a.e.\ $x \in \redb E_{1} \cap \redb E_{2}$ such that $\nu_{E_{1}}(x) = \nu_{E_{2}}(x)$.
This implies that the expression in \eqref{eq:quotEstE_1E_2} goes to zero as $r \to 0$, and so it proves the first part of \eqref{same_or_normal_traces}.
Concerning the exterior normal traces, one can argue in a similar way for the sets $\Omega \setminus E_{1}$ and $\Omega \setminus E_{2}$.
Finally, taking into account \eqref{interior_normal_trace_def}, \eqref{exterior_normal_trace_def}, Remark~\ref{normal_traces_opposite_orientation} and \eqref{same_or_normal_traces} applied to $E_{1}$ and $\Omega \setminus E_{2}$, and conversely,
we arrive at \eqref{opposite_or_normal_traces}.
\end{proof}

\subsection{Tripartition by weak$^*$ limit of mollified functions}

In this section we study the properties of the limit $\tildef{\chi_E}$ 
defined in \eqref{eq:limxF} and the related Leibniz rule.
From Remark~\ref{r:sign_chi_E} we have that 
$0 \le \tildef{\chi_{E}}(x) \le 1$ for $|\div F|$-a.e.\ $x\in\Omega$.

\begin{proposition} \label{tripartition_div_1} 
Let $F \in \DM^{\infty}(H \Omega)$ and $E \subset \Omega$ be a set of finite h-perimeter.
Let $\tildef{\chi_E}$, $\intE{E}$, $\extE{E}$ and $\redB{E}$ 
be as in \eqref{eq:limxF}, \eqref{int-ext_F_tilde_chi_E} and
\eqref{eq:redBB}, respectively. It holds
\begin{equation} \label{div_F_partition_Omega} 
|\div F|(\Omega \setminus (\redb E \cup \intE{E} \cup \extE{E}))=
|\div F|(\Omega \setminus (\redB{E} \cup \intE{E} \cup \extE{E})) = 0.
\end{equation}
In particular, $\tildef{\chi_{E}}$ is uniquely determined on $\Omega \setminus \redb E$, up to $|\div F|$-negligible sets, and we have 
\begin{equation} \label{tilde_chi_E_div_no_redb_E}
\tildef{\chi_{E}} = \chi_{\intE{E}} \quad |\div F|\text{-a.e. in} \ \Omega \setminus \redb E, \end{equation}
so that
\begin{equation}  \label{eq:restr_no_redb_E_inteE}
\div(\chi_{E} F) \res (\Omega \setminus \redb E) = \div F \res (\intE{E} \setminus \redb E).
\end{equation}
\end{proposition}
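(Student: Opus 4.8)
The plan is to derive everything from the Leibniz rule of Proposition~\ref{boundary_term_divergence}, the absolute continuity estimates \eqref{eq:chiEF_ie} and \eqref{abs_cont_pairing_infty}, and the fact that the h-perimeter measure $|D_{H}\chi_{E}|$ is concentrated on $\redb E$ (Remark~\ref{negl_abs_cont_red_boundary}). A preliminary observation is that the two sets in \eqref{div_F_partition_Omega} actually coincide: since $\redB{E}=\redb E\setminus(\intE{E}\cup\extE{E})$ by \eqref{eq:redBB}, setting $S=\intE{E}\cup\extE{E}$ one has $\redB{E}\cup S=(\redb E\setminus S)\cup S=\redb E\cup S$. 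Hence it suffices to prove that $|\div F|$ vanishes on $\Omega\setminus(\redb E\cup\intE{E}\cup\extE{E})$, that is, that $\tildef{\chi_{E}}(x)\in\{0,1\}$ for $|\div F|$-a.e.\ $x\in\Omega\setminus\redb E$.

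To this end I would invoke formula \eqref{boundary term div}, namely $\tildef{\chi_{E}}(1-\tildef{\chi_{E}})\div F=\frac12(\chi_{E}F,D_{H}\chi_{E})-\frac12(\chi_{\Omega\setminus E}F,D_{H}\chi_{E})$. By \eqref{eq:chiEF_ie} its right-hand side is a signed measure absolutely continuous with respect to $|D_{H}\chi_{E}|$, hence concentrated on $\redb E$; therefore $\tildef{\chi_{E}}(1-\tildef{\chi_{E}})\div F\res(\Omega\setminus\redb E)=0$. Since $0\le\tildef{\chi_{E}}\le1$ holds $|\div F|$-a.e.\ by Remark~\ref{r:sign_chi_E}, the density $\tildef{\chi_{E}}(1-\tildef{\chi_{E}})$ is bounded and nonnegative, so the total variation of the left-hand measure is $\tildef{\chi_{E}}(1-\tildef{\chi_{E}})\,|\div F|$, and its vanishing on $\Omega\setminus\redb E$ forces $\tildef{\chi_{E}}(1-\tildef{\chi_{E}})=0$ $|\div F|$-a.e.\ there. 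This yields $\tildef{\chi_{E}}(x)\in\{0,1\}$ for $|\div F|$-a.e.\ $x\in\Omega\setminus\redb E$, which is \eqref{div_F_partition_Omega}, and since $\intE{E}=\{\tildef{\chi_{E}}=1\}$ and $\extE{E}=\{\tildef{\chi_{E}}=0\}$, it also gives $\tildef{\chi_{E}}=\chi_{\intE{E}}$ $|\div F|$-a.e.\ on $\Omega\setminus\redb E$, i.e.\ \eqref{tilde_chi_E_div_no_redb_E}.

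For \eqref{eq:restr_no_redb_E_inteE} I would restrict the Leibniz rule \eqref{Leibniz_infty_E_1} to $\Omega\setminus\redb E$: by \eqref{abs_cont_pairing_infty} the pairing $(F,D_{H}\chi_{E})$ is absolutely continuous with respect to $|D_{H}\chi_{E}|$, hence vanishes there, while $\tildef{\chi_{E}}\div F\res(\Omega\setminus\redb E)=\chi_{\intE{E}}\div F\res(\Omega\setminus\redb E)=\div F\res(\intE{E}\setminus\redb E)$ by \eqref{tilde_chi_E_div_no_redb_E}. Finally, for the uniqueness statement: the measure $\div(\chi_{E}F)$ depends only on the fixed vector field $\chi_{E}F$, and not on the mollifying sequence $\eps_{k}$ or the mollifier $\rho$; so \eqref{eq:restr_no_redb_E_inteE} shows that $\div F\res(\intE{E}\setminus\redb E)$ is intrinsic. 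If two different choices of sequence and mollifier (compatible with \eqref{eq:chiEFconvol} and \eqref{eq:limxF}) produced sets $A_{1}$ and $A_{2}$ in place of $\intE{E}$, subtracting the two corresponding identities \eqref{eq:restr_no_redb_E_inteE} would give $\div F\res\big((A_{1}\setminus A_{2})\setminus\redb E\big)=\div F\res\big((A_{2}\setminus A_{1})\setminus\redb E\big)$; the two sides being restrictions of $\div F$ to disjoint sets, both are the null measure, whence $|\div F|\big((A_{1}\triangle A_{2})\setminus\redb E\big)=0$. Together with \eqref{div_F_partition_Omega}, this makes $\tildef{\chi_{E}}$ uniquely determined on $\Omega\setminus\redb E$ up to $|\div F|$-negligible sets.

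The whole argument is essentially soft. The only points deserving care are the passage from ``$g\,\nu$ concentrated on a set'' to ``$g=0$ holds $|\nu|$-a.e.\ off that set'' for a signed measure $\nu$ (handled through the identity $|g\nu|=g|\nu|$ for $g\ge0$, or equivalently via the Hahn decomposition of $\div F$), and the bookkeeping with mutually singular restrictions of $\div F$ in the uniqueness step; I do not expect a genuine obstacle here.
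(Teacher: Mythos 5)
Your argument is correct and follows essentially the same route as the paper: you extract $\tildef{\chi_{E}}(1-\tildef{\chi_{E}})\div F\res(\Omega\setminus\redb E)=0$ from the boundary-term identity of Proposition~\ref{boundary_term_divergence} together with the absolute continuity of the pairings with respect to $|D_{H}\chi_{E}|$ (which is concentrated on $\redb E$), and then restrict \eqref{Leibniz_infty_E_1} to $\Omega\setminus\redb E$ to obtain \eqref{eq:restr_no_redb_E_inteE} and the uniqueness of $\tildef{\chi_{E}}$ there. The only cosmetic differences are that the paper quotes the trace form \eqref{boundary_term_div_trace} instead of \eqref{boundary term div}, and reads uniqueness directly from the identity $\div(\chi_{E}F)\res(\Omega\setminus\redb E)=\tildef{\chi_{E}}\,\div F\res(\Omega\setminus\redb E)$ rather than via your symmetric-difference bookkeeping.
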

\begin{proof}
From \eqref{boundary_term_div_trace} we immediately conclude that
\begin{equation*} \int_{\Omega \setminus \redb E} \tildef{\chi_{E}}(1 - \tildef{\chi_{E}}) \, d |\div F| = 0. \end{equation*}
Therefore definitions \eqref{int-ext_F_tilde_chi_E} and \eqref{eq:redBB} give
\begin{equation*} 
\Omega = \intE{E}\cup \extE{E}\cup \redB{E} \cup Z_{E}^{F},
\end{equation*}
where $Z_E^F=\Omega\sm(\redb E\cup \intE{E}\cup \extE{E})$
is $|\div F|$-negligible. This proves \eqref{div_F_partition_Omega}. Then, if we restrict \eqref{Leibniz_infty_E_1} to $\Omega \setminus \redb E$, we have
\begin{equation} \label{restr_no_redb_E}
\div(\chi_{E} F) \res (\Omega \setminus \redb E) = \tildef{\chi_{E}} \div F \res (\Omega \setminus \redb E),
\end{equation}
which immediately shows that $\tildef{\chi_{E}}$ is uniquely determined on $\Omega \setminus \redb E$, as a function in $L^{\infty}(\Omega; |\div F|)$.
Moreover, by \eqref{div_F_partition_Omega}, we have that $\tildef{\chi_{E}}(x) \in \{0, 1\}$ for $|\div F|$-a.e. $x \in \Omega \setminus \redb E$, and this implies \eqref{tilde_chi_E_div_no_redb_E}. Finally, this immediately shows that \eqref{restr_no_redb_E} is equivalent to \eqref{eq:restr_no_redb_E_inteE}.
\end{proof}

Formula \eqref{tilde_chi_E_div_no_redb_E} will be important to show that indeed the set $\intE{E}$ is uniquely defined up to $|\div F|$-negligible sets.

\begin{remark}\label{r:tripartition_div_2} 
We notice that, by Proposition \ref{tilde_g_characterization}, $\tildef{\chi_{E}}(x) = \chi_{E}^{*, \cR}(x)$ for $|\div F|$-a.e. $x \in C_{E}^{\cR}$. In particular, we obtain
\begin{equation*}
|\div F|\left ( (\intE{E} \Delta E^{1, \cR}) \cap C_{E}^{\cR} \right ) = 0 \ \text{and} \ |\div F|\left ( (\extE{E} \Delta E^{0, \cR} ) \cap C_{E}^{\cR} \right ) = 0.
\end{equation*}
If we now assume that 
\begin{equation} \label{eq:assumption_precise_right_repr_div}
|\div F|(\Omega \setminus C_{E}^{\cR}) = 0,
\end{equation}
then it follows that $\tildef{\chi_{E}}(x) = \chi_{E}^{*, \cR}(x)$ for $|\div F|$-a.e. $x \in \Omega$. In particular, this yields 
\begin{equation*}
|\div F|(\intE{E} \Delta E^{1, \cR}) = 0 \quad \text{and} \quad |\div F|(\extE{E} \Delta E^{0, \cR}) = 0.
\end{equation*}
Thus, we have shown that
\[ \mtbR E \setminus \redb E = \Omega \setminus (E^{1, \cR} \cup E^{0, \cR} \cup \redb E) = (\Omega \setminus (\intE{E}\cup \extE{E}\cup \redb{E})) \cup \tildef{Z}_{E}^{F}, \]
for some $|\div F|$-negligible set $\tildef{Z}_{E}^{F}$. By \eqref{div_F_partition_Omega}, we obtain $|\div F|(\mtbR E \setminus \redb E) = 0$. 
Hence, under the assumption \eqref{eq:assumption_precise_right_repr_div}, we can identify $\intE{E}$ and $\extE{E}$ with $E^{1, \cR}$ and $E^{0, \cR}$, up to $|\div F|$-negligible sets, thus obtaining their uniqueness in this special case. In fact, as we shall see below, the uniqueness holds in general, even if \eqref{eq:assumption_precise_right_repr_div} fails to be true.
\end{remark}

The following proposition proves that any given set of finite h-perimeter $E$ in $\Omega$
yields a tripartition of $\Omega$. More precisely, for $F \in \DM^{\infty}(H \Omega)$ there exists a representative $\tildef{\chi_{E}}$ of $\chi_{E}$ such that $\tildef{\chi_{E}}(x) \in \{1, 0, 1/2 \}$ for $|\div F|$-a.e. $x \in \Omega$.

\begin{proposition} \label{tilde_chi_E}
Let $F \in \DM^{\infty}(H \Omega)$ and let $E \subset \Omega$ be a set of finite h-perimeter. Let $\rho\in C_{c}(B(0, 1))$ be a mollifier satisfying $\rho(x) = \rho(x^{-1})$ and $\int_{B(0, 1)} \rho(y) \, dy = 1$.
If $\tildef{\chi_{E}} \in L^{\infty}(\Omega; |\div F|)$ is defined by
\eqref{eq:limxF}, then  
\begin{equation} \label{tilde_chi_E_div_F}
\displaystyle \tildef{\chi_{E}} = \frac{1}{2} \ \ |\div F|\text{-a.e.\ on} \ \ 
\redB{E}.
\end{equation}
In addition, the normal traces of $F$ on the boundary of $E$ satisfy 
\begin{equation} \label{trace_equality_boundary} 
\Tr{F}{i}{E} = \Tr{F}{e}{E} \ \ |D_{H} \chi_{E}|\text{-a.e.\ on} \ \ \intE{E} \cup \extE{E}
\end{equation} 
and we have
\begin{equation} \label{boundary_term_div_1_2} \chi_{\redB{E}} \div F = (\Tr{F}{e}{E} - \Tr{F}{i}{E}) |D_{H} \chi_{E}|.
\end{equation}
\end{proposition}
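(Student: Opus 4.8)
The plan is to transfer the known weak$^*$ limit $1/2$ of the mollifications $\rho_\eps\ast\chi_E$ from the h-perimeter measure to $|\div F|$ along the reduced boundary, and then to read off the three assertions from the already established identity \eqref{boundary_term_div_trace} together with the tripartition of Proposition~\ref{tripartition_div_1}. The crucial preliminary observation is that $|\div F|\res\redb E$ is absolutely continuous with respect to $|D_H\chi_E|$: since $F\in\DM^\infty(H\Omega)$, Theorem~\ref{absolute continuity} gives $|\div F|\ll\SHaus{Q-1}$, while Remark~\ref{negl_abs_cont_red_boundary} yields $|D_H\chi_E|\ge\alpha\,\SHaus{Q-1}\res\redb E$; combining these, and using that $|D_H\chi_E|$ and $|\div F|$ are finite, the Radon--Nikod\'ym theorem furnishes $\theta\in L^1(\Omega;|D_H\chi_E|)$ with $|\div F|\res\redb E=\theta\,|D_H\chi_E|$.

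To prove \eqref{tilde_chi_E_div_F} I would start from \eqref{limit1/2}, namely $\rho_{\eps_k}\ast\chi_E\weakstarto 1/2$ in $L^\infty(\Omega;|D_H\chi_E|)$, and apply Lemma~\ref{lemma:weak_conv_absolutely_continuous_perimeter_measures} with $\gamma=|D_H\chi_E|$ and $\nu=\theta\gamma=|\div F|\res\redb E$ to deduce $\rho_{\eps_k}\ast\chi_E\weakstarto 1/2$ in $L^\infty(\Omega;|\div F|\res\redb E)$. Since by \eqref{eq:limxF} the same sequence converges weakly$^*$ to $\tildef{\chi_E}$ in $L^\infty(\Omega;|\div F|)$, hence also in $L^\infty(\Omega;|\div F|\res\redb E)$ (restrict test functions to those vanishing off $\redb E$), uniqueness of weak$^*$ limits forces $\tildef{\chi_E}=1/2$ for $|\div F|$-a.e.\ $x\in\redb E$, in particular on $\redB{E}$.

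For \eqref{trace_equality_boundary} I would simply restrict \eqref{boundary_term_div_trace} to the Borel set $\intE{E}\cup\extE{E}$, where $\tildef{\chi_E}$ takes only the values $0$ and $1$; then $\tildef{\chi_E}(1-\tildef{\chi_E})\equiv 0$ there, the left-hand side vanishes, and so $(\Tr{F}{e}{E}-\Tr{F}{i}{E})\,|D_H\chi_E|\res(\intE{E}\cup\extE{E})=0$. Finally, for \eqref{boundary_term_div_1_2}, Proposition~\ref{tripartition_div_1} tells us that $|\div F|$ is concentrated on $\redB{E}\cup\intE{E}\cup\extE{E}$; there the integrand $\tildef{\chi_E}(1-\tildef{\chi_E})$ vanishes on $\intE{E}\cup\extE{E}$ and equals $1/4$ on $\redB{E}$ by the previous step, so $\tildef{\chi_E}(1-\tildef{\chi_E})\,\div F=\tfrac14\,\chi_{\redB{E}}\,\div F$; substituting into \eqref{boundary_term_div_trace} and multiplying by $4$ yields the claim, the right-hand side being concentrated on $\redB{E}$ thanks to \eqref{trace_equality_boundary}.

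The main obstacle is the step carrying the value $1/2$ over to $\tildef{\chi_E}$: although in a general stratified group one has no rectifiability or blow-up theorem for $\redb E$, the sharp absolute continuity $|\div F|\ll\SHaus{Q-1}$ combined with the perimeter representation of Theorem~\ref{perimeter repr} is exactly what makes $|\div F|\res\redb E$ absolutely continuous with respect to the h-perimeter, so that Lemma~\ref{lemma:weak_conv_absolutely_continuous_perimeter_measures} applies. Once this comparison of measures is in place, the remaining manipulations are routine.
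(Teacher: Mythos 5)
Your proof is correct, but for the key assertion \eqref{tilde_chi_E_div_F} you follow a genuinely different route from the paper's proof of this proposition. The paper stays entirely inside the identity \eqref{boundary_term_div_trace}: it multiplies both sides by $\rho_{\eps_k}\ast\chi_E$ and passes to the limit twice, using the weak$^*$ convergence to $\tildef{\chi_E}$ in $L^\infty(\Omega;|\div F|)$ on the left and the convergence to $1/2$ in $L^\infty(\Omega;|D_H\chi_E|)$ (via Remark~\ref{weak_conv_absolutely_continuous_perimeter_measures}) on the right; since the two sequences of measures coincide, so do the limits, giving $\bigl(\tildef{\chi_E}-\tfrac12\bigr)\tildef{\chi_E}(1-\tildef{\chi_E})\,\div F\res\redb E=0$ and hence $\tildef{\chi_E}=1/2$ on $\redB{E}$, without ever invoking Theorem~\ref{absolute continuity}. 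You instead use $|\div F|\ll\SHaus{Q-1}$ together with the lower bound $|D_H\chi_E|\ge\alpha\,\SHaus{Q-1}\res\redb E$ to get $|\div F|\res\redb E\ll|D_H\chi_E|$, write $|\div F|\res\redb E=\theta\,|D_H\chi_E|$ by Radon--Nikod\'ym, and transfer the limit $1/2$ through Lemma~\ref{lemma:weak_conv_absolutely_continuous_perimeter_measures}; this is precisely the argument the paper defers to Theorem~\ref{Uniqueness_traces} (equations \eqref{eq:weak_star_conv_redb}--\eqref{eq:tildef_chi_E_redb}), and it proves the stronger statement $\tildef{\chi_E}=1/2$ for $|\div F|$-a.e.\ point of the whole reduced boundary, i.e.\ \eqref{uniqRedB}, of which \eqref{tilde_chi_E_div_F} is a special case. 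What each approach buys: the paper's argument is more elementary at this stage, keeping the proposition independent of the deeper absolute continuity theorem, whereas yours is shorter, anticipates part of the uniqueness theorem, and makes the role of $|\div F|\ll\SHaus{Q-1}$ transparent. Your treatment of \eqref{trace_equality_boundary} (restricting \eqref{boundary_term_div_trace} to $\intE{E}\cup\extE{E}$, where $\tildef{\chi_E}(1-\tildef{\chi_E})=0$) and of \eqref{boundary_term_div_1_2} (using the tripartition of Proposition~\ref{tripartition_div_1} to rewrite $\tildef{\chi_E}(1-\tildef{\chi_E})\,\div F=\tfrac14\chi_{\redB{E}}\div F$) matches the paper's; the only cosmetic caveat, shared with the paper, is that one should fix a Borel representative of $\tildef{\chi_E}$ so that the sets $\intE{E}$, $\extE{E}$, $\redB{E}$ are measurable for both $|\div F|$ and $|D_H\chi_E|$.
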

\begin{proof}
From \eqref{boundary_term_div_trace} it follows immediately that 
\[
(\Tr{F}{i}{E} - \Tr{F}{e}{E}) |D_{H} \chi_{E}| = 0  \quad\text{on} \ \ \redb E \cap ( \intE{E} \cup \extE{E}),
\]
proving \eqref{trace_equality_boundary}.
Let $\eps_{k}$ be the defining sequence such that \eqref{eq:chiEFconvol} and \eqref{eq:limxF} hold. We have
\begin{equation*} (\rho_{\eps_{k}} \ast \chi_{E})\tildef{\chi_{E}}(1 - \tildef{\chi_{E}}) \div F \weakto (\tildef{\chi_{E}})^{2}(1 - \tildef{\chi_{E}}) \div F,
\end{equation*}
by Lemma~\ref{lemma:weak_conv_absolutely_continuous_perimeter_measures}.
Since the traces $\Tr{F}{i}{E}, \Tr{F}{e}{E}$ defined in
\eqref{interior_normal_trace_def} and \eqref{exterior_normal_trace_def}  
belong to $L^{\infty}(\Omega; |D_{H} \chi_{E}|)$, Remark~\ref{weak_conv_absolutely_continuous_perimeter_measures}
and \eqref{boundary_term_div_trace} imply that
\begin{align*} (\rho_{\eps_{k}} \ast \chi_{E}) \frac{ \Tr{F}{e}{E} - \Tr{F}{i}{E}}{4} |D_{H} \chi_{E}| & \weakto \frac{1}{2} \frac{ \Tr{F}{e}{E} - \Tr{F}{i}{E}}{4} |D_{H} \chi_{E}| \\
& = \frac{1}{2} \tildef{\chi_{E}}(1 - \tildef{\chi_{E}}) \div F.  
\end{align*}
Again \eqref{boundary_term_div_trace} shows that the previous
sequences of measures are equal, hence so are their limits.
Taking their difference, we get
\begin{equation*} \left ( \tildef{\chi_{E}} - \frac{1}{2} \right ) \tildef{\chi_{E}}(1 - \tildef{\chi_{E}}) \div F \res \redb E = 0. \end{equation*}
This implies $\displaystyle \tildef{\chi_{E}} = \frac{1}{2}$ $|\div F|$-a.e.\ on 
$\redB{E}$.
From \eqref{tilde_chi_E_div_F} and \eqref{boundary_term_div_trace}, we obtain \eqref{boundary_term_div_1_2}.
\end{proof}

\begin{remark} \label{tripartition_div_2} 
Proposition~\ref{tilde_chi_E} and \eqref{tilde_chi_E_div_F} imply that 
\[
|\div F|\Big(\redB{E} \setminus  \{x\in\Omega: \tildef{\chi_{E}} = 1/2 \}\Big) = 0.
\]
Since Proposition~\ref{tripartition_div_1} states that 
$\Omega = \intE{E} \cup \extE{E}\cup \redB{E} \cup Z_{E}^{F}$, for some $|\div F|$-negligible set $Z_{E}^{F}$, then we get the tripartition 
\begin{equation*} 
\tildef{\chi_{E}}(x) \in \{0, 1, 1/2 \} \ \ \text{for} \ \ |\div F|\text{-a.e.} \ x \in \Omega. 
\end{equation*} 
As a result, we have shown that for every $F \in \DM^{\infty}(H \Omega)$,
taking any weak$^{*}$ limit of $\rho_{\eps} \ast \chi_{E}$ in $L^{\infty}(\Omega; |\div F|)$, this limit attains only the three possible values $1, 0, 1/2$ for $|\div F|$-a.e.\ $x \in \Omega$. This motivates our definitions \eqref{int-ext_F_tilde_chi_E} and \eqref{eq:redBB}.
\end{remark}
\begin{remark} \label{F_D_chi_E_pairing} If $F \in \DM^{\infty}(H \Omega)$ and $E \subset \Omega$ is a set of finite h-perimeter, then
\begin{equation} \label{F_D_chi_E_pairing_eq} 
(F, D_{H} \chi_{E}) =
-\frac{\Tr{F}{i}{E} + \Tr{F}{e}{E}}{2} |D_{H} \chi_{E}|.
\end{equation}
This follows from \eqref{decomposition E and compl} and from the definitions of the normal traces, \eqref{interior_normal_trace_def}, \eqref{exterior_normal_trace_def}.
\end{remark}

We are now arrived at our first general result on the Leibniz rule for divergence-measure horizontal fields and characteristic functions of sets of finite h-perimeter in stratified groups.

\begin{theorem} \label{Leibniz_infty_E_trace}
If $F \in \DM^{\infty}(H \Omega)$ and $E \subset \Omega$ is a set of finite h-perimeter, then we have
\begin{align} \label{Leibniz_infty_E_1_trace} 
\div(\chi_{E} F) & = \chi_{ \intE{E}} \div F - \Tr{F}{i}{E} |D_{H} \chi_{E}|,  \\
\label{Leibniz_infty_E_2_trace} 
\div(\chi_{E} F) & = 
\chi_{ \intE{E}\cup\redB{E}} \div F - \Tr{F}{e}{E} |D_{H} \chi_{E}|, 
\end{align}
where $\tildef{\chi_{E}} \in L^{\infty}(\Omega; |\div F|)$ is 
the weak$^{*}$ limit defined in \eqref{eq:limxF}.
\end{theorem}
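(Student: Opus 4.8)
The engine of the argument is already in place: Proposition~\ref{boundary_term_divergence} gives the ``raw'' Leibniz identity, and Propositions~\ref{tripartition_div_1} and \ref{tilde_chi_E} provide the tripartition of $\Omega$ and the facts that $\tildef{\chi_{E}}\in\{0,1,1/2\}$ in the three pieces. So the plan is to start from the first Leibniz identity \eqref{Leibniz_infty_E_1}, namely $\div(\chi_{E}F)=\tildef{\chi_{E}}\,\div F+(F,D_{H}\chi_{E})$, and to rewrite both summands on the right explicitly. For the first one, recall from Proposition~\ref{tripartition_div_1} that $\Omega=\intE{E}\cup\extE{E}\cup\redB{E}\cup Z_{E}^{F}$ with $Z_{E}^{F}$ being $|\div F|$-negligible (by \eqref{div_F_partition_Omega}), that $\tildef{\chi_{E}}=\chi_{\intE{E}}$ holds $|\div F|$-a.e.\ on $\Omega\sm\redb E$ by \eqref{tilde_chi_E_div_no_redb_E}, and from \eqref{tilde_chi_E_div_F} that $\tildef{\chi_{E}}=1/2$ holds $|\div F|$-a.e.\ on $\redB{E}$. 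Splitting $\div F$ over this partition then yields the identity of Radon measures
\[
\tildef{\chi_{E}}\,\div F=\chi_{\intE{E}}\,\div F+\tfrac12\,\chi_{\redB{E}}\,\div F.
\]

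Next I would substitute the two formulas already available for the boundary terms. By \eqref{F_D_chi_E_pairing_eq} (from Remark~\ref{F_D_chi_E_pairing}) one has $(F,D_{H}\chi_{E})=-\tfrac12\bigl(\Tr{F}{i}{E}+\Tr{F}{e}{E}\bigr)|D_{H}\chi_{E}|$, while \eqref{boundary_term_div_1_2} gives $\chi_{\redB{E}}\,\div F=\bigl(\Tr{F}{e}{E}-\Tr{F}{i}{E}\bigr)|D_{H}\chi_{E}|$. Plugging these together with the previous display into \eqref{Leibniz_infty_E_1}, the two contributions proportional to $|D_{H}\chi_{E}|$ add up to $\tfrac12\bigl(\Tr{F}{e}{E}-\Tr{F}{i}{E}\bigr)-\tfrac12\bigl(\Tr{F}{i}{E}+\Tr{F}{e}{E}\bigr)=-\Tr{F}{i}{E}$, which is precisely \eqref{Leibniz_infty_E_1_trace}.

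Finally, \eqref{Leibniz_infty_E_2_trace} would be derived from \eqref{Leibniz_infty_E_1_trace} by adding and subtracting $\chi_{\redB{E}}\,\div F$: since $\intE{E}$ and $\redB{E}$ are disjoint, $\chi_{\intE{E}\cup\redB{E}}\,\div F=\chi_{\intE{E}}\,\div F+\chi_{\redB{E}}\,\div F$, and replacing $\chi_{\redB{E}}\,\div F$ once more by $\bigl(\Tr{F}{e}{E}-\Tr{F}{i}{E}\bigr)|D_{H}\chi_{E}|$ via \eqref{boundary_term_div_1_2} converts the interior trace in \eqref{Leibniz_infty_E_1_trace} into the exterior one. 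Thus the proof is essentially bookkeeping with mutually singular Radon measures, and I do not expect a real obstacle; the one point needing care — already handled by the conventions fixed at the beginning of Section~\ref{sect:normaltraces} — is that $\tildef{\chi_{E}}$, the pairing measures $(\chi_{E}F,D_{H}\chi_{E})$, $(\chi_{\Omega\sm E}F,D_{H}\chi_{E})$ and the normal traces $\Tr{F}{i}{E}$, $\Tr{F}{e}{E}$ must all be built from one and the same vanishing sequence $\eps_{k}$ realizing \eqref{eq:chiEFconvol} and \eqref{eq:limxF}, so that \eqref{Leibniz_infty_E_1}, \eqref{tilde_chi_E_div_F}, \eqref{F_D_chi_E_pairing_eq} and \eqref{boundary_term_div_1_2} may be applied at the same time.
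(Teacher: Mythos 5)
Your proposal is correct and takes essentially the same route as the paper's proof: starting from \eqref{Leibniz_infty_E_1}, inserting the tripartition identity $\tildef{\chi_{E}}=\chi_{\intE{E}}+\tfrac12\chi_{\redB{E}}$ (from Propositions~\ref{tripartition_div_1} and \ref{tilde_chi_E}) together with \eqref{F_D_chi_E_pairing_eq}, then using \eqref{boundary_term_div_1_2} to convert $\tfrac12\chi_{\redB{E}}\div F$ and, for \eqref{Leibniz_infty_E_2_trace}, adding $\chi_{\redB{E}}\div F$ back via the same identity. Your final caveat that $\tildef{\chi_{E}}$, the pairings and the traces $\Tr{F}{i}{E}$, $\Tr{F}{e}{E}$ must all come from one and the same sequence $\eps_{k}$ realizing \eqref{eq:chiEFconvol} and \eqref{eq:limxF} is exactly the convention the paper relies on at this stage, before the uniqueness of Theorem~\ref{Uniqueness_traces} is available.
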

\begin{proof}
By Remark \ref{tripartition_div_2}, we have 
\[
\tildef{\chi_{E}}(x) = \chi_{\intE{E}}(x) + \frac{1}{2} \chi_{\redB{E}}(x)\quad \text{ for} \ \ |\div F|-a.e.\ x \in \Omega.
\]
Due to \eqref{F_D_chi_E_pairing_eq}, we can rewrite \eqref{Leibniz_infty_E_1} as follows
\begin{equation*}
\div(\chi_{E} F) = \chi_{\intE{E}} \div F 
+ \frac{1}{2} \chi_{\redB{E}} \div F  - \frac{\Tr{F}{i}{E} + \Tr{F}{e}{E}}{2} |D_{H} \chi_{E}|. \end{equation*}
We can now employ \eqref{boundary_term_div_1_2} to substitute the term
$\chi_{\redB{E}} \div F$, 
obtaining
\begin{align*}
\div(\chi_{E} F) & = \chi_{\intE{E}} \div F 
+ \frac{\Tr{F}{e}{E} - \Tr{F}{i}{E}}{2} |D_{H} \chi_{E}| \res\redB{E} + \\ 
 & - \frac{\Tr{F}{i}{E} + \Tr{F}{e}{E}}{2} |D_{H} \chi_{E}|. \end{align*}
The previous equality immediately gives \eqref{Leibniz_infty_E_1_trace}.
To derive \eqref{Leibniz_infty_E_2_trace}, we simply join \eqref{Leibniz_infty_E_1_trace} with \eqref{boundary_term_div_1_2} and \eqref{trace_equality_boundary}.
\end{proof}

\subsection{Uniqueness results}
The previous results, together with the auxiliary definitions of $\intE{E}$, $\extE{E}$ and $\redB{E}$, allow us to obtain the following uniqueness theorem,
along with a number of relevant consequences.

\begin{theorem}[Uniqueness] \label{Uniqueness_traces}
If $F \in \DM^{\infty}(H \Omega)$ and $E \subset \Omega$ is a set of finite h-perimeter, then there exists a unique $|\div F|$-measurable subset 
$$\intee{E}{F} \subset \Omega \setminus \redb E,$$
up to $|\div F|$-negligible sets, such that 
\begin{equation}\label{chiEtildeUniq}
\tildef{\chi_{E}}(x) = \chi_{\intee{E}{F}}(x) + \frac{1}{2} \chi_{\redb E}(x)\quad \text{ for} \ \ |\div F|\text{-a.e.} \ x \in \Omega.
\end{equation}
In addition, we have 
\beq\label{uniqRedB}
|\div F|\pa{\redb E\sm\redB E}=0
\eeq
and there exist unique normal traces 
$$\Tr{F}{i}{E}, \Tr{F}{e}{E}\in L^{\infty}(\redb E; |D_{H} \chi_{E}|)$$
satisfying
\begin{align} \label{Leibniz_infty_E_1_trace_ref} 
\div(\chi_{E} F) & = \chi_{\intee{E}{F}} \div F - \Tr{F}{i}{E} |D_{H} \chi_{E}|,  \\
\label{Leibniz_infty_E_2_trace_ref} 
\div(\chi_{E} F) & = 
\chi_{\intee{E}{F} \cup \redb E} \div F - \Tr{F}{e}{E} |D_{H} \chi_{E}|.
\end{align}
\end{theorem}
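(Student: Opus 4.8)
The plan is to pin down the intrinsic meaning of the a priori mollifier-dependent objects $\tildef{\chi_E}$, $\intE E$, $\extE E$, $\redB E$ and of the normal traces. First I would assemble the absolute continuity facts that drive everything: since $F\in\DM^\infty(H\Omega)$, Theorem~\ref{absolute continuity} yields $|\div F|\ll\SHaus{Q-1}$, whereas by Remark~\ref{negl_abs_cont_red_boundary} the measures $|D_H\chi_E|$ and $\SHaus{Q-1}\res\redb E$ have the same negligible sets. Consequently, if $B\subset\redb E$ is Borel with $|D_H\chi_E|(B)=0$, then $\SHaus{Q-1}(B)=0$ and hence $|\div F|(B)=0$; that is, $\div F\res\redb E\ll|D_H\chi_E|$, so we may write $\div F\res\redb E=\theta\,|D_H\chi_E|$ with $\theta\in L^1_{\rm loc}(\Omega;|D_H\chi_E|)$.

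The crucial step is to show that $\tildef{\chi_E}=\tfrac12$ holds $|\div F|$-a.e.\ on $\redb E$. By Proposition~\ref{overline_chi_E} we have $\rho_{\eps_k}\ast\chi_E\weakstarto\tfrac12$ in $L^\infty(\Omega;|D_H\chi_E|)$ for the (symmetric) mollifier and the subsequence at hand; applying Lemma~\ref{lemma:weak_conv_absolutely_continuous_perimeter_measures} with $\gamma=|D_H\chi_E|$ and $\nu=\theta|D_H\chi_E|=\div F\res\redb E$ transfers this into $\rho_{\eps_k}\ast\chi_E\weakstarto\tfrac12$ in $L^\infty(\Omega;|\div F\res\redb E|)=L^\infty(\redb E;|\div F|)$. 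On the other hand \eqref{eq:limxF} gives $\rho_{\eps_k}\ast\chi_E\weakstarto\tildef{\chi_E}$ in $L^\infty(\Omega;|\div F|)$, which restricts to weak$^*$ convergence in $L^\infty(\redb E;|\div F|)$ upon extending $L^1(\redb E;|\div F|)$ test functions by zero. By uniqueness of the weak$^*$ limit, $\tildef{\chi_E}=\tfrac12$ $|\div F|$-a.e.\ on $\redb E$. Combining this with Remark~\ref{tripartition_div_2} (which gives $\tildef{\chi_E}\in\{0,1,\tfrac12\}$ $|\div F|$-a.e.) and Proposition~\ref{tripartition_div_1} (which gives $\Omega=\intE E\cup\extE E\cup\redB E$ up to a $|\div F|$-null set) shows that $|\div F|(\redb E\cap\intE E)=|\div F|(\redb E\cap\extE E)=0$, which is exactly \eqref{uniqRedB}, and that $\tildef{\chi_E}$ is $\{0,1\}$-valued $|\div F|$-a.e.\ on $\Omega\setminus\redb E$.

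Now I would define $\intee{E}{F}:=\{x\in\Omega\setminus\redb E:\tildef{\chi_E}(x)=1\}$, which equals $\intE E\setminus\redb E$ up to $|\div F|$-negligible sets; formula \eqref{chiEtildeUniq} is then immediate from the previous paragraph. To show $\intee{E}{F}$ is independent of the mollifier and of the subsequence, recall \eqref{eq:restr_no_redb_E_inteE}: $\div(\chi_E F)\res(\Omega\setminus\redb E)=\div F\res(\intE E\setminus\redb E)=\div F\res\intee{E}{F}$. Since $\chi_E F\in\DM^\infty(H\Omega)$ by Theorem~\ref{productrule}, the measure $\div(\chi_E F)$ is fixed, so if $\intee{E}{F}$ and $(\intee{E}{F})'$ come from two admissible choices then $\div F\res\intee{E}{F}=\div F\res(\intee{E}{F})'$, whence $|\div F|\big(\intee{E}{F}\triangle(\intee{E}{F})'\big)=0$ by the Hahn decomposition; this is the claimed uniqueness.

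Finally, I would read off the traces from Theorem~\ref{Leibniz_infty_E_trace}. Using $|\div F|(\intE E\triangle\intee{E}{F})=0$ together with \eqref{uniqRedB}, formulas \eqref{Leibniz_infty_E_1_trace} and \eqref{Leibniz_infty_E_2_trace} become \eqref{Leibniz_infty_E_1_trace_ref} and \eqref{Leibniz_infty_E_2_trace_ref}; solving for the boundary terms gives $\Tr{F}{i}{E}|D_H\chi_E|=\chi_{\intee{E}{F}}\div F-\div(\chi_E F)$ and $\Tr{F}{e}{E}|D_H\chi_E|=\chi_{\intee{E}{F}\cup\redb E}\div F-\div(\chi_E F)$, whose right-hand sides are mollifier-independent by the uniqueness just proved. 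Hence $\Tr{F}{i}{E}|D_H\chi_E|$ and $\Tr{F}{e}{E}|D_H\chi_E|$ are fixed measures, and $\Tr{F}{i}{E},\Tr{F}{e}{E}$ are determined $|D_H\chi_E|$-a.e.\ as their Radon--Nikodym densities; they lie in $L^\infty(\redb E;|D_H\chi_E|)$ because $|D_H\chi_E|$ is concentrated on $\redb E$ and their essential supremum is bounded via Proposition~\ref{boundary_term_divergence1}. I expect the main difficulty to be the second paragraph, i.e.\ the transfer of the weak$^*$ limit $\tfrac12$ from $L^\infty(\Omega;|D_H\chi_E|)$ to $L^\infty(\redb E;|\div F|)$: this is precisely where the lack of any rectifiability of $\redb E$ in a general stratified group is bypassed, by feeding the soft statement of Proposition~\ref{overline_chi_E} through the absolute continuity $|\div F|\ll\SHaus{Q-1}$.
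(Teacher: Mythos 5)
Your argument is correct and follows essentially the same route as the paper's proof: the absolute continuity $|\div F|\ll\SHaus{Q-1}$ combined with Theorem~\ref{perimeter repr} to get $\div F\res\redb E\ll|D_H\chi_E|$, then Proposition~\ref{overline_chi_E} fed through Lemma~\ref{lemma:weak_conv_absolutely_continuous_perimeter_measures} to force $\tildef{\chi_E}=1/2$ $|\div F|$-a.e.\ on $\redb E$, with uniqueness of $\intee{E}{F}$ read off from the mollifier-independent identity $\div(\chi_E F)\res(\Omega\setminus\redb E)=\div F\res\intee{E}{F}$ and the traces then pinned down via \eqref{Leibniz_infty_E_1_trace} and \eqref{Leibniz_infty_E_2_trace}. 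The only (immaterial) difference is that you invoke the $L^\infty$ weak$^*$ conclusion of Lemma~\ref{lemma:weak_conv_absolutely_continuous_perimeter_measures} directly, where the paper uses its measure-convergence form.
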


\begin{proof}
Thanks to \eqref{tilde_chi_E_div_no_redb_E} the set 
$$
\intee{E}{F}=\intE{E}\sm\redb E \subset \Omega \setminus \redb E
$$
satisfies the equality $\widetilde{\chi_{E}}=\chi_{\intee{E}{F}}$ $|\div F|$-a.e. in $\Omega\sm\redb E$
and it is uniquely determined up to $|\div F|$-negligible sets. 
By the tripartition stated in Proposition~\ref{tripartition_div_1}, we get 
\begin{equation} \label{eq:contr_argument_tildef_chi_E}
\tildef{\chi_{E}}(x) = \chi_{\intee{E}{F}}(x) + \chi_{\intE{E} \cap \redb E}(x) + \frac{1}{2} \chi_{\redB{E}}(x)\quad \text{ for} \ \ |\div F|\text{-a.e.} \ x \in \Omega,
\end{equation}
from which we get
\begin{equation*}
\intE{E} = \intee{E}{F} \cup (\intE{E} \cap \redb E) \ \text{and} \ \extE{E} = \extee{E}{F} \cup (\extE{E} \cap \redb E),
\end{equation*}
where $\extee{E}{F} = \Omega \setminus ( \intee{E}{F} \cup \redb E)$.
Let $E$ be a set of finite h-perimeter in $\Omega$. We notice that $\redb E$ is a Borel set, by definition, so that, if $F \in \DM^{\infty}(H \Omega)$, the measure $|\div F| \res \redb E$ is well defined.
Let $\eps_{k}$ be the fixed nonnegative vanishing sequence such that $\rho_{\eps_{k}} \ast \chi_{E} \weakstarto \tildef{\chi_{E}}$ in $L^{\infty}(\Omega; |\div F|)$. Then, we also have
\begin{equation} \label{eq:weak_star_conv_redb}
\rho_{\eps_{k}} \ast \chi_{E} \weakstarto \tildef{\chi_{E}} \ \ \text{in} \  L^{\infty}(\Omega; |\div F| \res \redb E).
\end{equation}
To see this, it is enough to multiply any test functions $\psi \in L^{1}(\Omega; |\div F|\res\redb E)$ with $\chi_{\redb E}$, getting a function in $L^{\infty}(\Omega; |\div F|)$.

Now, Theorem~\ref{absolute continuity} shows that $|\div F| \ll \SHaus{Q - 1}$, so that $$|\div F| \res \redb E \ll \SHaus{Q - 1} \res \redb E,$$ which, by Theorem~\ref{perimeter repr}, gives
\begin{equation} \label{eq:abs_cont_div_ref}
|\div F| \res \redb E \ll |D_{H} \chi_{E}|.
\end{equation}
Then, it is easy to see that \eqref{eq:abs_cont_div_ref} implies $|\div F| \res \redb E = \theta_{F, E} |D_{H} \chi_{E}|$, for some $\theta_{F, E} \in L^{1}(\Omega; |D_{H} \chi_{E}|)$, by Radon-Nikod\'ym theorem.
We recall that, by \eqref{limit1/2}, we have $$\rho_{\eps} \ast \chi_{E} \weakstarto 1/2$$ in $L^{\infty}(\Omega; |D_{H} \chi_{E}|)$.
Due to Lemma~\ref{lemma:weak_conv_absolutely_continuous_perimeter_measures}, it follows that
\begin{equation*}
(\rho_{\eps} \ast \chi_{E}) |\div F| \res \redb E \weakto \frac{1}{2} |\div F| \res \redb E.
\end{equation*}
On the other hand, \eqref{eq:weak_star_conv_redb} implies
\begin{equation*}
(\rho_{\eps_{k}} \ast \chi_{E}) |\div F| \res \redb E \weakto \tildef{\chi_{E}} |\div F| \res \redb E,
\end{equation*}
and so we conclude that any weak* limit point of $\{\rho_{\eps} \ast \chi_{E}\}_{\eps > 0}$ in $L^{\infty}(\Omega; |\div F|)$ must satisfy
$\tildef{\chi_{E}}(x) = \frac{1}{2}$ for $|\div F| \res \redb E$-a.e. $x \in \Omega$. 
Clearly, this means that  
\begin{equation} \label{eq:tildef_chi_E_redb}
\tildef{\chi_{E}}(x) = \frac{1}{2} \ \ \text{for} \ |\div F|\text{-a.e.} \ x \in \redb E.
\end{equation}
As an immediate consequence, we obtain
\begin{equation*}
|\div F|(\tildef{E^{1}} \cap \redb E) = 0 \ \ \text{and} \ \ |\div F|(\tildef{E^{0}} \cap \redb E) = 0,
\end{equation*}
which implies \eqref{uniqRedB}.
Thus, combining these results with \eqref{eq:contr_argument_tildef_chi_E}, we deduce that there exists a unique $|\div F|$-measurable set $\intee{E}{F} \subset \Omega \setminus \redb E$ such that \eqref{chiEtildeUniq} holds.
Hence, there exists a unique weak* limit $\tildef{\chi_{E}}$ of $\{ \rho_{\eps} \ast \chi_{E}\}_{\eps > 0}$ in $L^{\infty}(\Omega; |\div F|)$. Thanks to \eqref{Leibniz_infty_E_1_trace} and \eqref{Leibniz_infty_E_2_trace}, we obtain the uniqueness of the normal traces. Indeed, we have
\begin{align*} 
\div(\chi_{E} F) - \chi_{\intee{E}{F}} \div F & = - \Tr{F}{i}{E} |D_{H} \chi_{E}|,  \\
\div(\chi_{E} F) - \chi_{\intee{E}{F} \cup \redb E} \div F & = - \Tr{F}{e}{E} |D_{H} \chi_{E}|, 
\end{align*}
and the uniqueness of the terms on the left hand sides implies the uniqueness of those on the right hand sides.
\end{proof}

In view of the previous uniqueness result, we are in the position to
introduce the following definition.

\begin{definition}\label{def:measthinterior}
Let $E\subset\Omega$ be a set of finite h-perimeter and let 
$F\in \DM^{\infty}(H \Omega)$. We define 
the {\em measure theoretic interior of $E$ with respect to $F$} as $\intee{E}{F}\subset\Omega\sm\redb E$, such that 
\begin{equation} \label{defE1F}
\div(\chi_{E} F) \res (\Omega \setminus \redb E) = \chi_{\intee{E}{F}} \div F.
\end{equation}
Analogously, we define the {\em measure theoretic exterior of $E$ with respect to $F$} as a set $\extee{E}{F}\subset\Omega\sm\redb E$ such that  
\begin{equation} \label{defE0F}
\div(\chi_{\Omega\sm E} F) \res (\Omega \setminus \redb E) = \chi_{\extee{E}{F}} \div F.
\end{equation}
\begin{comment}
\begin{equation} \label{defE0F}
\div(\chi_{\Omega\sm E} F) \res (\Omega \setminus \redb E) =\widetilde{\chi_{\Omega\sm E}}\div F=(1-\widetilde{\chi_E})\div F\res (\Omega \setminus \redb E)= \chi_{\extee{E}{F}} \div F.
\end{equation}
\end{comment}
\end{definition}

\begin{remark}\label{r:E1F}
The existence of $\intee{E}{F}$, along with its uniqueness up to 
$|\div F|$-negligible sets, is a direct consequence of restricting
\eqref{Leibniz_infty_E_1_trace_ref} to $\Omega\sm \redb E$. Analogously, the existence and uniqueness up to $|\div F|$-negligible sets of $\extee{E}{F}$ follows from applying \eqref{Leibniz_infty_E_1_trace_ref} to $\Omega \setminus E$ and restricting it to $\Omega\sm \redb E$. In addition, we have $$|\div F| \left (\Omega \setminus ( \intee{E}{F} \cup \extee{E}{F} \cup \redb E) \right ) = 0,$$
since
\begin{align*}
\chi_{\extee{E}{F}} \div F & = \div ( \chi_{\Omega \setminus E} F) \res (\Omega \setminus \redb E) = \chi_{\Omega \setminus \redb E} \div F - \div (\chi_{E} F) \res (\Omega \setminus \redb E) \\
& = \left (\chi_{\Omega \setminus \redb E} - \chi_{\intee{E}{F}} \right ) \div F = \chi_{\Omega \setminus ( \intee{E}{F} \cup \redb E)} \div F,
\end{align*}
thanks to \eqref{defE1F}.
\end{remark}

\begin{remark} \label{rem:unique_pairing}
Theorem~\ref{Uniqueness_traces} shows that the
interior and exterior normal traces $\Tr{F}{i}{E}$ and $\Tr{F}{e}{E}$ are unique up to $|D_H\chi_E|$-negligible sets. As an immediate consequence of this fact, joined with \eqref{interior_normal_trace_def} and \eqref{exterior_normal_trace_def}, we see that also the pairings $(\chi_{E} F, D_{H} \chi_{E})$ and $(\chi_{\Omega \setminus E} F, D_{H} \chi_{E})$ are uniquely determined and do not depend on the approximating sequences $\ban{\chi_{E} F, \nabla_{H}(\rho_{\eps_{k}} \ast \chi_{E})} \mu$ and $\ban{\chi_{\Omega \setminus E} F, \nabla_{H}(\rho_{\eps_{k}} \ast \chi_{E})} \mu$.
In addition, \eqref{F_D_chi_E_pairing_eq} also shows that the pairing 
$(F,D_H\chi_E)$ is unique and independent from the choice of the approximating sequence.
\end{remark}

We conclude this section with the following easy refinement of \eqref{boundary_term_div_1_2}.

\begin{corollary} 
Let $F \in \DM^{\infty}(H \Omega)$ and $E$ be a set of finite h-perimeter. Then, we have
\begin{equation} \label{boundary_term_div_1_2_ref} \chi_{\redb{E}} \div F = (\Tr{F}{e}{E} - \Tr{F}{i}{E}) |D_{H} \chi_{E}|.
\end{equation}
\end{corollary}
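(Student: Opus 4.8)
The statement is essentially immediate once we combine Proposition~\ref{tilde_chi_E} with the uniqueness theorem. Recall that \eqref{boundary_term_div_1_2} already gives
\[
\chi_{\redB{E}}\,\div F=(\Tr{F}{e}{E}-\Tr{F}{i}{E})\,|D_{H}\chi_{E}|,
\]
where $\redB E=\redb E\sm(\intE E\cup\extE E)$ is the auxiliary reduced boundary of \eqref{eq:redBB}. The only discrepancy between \eqref{boundary_term_div_1_2} and the claimed \eqref{boundary_term_div_1_2_ref} is that the characteristic function on the left is $\chi_{\redB E}$ rather than $\chi_{\redb E}$. So the plan is simply to show that these two measures $\chi_{\redB E}\,\div F$ and $\chi_{\redb E}\,\div F$ coincide.

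\textbf{Key step.} Since $\redb E$ is a Borel set (Definition~\ref{d:RedBdry}) and $\redB E\subset\redb E$, the restrictions $\div F\res\redb E$ and $\div F\res\redB E$ are well defined signed Radon measures, and
\[
\chi_{\redb E}\,\div F=\chi_{\redB E}\,\div F+\chi_{\redb E\sm\redB E}\,\div F .
\]
By \eqref{uniqRedB} in Theorem~\ref{Uniqueness_traces} we have $|\div F|(\redb E\sm\redB E)=0$, hence the last summand is the zero measure, giving $\chi_{\redb E}\,\div F=\chi_{\redB E}\,\div F$. Plugging this into \eqref{boundary_term_div_1_2} yields \eqref{boundary_term_div_1_2_ref}. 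Note one may alternatively argue directly from the tripartition: by \eqref{eq:tildef_chi_E_redb} one has $\tildef{\chi_E}=1/2$ for $|\div F|$-a.e.\ $x\in\redb E$, so the ``interior'' and ``exterior'' parts $\tildef{E^1}\cap\redb E$ and $\tildef{E^0}\cap\redb E$ are $|\div F|$-negligible, which is exactly the content of \eqref{uniqRedB}.

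\textbf{Expected obstacle.} There is essentially no obstacle here: the corollary is a bookkeeping consequence of results already in place. The only point requiring a word of care is making sure the measures $\div F\res\redb E$ and $\div F\res\redB E$ are legitimate (i.e.\ that $\redB E$ is $|\div F|$-measurable), which follows since $\redb E$ is Borel and the negligibility of $\redb E\sm\redB E$ has been established. Everything else is the additivity of $\div F$ over the decomposition $\redb E=\redB E\cup(\redb E\sm\redB E)$ together with $|\div F|$-negligibility of the second piece.
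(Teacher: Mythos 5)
Your argument is correct: \eqref{boundary_term_div_1_2} gives the identity with $\chi_{\redB{E}}$ in place of $\chi_{\redb E}$, and \eqref{uniqRedB} shows $|\div F|(\redb E\sm\redB{E})=0$, so $\chi_{\redb E}\,\div F=\chi_{\redB{E}}\,\div F$ and the corollary follows; the measurability point you raise is handled exactly as you say, since $\redb E$ is Borel and $\intE{E},\extE{E}$ are $|\div F|$-measurable by construction. The route differs slightly from the paper's, which obtains \eqref{boundary_term_div_1_2_ref} in one line by subtracting \eqref{Leibniz_infty_E_1_trace_ref} from \eqref{Leibniz_infty_E_2_trace_ref} and using that $\intee{E}{F}\subset\Omega\sm\redb E$, so that $\chi_{\intee{E}{F}\cup\redb E}-\chi_{\intee{E}{F}}=\chi_{\redb E}$. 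Both proofs ultimately rest on Theorem~\ref{Uniqueness_traces}: you invoke its conclusion \eqref{uniqRedB} (equivalently \eqref{eq:tildef_chi_E_redb}) to upgrade the earlier Proposition~\ref{tilde_chi_E}, while the paper invokes its refined Leibniz rules, which already have \eqref{uniqRedB} built in. Your version makes explicit \emph{why} $\redB{E}$ can be replaced by $\redb E$, which is a useful remark; the paper's version is shorter and avoids mentioning the auxiliary set $\redB{E}$ altogether. One small point worth stating if you write this up: \eqref{boundary_term_div_1_2} was derived for traces associated with a particular mollifying subsequence, so you should note (as the paper does in Remark~\ref{rem:unique_pairing}) that by Theorem~\ref{Uniqueness_traces} these coincide with the unique normal traces appearing in the statement.
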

\begin{proof}
It is enough to subtract \eqref{Leibniz_infty_E_1_trace_ref} from \eqref{Leibniz_infty_E_2_trace_ref}.
\end{proof}

\section{Gauss--Green and integration by parts formulas}

This section is devoted to establish different Gauss--Green formulas
and integration by parts formula in stratified groups. 
Throughout we shall use the measure theoretic interior 
$\intee{E}{F}\subset\G$ introduced in Definition~\ref{def:measthinterior}. We start with a general version of the Gauss--Green formulas, which is a direct consequence of Theorem \ref{Uniqueness_traces}.

\begin{theorem}[Gauss--Green formula I]
Let $F \in \DM^{\infty}(H \Omega)$ and $E \Subset \Omega$ be a set of finite h-perimeter. Then, we have
\begin{align}
\label{G-G groups 1} \div F(\intee{E}{F}) & = \int_{\redb E} \Tr{F}{i}{E} \, d |D_{H} \chi_{E}|, \\
\label{G-G groups 2} \div F(\intee{E}{F}\cup \redb E) & = \int_{\redb E} \Tr{F}{e}{E} \, d |D_{H} \chi_{E}|.
\end{align}
\end{theorem}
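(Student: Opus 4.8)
The plan is to apply Theorem~\ref{Uniqueness_traces} directly and then evaluate the resulting measure identities on all of $\Omega$. Since $E \Subset \Omega$, the characteristic function $\chi_E$ has compact support in $\Omega$, and by Theorem~\ref{productrule} we have $\chi_E F \in \DM^{\infty}(H\Omega)$; hence $\chi_E F$ is a divergence-measure horizontal field with compact support in $\Omega$. Therefore Lemma~\ref{DMcomptsupp} applies and yields $\div(\chi_E F)(\Omega) = 0$.

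First I would recall the two Leibniz identities \eqref{Leibniz_infty_E_1_trace_ref} and \eqref{Leibniz_infty_E_2_trace_ref} from Theorem~\ref{Uniqueness_traces}, namely
\begin{align*}
\div(\chi_{E} F) & = \chi_{\intee{E}{F}} \div F - \Tr{F}{i}{E} |D_{H} \chi_{E}|,  \\
\div(\chi_{E} F) & = \chi_{\intee{E}{F} \cup \redb E} \div F - \Tr{F}{e}{E} |D_{H} \chi_{E}|,
\end{align*}
which hold as equalities in $\mathcal{M}(\Omega)$. Evaluating the first identity on the whole set $\Omega$ gives
\[
0 = \div(\chi_{E} F)(\Omega) = \div F(\intee{E}{F}) - \int_{\Omega} \Tr{F}{i}{E} \, d|D_{H}\chi_{E}|,
\]
and since $\Tr{F}{i}{E} \in L^{\infty}(\redb E; |D_H\chi_E|)$ — so the perimeter measure $|D_H\chi_E|$ is concentrated on $\redb E$ by the representation \eqref{repr_D_H_chi_E} — the integral over $\Omega$ equals the integral over $\redb E$, giving \eqref{G-G groups 1}. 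The same argument applied to the second identity yields \eqref{G-G groups 2}, using that $\chi_{\intee{E}{F} \cup \redb E} \div F$ evaluated on $\Omega$ is $\div F(\intee{E}{F} \cup \redb E)$.

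There is essentially no obstacle here; the only point requiring a word of care is the measurability and finiteness needed to split $\div(\chi_E F)(\Omega)$ into the two summands, but this is immediate since $\div F \in \mathcal{M}(\Omega)$, $\intee{E}{F}$ is $|\div F|$-measurable by Theorem~\ref{Uniqueness_traces}, and $\Tr{F}{i}{E}, \Tr{F}{e}{E}$ are bounded and the perimeter measure of $E$ is finite (as $E \Subset \Omega$). One should also note that the hypothesis $E \Subset \Omega$ is exactly what is needed to invoke Lemma~\ref{DMcomptsupp}; without it there would be boundary contributions at $\partial \Omega$. Thus the proof is a two-line deduction from Theorem~\ref{Uniqueness_traces}, Theorem~\ref{productrule}, and Lemma~\ref{DMcomptsupp}.
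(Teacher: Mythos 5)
Your proof is correct and follows exactly the paper's own argument: evaluate the two Leibniz identities \eqref{Leibniz_infty_E_1_trace_ref} and \eqref{Leibniz_infty_E_2_trace_ref} on $\Omega$, and use Theorem~\ref{productrule} together with Lemma~\ref{DMcomptsupp} to conclude $\div(\chi_E F)(\Omega)=0$. Your added remark that $|D_H\chi_E|$ is concentrated on $\redb E$ is a harmless justification of a step the paper leaves implicit.
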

\begin{proof}
If we evaluate \eqref{Leibniz_infty_E_1_trace_ref} and \eqref{Leibniz_infty_E_2_trace_ref} on $\Omega$, we obtain 
\begin{align*}
\div(\chi_{E} F)(\Omega) & = \div F(\intee{E}{F}) - \int_{\redb E} \Tr{F}{i}{E} \, d |D_{H} \chi_{E}|, \\
\div(\chi_{E} F)(\Omega) & = \div F(\intee{E}{F}\cup \redb E) - \int_{\redb E} \Tr{F}{e}{E} \, d |D_{H} \chi_{E}|.
\end{align*}
Then, we exploit the fact that $\chi_{E} F \in \DM^{\infty}(H \Omega)$, thanks to Theorem \ref{productrule}, and Lemma~\ref{DMcomptsupp} in order to conclude that $\div(\chi_{E} F)(\Omega) = 0$.
\end{proof}

We consider now some special cases in which the normal traces coincides; that is, in which there are no jumps along the reduced boundary of the integration domain. To this purpose, we give the following definition.

\begin{definition} \label{def:average_normal_trace}
Let $F \in \DM^{\infty}(H \Omega)$ and let $E \subset \Omega$ be a set of finite h-perimeter. We define the {\em average normal trace} $\Tr{F}{}{E}$ as the function in $L^{\infty}(\Omega; |D_{H} \chi_{E}|)$ satisfying
\begin{equation} \label{density_pairing_unique_trace} 
(F, D_{H} \chi_{E}) = - \Tr{F}{}{E} |D_{H} \chi_{E}|.
\end{equation}
\end{definition}

\begin{remark}
Thanks to \eqref{abs_cont_pairing_infty}, we have
\[
|(F, D_{H} \chi_{E})| \le \|F\|_{\infty, \Omega} |D_{H} \chi_{E}|,
\]
which implies the existence of $\Tr{F}{}{E} \in L^{\infty}(\Omega; |D_{H} \chi_{E}|)$ satisfying \eqref{density_pairing_unique_trace}, by Radon-Nikod\'ym theorem. In addition, \eqref{F_D_chi_E_pairing_eq} shows that
\begin{equation} \label{eq:average_normal_trace}
\Tr{F}{}{E} = \frac{\Tr{F}{i}{E} + \Tr{F}{e}{E}}{2}  \ \ |D_{H} \chi_{E}|\text{-a.e. in} \  \Omega. 
\end{equation}
\end{remark}

\begin{proposition} \label{G-G_groups_no_red_boundary}
Let $F \in \DM^{\infty}(H \Omega)$ and let $E \subset \Omega$ be a set of finite h-perimeter such that $|\div F|(\redb E) = 0$.
Then we have
\begin{equation} \label{trace_equality_abs_cont} 
\Tr{F}{i}{E} = \Tr{F}{e}{E} = \Tr{F}{}{E} \ \ |D_{H} \chi_{E}|\text{-a.e. in} \ \Omega. \end{equation}
As a consequence, we obtain 
\begin{equation} \label{productrulechi_no_red_boundary} 
\div(\chi_{E} F) = \chi_{\intee{E}{F}} \div F + (F, D_{H} \chi_{E}) = \chi_{\intee{E}{F}} \div F -  \Tr{F}{}{E} |D_{H} \chi_{E}|.
\end{equation}
\end{proposition}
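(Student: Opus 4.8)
The plan is to derive everything as a clean consequence of the uniqueness theorem, Theorem~\ref{Uniqueness_traces}, together with the hypothesis $|\div F|(\redb E)=0$. First I would invoke the refined jump identity \eqref{boundary_term_div_1_2_ref}, namely $\chi_{\redb E}\div F=(\Tr{F}{e}{E}-\Tr{F}{i}{E})|D_{H}\chi_{E}|$. Since $|\div F|(\redb E)=0$ by hypothesis, the left-hand side is the null measure, so $(\Tr{F}{e}{E}-\Tr{F}{i}{E})|D_{H}\chi_{E}|=0$ in $\mathcal{M}(\Omega)$. Because $\Tr{F}{i}{E}$ and $\Tr{F}{e}{E}$ are genuine functions in $L^{\infty}(\Omega;|D_{H}\chi_{E}|)$, this forces $\Tr{F}{i}{E}=\Tr{F}{e}{E}$ for $|D_{H}\chi_{E}|$-a.e.\ point; here it is worth recalling, via Remark~\ref{negl_abs_cont_red_boundary} (or Theorem~\ref{perimeter repr}), that $|D_{H}\chi_{E}|$ is concentrated on $\redb E$, so testing against $\chi_{\redb E}$ loses no information. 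Combining this equality with \eqref{eq:average_normal_trace}, which states $\Tr{F}{}{E}=(\Tr{F}{i}{E}+\Tr{F}{e}{E})/2$ $|D_{H}\chi_{E}|$-a.e., immediately yields \eqref{trace_equality_abs_cont}.

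For the Leibniz-type identity \eqref{productrulechi_no_red_boundary} I would start from \eqref{Leibniz_infty_E_1}, $\div(\chi_{E}F)=\tildef{\chi_{E}}\div F+(F,D_{H}\chi_{E})$, and use the tripartition \eqref{chiEtildeUniq}, $\tildef{\chi_{E}}=\chi_{\intee{E}{F}}+\tfrac12\chi_{\redb E}$ for $|\div F|$-a.e.\ $x\in\Omega$. Since $|\div F|(\redb E)=0$, the term $\tfrac12\chi_{\redb E}\div F$ vanishes as a measure, hence $\tildef{\chi_{E}}\div F=\chi_{\intee{E}{F}}\div F$ in $\mathcal{M}(\Omega)$, which gives the first equality $\div(\chi_{E}F)=\chi_{\intee{E}{F}}\div F+(F,D_{H}\chi_{E})$. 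The second equality is then just the definition of the average normal trace: by \eqref{density_pairing_unique_trace} (equivalently, by \eqref{F_D_chi_E_pairing_eq} combined with the already proved \eqref{trace_equality_abs_cont}) one has $(F,D_{H}\chi_{E})=-\Tr{F}{}{E}|D_{H}\chi_{E}|$, and substituting completes the argument. Alternatively, one may obtain the same conclusion directly from \eqref{Leibniz_infty_E_1_trace_ref} by replacing $\Tr{F}{i}{E}$ with $\Tr{F}{}{E}$.

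I do not expect a genuine obstacle here: the statement is essentially a corollary of Theorem~\ref{Uniqueness_traces} and of the structure of the pairing measure. The only point requiring a little care is justifying that the vanishing of $\chi_{\redb E}\div F$, combined with $|D_{H}\chi_{E}|=|D_{H}\chi_{E}|\res\redb E$, really pins down $\Tr{F}{i}{E}=\Tr{F}{e}{E}$ $|D_{H}\chi_{E}|$-a.e.\ on all of $\Omega$ rather than merely on $\redb E$ — which is immediate once one recalls that $|D_{H}\chi_{E}|$ charges no set disjoint from $\redb E$. Everything else is bookkeeping with the formulas already established in Section~\ref{sect:normaltraces}.
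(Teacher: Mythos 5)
Your proposal is correct and follows essentially the paper's own argument: the trace equality comes from \eqref{boundary_term_div_1_2_ref}, the hypothesis $|\div F|(\redb E)=0$, the concentration of $|D_{H}\chi_{E}|$ on $\redb E$, and \eqref{eq:average_normal_trace}, while \eqref{productrulechi_no_red_boundary} follows from the refined Leibniz rule together with \eqref{density_pairing_unique_trace}. Your unwinding via \eqref{Leibniz_infty_E_1} and the tripartition \eqref{chiEtildeUniq} is only a cosmetic variant of the paper's direct use of \eqref{Leibniz_infty_E_1_trace_ref}, which you yourself note as the alternative.
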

\begin{proof} 
Equality \eqref{trace_equality_abs_cont} is an immediate consequence of $|\div F|(\redb E) = 0$, \eqref{boundary_term_div_1_2_ref} and \eqref{eq:average_normal_trace}.
Then, by combining \eqref{Leibniz_infty_E_1_trace_ref}, \eqref{density_pairing_unique_trace} and \eqref{trace_equality_abs_cont} we obtain \eqref{productrulechi_no_red_boundary}.
\end{proof}

The previous result immediately gives a new version of the Gauss--Green formula without jumps on the reduced boundary of the domain.

\begin{theorem}[Gauss--Green formula II] \label{G-G groups general II} 
Let $F \in \DM^{\infty}(H \Omega)$ and let $E \Subset \Omega$
be a set of finite h-perimeter with $|\div F|(\redb E)=0$.
Then there exists a unique normal trace $\Tr{F}{}{E}  \in L^{\infty}(\Omega; |D_{H} \chi_{E}|)$ such that
\begin{equation} \label{G-G_no_red_boundary}
\div F(\intee{E}{F}) = \int_{\redb E} \Tr{F}{}{E} \, d |D_{H} \chi_{E}|.
\end{equation}
\end{theorem}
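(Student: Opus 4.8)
The plan is to derive Gauss--Green formula II directly from the general version, Theorem~\ref{Uniqueness_traces}, together with Proposition~\ref{G-G_groups_no_red_boundary}. First I would observe that the hypothesis $|\div F|(\redb E)=0$ puts us exactly in the setting of Proposition~\ref{G-G_groups_no_red_boundary}, so that \eqref{trace_equality_abs_cont} gives $\Tr{F}{i}{E}=\Tr{F}{e}{E}=\Tr{F}{}{E}$ as elements of $L^\infty(\Omega;|D_H\chi_E|)$; in particular the normal trace is unique, since both $\Tr{F}{i}{E}$ and $\Tr{F}{e}{E}$ are unique by Theorem~\ref{Uniqueness_traces} and the average normal trace $\Tr{F}{}{E}$ is determined by \eqref{density_pairing_unique_trace} via Radon--Nikod\'ym.

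Next I would invoke the already-established general Gauss--Green formula \eqref{G-G groups 1}, namely
\[
\div F(\intee{E}{F}) = \int_{\redb E} \Tr{F}{i}{E}\, d|D_H\chi_E|,
\]
and simply replace $\Tr{F}{i}{E}$ by $\Tr{F}{}{E}$ on the right-hand side, using \eqref{trace_equality_abs_cont}. This yields \eqref{G-G_no_red_boundary} immediately. Alternatively, and perhaps more self-containedly, one can start from \eqref{productrulechi_no_red_boundary}, evaluate both sides on $\Omega$, and use that $\chi_E F\in\DM^\infty(H\Omega)$ by Theorem~\ref{productrule} has compact support in $\Omega$ (since $E\Subset\Omega$), so that Lemma~\ref{DMcomptsupp} forces $\div(\chi_E F)(\Omega)=0$; this gives
\[
0 = \div F(\intee{E}{F}) - \int_{\redb E}\Tr{F}{}{E}\, d|D_H\chi_E|,
\]
which is precisely \eqref{G-G_no_red_boundary}.

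There is essentially no obstacle here: the theorem is a corollary packaging together Theorem~\ref{Uniqueness_traces}, Proposition~\ref{G-G_groups_no_red_boundary}, Theorem~\ref{productrule} and Lemma~\ref{DMcomptsupp}. The only minor point to check is that the compact support of $\chi_E F$ inside $\Omega$ really follows from $E\Subset\Omega$ and $F\in\DM^\infty(H\Omega)$ (so that $\overline{E}$ compact in $\Omega$ forces $\supp(\chi_E F)\subset\overline E\Subset\Omega$), and that the finiteness of $\div F$ guaranteed by $F\in\DM^\infty(H\Omega)$ makes the evaluation $\div F(\intee{E}{F})$ meaningful; both are routine given the preceding results. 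I would therefore keep the proof to a few lines, citing \eqref{trace_equality_abs_cont} for uniqueness of the trace and either \eqref{G-G groups 1} or the pair \eqref{productrulechi_no_red_boundary}--Lemma~\ref{DMcomptsupp} for the formula itself.
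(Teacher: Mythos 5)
Your proposal is correct and essentially coincides with the paper's proof: the ``alternative'' route you describe (existence and uniqueness of $\Tr{F}{}{E}$ from Proposition~\ref{G-G_groups_no_red_boundary}, then evaluating \eqref{productrulechi_no_red_boundary} on $\Omega$ and applying Lemma~\ref{DMcomptsupp} via $\chi_E F\in\DM^{\infty}(H\Omega)$ with compact support) is exactly the argument given there. Your first route through \eqref{G-G groups 1} is not genuinely different either, since that formula is itself obtained by the same evaluation-plus-Lemma~\ref{DMcomptsupp} device.
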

\begin{proof}
The existence of a unique normal trace $\Tr{F}{}{E}$ follows from Proposition \ref{G-G_groups_no_red_boundary}. Then, we evaluate \eqref{productrulechi_no_red_boundary} on $\Omega$ and apply Lemma~\ref{DMcomptsupp}, and thus we obtain \eqref{G-G_no_red_boundary}.
\end{proof}

We now prove that, in the case $F$ is continuous, the normal traces are equal and coincide with the scalar product in the horizontal section.

\begin{proposition} \label{F_continuous}
Let $F \in \DM^{\infty}(H \Omega) \cap C(H\Omega)$ and $E \subset \Omega$ be a set of finite h-perimeter. Then we have
\begin{equation} \label{F_continuous_trace} 
\Tr{F}{i}{E}(x) = \Tr{F}{e}{E}(x) = \ban{F(x), \nu_{E}(x)} \ \ \text{for} \ \ |D_{H} \chi_{E}|\text{-a.e.} \  x \in \redb E
\end{equation}
and in particular $|\div F|(\redb{E}) = 0$.
\end{proposition}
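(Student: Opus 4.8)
The plan is to show first that the pairing $(F,D_H\chi_E)$ has a continuous density with respect to $|D_H\chi_E|$, and then identify that density via a blow-up-free argument using the mollification. Concretely, I would start from the defining relations \eqref{interior_normal_trace_def} and \eqref{exterior_normal_trace_def} together with the approximating sequences \eqref{eq:chiEFconvol}. Since $F$ is continuous, for $\phi\in C_c(H\Omega)$ with support in $\Omega^\cR_{2\ep_k}$ we have
\[
\int_\Omega \phi\, \chi_E\, \ban{F,\rho_{\ep_k}\ast D_H\chi_E}\,dx
=\int_\Omega \ban{(\rho_{\ep_k}\ast(\phi\,\chi_E F))(y),\nu_E(y)}\,(-1)\,d|D_H\chi_E|(y),
\]
arguing as in Lemma~\ref{conv_meas_bounded_function}. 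Because $F$ is continuous and bounded, $\rho_{\ep_k}\ast(\phi\chi_E F)\to \tfrac12\phi F$ uniformly on compact sets of $\redb E$: here the factor $\tfrac12$ is exactly the content of \eqref{limit1/2} / Proposition~\ref{overline_chi_E}, applied to the mollification of $\chi_E$ against the perimeter measure (note $\rho_{\ep}\ast(\phi\chi_E F)(y)=\int \rho_\ep(yx^{-1})\phi(x)\chi_E(x)F(x)\,dx$ and the continuity of $\phi F$ lets us replace $F(x)$ by $F(y)$ in the limit, while $\rho_\ep\ast\chi_E\weakstarto\tfrac12$ in $L^\infty(\Omega;|D_H\chi_E|)$). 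This yields
\[
(\chi_E F,D_H\chi_E)=-\tfrac12\,\ban{F,\nu_E}\,|D_H\chi_E|,
\]
hence by \eqref{interior_normal_trace_def} we get $\Tr{F}{i}{E}=\ban{F,\nu_E}$ a.e.\ with respect to $|D_H\chi_E|$. The same computation with $\chi_{\Omega\sm E}$ in place of $\chi_E$ and using $\rho_\ep\ast\chi_{\Omega\sm E}\weakstarto\tfrac12$ gives $\Tr{F}{e}{E}=\ban{F,\nu_E}$, which is \eqref{F_continuous_trace}.

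For the second assertion, once \eqref{F_continuous_trace} holds we have $\Tr{F}{i}{E}=\Tr{F}{e}{E}$ $|D_H\chi_E|$-a.e.\ on $\redb E$. Then \eqref{boundary_term_div_1_2_ref} gives
\[
\chi_{\redb E}\,\div F=(\Tr{F}{e}{E}-\Tr{F}{i}{E})\,|D_H\chi_E|=0,
\]
so $|\div F|(\redb E)=0$, completing the proof.

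The main obstacle I anticipate is justifying the uniform convergence $\rho_{\ep_k}\ast(\phi\chi_E F)\to\tfrac12\phi F$ against the perimeter measure in a way that cleanly separates the "$\chi_E$ part" (which contributes the $\tfrac12$ via Proposition~\ref{overline_chi_E}) from the "$F$ part" (which is handled by continuity). The delicate point is that $\rho_\ep\ast\chi_E$ converges to $\tfrac12$ only weakly-$*$ in $L^\infty(\Omega;|D_H\chi_E|)$, not pointwise or uniformly, so one must couple this weak-$*$ convergence with the strong (uniform-on-compacts) convergence coming from the continuity of $F$; this is exactly the kind of product of a weakly convergent and a strongly convergent sequence handled by Remark~\ref{weak_conv_absolutely_continuous_perimeter_measures} and Lemma~\ref{lemma:weak_conv_absolutely_continuous_perimeter_measures}. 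Once that coupling is set up carefully, the rest is a routine density argument ($C^1_c(H\Omega)$ dense in $C_c(H\Omega)$) plus the already established absolute continuity $(F,D_H\chi_E)\ll|D_H\chi_E|$ from \eqref{abs_cont_pairing_infty}.
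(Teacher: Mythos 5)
Your proposal is correct and in substance follows the same strategy as the paper: both arguments rest on the observation that, since $F$ is continuous, $\phi F$ is an admissible continuous test field for the ``one half'' convergence originating in Proposition~\ref{overline_chi_E}, both identify the pairings $(\chi_E F,D_H\chi_E)$ and $(\chi_{\Omega\sm E}F,D_H\chi_E)$ with $-\tfrac12\ban{F,\nu_E}|D_H\chi_E|$ and read off the traces from \eqref{interior_normal_trace_def}--\eqref{exterior_normal_trace_def}, and both deduce $|\div F|(\redb E)=0$ from \eqref{boundary_term_div_1_2_ref}. The only real difference is how the key limit is justified: the paper simply tests Lemma~\ref{overline_D_chi_E_1_2_lemma} (i.e.\ \eqref{overline_D_chi_E_1} and \eqref{overline_D_chi_E_2}, which assert $\chi_E(\rho_\ep\ast D_H\chi_E)\mu\weakto\tfrac12 D_H\chi_E$ and the analogue for $\Omega\sm E$) against the field $\phi F\in C_c(H\Omega)$, whereas you re-derive exactly this tested statement by hand through the symmetry/Fubini swap of Lemma~\ref{conv_meas_bounded_function} combined with Proposition~\ref{overline_chi_E}. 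That derivation can be made rigorous, but your intermediate claim that $\rho_{\ep_k}\ast(\phi\chi_E F)\to\tfrac12\phi F$ \emph{uniformly on compact subsets of} $\redb E$ is false as stated: the factor $\tfrac12$ only appears in the weak$^*$ sense against $|D_H\chi_E|$, never pointwise or uniformly (precisely because no blow-up/rectifiability information is available). The correct mechanism, which you do sketch, is the decomposition $\rho_\ep\ast(\phi\chi_E F)(y)=(\rho_\ep\ast\chi_E)(y)\,\phi(y)F(y)+o(1)$ with $o(1)$ uniform in $y$ by the uniform continuity of $\phi F$ (the mollifier being supported in $B^\cR(y,\ep)$), after which \eqref{limit1/2} applied to the fixed density $\phi\ban{F,\nu_E}\in L^1(\Omega;|D_H\chi_E|)$ yields the limit; so there is no genuine gap, only a misphrased step. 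In short, you rebuild a special case of Lemma~\ref{overline_D_chi_E_1_2_lemma} instead of quoting it, which costs a few extra lines and buys nothing new; invoking that lemma directly, as the paper does, reduces the proof to a three-line argument.
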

\begin{proof} 
Let $\phi \in C_{c}(\Omega)$ and let $(\chi_{E} F, D_{H} \chi_{E})$
be as defined in \eqref{eq:chiEFconvol}, hence
\begin{equation*} 
\int_{\Omega} \phi \, d (\chi_{E} F, D_{H} \chi_{E}) = \lim_{\eps \to 0} \int_{\Omega} \ban{\phi F, \chi_{E} (\rho_{\eps} \ast D_{H} \chi_{E})} \, dx. \end{equation*}
We observe that $\phi F \in C_{c}(\Omega, H \Omega)$ and 
taking into account that $\nu_E$ is the measure theoretic exterior h-normal,
by \eqref{overline_D_chi_E_1} we obtain
\begin{equation*} \int_{\Omega} \phi \, d (\chi_{E} F, D_{H} \chi_{E}) = - \int_{\Omega} \frac{1}{2} \phi \ban{F, \nu_{E}} \, d |D_{H} \chi_{E}|. \end{equation*}
By definition of interior normal trace \eqref{interior_normal_trace_def}, we obtain that
\[
\Tr{F}{i}{E}(x) = \ban{F(x), \nu_{E}(x)} \quad \text{for $|D_{H} \chi_{E}|$-a.e.\ $x \in \Omega$,}
\]
which implies \eqref{F_continuous_trace} for the interior normal trace.
The identity for the exterior normal trace in \eqref{F_continuous_trace} can be proved in an analogous way, employing \eqref{overline_D_chi_E_2} and definition \eqref{exterior_normal_trace_def}.
Finally, in view of \eqref{boundary_term_div_1_2_ref}, we get $|\div F|(\redb{E}) = 0$.
\end{proof}

\begin{theorem}[Gauss--Green formula III] \label{G-G groups general III}
Let $F \in \DM^{\infty}(H \Omega) \cap C(H\Omega)$ and let $E \Subset \Omega$ be a set of finite h-perimeter. Then the following formula holds
\begin{equation}
\label{G-G groups continuous} 
\div F(\intee{E}{F}) = \int_{\redb E}  \ban{F,\nu_E} \, d |D_{H} \chi_{E}|. \end{equation}
\end{theorem}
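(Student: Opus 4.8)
The plan is to obtain \eqref{G-G groups continuous} as an immediate corollary of the continuity refinement of the normal traces in Proposition~\ref{F_continuous}, combined with one of the Gauss--Green formulas already proved in this section. First I would apply Proposition~\ref{F_continuous} to $F$: since $F\in\DM^{\infty}(H\Omega)\cap C(H\Omega)$, it gives both $|\div F|(\redb E)=0$ and the pointwise identification
\[
\Tr{F}{i}{E}(x)=\Tr{F}{e}{E}(x)=\ban{F(x),\nu_E(x)}\qquad\text{for }|D_{H}\chi_{E}|\text{-a.e. }x\in\redb E,
\]
which is exactly \eqref{F_continuous_trace}.

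Since $|\div F|(\redb E)=0$ and $E\Subset\Omega$, Theorem~\ref{G-G groups general II} applies and yields a unique average normal trace $\Tr{F}{}{E}\in L^{\infty}(\Omega;|D_{H}\chi_{E}|)$ with $\div F(\intee{E}{F})=\int_{\redb E}\Tr{F}{}{E}\,d|D_{H}\chi_{E}|$. By \eqref{eq:average_normal_trace} one has $\Tr{F}{}{E}=(\Tr{F}{i}{E}+\Tr{F}{e}{E})/2$ $|D_{H}\chi_{E}|$-a.e., so inserting \eqref{F_continuous_trace} gives $\Tr{F}{}{E}=\ban{F,\nu_E}$ $|D_{H}\chi_{E}|$-a.e.\ on $\redb E$; substituting this into the Gauss--Green formula of Theorem~\ref{G-G groups general II} produces precisely \eqref{G-G groups continuous}.

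An equivalent route I could take instead bypasses Theorem~\ref{G-G groups general II}: by Theorem~\ref{productrule} the field $\chi_{E}F$ belongs to $\DM^{\infty}(H\Omega)$, and it has compact support in $\Omega$ because $E\Subset\Omega$, so Lemma~\ref{DMcomptsupp} gives $\div(\chi_{E}F)(\Omega)=0$. Evaluating the refined Leibniz rule \eqref{Leibniz_infty_E_1_trace_ref} on all of $\Omega$ and plugging in $\Tr{F}{i}{E}=\ban{F,\nu_E}$ from Proposition~\ref{F_continuous} yields $0=\div F(\intee{E}{F})-\int_{\redb E}\ban{F,\nu_E}\,d|D_{H}\chi_{E}|$, which is the claim.

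There is essentially no serious obstacle here: the statement is a corollary of Proposition~\ref{F_continuous} together with the machinery of Section~\ref{sect:normaltraces} (the substantive work having gone into the intrinsic blow-up behaviour underlying Proposition~\ref{F_continuous}). The only points requiring a little care are that $E\Subset\Omega$ is exactly what makes $\chi_{E}F$ compactly supported so that Lemma~\ref{DMcomptsupp} is applicable, that the normal traces are unique by Theorem~\ref{Uniqueness_traces} so their identification with $\ban{F,\nu_E}$ is unambiguous, and that $\int_{\redb E}\ban{F,\nu_E}\,d|D_{H}\chi_{E}|$ is finite since $F$ is bounded and $|D_{H}\chi_{E}|(\Omega)<\infty$.
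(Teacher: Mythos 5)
Your first route is exactly the paper's proof: Proposition~\ref{F_continuous} supplies both $|\div F|(\redb E)=0$ and the identification of the (then unique) normal trace with $\ban{F,\nu_E}$, after which Theorem~\ref{G-G groups general II} (formula \eqref{G-G_no_red_boundary}) gives the conclusion since $E\Subset\Omega$. The alternative route via Lemma~\ref{DMcomptsupp} and \eqref{Leibniz_infty_E_1_trace_ref} is also sound, but it merely unwinds what Theorem~\ref{G-G groups general II} already encapsulates, so there is nothing substantively different from the paper's argument.
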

\begin{proof}
By Proposition~\ref{F_continuous}, we have a unique normal trace, that $|D_H\chi_E|$-a.e.\ in $\redb E$ equals the scalar product $\ban{F,\nu_E}$. In addition,  $|\div F|(\redb{E}) = 0$. Since $E \Subset \Omega$, we may apply \eqref{G-G_no_red_boundary} to conclude the proof.
\end{proof}

Next, we apply the Leibniz rule (Theorem \ref{productrule}) to derive integration by parts formulas.

\begin{theorem}[Integration by parts I] \label{IBP_general} Let $F \in \DM^{\infty}_{\rm loc}(H \Omega)$, $E$ be a set of locally finite h-perimeter in $\Omega$ and $\varphi \in C(\Omega)$ with $\nabla_{H} \varphi \in L^{1}_{\rm loc}(H \Omega)$ such that $$\mathrm{supp}(\varphi \chi_{E}) \Subset \Omega.$$ 
Then, there exist interior and exterior normal traces $$\Tr{F}{i}{E}, \Tr{F}{e}{E} \in L^{\infty}_{\rm loc}(\Omega; |D_{H} \chi_{E}|)$$ such that the following formulas hold
\begin{align} \label{IBP_1} \int_{\intee{E}{F}} \varphi \, d \div F + \int_{E} \ban{F, \nabla_{H} \varphi} \, dx & = \int_{\redb E} \varphi \Tr{F}{i}{E} \, d |D_{H} \chi_{E}|, \\
\label{IBP_2} \int_{\intee{E}{F} \cup \redb E} \varphi \, d \div F + \int_{E} \ban{F, \nabla_{H} \varphi} \, dx & = \int_{\redb E} \varphi \Tr{F}{e}{E} \, d |D_{H} \chi_{E}|. \end{align}
In addition, for any open set $U \Subset \Omega$, we have the following estimates
\begin{align} \label{interior_normal_trace_norm_loc} \|\Tr{F}{i}{E}\|_{L^{\infty}(\redb E \cap U; |D_{H} \chi_{E}|)} & \le \| F\|_{\infty, E \cap U}, \\
 \label{exterior_normal_trace_norm_loc} \|\Tr{F}{e}{E}\|_{L^{\infty}(\redb E \cap U; |D_{H} \chi_{E}|)} & \le \| F\|_{\infty, U \setminus E}. \end{align}
\end{theorem}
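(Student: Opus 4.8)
The plan is to reduce the local statement (Theorem~\ref{IBP_general}) to the global one already proved in Theorem~\ref{Uniqueness_traces}, via a cut-off and exhaustion argument. First I would fix an open set $U\Subset\Omega$ containing $\mathrm{supp}(\varphi\chi_E)$, and choose a cut-off $\xi\in C^\infty_c(\Omega)$ with $0\le\xi\le1$ and $\xi\equiv1$ on a neighborhood of $\overline U$. Since $F\in\DM^\infty_{\rm loc}(H\Omega)$, we have $F\in\DM^\infty(HW)$ for every $W\Subset\Omega$; picking $W$ with $\mathrm{supp}\,\xi\subset W\Subset\Omega$, Theorem~\ref{productrule} applied in $W$ gives $\xi F\in\DM^\infty(HW)$, and by Remark~\ref{div comp supp extension} its trivial extension $\widehat{\xi F}$ belongs to $\DM^\infty(H\G)$. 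Thus we may work with a globally defined essentially bounded divergence-measure field that agrees with $F$ on $U$.

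Next I would apply Theorem~\ref{Uniqueness_traces} to $\widehat{\xi F}$ (on $\G$, or on $W$) and the set $E\cap W$ of finite h-perimeter, obtaining the Leibniz rules \eqref{Leibniz_infty_E_1_trace_ref} and \eqref{Leibniz_infty_E_2_trace_ref}, together with normal traces $(\mathcal F_i,\nu_E)$, $(\mathcal F_e,\nu_E)\in L^\infty(\redb E;|D_H\chi_E|)$ defined on $W$. Covering $\Omega$ by an increasing sequence of such open sets $U_j\Subset\Omega$, the locality statement of Theorem~\ref{normal_trace_locality_theorem} (applied with $E_1=E_2=E$ but different cut-offs) shows that the traces built on $U_j$ and $U_{j+1}$ agree $|D_H\chi_E|$-a.e.\ on $\redb E\cap U_j$, so they glue to well-defined elements of $L^\infty_{\rm loc}(\Omega;|D_H\chi_E|)$; one also uses here that the pairings $(\chi_E F,D_H\chi_E)$ are intrinsic (Remark~\ref{rem:unique_pairing}) and the restriction formula \eqref{defE1F} defining $\intee{E}{F}$, which does not depend on the choice of $W$. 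The estimates \eqref{interior_normal_trace_norm_loc} and \eqref{exterior_normal_trace_norm_loc} then follow from the global bounds of Proposition~\ref{boundary_term_divergence1} applied to $\widehat{\xi F}$ on each $U$, noting $\|\widehat{\xi F}\|_{\infty,E\cap U}\le\|F\|_{\infty,E\cap U}$ and similarly for the complement.

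To obtain \eqref{IBP_1}, I would test \eqref{Leibniz_infty_E_1_trace_ref} (for $\xi F$) against $\varphi$: since $\mathrm{supp}(\varphi\chi_E)\Subset\Omega$, the measure $\varphi\,\div(\chi_E\,\xi F)$ is compactly supported, and because $\xi\equiv1$ on $\mathrm{supp}(\varphi\chi_E)$ we have $\chi_E\,\xi F=\chi_E F$ there and $\div(\chi_E\xi F)=\div(\chi_E F)$ as measures on a neighborhood of that support. Expanding $\int\varphi\,d\div(\chi_E F)$ using the distributional definition of divergence extended to $\varphi$ (Remark~\ref{Lip test}, noting $\varphi\chi_E$ has compact support so $\varphi$ may be multiplied by a suitable cutoff equal to $1$ near $\mathrm{supp}(\varphi\chi_E)$) yields $-\int_E\ban{F,\nabla_H\varphi}\,dx$, while the right-hand side of \eqref{Leibniz_infty_E_1_trace_ref} integrates against $\varphi$ to $\int_{\intee{E}{F}}\varphi\,d\div F-\int_{\redb E}\varphi\,\Tr{F}{i}{E}\,d|D_H\chi_E|$. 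Rearranging gives \eqref{IBP_1}; \eqref{IBP_2} is identical starting from \eqref{Leibniz_infty_E_2_trace_ref}. The main obstacle I anticipate is the bookkeeping in the gluing step: one must check that $\intee{E}{F}$, the normal traces, and the pairing measures constructed locally on different $U_j$ are genuinely consistent, which rests on the intrinsic (cutoff-independent) characterizations \eqref{defE1F} and Remark~\ref{rem:unique_pairing} together with the locality Theorem~\ref{normal_trace_locality_theorem}; the integration-by-parts manipulation itself is then routine once the compact-support reduction is in place.
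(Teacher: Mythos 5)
Your overall strategy is the paper's: localize to an open set $U\Subset\Omega$ containing $\mathrm{supp}(\varphi\chi_{E})$, invoke Theorem~\ref{Uniqueness_traces} there to obtain the refined Leibniz rules \eqref{Leibniz_infty_E_1_trace_ref}--\eqref{Leibniz_infty_E_2_trace_ref}, integrate against $\varphi$, and get the trace bounds from \eqref{interior_normal_trace_norm}--\eqref{exterior_normal_trace_norm} by restriction. The one genuine gap is the step where you expand $\int\varphi\,d\,\div(\chi_{E}F)$ ``using the distributional definition of divergence extended to $\varphi$ (Remark~\ref{Lip test})''. Remark~\ref{Lip test} (through Theorem~\ref{approx Lip function}) only enlarges the admissible test class to $\Lip_{H,c}(\Omega)$, whereas your $\varphi$ is merely continuous with $\nabla_{H}\varphi\in L^{1}_{\rm loc}(H\Omega)$, so even after multiplying by a smooth cutoff it is not covered by that remark; testing a divergence-measure field against such a weak $\varphi$ requires an extra approximation argument in which the essential boundedness of the field must enter. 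The paper closes exactly this point by applying the Leibniz rule \eqref{product rule eq p} of Proposition~\ref{tilde_g_characterization} to the pair $\varphi$ and $\chi_{E}F\in\DM^{\infty}(HU)$ (legitimate since $\varphi\in C(U)\cap L^{\infty}(U)$ and $\nabla_{H}\varphi\in L^{1}(HU)$), i.e.\ $\div(\varphi\,\chi_{E}F)=\varphi\,\div(\chi_{E}F)+\ban{\chi_{E}F,\nabla_{H}\varphi}\mu$ on $U$, and then evaluating on $U$ via Lemma~\ref{DMcomptsupp}. Either cite this (or carry out the corresponding mollification of $\varphi$ explicitly); as written, the reference to Remark~\ref{Lip test} does not justify the identity $\int\varphi\,d\,\div(\chi_{E}F)=-\int_{E}\ban{F,\nabla_{H}\varphi}\,dx$.

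Two secondary remarks. First, the cutoff-extension-and-gluing machinery is unnecessary: since $U\Subset\Omega$, one has directly $F\in\DM^{\infty}(HU)$ and $E$ of finite h-perimeter in $U$, so one can work with the restriction as the paper does. Moreover Theorem~\ref{normal_trace_locality_theorem} is not quite the right tool for your gluing step, since it compares two sets of finite h-perimeter for the same field, not one set against two truncations $\xi_{1}F$, $\xi_{2}F$ of the field; the compatibility of the locally constructed traces follows instead from the uniqueness part of Theorem~\ref{Uniqueness_traces} (equivalently Remark~\ref{rem:unique_pairing}) applied on nested open sets, which is what yields well-defined objects of $L^{\infty}_{\rm loc}(\Omega;|D_{H}\chi_{E}|)$. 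Second, if you insist on the cutoff route, the estimate \eqref{interior_normal_trace_norm_loc} localized to $U$ requires applying Proposition~\ref{boundary_term_divergence1} with the ambient open set equal to $U$ itself; applying it to $\widehat{\xi F}$ on the larger set $W$ only gives the bound $\|F\|_{\infty,E\cap W}$, not $\|F\|_{\infty,E\cap U}$.
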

\begin{proof} Let $U \Subset \Omega$ be an open set such that $\mathrm{supp}(\varphi \chi_{E}) \subset U$. Then, we clearly have $F \in \DM^{\infty}(H U)$, $\chi_{E} \in BV(U)$ and $\varphi \in C(U) \cap L^{\infty}(U)$ with $\nabla_{H} \varphi \in L^{1}(H U)$. 
Hence, Theorem~\ref{productrule} implies that $\chi_{E} F \in \DM^{\infty}(H U)$ and so we can apply \eqref{product rule eq p} to $\varphi$ and $\chi_{E} F$, thus obtaining
\begin{equation} \label{productrule_phi} \div(\varphi \chi_{E} F) = \varphi \div(\chi_{E} F) + \ban{\chi_{E} F, \nabla_{H} \varphi} \mu \end{equation}
in the sense of Radon measures on $U$. Now, Theorem \ref{Uniqueness_traces} implies the existence of interior and exterior normal traces $\Tr{F}{i}{E}, \Tr{F}{e}{E}$ in $L^{\infty}(U; |D_{H} \chi_{E}|)$, and, by \eqref{Leibniz_infty_E_1_trace_ref} and \eqref{Leibniz_infty_E_2_trace_ref}, we get
\begin{align} \label{productrule_phi_1} \div(\varphi \chi_{E} F) & = \chi_{\intee{E}{F}} \varphi \div F - \varphi \Tr{F}{i}{E} |D_{H} \chi_{E}| + \chi_{E} \ban{F, \nabla_{H}\varphi} \mu, \\
\label{productrule_phi_2} \div(\varphi \chi_{E} F) & = \chi_{\intee{E}{F} \cup \redb E} \varphi \div F - \varphi \Tr{F}{e}{E} |D_{H} \chi_{E}| + \chi_{E} \ban{F, \nabla_{H}\varphi} \mu. \end{align}
Finally, we evaluate \eqref{productrule_phi_1} and \eqref{productrule_phi_2} on $U$, and we employ Lemma \ref{DMcomptsupp} and the assumption that $\mathrm{supp}(\varphi \chi_{E}) \subset U$, so that \eqref{IBP_1} and \eqref{IBP_2} immediately follow.
The estimates \eqref{interior_normal_trace_norm_loc} and \eqref{exterior_normal_trace_norm_loc} follow from the restriction of $F$ and $\chi_{E}$ to $U$ and from \eqref{interior_normal_trace_norm} and \eqref{exterior_normal_trace_norm}.
\end{proof}

\begin{remark} We notice that the local statement of Theorem \ref{IBP_general} shows that the field $F$ needs not be essentially bounded on the whole set $\Omega$, but only on an arbitrarily small neighborhood of $\redb E$. In particular, let $\eps > 0$ and $E \Subset \Omega$. We define
\begin{align*} E_{\eps} & := \{ x \in E : \mathrm{dist}(x, \redb E) < \eps \}, \\
E^{\eps} & := \{ x \in \Omega \setminus E : \mathrm{dist}(x, \redb E) < \eps \}. \end{align*} 
Then, from \eqref{interior_normal_trace_norm_loc} and \eqref{exterior_normal_trace_norm_loc} one can deduce that we have
\begin{align*} \|\Tr{F}{i}{E}\|_{L^{\infty}(\redb E; |D_{H} \chi_{E}|)} & \le \inf_{\eps > 0} \| F\|_{\infty, E_{\eps}}, \\
\|\Tr{F}{e}{E}\|_{L^{\infty}(\redb E; |D_{H} \chi_{E}|)} & \le \inf_{\eps > 0} \| F\|_{\infty, E^{\eps}}. \end{align*}
Indeed, it is enough to take the open set $U = E_{\eps} \cup E^{\eps} = \{ x \in \Omega : \mathrm{dist}(x, \redb E) < \eps \}$ for some $\eps > 0$ such that $U \Subset \Omega$, and then to pass to the infimum in $\eps$.
\end{remark}

As an application of the integration by parts formulas, one can generalize the classical Euclidean Green's identities to $C^1_{H}(\Omega)$ functions whose horizontal gradients are in $\DM^{\infty}_{\rm loc}(H \Omega)$.

In the spirit of Definition \ref{d:divergence_distrib}, we can define the distributional sub-Laplacian of a locally summable function $u : \Omega \to \R$ with horizontal gradient satisfying $\nabla_{H} u \in L^{1}_{\rm loc}(H \Omega)$ as the distribution
\begin{equation} \label{distributional_sub_laplacian}
C_c^\infty(\Omega)\ni \phi\mapsto  - \int_{\Omega} \ban{\nabla_{H} u, \nabla_H \phi} \, dx.
\end{equation}
We shall denote the distributional sub-Laplacian of $u$ by $\Delta_{H} u$ and, with a little abuse of notation, we shall use the same symbol to denote also the measurable function defining the distribution, whenever it exists. Arguing as in the paragraph after Remark \ref{Lip test}, one can show that, if $u \in C^{2}_{H}(\Omega)$, then its distributional sub-Laplacian coincides with the pointwise sub-Laplacian, and so we can write
\begin{equation*} \Delta_{H} u = \sum_{j = 1}^{\m} X_{j}^{2} u. \end{equation*}

\begin{theorem}[Green's identities I] \label{Green_identity}  Let $u \in C^1_{H}(\Omega)$ satisfy $\Delta_{H} u \in \mathcal{M}_{\rm loc}(\Omega)$ and let $E \subset \Omega$ be a set of locally finite h-perimeter in $\Omega$. Then for each $v \in C_{c}(\Omega)$ with $\nabla_{H} v \in L^{1}(H \Omega)$, one has
\begin{equation} \label{GFI}
\int_{E^{1}_{u}} v \, d\Delta_{H} u  = \int_{\redb E} v \, \ban{\nabla_{H} u, \nu_{E}} \, d |D_{H} \chi_{E}| - \int_{E} \ban{\nabla_{H} v, \nabla_{H} u} \, dx, 
\end{equation}
where $E^{1}_{u} := \intee{E}{\nabla_{H} u}$ is the measure theoretic interior of $E$ with respect to $\nabla_Hu$.
If $u, v \in C^1_{H, c}(\Omega)$ also satisfy $\Delta_{H} u, \Delta_{H} v \in \mathcal{M}(\Omega)$, then we have
\begin{equation} \label{GSI}
\int_{E^{1}_{u}} v \, d\Delta_{H} u  - \int_{E^{1}_{v}} u \, d\Delta_{H} v  = \int_{\redb E} \ban{v \nabla_{H} u - u \nabla_{H} v, \nu_{E}} \, d |D_{H} \chi_{E}|, \end{equation}
where $E^{1}_{v} := \intee{E}{\nabla_{H} v}$, analogously as with $u$. If $E \Subset \Omega$, one can drop the assumption that $u$ and $v$ have compact support in $\Omega$.
\end{theorem}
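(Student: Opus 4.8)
The plan is to apply the integration by parts formula \eqref{IBP_1} of Theorem~\ref{IBP_general} with the horizontal field $F=\nabla_H u$ and the test function $v$, exploiting the hypotheses to verify the assumptions of that theorem. First I would observe that $u\in C^1_H(\Omega)$ with $\Delta_H u\in\mathcal M_{\rm loc}(\Omega)$ means precisely that $\nabla_H u\in C(H\Omega)\subset L^\infty_{\rm loc}(H\Omega)$ and $\div(\nabla_H u)=\Delta_H u$ is a local Radon measure, i.e.\ $F:=\nabla_H u\in\DM^\infty_{\rm loc}(H\Omega)$; here I would note that continuity of $F$ lets us also invoke Proposition~\ref{F_continuous}, so that the interior normal trace equals $\ban{F,\nu_E}=\ban{\nabla_H u,\nu_E}$ $|D_H\chi_E|$-a.e.\ on $\redb E$, and moreover $|\div F|(\redb E)=0$. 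Since $E$ is a set of locally finite h-perimeter and $v\in C_c(\Omega)$ with $\nabla_H v\in L^1(H\Omega)$, the support condition $\supp(v\chi_E)\Subset\Omega$ holds because $\supp v$ is compact in $\Omega$. Thus \eqref{IBP_1} applies verbatim and reads
\[
\int_{\intee{E}{\nabla_H u}} v\, d\Delta_H u+\int_E\ban{\nabla_H u,\nabla_H v}\,dx=\int_{\redb E}v\,\ban{\nabla_H u,\nu_E}\,d|D_H\chi_E|,
\]
which is exactly \eqref{GFI} with $E^1_u=\intee{E}{\nabla_H u}$, after using $\ban{\nabla_H v,\nabla_H u}=\ban{\nabla_H u,\nabla_H v}$.

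For the second identity \eqref{GSI}, I would apply \eqref{GFI} twice: once with the pair $(u,v)$ giving
\[
\int_{E^1_u}v\,d\Delta_H u=\int_{\redb E}v\,\ban{\nabla_H u,\nu_E}\,d|D_H\chi_E|-\int_E\ban{\nabla_H v,\nabla_H u}\,dx,
\]
and once with the roles of $u$ and $v$ exchanged (legitimate since now $u,v\in C^1_{H,c}(\Omega)$ and $\Delta_H u,\Delta_H v\in\mathcal M(\Omega)$, so both serve as test functions of the required class), giving
\[
\int_{E^1_v}u\,d\Delta_H v=\int_{\redb E}u\,\ban{\nabla_H v,\nu_E}\,d|D_H\chi_E|-\int_E\ban{\nabla_H u,\nabla_H v}\,dx.
\]
Subtracting the second from the first, the symmetric volume integrals $\int_E\ban{\nabla_H u,\nabla_H v}\,dx$ cancel, and the boundary integrals combine by linearity into $\int_{\redb E}\ban{v\nabla_H u-u\nabla_H v,\nu_E}\,d|D_H\chi_E|$, which is \eqref{GSI}.

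Finally, for the case $E\Subset\Omega$ I would remove the compact-support assumptions on $u$ and $v$ by a standard cutoff: choose $\xi\in C^\infty_c(\Omega)$ with $\xi\equiv1$ on a neighborhood of $\overline E$, and apply the already-proved formulas to $\xi u$ and $\xi v$ (or to $v$ replaced by $\xi v$ in the first identity); since all the measures and integrals in \eqref{GFI}--\eqref{GSI} are supported in $\overline E\subset\{\xi=1\}$, replacing $u,v$ by $\xi u,\xi v$ changes nothing, and the cutoff functions do lie in $C^1_{H,c}(\Omega)$ with sub-Laplacians in $\mathcal M(\Omega)$ on that neighborhood. The only mildly delicate point — and the main thing to check carefully — is the identification of the interior normal trace of the continuous field $\nabla_H u$ with $\ban{\nabla_H u,\nu_E}$ and the vanishing $|\Delta_H u|(\redb E)=0$; both follow from Proposition~\ref{F_continuous} applied to $F=\nabla_H u$, so no new argument is needed, but it is the step that makes the right-hand sides of \eqref{GFI} and \eqref{GSI} take the clean stated form rather than involving the abstract normal traces $\Tr{\nabla_H u}{i}{E}$.
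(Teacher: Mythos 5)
Your proposal is correct and follows essentially the same route as the paper: apply the integration by parts formula \eqref{IBP_1} of Theorem~\ref{IBP_general} to $F=\nabla_H u$ and the test function $v$, identify $\Tr{\nabla_H u}{i}{E}$ with $\ban{\nabla_H u,\nu_E}$ and get $|\Delta_H u|(\redb E)=0$ from Proposition~\ref{F_continuous} (the paper makes the localization to an open set $U\Supset\supp(v\chi_E)$ explicit before invoking that proposition, which you should do as well since $E$ is only of locally finite h-perimeter), and obtain \eqref{GSI} by interchanging $u$ and $v$ and subtracting.
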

\begin{proof} Arguing as in the proof of Theorem \ref{IBP_general}, we can localize to an open set $U \Subset \Omega$ such that $\mathrm{supp}(v \chi_{E}) \subset U$. Then, we notice that, since $u \in C^{1}_{H}(U)$ and $\Delta_{H} u \in \mathcal{M}(U)$, then ${\nabla_{H} u \in \DM^{\infty}(H U) \cap C(HU)}$. Thus, since $E$ is a set of finite h-perimeter in $U$, the normal traces of $\nabla_{H} u$ on $\redb E \cap U$ coincide with $\ban{\nabla_{H} u(x), \nu_{E}(x)}$ for $|D_{H} \chi_{E}|$-a.e.\ $x \in U$, by Proposition~\ref{F_continuous}. 
In addition, Proposition~\ref{F_continuous} gives $|\Delta_{H} u|(\redb E) = 0$, and so \eqref{IBP_1} implies \eqref{GFI}, if we set $\intee{E}{\nabla_{H} u} =: E^{1}_{u}$.

If now $u, v \in C^1_{H, c}(\Omega)$ and satisfy $\Delta_{H} u, \Delta_{H} v \in \mathcal{M}(\Omega)$, one also has \eqref{GFI} with the roles of $u$ and $v$ interchanged, and thus with a set $E^{1}_{v}$ uniquely determined by $\nabla_{H} v$, instead. Subtracting these two expressions leads to \eqref{GSI}.
If $E \Subset \Omega$, then the assumption on the compact support of $u$ and $v$ are not anymore needed.
\end{proof}

\section{Absolutely continuous divergence-measure horizontal fields}

This section is devoted to some first applications of our previous results, which cover the case of $F \in L^{\infty}(H \Omega)$ with $\div F \in L^{1}(\Omega)$, and consequently the cases $F \in W^{1, 1}(H \Omega)$ and $F \in \Lip_{H, c}(H \Omega)$. 

\begin{theorem} \label{divF_abs_continuous} If $F \in \DM^{\infty}(H \Omega)$ and $|\div F| \ll \mu$, then for any set of finite h-perimeter $E \subset \Omega$, we have 
\begin{equation} \label{productrulechi_abs_cont} \div(\chi_{E} F) = \chi_{E} \div F + (F, D_{H} \chi_{E}) \end{equation}
in the sense of Radon measures on $\Omega$. Therefore, we also obtain \eqref{trace_equality_abs_cont},
\begin{equation} \label{eq:E_1_F_repr_abs_cont}
|\div F|(\intee{E}{F} \Delta E) = 0,
\end{equation} 
and
\begin{equation} \label{productrulechi_abs_cont_trace} \div(\chi_{E} F) = \chi_{E} \div F - \Tr{F}{}{E} |D_{H} \chi_{E}|. \end{equation} 
\end{theorem}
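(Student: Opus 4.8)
The plan is to specialize the general Leibniz rule of Theorem~\ref{productrule}, together with the uniqueness results of Theorem~\ref{Uniqueness_traces}, to the present situation where $|\div F|\ll\mu$. First I would observe that, since $|\div F|\ll\mu$, the singular part $\divFs$ vanishes identically, so the hypothesis of Corollary~\ref{productrulecor} is trivially satisfied and formula \eqref{product rule eq Leb} applies with $g=\chi_E$. But with $g=\chi_E$ we have $\divFa=\div F$ (as a measure, $\div F=\div F\,\mu$ with density in $L^1$) and $\ban{F,\nb{\chi_E}}=0$ $\mu$-a.e., because $\nb{\chi_E}=0$ $\mu$-a.e.\ (the approximate differential of a characteristic function vanishes outside a $\mu$-null set). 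Hence \eqref{product rule eq Leb} reduces to $\div(\chi_E F)=\chi_E\,\div F+(F,D_H^{\rm s}\chi_E)$. Since $D_H\chi_E$ is purely singular with respect to $\mu$ (as $\mu(\redb E)=0$ by Remark~\ref{negl_abs_cont_red_boundary} and $|D_H\chi_E|$ is concentrated on $\redb E$), we have $D_H^{\rm s}\chi_E=D_H\chi_E$ and therefore $(F,D_H^{\rm s}\chi_E)=(F,D_H\chi_E)$, which gives \eqref{productrulechi_abs_cont}.

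Next I would compare \eqref{productrulechi_abs_cont} with the general identity \eqref{Leibniz_infty_E_1} of Proposition~\ref{boundary_term_divergence}, namely $\div(\chi_E F)=\tildef{\chi_E}\,\div F+(F,D_H\chi_E)$. Subtracting the two gives $(\chi_E-\tildef{\chi_E})\,\div F=0$ as a measure, i.e.\ $\tildef{\chi_E}(x)=\chi_E(x)$ for $|\div F|$-a.e.\ $x\in\Omega$. In particular $\tildef{\chi_E}$ takes only the values $0$ and $1$ $|\div F|$-a.e., which forces $|\div F|(\redB E)=0$ by Proposition~\ref{tilde_chi_E} (where $\tildef{\chi_E}=1/2$ on $\redB E$), hence also $|\div F|(\redb E)=0$ by \eqref{uniqRedB}. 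Then from \eqref{chiEtildeUniq} we read off $\chi_{\intee{E}{F}}=\tildef{\chi_E}=\chi_E$ outside $\redb E$, and since $|\div F|(\redb E)=0$ the symmetric difference $\intee{E}{F}\Delta E$ is contained (up to a $|\div F|$-null set) in $\redb E$, yielding \eqref{eq:E_1_F_repr_abs_cont}.

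Finally, I would deduce \eqref{trace_equality_abs_cont} from Proposition~\ref{G-G_groups_no_red_boundary}, whose hypothesis $|\div F|(\redb E)=0$ we have just verified; this also produces the average normal trace $\Tr{F}{}{E}\in L^\infty(\Omega;|D_H\chi_E|)$ with $(F,D_H\chi_E)=-\Tr{F}{}{E}|D_H\chi_E|$. Substituting this and \eqref{eq:E_1_F_repr_abs_cont} into \eqref{productrulechi_abs_cont} gives \eqref{productrulechi_abs_cont_trace}: indeed $\chi_{\intee{E}{F}}\div F=\chi_E\div F$ because the two sets differ by a $|\div F|$-null set, so $\div(\chi_E F)=\chi_E\div F-\Tr{F}{}{E}|D_H\chi_E|$.

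I do not expect any serious obstacle here: the statement is essentially a corollary assembled from Theorem~\ref{productrule}, Corollary~\ref{productrulecor}, Proposition~\ref{boundary_term_divergence}, Proposition~\ref{tilde_chi_E}, Theorem~\ref{Uniqueness_traces} and Proposition~\ref{G-G_groups_no_red_boundary}. The only point requiring a little care is the bookkeeping of the singular/absolutely continuous decomposition --- making sure that $D_H\chi_E$ is purely singular so that $(F,D_H\chi_E)=(F,D_H^{\rm s}\chi_E)$, and that replacing $\intee{E}{F}$ by $E$ inside $\chi_{\intee{E}{F}}\div F$ is legitimate precisely because $|\div F|(\redb E)=0$. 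An alternative, slightly more direct route to \eqref{productrulechi_abs_cont} is to rerun the proof of Proposition~\ref{boundary_term_divergence} observing that $\tildef{\chi_E}\,\div F=\chi_E\,\div F$ already at the level of \eqref{Leibniz_infty_E_1}, using that $\rho_{\ep_k}\ast\chi_E\to\chi_E$ in $L^1_{\rm loc}$ and the absolute continuity of $\div F$; but invoking Corollary~\ref{productrulecor} is cleaner.
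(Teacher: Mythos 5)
Your proposal is correct and follows essentially the same route as the paper: \eqref{productrulechi_abs_cont} comes from Corollary~\ref{productrulecor} (formula \eqref{product rule eq Leb}) with $g=\chi_E$, using $\nabla_H\chi_E=0$ $\mu$-a.e.\ and $D_H^{\rm s}\chi_E=D_H\chi_E$, and then \eqref{trace_equality_abs_cont}, \eqref{eq:E_1_F_repr_abs_cont} and \eqref{productrulechi_abs_cont_trace} are obtained from Proposition~\ref{G-G_groups_no_red_boundary} together with a comparison of the two Leibniz rules, exactly as in the paper. The only (harmless) difference is that you derive $|\div F|(\redb E)=0$ via the tripartition of $\tildef{\chi_E}$ and \eqref{uniqRedB}, whereas the paper gets it immediately from $|\div F|\ll\mu$ together with $\mu(\redb E)=0$, i.e.\ \eqref{LebesgueNegBdry}.
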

\begin{proof} 
Formula \eqref{productrulechi_abs_cont} is a simple application of \eqref{product rule eq Leb} to $g = \chi_{E}$, taking into account \eqref{eq:BVdec-as}. Since the absolute continuity assumption and \eqref{LebesgueNegBdry} give $|\div F|(\redb E) = 0$, we can apply Proposition~\ref{G-G_groups_no_red_boundary} and obtain \eqref{trace_equality_abs_cont}. In addition, by comparing \eqref{productrulechi_no_red_boundary} and \eqref{productrulechi_abs_cont}, we get \eqref{eq:E_1_F_repr_abs_cont}. Thus, \eqref{productrulechi_abs_cont_trace} is an immediate consequence of \eqref{productrulechi_no_red_boundary} and \eqref{eq:E_1_F_repr_abs_cont}.
\end{proof}
 
Thanks to Theorem \ref{divF_abs_continuous}, the proofs of Theorem \ref{divF_abs_cont_G_G} and Theorem~\ref{IBP_divF_abs_cont} can be immediately achieved.

\begin{proof}[Proof of Theorem \ref{divF_abs_cont_G_G}]
We evaluate \eqref{productrulechi_abs_cont_trace} on $\Omega$ and apply Lemma \ref{DMcomptsupp}, thanks to the fact that $E \Subset \Omega$.
\end{proof}

\begin{proof}[Proof of Theorem~\ref{IBP_divF_abs_cont}]
It suffices to combine \eqref{trace_equality_abs_cont}, \eqref{eq:E_1_F_repr_abs_cont} and $|\div F|(\redb E) = 0$ with Theorem~\ref{IBP_general}.
\end{proof}

We notice now that Theorem~\ref{IBP_divF_abs_cont} may be applied to a set of locally finite h-perimeter whose reduced boundary is not rectifiable in the Euclidean sense.

\begin{example}\label{ex:notrectif} We recall that a set $S \subset \G$ is called a $C^{1}_{H}$-regular surface if, for any $p \in S$, there exists an open set $U$ and a map $f \in C^{1}_{H}(U)$ such that
\begin{equation*} S \cap U = \{ q \in U: f(q) = 0 \ \text{and} \ \nabla_{H} f(p) \neq 0\}. \end{equation*} 
In \cite[Theorem 3.1]{cassano2004rectifiability}, the authors proved the existence of a $C^{1}_{H}$-regular surface $S$ in the Heisenberg group $\H^1$ such that $\Haus{\frac{5 - \eps}{2}}_{|\cdot|}(S) > 0$ for any $\eps \in (0, 1)$; which means that $S$ is not $2$-Euclidean rectifiable. In particular, they showed that there exists a function $f \in C^{1}_{H}(\H^{1})$ related to $S$ as above, with $U = \H^1$. From \cite[Theorem 2.1]{franchi2003regular}, it is known that the open set $E = \{ p \in \H^{1} : f(p) < 0 \}$ is of locally finite h-perimeter and $\redb E = S$.
Thus, given any $F \in \DM^{\infty}_{\rm loc}(H \H^1)$ such that $|\div F| \ll \mu = \Leb{3}$, we can apply 
Theorem~\ref{IBP_divF_abs_cont} to $F$ and $E$ to show that there exists a unique normal trace $\Tr{F}{}{E} \in L^{\infty}_{\rm loc}(\H^1; |D_{H} \chi_{E}|)$. In addition, for any $\varphi \in C_c(\H^1)$ with $\nabla_{H} \varphi \in L^{1}(H \H^1)$ we obtain
\begin{equation*} \int_{E} \varphi \, d \div F + \int_{E} \ban{F, \nabla_{H} \varphi} \, dx = \int_{S} \varphi \Tr{F}{}{E} \, d |D_{H} \chi_{E}|. \end{equation*}
We stress the fact that, on the right hand side, we are integrating on a fractal object, which is an Euclidean unrectifiable set.
\end{example}

\begin{theorem}[Green's identities II]\label{t:GreenII}
Let $u \in C^1_{H}(\Omega)$ be such that $\Delta_{H} u \in \mathcal{M}_{\rm loc}(\Omega)$ with $|\Delta_{H} u| \ll \mu$ and let $E \subset \Omega$ be a set of locally finite h-perimeter in $\Omega$. Then for each $v \in C_{c}(\Omega)$ with $\nabla_{H} v \in L^{1}(H \Omega)$ one has
\begin{equation}\label{eq:Green1}
\int_{E} v \, d\Delta_{H} u  = \int_{\redb E} v \, \ban{\nabla_{H} u, \nu_{E}} \, d |D_{H} \chi_{E}| - \int_{E} \ban{\nabla_{H} v, \nabla_{H} u} \, dx. 
\end{equation}
If $u, v \in C^1_{H, c}(\Omega)$ also satisfy $\Delta_{H} u, \Delta_{H} v \in \mathcal{M}(\Omega)$, $|\Delta_{H} u| \ll \mu, |\Delta_{H} v| \ll \mu$, one has
\begin{equation} \label{eq:Green2}
\int_{E} v \, d\Delta_{H} u  - u \, d\Delta_{H} v  = \int_{\redb E} \ban{v \nabla_{H} u - u \nabla_{H} v, \nu_{E}} \, d |D_{H} \chi_{E}|. \end{equation}
If $E \Subset \Omega$, one can drop the assumption that $u$ and $v$ have compact support in $\Omega$.
\end{theorem}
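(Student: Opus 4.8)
The statement is Theorem~\ref{t:GreenII}, and the plan is to reduce it to Theorem~\ref{Green_identity} (Green's identities I) by showing that the absolute continuity hypothesis $|\Delta_{H} u| \ll \mu$ forces the set $E^{1}_{u} = \intee{E}{\nabla_{H} u}$ appearing in \eqref{GFI} to be equivalent to $E$ itself, up to a $|\Delta_{H} u|$-negligible set. Concretely, the horizontal gradient $F := \nabla_{H} u$ satisfies $F \in C(H\Omega)$ and $\div F = \Delta_{H} u \in \mathcal{M}_{\rm loc}(\Omega)$, so $F \in \DM^{\infty}_{\rm loc}(H \Omega) \cap C(H\Omega)$ locally; moreover $|\div F| = |\Delta_{H} u| \ll \mu$ by hypothesis. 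Hence Theorem~\ref{divF_abs_continuous} applies (after the standard localization to an open set $U \Subset \Omega$ with $\mathrm{supp}(v\chi_{E}) \subset U$, exactly as in the proof of Theorem~\ref{IBP_general} and Theorem~\ref{Green_identity}), and in particular \eqref{eq:E_1_F_repr_abs_cont} gives $|\div F|(\intee{E}{F} \Delta E) = 0$, i.e.\ $|\Delta_{H} u|(E^{1}_{u} \Delta E) = 0$.

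First I would carry out the localization: fix $U \Subset \Omega$ open with $\mathrm{supp}(v \chi_{E}) \subset U$, so that $F = \nabla_{H} u \in \DM^{\infty}(H U) \cap C(HU)$, $E$ has finite h-perimeter in $U$, and $v \in C(U) \cap L^{\infty}(U)$ with $\nabla_{H} v \in L^{1}(HU)$. Then I would invoke Theorem~\ref{Green_identity} on $U$ to obtain \eqref{GFI}, namely
\[
\int_{E^{1}_{u}} v \, d\Delta_{H} u = \int_{\redb E} v \, \ban{\nabla_{H} u, \nu_{E}} \, d |D_{H} \chi_{E}| - \int_{E} \ban{\nabla_{H} v, \nabla_{H} u} \, dx,
\]
with $E^{1}_{u} = \intee{E}{\nabla_{H} u}$. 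By the observation above, $|\Delta_{H} u|(E^{1}_{u} \Delta E) = 0$, so the left-hand side integral over $E^{1}_{u}$ can be replaced by the integral over $E$: $\int_{E^{1}_{u}} v \, d\Delta_{H} u = \int_{E} v \, d\Delta_{H} u$ (using $v \in C_{c}$, hence $v \in L^{\infty}$ locally and integrable against the finite measure $|\Delta_{H} u| \res U$). This yields \eqref{eq:Green1}.

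For the second identity \eqref{eq:Green2}, I would apply \eqref{eq:Green1} once to the pair $(u, v)$ and once with the roles of $u$ and $v$ interchanged — here both $u, v \in C^{1}_{H, c}(\Omega)$ with $\Delta_{H} u, \Delta_{H} v \in \mathcal{M}(\Omega)$ and both sub-Laplacians absolutely continuous, so each hypothesis set is symmetric — and then subtract. The two $\int_{E} \ban{\nabla_{H} v, \nabla_{H} u}\, dx$ terms cancel, leaving
\[
\int_{E} v \, d\Delta_{H} u - \int_{E} u \, d\Delta_{H} v = \int_{\redb E} \ban{v \nabla_{H} u - u \nabla_{H} v, \nu_{E}} \, d |D_{H} \chi_{E}|,
\]
which is \eqref{eq:Green2} (written with the shorthand $v\,d\Delta_H u - u\,d\Delta_H v$ under a single integral sign). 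The final sentence, that compact support may be dropped when $E \Subset \Omega$, follows exactly as in Theorem~\ref{Green_identity}: when $E \Subset \Omega$ one has $\mathrm{supp}(v\chi_E) \subset \bar E \Subset \Omega$ automatically for any admissible $v$, so no support condition on $u, v$ is needed.

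\textbf{Main obstacle.} There is no serious analytic obstacle here, since the heavy lifting — the Leibniz rule, the uniqueness of traces, the identification $E^{1}_{u} \sim E$ under $|\div F| \ll \mu$, and the continuity case giving $|\Delta_H u|(\redb E) = 0$ — has already been established in Theorem~\ref{productrule}, Theorem~\ref{Uniqueness_traces}, Theorem~\ref{divF_abs_continuous} and Proposition~\ref{F_continuous}. The only point requiring a little care is the bookkeeping in the localization step: one must check that the normal trace $\Tr{F}{}{E}$ of $\nabla_H u$ produced locally on $U$ is independent of the choice of $U$ (so that the formula is genuinely global), which is immediate from the locality of normal traces (Theorem~\ref{normal_trace_locality_theorem}) together with the fact that in the continuous case the trace is just $\ban{\nabla_H u, \nu_E}$, a pointwise expression not depending on any ambient set. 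Thus the proof is essentially a short deduction assembling earlier results.
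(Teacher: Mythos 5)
Your proposal is correct and follows essentially the same route as the paper: the paper's proof is precisely the combination of Theorem~\ref{Green_identity} with the identification $E^{1}_{u}=E^{1}_{v}=E$ (up to negligible sets for the relevant divergence measures) coming from Theorem~\ref{divF_abs_continuous}, which is exactly your reduction via \eqref{eq:E_1_F_repr_abs_cont}. The extra details you supply on localization and on obtaining \eqref{eq:Green2} by symmetrizing \eqref{eq:Green1} are consistent with the paper's argument and add nothing that changes the method.
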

\begin{proof}
It suffices to combine the results of Theorem~\ref{Green_identity} with the fact that, in this case, $E^{1}_{u} = E^{1}_{v} = E$ up to $\mu$-negligible sets, which follows from 
Theorem~\ref{divF_abs_continuous}.
\end{proof}

It is worth noticing that one could weaken the absolute continuity assumption on $\div F$, by asking only $|\div F|(\mtbR E) = 0$. We also notice that, in the Euclidean context, this resembles part of the hypotheses assumed by Degiovanni, Marzocchi and Musesti in \cite[Theorem 5.2]{degiovanni1999cauchy} and Schuricht in \cite[Proposition 5.11]{Schuricht}. However, we do not require the existence of any suitable smooth approximation of $F$, as they do: thus, our results are more general, even though we cannot represent the normal traces as the classical scalar product. 

Before stating our results, we need a preliminary lemma.

\begin{lemma}\label{lem:R_repr_equality} 
Let $F \in \DM^{\infty}(H \Omega)$ and let $E \subset \Omega$ be a set of finite h-perimeter such that \eqref{eq:assumption_precise_right_repr_div} holds. Then, we have 
\begin{equation} \label{eq:E_1_0_F_R}
|\div F|(\intee{E}{F} \Delta E^{1, \cR}) = 0 \quad \text{and} \quad |\div F|(\extee{E}{F} \Delta E^{0, \cR}) = 0.
\end{equation}
\end{lemma}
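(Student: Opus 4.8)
The plan is to identify $\tildef{\chi_E}$ with the right-invariant precise representative $\chi_E^{*,\cR}$ and then read off $\intee{E}{F}$ and $\extee{E}{F}$ from the tripartition \eqref{chiEtildeUniq}; this is essentially the content already sketched in Remark~\ref{r:tripartition_div_2}, and the only point requiring care is that Proposition~\ref{tilde_g_characterization} characterizes $\tildef{\chi_E}$ only for the radial mollifier of Proposition~\ref{pointwise limit moll}, whereas $\intee{E}{F}$ and $\extee{E}{F}$ are a priori defined through an arbitrary symmetric mollifier.

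First I would invoke Theorem~\ref{Uniqueness_traces}: the weak$^*$ limit $\tildef{\chi_E}$, and hence the sets $\intee{E}{F}$ and $\extee{E}{F}$, are uniquely determined up to $|\div F|$-negligible sets, independently of the symmetric mollifier used to produce them. So I may assume that $\rho(x) = \eta(d(x,0))$ is the mollifier of Proposition~\ref{pointwise limit moll}. Then Proposition~\ref{tilde_g_characterization} applied with $g = \chi_E$ yields $\tildef{\chi_E}(x) = \chi_E^{*,\cR}(x)$ for $|\div F|$-a.e.\ $x \in C_E^{\cR}$, and the hypothesis \eqref{eq:assumption_precise_right_repr_div}, namely $|\div F|(\Omega \setminus C_E^{\cR}) = 0$, promotes this to
\[
\tildef{\chi_E}(x) = \chi_E^{*,\cR}(x) \quad \text{for} \ \ |\div F|\text{-a.e.} \ x \in \Omega .
\]

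Next I would compare this with the tripartition formula \eqref{chiEtildeUniq}, $\tildef{\chi_E} = \chi_{\intee{E}{F}} + \frac{1}{2}\chi_{\redb E}$, valid $|\div F|$-a.e., bearing in mind two elementary facts about the precise representative: $\{\,\chi_E^{*,\cR} = 1\,\} = E^{1,\cR}$ exactly, while $\{\,\chi_E^{*,\cR} = 0\,\}$ coincides with $E^{0,\cR}$ up to the $|\div F|$-null set $\Omega \setminus C_E^{\cR}$ (recall $C_E^{\cR} = \bigcup_{\alpha \in [0,1]} E^{\alpha,\cR}$ and \eqref{right_interior_chi_E_star}). Since $\intee{E}{F} \subset \Omega \setminus \redb E$ and $\chi_{\intee{E}{F}} \in \{0,1\}$, $\frac{1}{2}\chi_{\redb E} \in \{0,\frac{1}{2}\}$, the identity $\chi_{\intee{E}{F}}(x) + \frac{1}{2}\chi_{\redb E}(x) = \chi_E^{*,\cR}(x)$ forces: on $\intee{E}{F}$ the left side equals $1$, so $x \in E^{1,\cR}$; conversely, on $E^{1,\cR}$ the right side equals $1$, which is possible only if $\chi_{\intee{E}{F}}(x) = 1$ and $x \notin \redb E$. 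Hence $\intee{E}{F} = E^{1,\cR}$ up to a $|\div F|$-null set, which is the first equality of \eqref{eq:E_1_0_F_R}.

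For the exterior I would use $\extee{E}{F} = \Omega \setminus (\intee{E}{F} \cup \redb E)$ with the two sets disjoint, rewrite \eqref{chiEtildeUniq} as $1 - \tildef{\chi_E} = \chi_{\extee{E}{F}} + \frac{1}{2}\chi_{\redb E}$, and run the identical dichotomy with $1 - \chi_E^{*,\cR}$ and $E^{0,\cR}$ in the roles of $\chi_E^{*,\cR}$ and $E^{1,\cR}$, obtaining $\extee{E}{F} = E^{0,\cR}$ up to a $|\div F|$-null set. Equivalently, one may quote Remark~\ref{r:tripartition_div_2} directly for $\intE{E}, \extE{E}$ and pass to $\intee{E}{F}, \extee{E}{F}$ via $|\div F|(\intE{E} \cap \redb E) = |\div F|(\extE{E} \cap \redb E) = 0$, which follows from \eqref{eq:tildef_chi_E_redb}. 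I do not expect any genuine obstacle: all the analytic work is already carried out in Proposition~\ref{tilde_g_characterization}, Theorem~\ref{Uniqueness_traces} and Remark~\ref{r:tripartition_div_2}, and only the bookkeeping above remains.
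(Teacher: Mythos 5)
Your proof is correct and follows essentially the same route as the paper, which simply combines Remark~\ref{r:tripartition_div_2} (i.e.\ Proposition~\ref{tilde_g_characterization} applied to $g=\chi_E$) with the hypothesis \eqref{eq:assumption_precise_right_repr_div} and the identifications of $\intee{E}{F}$, $\extee{E}{F}$ coming from Theorem~\ref{Uniqueness_traces}. Your explicit remark that Theorem~\ref{Uniqueness_traces} makes $\tildef{\chi_E}$ and the sets $\intee{E}{F}$, $\extee{E}{F}$ independent of the choice of symmetric mollifier, so that one may reduce to the radial mollifier of Proposition~\ref{pointwise limit moll}, is exactly the bookkeeping the paper leaves implicit.
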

\begin{proof}
Thanks to Remark \ref{r:tripartition_div_2}, we see that 
\begin{equation*}
|\div F|\left ( (\intee{E}{F} \Delta E^{1, \cR}) \cap C_{E}^{\cR} \right ) = 0 \ \text{and} \ |\div F|\left ( (\extee{E}{F} \Delta E^{0, \cR} ) \cap C_{E}^{\cR} \right ) = 0.
\end{equation*}
If \eqref{eq:assumption_precise_right_repr_div} holds, then it is clear that we obtain \eqref{eq:E_1_0_F_R}.
\end{proof}

\begin{proposition} \label{G-G_groups_no_boundary} Let $F \in \DM^{\infty}(H \Omega)$ and let $E \subset \Omega$ be a set of finite h-perimeter such that $|\div F|(\mtbR E) = 0$. Then we have \eqref{trace_equality_abs_cont}, \eqref{eq:E_1_0_F_R} and
\begin{equation} \label{productrulechi_no_boundary_trace} \div(\chi_{E} F) = \chi_{E^{1, \cR}} \div F - \Tr{F}{}{E} |D_{H} \chi_{E}|. \end{equation}
\end{proposition}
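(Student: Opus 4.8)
The strategy is to reduce the hypothesis $|\div F|(\mtbR E) = 0$ to the assumption \eqref{eq:assumption_precise_right_repr_div} of Lemma~\ref{lem:R_repr_equality} and Remark~\ref{r:tripartition_div_2}, and then to compare the resulting description of $\intee{E}{F}$ with $E^{1,\cR}$. First I would recall from \eqref{eq:mtbR E_def} that $\mtbR E = \Omega \setminus (E^{1,\cR} \cup E^{0,\cR})$, so the condition $|\div F|(\mtbR E) = 0$ says precisely that $|\div F|$-a.e.\ point of $\Omega$ lies in $E^{1,\cR} \cup E^{0,\cR} \subset C_{E}^{\cR}$; hence \eqref{eq:assumption_precise_right_repr_div} holds and Lemma~\ref{lem:R_repr_equality} applies, giving \eqref{eq:E_1_0_F_R} directly.

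Next I would establish \eqref{trace_equality_abs_cont}. By \eqref{right_interior_chi_E_star} and Proposition~\ref{tilde_g_characterization} (used through Remark~\ref{r:tripartition_div_2}), under \eqref{eq:assumption_precise_right_repr_div} we have $\tildef{\chi_{E}} = \chi_{E}^{*,\cR} = \chi_{E^{1,\cR}}$ for $|\div F|$-a.e.\ $x \in \Omega$, so $\tildef{\chi_{E}}$ takes only the values $0$ and $1$ up to $|\div F|$-negligible sets. In particular $|\div F|(\redB{E}) = 0$ since on $\redB{E}$ the value is $1/2$ by \eqref{tilde_chi_E_div_F}. Then \eqref{uniqRedB} of Theorem~\ref{Uniqueness_traces} gives $|\div F|(\redb E) = |\div F|(\redb E \setminus \redB E) + |\div F|(\redB E) = 0$. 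With $|\div F|(\redb E) = 0$ in hand, Proposition~\ref{G-G_groups_no_red_boundary} applies and yields exactly \eqref{trace_equality_abs_cont}, namely $\Tr{F}{i}{E} = \Tr{F}{e}{E} = \Tr{F}{}{E}$ $|D_{H}\chi_{E}|$-a.e.

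Finally, for \eqref{productrulechi_no_boundary_trace} I would start from \eqref{productrulechi_no_red_boundary} of Proposition~\ref{G-G_groups_no_red_boundary}, which reads $\div(\chi_{E} F) = \chi_{\intee{E}{F}} \div F - \Tr{F}{}{E} |D_{H}\chi_{E}|$, and then replace $\chi_{\intee{E}{F}} \div F$ by $\chi_{E^{1,\cR}} \div F$. This substitution is legitimate precisely because the first identity in \eqref{eq:E_1_0_F_R} says $|\div F|(\intee{E}{F} \Delta E^{1,\cR}) = 0$, so the two measures $\chi_{\intee{E}{F}} \div F$ and $\chi_{E^{1,\cR}} \div F$ coincide. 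This completes the proof.

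I do not expect any serious obstacle here: the proposition is a corollary that packages together Lemma~\ref{lem:R_repr_equality}, Remark~\ref{r:tripartition_div_2}, the uniqueness statement \eqref{uniqRedB}, and Proposition~\ref{G-G_groups_no_red_boundary}. The only point requiring a moment's care is the verification that $|\div F|(\mtbR E) = 0$ implies \eqref{eq:assumption_precise_right_repr_div}; this hinges on the set inclusion $E^{1,\cR} \cup E^{0,\cR} \subset C_{E}^{\cR}$, which is immediate from the definitions of density points and of $C_{E}^{\cR}$. Everything else is bookkeeping with the already-established decompositions.
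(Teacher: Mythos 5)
Your argument is correct, and its first half coincides with the paper's: both verify \eqref{eq:assumption_precise_right_repr_div} from $|\div F|(\mtbR E)=0$ via the inclusion $\Omega\setminus C_{E}^{\cR}\subset\mtbR E$ and invoke Lemma~\ref{lem:R_repr_equality} to get \eqref{eq:E_1_0_F_R}. Where you diverge is in how the remaining two conclusions are extracted. The paper proves $|\div F|(\redb E)=0$ at the level of sets: by \eqref{eq:E_1_0_F_R} and $|\div F|(\mtbR E)=0$ one has $|\div F|(\redb E)\le |\div F|(\intee{E}{F}\cap\redb E)+|\div F|(\extee{E}{F}\cap\redb E)=0$, since $\intee{E}{F},\extee{E}{F}\subset\Omega\setminus\redb E$ by Definition~\ref{def:measthinterior}; it then rewrites both \eqref{Leibniz_infty_E_1_trace_ref} and \eqref{Leibniz_infty_E_2_trace_ref} with $\chi_{E^{1,\cR}}$ and compares them to obtain \eqref{trace_equality_abs_cont} and \eqref{productrulechi_no_boundary_trace} simultaneously. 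You instead prove $|\div F|(\redb E)=0$ at the level of the function $\tildef{\chi_{E}}$, using Remark~\ref{r:tripartition_div_2} and \eqref{right_interior_chi_E_star} to see that it is $\{0,1\}$-valued $|\div F|$-a.e., against the value $1/2$ on $\redB{E}$ from \eqref{tilde_chi_E_div_F}, together with \eqref{uniqRedB}; you then quote Proposition~\ref{G-G_groups_no_red_boundary} wholesale and substitute $E^{1,\cR}$ for $\intee{E}{F}$ via \eqref{eq:E_1_0_F_R}. Both mechanisms are legitimate; your route is a slightly shorter piece of bookkeeping because it reuses Proposition~\ref{G-G_groups_no_red_boundary} (which the paper essentially re-derives inline here), at the cost of appealing to the characterization of $\tildef{\chi_{E}}$ through the radial mollifier — harmless, since Theorem~\ref{Uniqueness_traces} guarantees the weak$^{*}$ limit is mollifier-independent, but worth stating explicitly. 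The paper's version has the minor advantage of staying entirely at the level of the measures $\chi_{\intee{E}{F}}\div F$, $\chi_{\extee{E}{F}}\div F$, never returning to $\tildef{\chi_{E}}$.
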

\begin{proof} 
We notice that $\Omega \setminus C_{E}^{\cR} \subset \mtbR E$, therefore we can apply Lemma \ref{lem:R_repr_equality} to get \eqref{eq:E_1_0_F_R}. Thanks to \eqref{Leibniz_infty_E_1_trace_ref} and \eqref{eq:E_1_0_F_R}, we easily get
\begin{equation} \label{eq:productrulechi_no_boundary_trace_1}
\div(\chi_{E} F)  = \chi_{E^{1, \cR}} \div F - \Tr{F}{i}{E} |D_{H} \chi_{E}|.
\end{equation}
In addition, \eqref{eq:E_1_0_F_R} and $|\div F|(\mtbR E) = 0$ imply that 
\begin{align*}
|\div F|(\redb E) & \le |\div F|(E^{1, \cR} \cap \redb E) + |\div F|(E^{0, \cR} \cap \redb E) \\
& = |\div F|(\intee{E}{F} \cap \redb E) + |\div F|(\extee{E}{F} \cap \redb E) = 0,
\end{align*}
since $\intee{E}{F}, \extee{E}{F} \subset \Omega \setminus \redb E$ by Definition \ref{def:measthinterior}. As an immediate consequence of this fact and of \eqref{eq:E_1_0_F_R}, we may rewrite \eqref{Leibniz_infty_E_2_trace_ref} as
\begin{equation} \label{eq:productrulechi_no_boundary_trace_2} 
\div(\chi_{E} F) = \chi_{E^{1, \cR}} \div F - \Tr{F}{e}{E} |D_{H} \chi_{E}|. 
\end{equation}
Finally, by combining \eqref{eq:productrulechi_no_boundary_trace_1} and \eqref{eq:productrulechi_no_boundary_trace_2}, the equalities \eqref{trace_equality_abs_cont} and \eqref{productrulechi_no_boundary_trace} follow.
\end{proof}

\begin{theorem} [Gauss--Green formula IV] \label{thm:GGIV} Let $F \in \DM^{\infty}(H \Omega)$ and let $E \Subset \Omega$ be a set of finite h-perimeter such that $|\div F|(\mtbR E) = 0$. Then, we have
\begin{equation} \label{G-G_no_boundary} \div F(E^{1, \cR}) = \int_{\redb E} \Tr{F}{}{E} \, d |D_{H} \chi_{E}|. \end{equation}
\end{theorem}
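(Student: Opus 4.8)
The statement is an immediate consequence of the Leibniz rule already established in Proposition~\ref{G-G_groups_no_boundary}, combined with the vanishing of the divergence of compactly supported fields. First I would observe that the hypothesis $|\div F|(\mtbR E) = 0$ puts us exactly in the setting of Proposition~\ref{G-G_groups_no_boundary}: since $\Omega\setminus C_E^\cR\subset\mtbR E$ (a point not admitting an averaged limit of $\chi_E$ on $d^\cR$-balls cannot have $d^\cR$-density $0$ or $1$), the absolute continuity assumption \eqref{eq:assumption_precise_right_repr_div} needed in Lemma~\ref{lem:R_repr_equality} holds for our $F$ and $E$. Hence Proposition~\ref{G-G_groups_no_boundary} yields the identity of Radon measures
\begin{equation*}
\div(\chi_E F) = \chi_{E^{1,\cR}}\,\div F - \Tr{F}{}{E}\,|D_H\chi_E|\quad\text{on }\Omega,
\end{equation*}
together with the existence of the (unique) average normal trace $\Tr{F}{}{E}\in L^\infty(\Omega;|D_H\chi_E|)$.

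Next I would use that $E\Subset\Omega$. By Theorem~\ref{productrule} (applied to $F$ and $g=\chi_E$, using that $\chi_E\in L^\infty(\Omega)$ with $|D_H\chi_E|(\Omega)<+\infty$) we get $\chi_E F\in\DM^\infty(H\Omega)$; and since $\overline E$ is compact in $\Omega$, the field $\chi_E F$ has compact support in $\Omega$. Therefore Lemma~\ref{DMcomptsupp} applies and gives $\div(\chi_E F)(\Omega)=0$. Evaluating the displayed measure identity on the whole of $\Omega$, and noting that both $\div F\res E^{1,\cR}$ and $\Tr{F}{}{E}|D_H\chi_E|$ are finite measures (the perimeter is finite and the trace is bounded, while $|\div F|(\Omega)<\infty$ after the standard localization or because $E\Subset\Omega$ one can restrict to a relatively compact open neighborhood of $\overline E$), we obtain
\begin{equation*}
0=\div(\chi_E F)(\Omega)=\div F(E^{1,\cR})-\int_{\redb E}\Tr{F}{}{E}\,d|D_H\chi_E|,
\end{equation*}
which is precisely \eqref{G-G_no_boundary}.

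There is no serious obstacle at this stage: all the real work has been absorbed into Proposition~\ref{G-G_groups_no_boundary} (itself resting on Lemma~\ref{lem:R_repr_equality}, Remark~\ref{r:tripartition_div_2}, and Theorem~\ref{Uniqueness_traces}) and into Lemma~\ref{DMcomptsupp}. The only point requiring a line of care is the passage from the measure-theoretic identity to the evaluation on $\Omega$: one should record that $\chi_E F$ being compactly supported in $\Omega$ legitimizes both the use of Lemma~\ref{DMcomptsupp} and the finiteness of each term, so that the cancellation is valid. If one prefers, this can be done by first working on an open set $U$ with $E\Subset U\Subset\Omega$ and then using that $\redb E\subset U$ and $\intee{E}{F}\subset U$ up to $|\div F|$-negligible sets.
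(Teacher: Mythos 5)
Your argument is correct and coincides with the paper's own proof: evaluate the identity $\div(\chi_E F)=\chi_{E^{1,\cR}}\div F-\Tr{F}{}{E}\,|D_H\chi_E|$ from Proposition~\ref{G-G_groups_no_boundary} on all of $\Omega$ and invoke Lemma~\ref{DMcomptsupp}, using $E\Subset\Omega$ to ensure $\chi_E F\in\DM^\infty(H\Omega)$ has compact support. Your extra remarks (that $\Omega\setminus C_E^{\cR}\subset\mtbR E$ justifies the use of Lemma~\ref{lem:R_repr_equality}, and that the compact support legitimizes the evaluation) are exactly the points the paper absorbs into Proposition~\ref{G-G_groups_no_boundary} and Lemma~\ref{DMcomptsupp}, so no new content is needed.
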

\begin{proof} We just need to evaluate \eqref{productrulechi_no_boundary_trace} on $\Omega$ and we apply Lemma \ref{DMcomptsupp} to get \eqref{G-G_no_boundary}. 
\end{proof}

In analogy to the case $|\div F| \ll \mu$, we can obtain similar integration by parts formula and Green's identities in the case $|\div F|(\mtbR E) = 0$, with $E^{1, \cR}$ instead of $E$ in the integration with respect to the divergence and the Laplacian measure, respectively.

\begin{theorem}[Integration by parts II] Let $F \in \DM^{\infty}_{\rm loc}(H \Omega)$, $E$ be a set of locally finite h-perimeter in $\Omega$ such that $|\div F|(\mtbR E) = 0$, and let $\varphi \in C(\Omega)$ with $\nabla_{H} \varphi \in L^{1}_{\rm loc}(H \Omega)$ such that $\mathrm{supp}(\varphi \chi_{E}) \Subset \Omega$. Then there exists a unique normal trace $\Tr{F}{}{E} \in L^{\infty}_{\rm loc}(\Omega; |D_{H} \chi_{E}|)$ of $F$, such that the following formula holds
\begin{equation*} \int_{E^{1, \cR}} \varphi \, d \div F + \int_{E} \ban{F, \nabla_{H} \varphi} \, dx = \int_{\redb E} \varphi \Tr{F}{}{E} \, d |D_{H} \chi_{E}|. \end{equation*}
\end{theorem}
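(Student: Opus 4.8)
The plan is to derive the statement from the integration by parts formulas of Theorem~\ref{IBP_general}, exactly as Theorem~\ref{IBP_divF_abs_cont} was obtained, with the role of Theorem~\ref{divF_abs_continuous} now played by Proposition~\ref{G-G_groups_no_boundary}. First I would localize: choosing an open set $U \Subset \Omega$ with $\mathrm{supp}(\varphi \chi_{E}) \subset U$, one has $F \in \DM^{\infty}(H U)$, $E$ of finite h-perimeter in $U$, and $\varphi \in C(U) \cap L^{\infty}(U)$ with $\nabla_{H} \varphi \in L^{1}(H U)$, so Theorem~\ref{IBP_general} applies and produces interior and exterior normal traces $\Tr{F}{i}{E}, \Tr{F}{e}{E} \in L^{\infty}_{\rm loc}(\Omega; |D_{H} \chi_{E}|)$ satisfying \eqref{IBP_1}, \eqref{IBP_2} and the estimates \eqref{interior_normal_trace_norm_loc}, \eqref{exterior_normal_trace_norm_loc}.

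Next I would feed in the structural information coming from the hypothesis $|\div F|(\mtbR E) = 0$. Since $\Omega \setminus C_{E}^{\cR} \subset \mtbR E$, assumption \eqref{eq:assumption_precise_right_repr_div} holds, so Proposition~\ref{G-G_groups_no_boundary} (via Lemma~\ref{lem:R_repr_equality}) is available and yields, up to $|\div F|$-negligible sets, $\intee{E}{F} = E^{1, \cR}$ through \eqref{eq:E_1_0_F_R}, together with $|\div F|(\redb E) = 0$ and the coincidence $\Tr{F}{i}{E} = \Tr{F}{e}{E} = \Tr{F}{}{E}$ $|D_{H} \chi_{E}|$-a.e. by \eqref{trace_equality_abs_cont}, where $\Tr{F}{}{E}$ is the average normal trace of Definition~\ref{def:average_normal_trace}; the latter is unique by the Radon--Nikod\'ym theorem, equivalently by the uniqueness of $\Tr{F}{i}{E}, \Tr{F}{e}{E}$ established in Theorem~\ref{Uniqueness_traces}.

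Then it remains to substitute these identities into \eqref{IBP_1} (equivalently into \eqref{IBP_2}, which coincides with \eqref{IBP_1} here because $|\div F|(\redb E) = 0$): the integral $\int_{\intee{E}{F}} \varphi \, d \div F$ becomes $\int_{E^{1, \cR}} \varphi \, d \div F$, and $\Tr{F}{i}{E}$ may be replaced by $\Tr{F}{}{E}$ on $\redb E$. This yields precisely the asserted formula, while uniqueness of the normal trace has already been recorded and the local integrability $\Tr{F}{}{E} \in L^{\infty}_{\rm loc}(\Omega; |D_{H} \chi_{E}|)$ follows from \eqref{interior_normal_trace_norm_loc}.

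I expect the only delicate point to be the passage from $|\div F|(\mtbR E) = 0$ to $|\div F|(\redb E) = 0$, since no rectifiability of $\redb E$ is available; this is however already settled inside the proof of Proposition~\ref{G-G_groups_no_boundary}, using that $\intee{E}{F}, \extee{E}{F} \subset \Omega \setminus \redb E$ and that $E^{1, \cR} \cap \redb E = \intee{E}{F} \cap \redb E$, $E^{0, \cR} \cap \redb E = \extee{E}{F} \cap \redb E$ modulo $|\div F|$-null sets. Everything else is bookkeeping identical to the proof of Theorem~\ref{IBP_divF_abs_cont}.
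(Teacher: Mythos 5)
Your proposal is correct and follows essentially the same route as the paper: the paper's proof consists precisely of invoking Proposition~\ref{G-G_groups_no_boundary} (which under $|\div F|(\mtbR E)=0$ yields \eqref{trace_equality_abs_cont}, \eqref{density_pairing_unique_trace} and \eqref{eq:E_1_0_F_R}) and combining it with Theorem~\ref{IBP_general}. Your additional remarks on localization and on deducing $|\div F|(\redb E)=0$ are exactly the bookkeeping already contained in the proofs of those two results.
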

\begin{proof} Proposition \ref{G-G_groups_no_boundary} implies that, if $|\div F|(\mtbR E) = 0$, then we have \eqref{trace_equality_abs_cont}, \eqref{density_pairing_unique_trace} and \eqref{eq:E_1_0_F_R}. One needs just to combine these results with Theorem \ref{IBP_general}.
\end{proof}

\begin{theorem}[Green's identities III]
Let $u \in C^1_{H}(\Omega)$ satisfy $\Delta_{H} u \in \mathcal{M}_{\rm loc}(\Omega)$ and let $E \subset \Omega$ be a set of locally finite h-perimeter in $\Omega$ such that $|\Delta_{H} u|(\mtbR E) = 0$. Then for each $v \in C_{c}(\Omega)$ with $\nabla_{H} v \in L^{1}(H \Omega)$ one has
\begin{equation*}
\int_{E^{1, \cR}} v \, d\Delta_{H} u  = \int_{\redb E} v \, \ban{\nabla_{H} u, \nu_{E}} \, d |D_{H} \chi_{E}| - \int_{E} \ban{\nabla_{H} v, \nabla_{H} u} \, dx. \end{equation*}
If $u, v \in C^1_{H, c}(\Omega)$ also satisfy $\Delta_{H} u, \Delta_{H} v \in \mathcal{M}(\Omega)$, $|\Delta_{H} u|(\mtbR E) = |\Delta_{H} v|(\mtbR E) = 0$, one has
\begin{equation*} 
\int_{E^{1, \cR}} v \, d\Delta_{H} u  - u \, d\Delta_{H} v  = \int_{\redb E} \ban{v \nabla_{H} u - u \nabla_{H} v, \nu_{E}} \, d |D_{H} \chi_{E}|. \end{equation*}
If $E \Subset \Omega$, one can drop the assumption that $u$ and $v$ have compact support in $\Omega$.
\end{theorem}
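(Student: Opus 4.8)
The plan is to obtain this statement as an essentially immediate corollary of Theorem~\ref{Green_identity} (Green's identities~I), the only new ingredient being the replacement of the set $E^1_u=\intee{E}{\nabla_H u}$ by the right-invariant measure theoretic interior $E^{1,\cR}$. The guiding observation is that, under the weakened hypothesis $|\Delta_H u|(\mtbR E)=0$, these two sets differ only by a $|\Delta_H u|$-negligible set, so that the integral of $v$ against $\Delta_H u$ is unchanged when $E^1_u$ is swapped for $E^{1,\cR}$.

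First I would localize exactly as in the proof of Theorem~\ref{Green_identity}: given $v\in C_c(\Omega)$ with $\nabla_H v\in L^1(H\Omega)$, choose an open set $U\Subset\Omega$ with $\mathrm{supp}(v\chi_E)\subset U$. Then $\nabla_H u\in\DM^{\infty}(HU)\cap C(HU)$, $E$ has finite h-perimeter in $U$, and $|\Delta_H u|$ is a finite measure on $U$. By Proposition~\ref{F_continuous} applied to $F=\nabla_H u$ the normal traces coincide with $\ban{\nabla_H u,\nu_E}$ and $|\Delta_H u|(\redb E)=0$, so Theorem~\ref{Green_identity} already gives \eqref{GFI} with $E^1_u=\intee{E}{\nabla_H u}$.

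Next I would invoke Lemma~\ref{lem:R_repr_equality} with $F=\nabla_H u$ on $U$. The hypothesis of that lemma is assumption \eqref{eq:assumption_precise_right_repr_div}, namely $|\div F|(\Omega\setminus C_E^{\cR})=0$; this follows from our standing hypothesis because $\Omega\setminus C_E^{\cR}\subset\mtbR E$, an inclusion already used in the proof of Proposition~\ref{G-G_groups_no_boundary}. The lemma then yields $|\Delta_H u|\big(E^1_u\,\Delta\,E^{1,\cR}\big)=0$, hence $\int_{E^1_u}v\,d\Delta_H u=\int_{E^{1,\cR}}v\,d\Delta_H u$, which upgrades \eqref{GFI} to the first identity claimed. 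For the second identity I would apply the first part to the ordered pair $(u,v)$ and then to $(v,u)$ — the latter using the extra hypothesis $|\Delta_H v|(\mtbR E)=0$ to replace $E^1_v$ by $E^{1,\cR}$ in the same way — and subtract, precisely as in the proof of Theorem~\ref{Green_identity}. Finally, if $E\Subset\Omega$ the localization step is automatic and the compact-support assumptions on $u$ and $v$ are not needed.

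I do not expect a real obstacle here, since all the analytic work (the Leibniz rule, the continuity of the normal trace for $C^1_H$ fields, and the identification of $\intee{E}{F}$ with $E^{1,\cR}$) is carried out in the preceding sections. The single genuine verification is the set-theoretic containment $\Omega\setminus C_E^{\cR}\subset\mtbR E$ together with the remark that it converts the measure-boundary hypothesis $|\Delta_H u|(\mtbR E)=0$ into the precise-representative hypothesis \eqref{eq:assumption_precise_right_repr_div} required by Lemma~\ref{lem:R_repr_equality}; everything else is bookkeeping.
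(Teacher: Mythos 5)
Your proposal is correct and follows essentially the same route as the paper: Green's identities~I (Theorem~\ref{Green_identity}) combined with the identification $E^{1}_{u}=E^{1}_{v}=E^{1,\cR}$ up to $|\Delta_{H}u|$- (resp.\ $|\Delta_{H}v|$-) negligible sets. The only cosmetic difference is that you invoke Lemma~\ref{lem:R_repr_equality} together with the inclusion $\Omega\setminus C_{E}^{\cR}\subset\mtbR E$ directly, whereas the paper cites Proposition~\ref{G-G_groups_no_boundary}, whose proof consists of exactly those two steps.
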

\begin{proof} 
It suffices to combine the results of Theorem~\ref{Green_identity} with the fact that, by Proposition~\ref{G-G_groups_no_boundary}, $E^{1}_{u} = E^{1}_{v} = E^{1, \cR}$ up to $|\Delta_{H} u|, |\Delta_{H} v|$-negligible sets.
\end{proof}

As an easy consequence of 
Theorem~\ref{normal_trace_locality_theorem}, we obtain the same locality property for the normal trace in the case $|\div F| \ll \mu$ and $|\div F|(\mtbR E) = 0$.

\begin{proposition} Let $F \in \DM^{\infty}(H \Omega)$, and $E_{1}, E_{2} \subset \Omega$ be sets of finite h-perimeter such that $\SHaus{Q - 1}(\redb E_{1} \cap \redb E_{2}) > 0$ and $|\div F|(\mtbR E_{j}) = 0$, for $j = 1, 2$. Then, we have
\begin{equation} \label{same_or_unique_normal_trace} \Tr{F}{}{E_{1}}(x) = \Tr{F}{}{E_{2}}(x), \end{equation}
for $\SHaus{Q - 1}$-a.e.\ $x \in \{ y \in \redb E_{1} \cap \redb E_{2} : \nu_{E_{1}}(y) = \nu_{E_{2}}(y) \}$, and
\begin{equation} \label{opposite_or_unique_normal_trace} \Tr{F}{}{E_{1}}(x) = - \Tr{F}{}{E_{2}}(x), \end{equation}
for $\SHaus{Q - 1}$-a.e.\ $x \in \{ y \in \redb E_{1} \cap \redb E_{2} : \nu_{E_{1}}(y) = - \nu_{E_{2}}(y) \}$.
\end{proposition}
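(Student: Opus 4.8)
The plan is to reduce this statement to Theorem~\ref{normal_trace_locality_theorem} via the uniqueness of the average normal trace furnished by Proposition~\ref{G-G_groups_no_boundary}. Recall that the hypothesis $|\div F|(\mtbR E_j) = 0$ guarantees, by Proposition~\ref{G-G_groups_no_boundary}, that for each $j=1,2$ the interior and exterior normal traces of $F$ on $\redb E_j$ coincide with the average normal trace, namely $\Tr{F}{i}{E_j} = \Tr{F}{e}{E_j} = \Tr{F}{}{E_j}$ $|D_{H}\chi_{E_j}|$-a.e. In particular $|\div F|(\redb E_j) = 0$ for $j=1,2$ (this is derived inside the proof of Proposition~\ref{G-G_groups_no_boundary}).

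First I would invoke Theorem~\ref{normal_trace_locality_theorem} directly. For $\SHaus{Q-1}$-a.e.\ $x \in \{ y \in \redb E_1 \cap \redb E_2 : \nu_{E_1}(y) = \nu_{E_2}(y) \}$ we have $\Tr{F}{i}{E_1}(x) = \Tr{F}{i}{E_2}(x)$ by \eqref{same_or_normal_traces}. Combining this with the equalities $\Tr{F}{i}{E_j}(x) = \Tr{F}{}{E_j}(x)$ for $|D_H \chi_{E_j}|$-a.e.\ $x$, and recalling from Remark~\ref{negl_abs_cont_red_boundary} together with \eqref{repr_D_H_chi_E} that a property holds $|D_H\chi_{E_j}|$-a.e.\ if and only if it holds $\SHaus{Q-1}$-a.e.\ on $\redb E_j$, we obtain \eqref{same_or_unique_normal_trace}. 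For the opposite orientation case, for $\SHaus{Q-1}$-a.e.\ $x \in \{ y \in \redb E_1 \cap \redb E_2 : \nu_{E_1}(y) = - \nu_{E_2}(y) \}$ we use \eqref{opposite_or_normal_traces}, which gives $\Tr{F}{i}{E_1}(x) = - \Tr{F}{e}{E_2}(x)$; again replacing both sides by the respective average normal traces yields \eqref{opposite_or_unique_normal_trace}.

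The only mild technical point is that the set $\{ y \in \redb E_1 \cap \redb E_2 : \nu_{E_1}(y) = \nu_{E_2}(y) \}$ has positive $\SHaus{Q-1}$-measure whenever $\SHaus{Q-1}(\redb E_1 \cap \redb E_2) > 0$ and, symmetrically, for the opposite-orientation set; this is exactly the dichotomy $\nu_{E_1}(x) = \pm \nu_{E_2}(x)$ for $\SHaus{Q-1}$-a.e.\ $x \in \redb E_1 \cap \redb E_2$ recorded before Lemma~\ref{locality_per_orientation} (from \cite[Corollary 2.6]{ambrosio2010locality}). So the equalities are asserted on $\SHaus{Q-1}$-a.e.\ point of the relevant subsets, and the union of the two subsets exhausts $\redb E_1 \cap \redb E_2$ up to an $\SHaus{Q-1}$-null set. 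There is essentially no obstacle here: everything follows by bookkeeping from results already proved. The statement is genuinely a corollary of Theorem~\ref{normal_trace_locality_theorem} and Proposition~\ref{G-G_groups_no_boundary}, so the proof is short.
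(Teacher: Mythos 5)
Your proof is correct and follows essentially the same route as the paper: the paper also deduces the statement directly from Theorem~\ref{normal_trace_locality_theorem} combined with the trace equality \eqref{trace_equality_abs_cont} supplied by Proposition~\ref{G-G_groups_no_boundary} under the hypothesis $|\div F|(\mtbR E_{j}) = 0$. Your extra bookkeeping (the $|D_H\chi_{E_j}|$-a.e.\ versus $\SHaus{Q-1}$-a.e.\ equivalence and the $\nu_{E_1}=\pm\nu_{E_2}$ dichotomy) is consistent with what the paper leaves implicit.
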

\begin{proof} The result follows immediately from Theorem \ref{normal_trace_locality_theorem} and \eqref{trace_equality_abs_cont}, which holds by Proposition \ref{G-G_groups_no_boundary}.
\end{proof}

\section{Applications to sets of Euclidean finite perimeter}

The underlying linear structure of $\G$ allows for introducing 
an Euclidean scalar product, for instance using a fixed system of
graded coordinates, see Section~\ref{sect:metric}.
With respect to this metric structure the classical
sets of finite perimeter can be considered. 
We will call them sets of {\em Euclidean finite perimeter}
to make a precise distinction with sets of finite h-perimeter.

If $E\subset\G$ is a set of locally finite Euclidean perimeter in
$\Omega\subset\G$ and $F\in \DM^{\infty}_{\rm loc}(H \Omega)$, then we can refine \eqref{G-G groups 1} and \eqref{G-G groups 2} employing the theory of divergence-measure fields in Euclidean space. From the Euclidean Leibniz rule for essentially bounded divergence-measure fields (\cite[Theorem~2.1]{Frid} of Frid), the uniqueness of the representative $\tildef{g}$ in Theorem \ref{productrule} and of the pairing measure follows.

Recall that we can identify $\G$ with $\R^{\q}$, where $\q$ is the topological dimension of $\G$. In this section, we shall denote the Euclidean norm by $|\cdot|$, and the Riemannian one by $|\cdot|_{g}$. The $L^{\infty}$-norm $\|\cdot\|_{\infty, \Omega}$ for horizontal fields is the same defined in \eqref{Linfty_norm} using $|\cdot|_{g}$. 

We denote the Euclidean Hausdorff measure by 
$\Haus{\alpha}_{|\cdot|}$ and the Euclidean ball by
\begin{equation*} 
B_{|\cdot|}(x, r) := \{ y \in \R^{\q} : |x - y| < r \}. 
\end{equation*}
Consequently, given $u \in L^{1}_{\rm loc}(\G)$, we denote by 
\begin{equation} \label{Euclidean_representative}
u^{*}_{|\cdot|}(x) := \begin{cases} \displaystyle \lim_{r \to 0} \mean{B_{|\cdot|}(x,r)} u(y) \, dy & \mbox{ if the limit exists, } \\ 0 &  \mbox{ otherwise,} \end{cases} \end{equation}
the {\em Euclidean precise representative} of $u$.

The following useful lemma is a consequence of
the rectifiability of the reduced boundary and of the negligibility
of characteristic points \cite{magnani2006characteristic}. 
Its proof can be found in \cite{cassano2016some}.
For the ease of the reader, we add a short proof.

\begin{lemma} \label{Euclidean_set_fin_per} If $E$ is a set of Euclidean locally finite perimeter in $\Omega$ and if we denote by $\redbeu E$ the Euclidean reduced boundary, we have $\SHaus{Q - 1}(\redb E \Delta \redbeu E) = 0$.
\end{lemma}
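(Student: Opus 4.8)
The plan is to compare the two boundaries through the horizontal perimeter measure $|D_{H}\chi_{E}|$, using on one side the representation of $|D_{H}\chi_{E}|$ coming from the Euclidean theory of sets of finite perimeter, and on the other side the intrinsic representation from Theorem~\ref{perimeter repr}.

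First I would observe that $E$ has locally finite h-perimeter in $\Omega$: by \eqref{eq:X_j} each distributional derivative $D_{X_{j}}\chi_{E}$ is a finite combination, with locally bounded polynomial coefficients, of the Euclidean partials $\partial_{x_{i}}\chi_{E}$, which are Radon measures since $E$ has Euclidean locally finite perimeter; hence $\chi_{E}\in BV_{H,\mathrm{loc}}(\Omega)$ and both $\redb E$ and $|D_{H}\chi_{E}|$ are defined. In particular $|D_{H}\chi_{E}|\ll|D\chi_{E}|$, and by the Euclidean structure theorem \cite{DeGiorgi1955} one has $|D\chi_{E}|=\Haus{\q-1}_{|\cdot|}\res\redbeu E$, so $|D_{H}\chi_{E}|$ is concentrated on $\redbeu E$. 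On the other hand, Theorem~\ref{perimeter repr} together with Remark~\ref{negl_abs_cont_red_boundary} gives $|D_{H}\chi_{E}|=\theta_{E}\,\SHaus{Q-1}\res\redb E$ with $\theta_{E}\ge\alpha>0$, so $|D_{H}\chi_{E}|$ and $\SHaus{Q-1}\res\redb E$ have the same negligible sets. Combining these two facts already yields the first inclusion: since $\SHaus{Q-1}\res\redb E\ll|D_{H}\chi_{E}|$ and $|D_{H}\chi_{E}|(\G\setminus\redbeu E)=0$, we get $\SHaus{Q-1}(\redb E\setminus\redbeu E)=0$.

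For the reverse inclusion I would invoke the structure results on characteristic points and on the perimeter measure of Euclidean finite perimeter sets, proved in \cite{magnani2006characteristic} (see also \cite{Mag31} and \cite{cassano2016some}). Let $C_{E}$ be the set of characteristic points of $\redbeu E$, i.e.\ the points at which the horizontal projection $N^{H}_{E}$ of the Euclidean measure theoretic normal vanishes. Then $\SHaus{Q-1}(C_{E})=0$, and $|D_{H}\chi_{E}|$ has the same negligible sets as $\SHaus{Q-1}\res(\redbeu E\setminus C_{E})$: indeed an elementary computation from $X_{j}=\partial_{x_{j}}+\sum_{i}a_{j}^{i}\partial_{x_{i}}$ and $\div X_{j}=0$, using that $(X_{1},\dots,X_{\q})$ is Riemannian orthonormal, gives $|D_{H}\chi_{E}|=|N^{H}_{E}|_{g}\,\Haus{\q-1}_{|\cdot|}\res\redbeu E$ with $|N^{H}_{E}|_{g}>0$ precisely off $C_{E}$, while on the non-characteristic rectifiable set $\redbeu E\setminus C_{E}$ the measures $\Haus{\q-1}_{|\cdot|}$ and $\SHaus{Q-1}$ are mutually absolutely continuous. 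Chaining this with the fact that $|D_{H}\chi_{E}|$ and $\SHaus{Q-1}\res\redb E$ have the same negligible sets, we deduce that $\SHaus{Q-1}\res\redb E$ and $\SHaus{Q-1}\res(\redbeu E\setminus C_{E})$ have the same negligible sets. Since the latter measure vanishes on every set disjoint from $\redbeu E\setminus C_{E}$, in particular on $(\redbeu E\setminus C_{E})\setminus\redb E$, we obtain $\SHaus{Q-1}\big((\redbeu E\setminus C_{E})\setminus\redb E\big)=0$; together with $\SHaus{Q-1}(C_{E})=0$ this gives $\SHaus{Q-1}(\redbeu E\setminus\redb E)=0$, and the two inclusions combine into $\SHaus{Q-1}(\redb E\Delta\redbeu E)=0$.

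The absolute-continuity bookkeeping above is routine; the genuine content is entirely imported, namely the identity $|D_{H}\chi_{E}|=|N^{H}_{E}|_{g}\,\Haus{\q-1}_{|\cdot|}\res\redbeu E$, the negligibility $\SHaus{Q-1}(C_{E})=0$ of characteristic points, and the comparability of $\Haus{\q-1}_{|\cdot|}$ and $\SHaus{Q-1}$ on the non-characteristic part of the rectifiable reduced boundary. The only delicate point in the write-up is to keep track of exactly where each representation of $|D_{H}\chi_{E}|$ is supported, so that the $\SHaus{Q-1}$-negligible characteristic set is precisely the set that must be removed when passing between the two representations.
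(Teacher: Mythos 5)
Your argument is essentially the paper's proof: both hinge on the identity $|D_{H}\chi_{E}| = |\pi_{H}N_{E}|\,\Haus{\q-1}_{|\cdot|}\res\redbeu E = \theta_{E}\,\SHaus{Q-1}\res\redb E$ with $\theta_{E}\ge\alpha>0$, the $\SHaus{Q-1}$-negligibility of the characteristic set, and the comparison between $\Haus{\q-1}_{|\cdot|}$ and $\SHaus{Q-1}$ (the paper cites \cite[Proposition 5.11]{cassano2016some} and \cite[Proposition 4.4]{FSSC5} for these imported facts). Two small touch-ups: only the implication ``$\Haus{\q-1}_{|\cdot|}$-null $\Rightarrow$ $\SHaus{Q-1}$-null'' is available and is all you actually use in the operative direction, so the stronger claim of mutual absolute continuity on the non-characteristic part should be dropped; and in your final step it is $\SHaus{Q-1}\res\redb E$ (not the restriction to $\redbeu E\setminus C_{E}$) that vanishes on $(\redbeu E\setminus C_{E})\setminus\redb E$, after which your ``same negligible sets'' chain correctly transfers this to $\SHaus{Q-1}$ itself.
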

\begin{proof}
By Theorem \ref{perimeter repr}, Lemma \ref{difference redb mtb} and \cite[Proposition 5.11]{cassano2016some}, we know that 
\begin{equation} \label{perimeter_Euclidean_identity} 
|D_{H} \chi_{E}| = |\pi_{H} N_{E}|\, \Haus{\q - 1}_{|\cdot|} \res \redbeu E = 
\theta_{E}\, \SHaus{Q - 1} \res \redb E, \end{equation}
where $\pi_{H} N_{E}$ is the projection of the Euclidean measure theoretic exterior normal $N_{E}$ on the horizontal bundle of $\G$. Hence, since $\theta_{E} \ge \alpha > 0$ by Theorem \ref{perimeter repr}, we get
$$\SHaus{Q - 1}(\redb E \setminus \redbeu E) = 0, $$
In addition, \cite[Proposition 5.11]{cassano2016some} implies also that the set $${\rm Char}(E):= \{x \in \redbeu E : \pi_{H} N_{E}(x) = 0 \}$$ is 
$\SHaus{Q - 1}$-negligible.
Therefore, by \eqref{perimeter_Euclidean_identity} we have
\begin{equation*} 
\Haus{\q - 1}_{|\cdot|}(\redbeu E \setminus (\redb E \cup {\rm Char}(E))) = 0, \end{equation*}
which implies $\SHaus{Q - 1}(\redbeu E \setminus (\redb E \cup {\rm Char}(E))) = 0$ by \cite[Proposition 4.4]{FSSC5}. Since
${\rm Char}(E)$ is $\SHaus{Q - 1}$-negligible, 
the proof is complete.
\end{proof}
We now recall the Euclidean Leibniz rule for essentially bounded divergence-measure fields in a stratified group.
\begin{theorem} \label{product_rule_Euclidean} 
Let $F\in \DM^{\infty}(H \Omega)$ and $g \in L^{\infty}(\Omega)$ be such that for every $j = 1, \dots, \q$ we have $\der_{x_{j}} g \in \mathcal{M}(\Omega)$.  It follows that $g F \in \DM^{\infty}(H \Omega)$ and 
\begin{equation} \label{product_rule_Euclidean_eq} \div(g F) = g^{*}_{|\cdot|} \div F + (F, D g), \end{equation}
where $(F, D g)$ is the weak$^*$ limit of $\ban{F,D(\rho_{\eps} \tilde{\ast} g)}_{\R^\q} \mu$ as $\eps \to 0$, denoting by $\tilde{\ast}$ the Euclidean convolution product, by $\rho \in C^{\infty}_{c}(B_{|\cdot|}(0, 1))$ a radially symmetric mollifier with $\int \rho \, dx = 1$
and $\rho_{\eps}(x) = \eps^{-\q} \rho(x/\eps)$.
\end{theorem}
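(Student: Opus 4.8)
The plan is to mimic the Euclidean proof of the Leibniz rule for divergence-measure fields (Chen--Frid, Frid), adapting it to the stratified group setting where the mollification uses the \emph{Euclidean} convolution $\tilde\ast$ rather than the intrinsic group convolution. The key observation is that, by \eqref{eq:Ff_j} and \eqref{distributional_div_equiv}, a horizontal field $F\in\DM^\infty(H\Omega)$ is in particular a classical divergence-measure field $F\in\DM^\infty(T\Omega)$ (with the same divergence measure), and $g$ with $\der_{x_j}g\in\mathcal{M}(\Omega)$ for all $j$ is a classical $BV\cap L^\infty$ function. Hence the classical Euclidean Leibniz rule applies verbatim to the pair $(F,g)$ viewed in $\R^\q$ and yields \eqref{product_rule_Euclidean_eq}, once one checks that the Euclidean divergence of $gF$ coincides with the horizontal divergence of $gF$. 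This last point again follows from \eqref{distributional_div_equiv}: for $\phi\in C_c^\infty(\Omega)$ one has $\int_\Omega\langle gF,\nabla_H\phi\rangle\,dx=\int_\Omega\langle gF,\nabla\phi\rangle_{\R^\q}\,dx$, so the two distributional divergences literally agree as distributions, hence as measures.

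Concretely, first I would recall the classical statement: for $F\in\DM^\infty(\R^\q)$ (or on $\Omega$) and $g\in BV(\Omega)\cap L^\infty(\Omega)$, one has $gF\in\DM^\infty$ and $\div_{\R^\q}(gF)=g^*_{|\cdot|}\,\div_{\R^\q}F+(F,Dg)$, where $(F,Dg)$ is the weak$^*$ limit of $\langle F,\nabla(\rho_\eps\tilde\ast g)\rangle_{\R^\q}\mu$ for a radially symmetric Euclidean mollifier, and where the Euclidean precise representative $g^*_{|\cdot|}$ is the relevant trace of $g$ against $\div F$ because $|\div F|\ll\Haus{\q-1}_{|\cdot|}$ (Šilhavý, or here Theorem~\ref{absolute continuity} combined with the comparison of spherical and Euclidean Hausdorff measures from \cite{balogh2009sub}). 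The existence of this weak$^*$ limit and the uniqueness of $g^*_{|\cdot|}$ as the $\div F$-a.e.\ value of $\rho_\eps\tilde\ast g$ rests on the fact that $\rho_\eps\tilde\ast g\to g^*_{|\cdot|}$ pointwise at every Lebesgue point (for a radial mollifier) together with $|\div F|\ll\Haus{\q-1}_{|\cdot|}$, so that $\div F$ charges only Lebesgue points of $g$; this is the Euclidean analogue of Proposition~\ref{tilde_g_characterization}. The uniform bound on $\langle F,\nabla(\rho_\eps\tilde\ast g)\rangle_{\R^\q}\mu$ needed for weak$^*$ compactness comes from $|\nabla(\rho_\eps\tilde\ast g)|\,\mu\le\rho_\eps\tilde\ast|Dg|$ (the Euclidean version of \eqref{pointwise_upper_control}), exactly as in \cite[Theorem~3.1]{CF1}.

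Then I would transfer this to $H\Omega$. Since $gF$, regarded as a section of $H\Omega$, has components $gF_j\in L^\infty(\Omega)$, it belongs to $L^\infty(H\Omega)$; and its horizontal distributional divergence equals its Euclidean one as computed above, which is a Radon measure by the classical result. Hence $gF\in\DM^\infty(H\Omega)$ and \eqref{product_rule_Euclidean_eq} holds with the measure $(F,Dg)$ on the right being precisely the weak$^*$ limit of $\langle F,\nabla(\rho_\eps\tilde\ast g)\rangle_{\R^\q}\mu$. Here it is important to note that the Euclidean gradient of the (scalar) function $\rho_\eps\tilde\ast g$ is what appears, and that $\langle F,\nabla(\rho_\eps\tilde\ast g)\rangle_{\R^\q}=\langle F,\nabla_H(\rho_\eps\tilde\ast g)\rangle$ by \eqref{distributional_div_equiv} applied with test function $\rho_\eps\tilde\ast g$ localized — so the pairing can equivalently be written using the horizontal gradient, which is the form one will use later when comparing with the intrinsic pairing $(F,D_Hg)$ of Theorem~\ref{productrule}.

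The main obstacle is a bookkeeping one rather than a conceptual one: one must be careful that the classical Euclidean Leibniz rule is available in the precise form stated (with the Euclidean precise representative $g^*_{|\cdot|}$ rather than some other choice of trace), which requires invoking the absolute continuity $|\div F|\ll\Haus{\q-1}_{|\cdot|}$ — this is exactly where Theorem~\ref{absolute continuity} together with $\SHaus{Q-1}\ll\Haus{\q-1}_{|\cdot|}$ (\cite[Proposition~3.1]{balogh2009sub}) enters, upgrading the generic Euclidean statement (which only guarantees absolute continuity with respect to $|Dg|$) to one where the trace is genuinely $g^*_{|\cdot|}$. A secondary point to handle carefully is the localization: $F$ and $g$ need not have compact support, so one should either assume $\Omega$ bounded for the purpose of the mollification argument (as the later theorem does, see the remark about "up to a restriction to bounded open sets") or run the argument on an exhaustion $\Omega'\Subset\Omega$ and patch, using that the limit measure does not depend on the exhaustion. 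Everything else is a routine translation of \cite[Theorem~2.1]{Frid} and \cite[Theorem~3.1]{CF1}.
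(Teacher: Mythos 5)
Your proposal is correct and follows essentially the same route as the paper: one observes that $F\in\DM^{\infty}(H\Omega)\subset\DM^{\infty}(T\Omega)$ and that the horizontal and Euclidean distributional divergences of $gF$ coincide by \eqref{distributional_div_equiv}, so that \cite[Theorem~2.1]{Frid} applied to the pair $(F,g)$ in $\R^{\q}$ gives $gF\in\DM^{\infty}(H\Omega)$ and \eqref{product_rule_Euclidean_eq}. The extra material you add (re-deriving the classical trace identification via $|\div F|\ll\Haus{\q-1}_{|\cdot|}$ and the localization discussion) is harmless but not needed, since Frid's statement already comes in the required form on an arbitrary open set.
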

\begin{proof}
Since $F \in \DM^{\infty}(H \Omega) \subset \DM^{\infty}(\Omega)$ and $g$ is an essentially bounded function of Euclidean bounded variation, \cite[Theorem~2.1]{Frid} shows that $g F \in \DM^{\infty}(\Omega)$ and that we have \eqref{product_rule_Euclidean_eq}. Then we clearly have $g F \in \DM^{\infty}(H\Omega)$, since $F$ is a measurable horizontal section.
\end{proof}
We stress the fact that $g \in BV(\Omega)$ in general does not imply $g \in BV_{H}(\Omega)$, unless the set $\Omega$ is bounded. Since a function of Euclidean bounded variation on $\Omega$ belongs only to $BV_{H, {\rm loc}}(\Omega)$, we shall need to localize all the following statements. 

\begin{theorem} \label{abs_continuity_pairing_infty_Euclidean} Let $F\in \DM^{\infty}(H \Omega)$ and $g \in L^{\infty}(\Omega)$ be such that $\der_{x_{j}} g \in \mathcal{M}(\Omega)$ for $j = 1, \dots, \q$. Then, the measure $(F, D g)$ satisfies 
\begin{equation} \label{abs_cont_pairing_infty_Euclidean}
|(F, D g)| \res U \le \|F\|_{\infty, U} |D_{H} g| \res U, 
\end{equation} 
for any open bounded set $U \subset \Omega$.
\end{theorem}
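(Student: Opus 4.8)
The statement asserts a bound of the Euclidean pairing $(F,Dg)$ by the \emph{horizontal} total variation $|D_Hg|$, which is strictly stronger than the classical Euclidean estimate $|(F,Dg)|\le\|F\|_{\infty}|Dg|$ available from \cite{Frid}. The plan is to exploit the explicit description of $(F,Dg)$ given in Theorem~\ref{product_rule_Euclidean} as the weak$^*$ limit of $\ban{F,D(\rho_\eps\,\tilde\ast\,g)}_{\R^\q}\mu$, where $\rho_\eps\,\tilde\ast\,g$ is the \emph{Euclidean} mollification. The key point is that the Euclidean convolution does \emph{not} commute with the horizontal gradient, but the commutator is controlled asymptotically, in the spirit of the Friedrichs commutator lemma \cite{Friedrichs1944} (see also \cite{garofalo1996isoperimetric}). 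Fix an open bounded set $U\subset\Omega$ and take $U'$ with $U\Subset U'\Subset\Omega$ bounded; then $g\in BV_H(U')$ since $U'$ is bounded. First I would write, using \eqref{eq:X_j} and the representation $F=\sum_{i=1}^\q f_i\der_{x_i}$ from \eqref{eq:Ff_j}, that for $\eps$ small enough and $x\in U$
\[
\ban{F(x),\nabla_H(\rho_\eps\,\tilde\ast\,g)(x)}=\ban{F(x),\nabla(\rho_\eps\,\tilde\ast\,g)(x)}_{\R^\q}
=\ban{F(x),\rho_\eps\,\tilde\ast\,\nabla g(x)}_{\R^\q}+R_\eps(x),
\]
where $R_\eps$ is the commutator error between $F$ and the Euclidean mollifier. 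Here $\rho_\eps\,\tilde\ast\,\nabla g$ denotes the Euclidean mollification applied componentwise to the vector measure $Dg$, and the main term, after rearranging the polynomial coefficients $a_j^i$ back into horizontal form, should be rewritten so that it tests only against $D_Hg$ rather than the full Euclidean gradient $Dg$.

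\textbf{Key steps.} Step 1: establish the asymptotic commutator estimate $\int_U|R_\eps|\,dx\to0$ as $\eps\to0$. This is the Friedrichs-type argument: writing $R_\eps$ in terms of $\der_{x_i}\rho_\eps$ acting against $g$ minus $\rho_\eps$ acting against $\der_{x_i}g$, one uses that $g\in L^\infty$, the smoothness of the coefficients $f_i$ (which are polynomials times the bounded horizontal components $F_j$ --- care is needed since $F_j$ is only $L^\infty$, so one must be slightly more careful, perhaps mollifying $F$ as well or localizing where the $a_j^i$ are bounded), and a standard scaling computation showing the bad terms carry a factor that vanishes in $L^1$. Step 2: for the main term, use that the Euclidean mollification of the vector measure $Dg=\sum_i(\der_{x_i}g)e_i$ can be re-expressed, via the structure \eqref{eq:X_j}, so that testing $F$ against it produces a quantity dominated by $\rho_\eps\,\tilde\ast\,|D_Hg|$; concretely, $|\ban{F,\rho_\eps\,\tilde\ast\,Dg}_{\R^\q}|\le\|F\|_{\infty,U'}\,(\rho_\eps\,\tilde\ast\,|D_Hg|)$ on $U$ for $\eps$ small, by the same change-of-variables/Fubini argument as in Lemma~\ref{conv_meas_bounded_function} adapted to the Euclidean convolution. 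Step 3: combine. For any nonnegative $\phi\in C_c(U)$,
\[
\Big|\int_U\phi\,d(F,Dg)\Big|=\lim_{\eps\to0}\Big|\int_U\phi\,\ban{F,\nabla_H(\rho_\eps\,\tilde\ast\,g)}\,dx\Big|
\le\limsup_{\eps\to0}\Big(\|F\|_{\infty,U'}\int_U\phi\,(\rho_\eps\,\tilde\ast\,|D_Hg|)\,dx+\|\phi\|_\infty\!\int_U|R_\eps|\,dx\Big).
\]
By Step 1 the error term vanishes, and since $(\rho_\eps\,\tilde\ast\,|D_Hg|)\mu\weakto|D_Hg|$ weakly$^*$ (standard property of Euclidean mollification of a Radon measure, analogous to Remark~\ref{weak_conv_moll_measure}), the first term converges to $\|F\|_{\infty,U'}\int_U\phi\,d|D_Hg|$. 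Step 4: pass from $U'$ to $U$ in the constant. Since $U'\Subset\Omega$ is arbitrary with $U\Subset U'$, one can shrink $U'$ toward $U$; by monotone convergence of the norms $\|F\|_{\infty,U'}\to\|F\|_{\infty,\bar U}$, but to land exactly at $\|F\|_{\infty,U}$ one instead tests with $\phi$ supported in an arbitrary $U''\Subset U$ and uses $\|F\|_{\infty,U''}\le\|F\|_{\infty,U}$, then exhausts $U$ by such $U''$; this gives $|(F,Dg)|\res U\le\|F\|_{\infty,U}|D_Hg|\res U$ as measures.

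\textbf{Main obstacle.} The delicate point is Step 1, the commutator estimate, complicated by the fact that the coefficients $f_i$ of $F$ in the ambient frame involve the polynomials $a_j^i$ from \eqref{eq:X_j}, which are unbounded on $\Omega$ in general, while $F_j\in L^\infty$; on a bounded set $U$ the $a_j^i$ are bounded, so $F\in L^\infty(U;\R^\q)$ in Euclidean sense, which is what makes localization to bounded $U$ essential. One must verify that the Friedrichs commutator lemma applies to $F\in L^\infty$ paired against $g\in L^\infty$ with $Dg$ a measure --- the classical statement handles $L^p$ functions and $W^{1,1}$, so one needs the version (available, e.g., via \cite{garofalo1996isoperimetric}) valid when one factor is merely $BV$; alternatively, one regularizes $F$ first, proves the estimate for smooth $F$ where the commutator vanishes cleanly, and passes to the limit using the uniform bound $\|F\|_{\infty,U}$. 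A secondary subtlety is keeping track of the two different convolutions (intrinsic $\ast$ versus Euclidean $\tilde\ast$) and making sure the final constant is $\|F\|_{\infty,U}$ and not a larger one, which Step 4 handles by exhaustion.
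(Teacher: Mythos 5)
Your plan is essentially the paper's own proof: one represents $(F,Dg)$ as the weak$^*$ limit of $\ban{F,\nabla(\rho_\eps\,\tilde{\ast}\,g)}_{\R^\q}\mu=\ban{F,\nabla_H(\rho_\eps\,\tilde{\ast}\,g)}\mu$, replaces $\nabla_H(\rho_\eps\,\tilde{\ast}\,g)$ by $\rho_\eps\,\tilde{\ast}\,D_Hg$ via a Friedrichs-type commutator estimate, bounds the resulting term by $\|F\|_{\infty}\,(\rho_\eps\,\tilde{\ast}\,|D_Hg|)$, and passes to the limit after localizing to bounded sets. The obstacle you flag in Step 1 does not actually arise: in the paper the commutator is between multiplication by the locally Lipschitz coefficients $a_j^i$ and the Euclidean convolution applied to the measure components $\der_{x_i}g$ (estimated by $C\|\rho\|_{\infty}\,|Dg|(B_{|\cdot|}(x,\eps))\,\eps^{1-\q}$, which gains a factor $\eps$ after integration against the test function), so $F_j\in L^\infty$ enters only through its sup norm and no regularization of $F$ is needed; also note that your first display has $R_\eps=0$ as written, since the Euclidean convolution commutes exactly with $\nabla$ — the genuine commutator appears only when moving $a_j^i$ inside the convolution, which is where your Step 1 belongs.
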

\begin{proof}
Without loss of generality, we may assume $\Omega$ to be bounded, which means that we have $g \in L^{\infty}(\Omega) \cap BV_{H}(\Omega)$.
By Theorem~\ref{product_rule_Euclidean}, we know that 
\[
\ban{F,\nabla(\rho_{\eps} \tilde{\ast} g)}_{\R^\q} \mu\weakto (F,D g)
\]
as $\eps \to 0$.
By \eqref{eq:Ff_j} one easily observes that
$\ban{F,\nabla (\rho_\ep \tilde{\ast} g)}_{\R^\q}=\ban{F,\nabla_H(\rho_\ep \tilde{\ast} g)}$
and this means that
\begin{equation} \label{Euclidean_pairing_horizontal} \ban{F,\nabla_H(\rho_{\eps} \tilde{\ast} g)} \mu \weakto (F, D g). \end{equation}
We notice that, for any $\phi \in C_{c}(\Omega)$, we have
\begin{align*} \limsup_{\eps \to 0} \left | \int_{\Omega} \phi \ban{F, (\rho_\ep \tilde{\ast} D_{H} g)} \, dx \right | & \le \limsup_{\eps \to 0} \|F\|_{\infty, \Omega} \int_{\Omega} |\phi| \rho_{\eps} \tilde{\ast} |D_{H} g| \, dx \\
& = \|F\|_{\infty, \Omega} \int_{\Omega} |\phi| \, d |D_{H} g|, \end{align*}
by well-known properties of Euclidean convolution of measures (see \cite[Theorem~2.2]{AFP}). Now we show that 
\begin{equation} \label{comm_est_Euclidean_pairing} \ban{F, \nabla_{H} (\rho_\ep \tilde{\ast} g)} \mu - \ban{F, (\rho_\ep \tilde{\ast} D_{H} g)} \mu \weakto 0. \end{equation} 
Indeed, if \eqref{comm_est_Euclidean_pairing} holds, then, for any $\phi \in C_{c}(\Omega)$, by \eqref{Euclidean_pairing_horizontal} we have
\begin{align*} \left | \int_{\Omega} \phi \, d (F, D g) \right | & = \lim_{\eps \to 0} \left | \int \phi \ban{F, \nabla_{H} (\rho_{\eps} \tilde{\ast} g)} \, dx \right | \\
& \le \limsup_{\eps \to 0} \left | \int \phi \ban{F, \nabla_{H} (\rho_{\eps} \tilde{\ast} g) - (\rho_{\eps} \tilde{\ast} D_{H} g)} \, dx \right |\\
& + \limsup_{\eps \to 0} \left | \int \phi \ban{F, \rho_{\eps} \tilde{\ast} D_{H} g} \, dx \right | \\
& \le \|F\|_{\infty, \Omega} \int_{\Omega} |\phi| \, d |D_{H} g|, \end{align*}
which implies \eqref{abs_cont_pairing_infty_Euclidean}. 
Therefore, we need to show a commutation estimate.
We recall the fact that $|a_{j}^{i}(x) - a_{j}^{i}(y)| \le C |x - y|$ on compact sets, for any $j = 1, \dots, \m$ and $i = \m + 1, \dots, \q$. Hence, for any $x \in \Omega$ and $\eps > 0$ such that $B_{|\cdot|}(x, \eps) \subset \Omega$, 
the equality between the modulus of the sum
\[
\left | \sum_{i=\m+1}^\q a_j^i(x) (\rho_\ep \tilde{\ast} \der_{y_{i}} g)(x) - \rho_{\eps} \tilde{\ast} (a_{j}^{i} \der_{y_{i}} g)(x) \right | 
\]
and its more explicit version 
\[
\left | \sum_{i=\m+1}^\q \int_{B_{|\cdot|}(x, \eps)} (a_j^i(x) - a_{j}^{i}(y)) \rho_\ep(x - y) \, d \der_{y_{i}} g(y) \right | 
\]
leads us to the inequality 
\begin{equation} \label{commutation_estimate_Euclidean} \left | \sum_{i=\m+1}^\q a_j^i(x) (\rho_\ep \tilde{\ast} \der_{y_{i}} g)(x) - \rho_{\eps} \tilde{\ast} (a_{j}^{i} \der_{y_{i}} g)(x) \right |  \le C \| \rho \|_{\infty, B_{|\cdot|}(0, 1)} \frac{|D g|(B_{|\cdot|}(x, \eps))}{\eps^{\q - 1}}. 
\end{equation}
We now take $\phi \in C_{c}(\Omega)$ and we employ the fact that $\der_{x_{j}} (\rho_\ep \tilde{\ast} g) = (\rho_\ep \tilde{\ast} \der_{x_j} g)$, for any $j = 1, \dots, \q$, to obtain
\begin{align*} \left | \int_{\Omega} \ban{\phi F, \nabla_{H} (\rho_\ep \tilde{\ast} g) - (\rho_\ep \tilde{\ast} D_{H} g)} \, dx \right | & = \left | \int_{\Omega} \phi \sum_{j=1}^\m F_j\,\pa{\sum_{i=\m+1}^\q a_j^i (\rho_\ep \tilde{\ast} \der_{x_{i}} g) - \rho_{\eps} \tilde{\ast} (a_{j}^{i} \der_{x_{i}} g)  } \, dx \right | \\
& \le C \|F\|_{\infty, \Omega} \| \rho \|_{\infty, B_{|\cdot|}(0, 1)} \int_{\Omega} |\phi(x)| \frac{|D g|(B_{|\cdot|}(x, \eps))}{\eps^{\q - 1}} \, dx
\end{align*}
by \eqref{commutation_estimate_Euclidean}. 
Let now $\eps > 0$ be smal enough so that 
\[
\mathrm{supp}(\phi) \subset  \Omega^{\eps}:= \{ x \in \Omega : \dist_{|\cdot|}(x, \partial \Omega) > \eps \}.
\]
It follows that 
\begin{align*} \int_{\Omega} |\phi(x)| \frac{|D g|(B_{|\cdot|}(x, \eps))}{\eps^{\q - 1}} \, dx & = \int_{\Omega^{\eps}} \int_{B_{|\cdot|}(x, \eps)} |\phi(x)| \eps^{1 - q} \, d |D g|(y) \, dx \\
& = \int_{\Omega} \int_{B_{|\cdot|}(y, \eps)}  |\phi(x)| \eps^{1 - q} \, dx \, d |D g|(y) \\
& \le \mu\big(B_{|\cdot|}(0, 1)\big)\, \eps\, \|\phi\|_{\infty, \Omega} |D g|(\Omega). \end{align*}
We finally conclude that
\begin{equation*} \int_{\Omega} \phi(x) \frac{|D g|(B_{|\cdot|}(x, \eps))}{\eps^{\q - 1}} \, dx \to 0. \end{equation*}
All in all, \eqref{comm_est_Euclidean_pairing} follows, and this ends the proof of \eqref{abs_cont_pairing_infty_Euclidean}.

\end{proof}

Thanks to the Leibniz rule of Theorem \ref{product_rule_Euclidean} and to its refinement given in Theorem \ref{abs_continuity_pairing_infty_Euclidean}, we are able to obtain the Gauss--Green formulas for Euclidean sets of finite perimeter in stratified groups. Even though such results could be proved directly, using \eqref{abs_cont_pairing_infty_Euclidean} and employing techniques very similar to those presented in \cite[Theorems 3.2, 4.1, 4.2]{comi2017locally}, we shall instead first show that, in the case of a set of Euclidean finite perimeter $E$, the pairing $(F, D_{H} \chi_{E})$ defined in Theorem \ref{productrule} actually coincides with $(F, D \chi_{E})$ introduced in Theorem \ref{product_rule_Euclidean}. Then, the Gauss--Green formulas will be just a consequence of Theorem \ref{Leibniz_infty_E_trace}.

Let us denote by $E^{1}_{|\cdot|}$ and $E^{0}_{|\cdot|}$ the Euclidean measure theoretic interior and exterior of a measurable set $E \subset \Omega$; that is,
\begin{align*}
E^{1}_{|\cdot|}&= \left \{ x \in \Omega : \lim_{r \to 0} \frac{\mu(B_{|\cdot|}(x, r) \cap E)}{\mu(B_{|\cdot|}(x, r))} = 1 \right \}, \\
E^{0}_{|\cdot|}&= \left \{ x \in \Omega : \lim_{r \to 0} \frac{\mu(B_{|\cdot|}(x, r) \cap E)}{\mu(B_{|\cdot|}(x, r))} = 0 \right \}. 
\end{align*}
We recall now that, if $E$ is a set of Euclidean finite perimeter and we denote by $\redbeu E$ the Euclidean reduced boundary, then 
\begin{equation} \label{precise_repr_char_Eu} \chi_{E, |\cdot|}^{*} = \chi_{E^{1}_{|\cdot|}} + \frac{1}{2} \chi_{\redbeu E}, \end{equation}
see for instance \cite[Lemma 2.13]{comi2017locally} and the references therein. 

We proceed now to show that, in the case $g = \chi_{E}$ for an Euclidean set of finite perimeter $E$, the Euclidean Leibniz rule is equivalent to the group one.

\begin{theorem} \label{equivalence_Leibniz_rule_Eucl} Let $F \in \DM^{\infty}(H \Omega)$ and $E$ be a set of Euclidean finite perimeter in $\Omega$. Then we have $\chi_{E} F \in \DM^{\infty}(H \Omega)$, 
\begin{equation} \label{id_local_pairings} (F, D \chi_{E}) = (F, D_{H} \chi_{E}), \end{equation} 
and the following Leibniz rule holds:
\begin{equation} \label{Leibniz_set_Eucl_H} \div(\chi_{E} F) = \chi_{E, |\cdot|}^{*} \div F + (F, D_{H} \chi_{E}). \end{equation}
In addition, for any $\rho \in C_{c}(B(0, 1))$ satisfying $\rho \ge 0$, $\rho(x) = \rho(-x)$, $\int_{B(0, 1)} \rho(x) \, dx = 1$, we have $\rho_{\eps} \ast \chi_{E} \weakstarto \chi_{E, |\cdot|}^{*}$ in $L^{\infty}(\Omega; |\div F|)$ and $\ban{F, \nabla_{H} (\rho_{\eps} \ast \chi_{E})} \mu \weakto (F, D \chi_{E})$ in $\mathcal{M}(\Omega)$. In particular, $\intee{E}{F} = E^{1}_{|\cdot|}$ and $\extee{E}{F} = E^{0}_{|\cdot|}$, up to $|\div F|$-negligible sets.
\end{theorem}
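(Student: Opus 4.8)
The plan is to compare the two Leibniz rules already at our disposal: the group one from Theorem~\ref{productrule} and the Euclidean one from Theorem~\ref{product_rule_Euclidean}. First I would observe that a set of Euclidean finite perimeter $E$ satisfies $\der_{x_j}\chi_E\in\mathcal{M}(\Omega)$ for all $j=1,\dots,\q$, so Theorem~\ref{product_rule_Euclidean} applies and gives $\chi_E F\in\DM^\infty(H\Omega)$ together with
\begin{equation*}
\div(\chi_E F)=\chi^{*}_{E,|\cdot|}\,\div F+(F,D\chi_E),
\end{equation*}
where $(F,D\chi_E)$ is the weak$^*$ limit of $\ban{F,\nabla_H(\rho_\ep\tilde\ast\chi_E)}\mu$ for a radially symmetric Euclidean mollifier, and by Theorem~\ref{abs_continuity_pairing_infty_Euclidean} we have the key refinement $|(F,D\chi_E)|\res U\le\|F\|_{\infty,U}|D_H\chi_E|\res U$ on bounded open $U$. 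On the other hand, Theorem~\ref{productrule} applied to $g=\chi_E$ produces (for a symmetric group mollifier) a possibly sequence-dependent $\tildef{\chi_E}$ and pairing $(F,D_H\chi_E)$ with $\div(\chi_E F)=\tildef{\chi_E}\div F+(F,D_H\chi_E)$.

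The heart of the argument is to identify $\tildef{\chi_E}$ with $\chi^{*}_{E,|\cdot|}$ for $|\div F|$-a.e.\ point, from which $(F,D_H\chi_E)=(F,D\chi_E)$ will follow by subtracting the two decompositions. For this I would use Theorem~\ref{absolute continuity}, which gives $|\div F|\ll\SHaus{Q-1}$; combined with Lemma~\ref{Euclidean_set_fin_per} (so $\SHaus{Q-1}(\redb E\,\Delta\,\redbeu E)=0$) and the representation \eqref{precise_repr_char_Eu}, this forces $|\div F|$ to be concentrated, outside $\redb E$, on $E^{1}_{|\cdot|}\cup E^{0}_{|\cdot|}$: indeed the Euclidean essential boundary $\redbeu E$ coincides $\SHaus{Q-1}$-a.e.\ with $\redb E$, and Euclidean points of density $1/2$ outside a Lebesgue-null set exhaust $\Omega\setminus(E^1_{|\cdot|}\cup E^0_{|\cdot|})$ up to $\Haus{\q-1}_{|\cdot|}$-null, hence up to $\SHaus{Q-1}$-null by the comparison of Euclidean and intrinsic Hausdorff measures. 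Then $\chi^{*}_{E,|\cdot|}$ takes only the values $0,1,\tfrac12$ for $|\div F|$-a.e.\ point, it equals $\tfrac12$ $|\div F|$-a.e.\ on $\redb E$ (same soft argument as in Theorem~\ref{Uniqueness_traces}, using \eqref{limit1/2} and Lemma~\ref{lemma:weak_conv_absolutely_continuous_perimeter_measures} together with $|\div F|\res\redb E\ll|D_H\chi_E|$), and off $\redb E$ it equals $\chi_{E^1_{|\cdot|}}$.

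Next I would show that $\rho_\ep\ast\chi_E\weakstarto\chi^{*}_{E,|\cdot|}$ in $L^\infty(\Omega;|\div F|)$ for the symmetric \emph{group} mollifier. The point is that the group mollification and the Euclidean mollification of $\chi_E$ have the same pointwise limit at every Euclidean Lebesgue point of $\chi_E$ (both converge to $\chi_E(x)$), and at a point of density $1/2$ one can invoke the intrinsic blow-up provided by Proposition~\ref{overline_chi_E} and Lemma~\ref{overline_D_chi_E_1_2_lemma} to pin the weak$^*$ limit to $1/2$ on $\redb E$ against $|D_H\chi_E|$, while $|\div F|\res\redb E\ll|D_H\chi_E|$ transfers this to $|\div F|$. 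Since $|\div F|(\Omega\setminus(E^1_{|\cdot|}\cup E^0_{|\cdot|}\cup\redb E))=0$ by the previous paragraph, and on $E^1_{|\cdot|}\cup E^0_{|\cdot|}$ the mollification converges pointwise to $\chi_{E^1_{|\cdot|}}$, Lebesgue's dominated convergence theorem with respect to $|\div F|$ gives the claimed weak$^*$ convergence, with a \emph{unique} limit independent of the mollifying sequence. Feeding this uniqueness back into Theorem~\ref{productrule} (more precisely into the proof of Theorem~\ref{Uniqueness_traces}) yields $\tildef{\chi_E}=\chi^{*}_{E,|\cdot|}$ $|\div F|$-a.e., hence $\intee{E}{F}=E^1_{|\cdot|}$ and $\extee{E}{F}=E^0_{|\cdot|}$ up to $|\div F|$-negligible sets, the weak$^*$ convergence $\ban{F,\nabla_H(\rho_\ep\ast\chi_E)}\mu\weakto(F,D\chi_E)$, and finally \eqref{id_local_pairings} and \eqref{Leibniz_set_Eucl_H} by comparing the two Leibniz decompositions on bounded open subsets and using the locality of $\div(\chi_E F)$. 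The main obstacle I anticipate is the measure-theoretic bookkeeping showing $|\div F|$ is concentrated on $E^1_{|\cdot|}\cup E^0_{|\cdot|}\cup\redb E$ and that the two mollifications agree $|\div F|$-a.e.; everything else is an assembly of results already proved.
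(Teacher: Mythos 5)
Your overall architecture (compare the Euclidean Leibniz rule of Theorem~\ref{product_rule_Euclidean} with the group one of Theorem~\ref{productrule}, and exploit Theorem~\ref{abs_continuity_pairing_infty_Euclidean}, Theorem~\ref{absolute continuity} and Lemma~\ref{Euclidean_set_fin_per}) is the right one, but the central step of your plan has a genuine gap. You identify $\tildef{\chi_E}$ with $\chi^{*}_{E,|\cdot|}$ by claiming that the \emph{group} mollification $\rho_\ep\ast\chi_E$ converges pointwise to $\chi_{E^1_{|\cdot|}}$ at every Euclidean Lebesgue point of $\chi_E$, and then invoking dominated convergence with respect to $|\div F|$. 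That pointwise claim is unjustified and in general false in a noncommutative group: $\rho_\ep\ast\chi_E(x)$ averages $\chi_E$ over the right-invariant ball $B^{\cR}(x,\ep)$, whose Haar measure is comparable to $\ep^{Q}$, while the Euclidean density hypothesis at $x$ only yields, via \eqref{eq:locEstDist}, the bound $\mu\big(B^{\cR}(x,\ep)\setminus E\big)\le \mu\big(B_{|\cdot|}(x,C\ep)\setminus E\big)=o(\ep^{\q})$, and $o(\ep^{\q})$ does not control $\ep^{Q}$ since $Q>\q$ whenever the step is at least two. In other words, a point of Euclidean density one need not be a point of density one for the right-invariant homogeneous balls, so $E^{1}_{|\cdot|}$ is not contained in $E^{1,\cR}$ even up to the relevant null sets; and there is no tool in the paper (nor a known one) guaranteeing $|\div F|\big(E^1_{|\cdot|}\setminus E^{1,\cR}\big)=0$. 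Indeed this is exactly the situation flagged in Remark~\ref{r:tripartition_div_2}: identifying $\tildef{\chi_E}$ with a precise representative requires an extra hypothesis like \eqref{eq:assumption_precise_right_repr_div}, which is not available here. Proposition~\ref{pointwise limit moll} cannot rescue the step either, since it only applies on $C^{\cR}_{E}$ and for mollifiers radial in the homogeneous norm. (Your treatment of $\redb E$ itself, via \eqref{limit1/2} and $|\div F|\res\redb E\ll|D_H\chi_E|$, is fine; the problem is entirely off the reduced boundary.)

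The paper avoids this pointwise issue altogether: it never proves convergence of $\rho_\ep\ast\chi_E$ at Euclidean density points, but instead equates the two Leibniz decompositions, $\chi^{*}_{E,|\cdot|}\div F+(F,D\chi_E)=\tildef{\chi_E}\div F+(F,D_H\chi_E)$, uses the tripartition \eqref{chiEtildeUniq} of Theorem~\ref{Uniqueness_traces} together with \eqref{precise_repr_char_Eu}, kills the term $\tfrac12(\chi_{\redb E}-\chi_{\redbeu E})\div F$ by $|\div F|(\redb E\,\Delta\,\redbeu E)=0$, restricts the resulting identity to $\redb E$ using that \emph{both} pairings are absolutely continuous with respect to $|D_H\chi_E|$ (here is where \eqref{abs_cont_pairing_infty} and \eqref{abs_cont_pairing_infty_Euclidean} enter), and finally observes $|\div F|(E^{1}_{|\cdot|}\cap\redb E)=0$. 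This yields \eqref{id_local_pairings} first, and only then the identity $\tildef{\chi_E}=\chi^{*}_{E,|\cdot|}$ $|\div F|$-a.e.\ and $\intee{E}{F}=E^{1}_{|\cdot|}$ as consequences. To repair your proof you should replace your mollification argument by this measure-algebraic comparison (the identification you were trying to prove directly is an \emph{output} of the theorem, not an admissible input), and also keep the paper's localization step showing that $(F,D_H\chi_E)$ is a finite measure on all of $\Omega$ when $\Omega$ is unbounded and $E$ has only locally finite h-perimeter.
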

\begin{proof} 
It is easy to see that $\chi_{E} F \in L^{\infty}(H \Omega)$, hence \eqref{product_rule_Euclidean_eq} with $g = \chi_{E}$ yields 
\begin{equation} \label{Leibniz_set_Eucl} \div(\chi_{E} F) = \chi_{E, |\cdot|}^{*} \div F + (F, D \chi_{E}), \end{equation}
which gives $\chi_{E} F \in \DM^{\infty}(H \Omega)$. Notice that this fact would not follow directly from Theorem \ref{productrule}, since in our assumptions the h-perimeter of $E$ is only locally finite.
Let us assume now $\Omega$ to be bounded, so that $E$ is a set of finite h-perimeter in $\Omega$.
By \eqref{Leibniz_infty_E_1}, we immediately obtain
\begin{equation} \label{two_rules} \chi_{E, |\cdot|}^{*} \div F + (F, D \chi_{E}) = \tildef{\chi_{E}} \div F + (F, D_{H} \chi_{E}), 
\end{equation}
where $\tildef{\chi_{E}}$ and $(F, D_{H} \chi_{E})$ are unique, thanks to Theorem \ref{Uniqueness_traces} and Remark \ref{rem:unique_pairing}.
This means that 
\begin{equation} \label{pairing_D_H_chi_E_difference} (F, D_{H} \chi_{E}) = \chi_{E, |\cdot|}^{*} \div F + (F, D \chi_{E}) - \tildef{\chi_{E}} \div F. \end{equation}
Hence, if $\Omega$ is unbounded, we get \eqref{two_rules} and \eqref{pairing_D_H_chi_E_difference} in the sense of Radon measures on any bounded open set $U \subset \Omega$. Taking into account Theorem \ref{product_rule_Euclidean}, we know that  
\[
\div F \in \mathcal{M}(\Omega), \quad \tildef{\chi_{E}},\chi_{E, |\cdot|}^{*} \in L^{\infty}(\Omega; |\div F|)\quad
\text{and}\quad (F, D \chi_{E}) \in \mathcal{M}(\Omega),
\]
therefore the right hand side of \eqref{pairing_D_H_chi_E_difference} is a finite Radon measure on $\Omega$. In other words, we have proved that $(F, D_{H} \chi_{E}) \in \mathcal{M}(\Omega)$, even if $E$ is only a set of locally finite h-perimeter in $\Omega$. Hence, \eqref{two_rules} holds on the whole $\Omega$.
We recall now that, by Lemma~\ref{Euclidean_set_fin_per}, $\SHaus{Q - 1}(\redb E \Delta \redbeu E) = 0$, which implies 
\begin{equation} \label{div_redb_E_Delta_redbeu_E} |\div F|(\redb E \Delta \redbeu E) = 0, \end{equation} 
by Theorem \ref{absolute continuity}. 
Now, we employ \eqref{chiEtildeUniq} in order to rewrite \eqref{two_rules} as follows
\begin{equation} \label{eq:three_rules}
\left ( \chi_{E^{1}_{|\cdot|}} - \chi_{\intee{E}{F}} \right ) \div F = \frac{1}{2} \left ( \chi_{\redb E} - \chi_{\redbeu E} \right ) \div F + (F, D_{H} \chi_{E}) - (F, D \chi_{E}).
\end{equation}
Thanks to \eqref{div_redb_E_Delta_redbeu_E}, we have
\begin{equation*}
\left ( \chi_{\redb E} - \chi_{\redbeu E} \right ) \div F = 0,
\end{equation*}
so that \eqref{eq:three_rules} reduces to
\begin{equation} \label{eq:four_rules}
\left ( \chi_{E^{1}_{|\cdot|}} - \chi_{\intee{E}{F}} \right ) \div F = (F, D_{H} \chi_{E}) - (F, D \chi_{E}).
\end{equation}
If we restrict \eqref{eq:four_rules} to $\redb E$, we obtain
\begin{equation} \label{eq:five_rules}
\chi_{E^{1}_{|\cdot|} \cap \redb E} \div F = (F, D_{H} \chi_{E}) - (F, D \chi_{E}),
\end{equation}
since $\intee{E}{F} \subset \Omega \setminus \redb E$, by Definition \ref{def:measthinterior}, and $|(F, D_{H} \chi_{E})|, |(F, D \chi_{E})| \ll |D_{H} \chi_{E}|$, by \eqref{abs_cont_pairing_infty} and \eqref{abs_cont_pairing_infty_Euclidean}. We notice that
\begin{align*}
|\div F|(E^{1}_{|\cdot|} \cap \redb E) & = |\div F|(E^{1}_{|\cdot|} \cap (\redb E \cap \redbeu E)) + |\div F|(E^{1}_{|\cdot|} \cap (\redb E \setminus \redbeu E)) \\
& \le |\div F|( \redb E \setminus \redbeu E) = 0,
\end{align*}
by \eqref{div_redb_E_Delta_redbeu_E} and the fact that $E^{1}_{|\cdot|} \cap \redbeu E = \emptyset$. As a consequence, we get
$$\chi_{E^{1}_{|\cdot|} \cap \redb E} \div F = 0,$$ 
which, joined with \eqref{eq:five_rules}, yields the equality of measures \eqref{id_local_pairings}. This equality, along with 
\eqref{Leibniz_set_Eucl}, leads us to \eqref{Leibniz_set_Eucl_H}.
Thus, combining \eqref{two_rules} and \eqref{id_local_pairings}, we obtain 
\begin{equation*}
\left ( \chi_{E, |\cdot|}^{*} - \tildef{\chi_{E}} \right ) \div F = 0,
\end{equation*}
which immediately yields $\tildef{\chi_{E}}(x) = \chi_{E, |\cdot|}^{*}(x)$ for $|\div F|$-a.e. $x \in \Omega$. As a consequence, we get $|\div F|(\intee{E}{F} \Delta E^{1}_{|\cdot|}) = 0$ and $|\div F|(\extee{E}{F} \Delta E^{0}_{|\cdot|}) = 0$.
\end{proof}

\begin{remark}
By Theorem \ref{equivalence_Leibniz_rule_Eucl}, $\chi_{E} F, \chi_{\Omega \setminus E} F \in \DM^{\infty}(H \Omega)$ for any $F \in \DM^{\infty}(H \Omega)$ and any set $E$ of Euclidean finite perimeter in $\Omega$. This means that, by \eqref{id_local_pairings}, we have 
\begin{equation*} (\chi_{E} F, D \chi_{E}) = (\chi_{E}F, D_{H} \chi_{E}) \ \text{and} \ (\chi_{\Omega \setminus E} F, D \chi_{E}) = (\chi_{\Omega \setminus E}F, D_{H} \chi_{E}). \end{equation*} 
Thus, we can define the normal traces of $F$ on the reduced boundary of an Euclidean set of finite perimeter as in \eqref{interior_normal_trace_def} and \eqref{exterior_normal_trace_def}. We wish to stress that, as a simple consequence of Theorem~\ref{Uniqueness_traces}, the measures  $(\chi_{E} F, D_{H} \chi_{E})$ and $(\chi_{\Omega \setminus E} F, D_{H} \chi_{E})$ do not depend on the vanishing sequence $\eps_{k}$, see Remark~\ref{rem:unique_pairing} for more details.
\end{remark}

These results enable us to prove Gauss--Green formulas for sets of Euclidean finite perimeter, Theorem~\ref{Gauss_Green_Euclidean}, extending \cite[Theorem 4.2]{comi2017locally} to all geometries of stratified groups.

\begin{proof}[Proof of Theorem~\ref{Gauss_Green_Euclidean}]
If we choose $U \Subset \Omega$, it is clear that $E$ is a set of finite Euclidean perimeter in $U$, and so of finite h-perimeter in $U$. By Theorem \ref{equivalence_Leibniz_rule_Eucl}, we know that, up to $|\div F|$-negligible sets, $\intee{E}{F} = E^{1}_{|\cdot|}$. Then, \eqref{Leibniz_Eu_infty_1}, \eqref{Leibniz_Eu_infty_2} and \eqref{Leibniz_Eu_infty_3} follow immediately from \eqref{Leibniz_infty_E_1_trace_ref}, \eqref{Leibniz_infty_E_2_trace_ref} and \eqref{boundary_term_div_1_2_ref}. 
The estimates on the normal traces
are a consequence of  \eqref{interior_normal_trace_norm_loc} and \eqref{exterior_normal_trace_norm_loc}
in Theorem~\ref{IBP_general},
since assumptions imply that $E$ is also a set of 
finite h-perimeter on any bounded open set of $\Omega$. The same theorem shows that \eqref{IBP_1_Eu} and \eqref{IBP_2_Eu} are a consequence of \eqref{IBP_1} and \eqref{IBP_2}, taking into account that $|\div F|(\intee{E}{F} \Delta E^{1}_{|\cdot|}) = 0$. Thus, we conclude the proof.
\end{proof}

\begin{remark} The normal traces of $F$ on the reduced boundary of an Euclidean set of finite perimeter $E$ satisfy the same locality property stated in Theorem \ref{normal_trace_locality_theorem}. As a byproduct, we have also provided an alternate proof of the locality of normal traces on reduced boundaries of Euclidean sets of finite perimeter. Such proof does not employ De Giorgi's blow-up theorem, which was essential in \cite[Proposition 4.10]{comi2017locally}.
\end{remark}

Arguing as for Theorem \ref{Gauss_Green_Euclidean}, we can provide a generalization of Green's identities to stratified groups for sets of Euclidean locally finite perimeter, Theorem~\ref{t:GreenIV}, which extends the result of \cite[Proposition 4.5]{comi2017locally} to stratified groups.

\begin{proof}[Proof of Theorem~\ref{t:GreenIV}]
It suffices to combine the results of Theorem~\ref{Green_identity} with the case of a set of Euclidean finite perimeter. By Theorem \ref{equivalence_Leibniz_rule_Eucl}, we know that, up to $|\Delta_{H} u|$-negligible sets, $E^{1}_{u} = E^{1}_{|\cdot|}$, and so we get \eqref{first_Green_Euclidean}. The same is clearly true up to $|\Delta_{H} v|$-negligible sets, and this concludes the proof.
\end{proof}

{\bf Acknowledgements.} It is our pleasure to thank Luigi Ambrosio for his generous comments and fruitful discussions.

\end{document}